\declaretheorem[name=Definition,style=definition,qed=$\dashv$,
numberwithin=section]{dfn}
\declaretheorem[name=Definition,style=definition,numbered=no,qed=$\dashv$]{dfn*}
\declaretheorem[name=Definition,style=definition,numbered=no]{dfnnoqed*}
\declaretheorem[name=Theorem,style=plain,sibling=dfn]{tm}
\declaretheorem[name=Theorem,style=plain,numbered=no]{tm*}
\declaretheorem[name=Lemma,style=plain,sibling=dfn]{lem}
\declaretheorem[name=Corollary,style=plain,numbered=no]{cor*}
\declaretheorem[name=Remark,style=definition,sibling=dfn]{rem}
\declaretheorem[name=Fact,style=definition,sibling=dfn]{fact}
\declaretheoremstyle[headfont=\scshape]{claimstyle}
\declaretheorem[name=Claim,style=claimstyle]{clm}
\declaretheorem[name=Claim,style=claimstyle]{clmthree}
\declaretheorem[name=Claim,style=claimstyle]{clmfour}
\declaretheorem[name=Claim,style=claimstyle]{clmseven}
\declaretheorem[name=Claim,style=claimstyle]{clmnine}
\declaretheorem[name=Claim,style=claimstyle,numbered=no]{clm*}
\declaretheorem[name=Subclaim,style=claimstyle,numberwithin=clmfour]{sclmfour}
\declaretheorem[name=Subclaim,style=claimstyle,numberwithin=clmseven]{sclmseven}
\declaretheorem[name=Subclaim,style=claimstyle,numberwithin=clmnine]{sclmnine}
\declaretheorem[name=Subclaim,style=claimstyle,numbered=no]{sclm*}
\declaretheorem[name=Subsubclaim,style=claimstyle,numbered=no]{ssclm*}
\declaretheoremstyle[headfont=\scshape]{casestyle}
\declaretheorem[name=Case,style=casestyle]{casetwo}
\declaretheorem[name=Case,style=casestyle]{casethree}
\declaretheorem[name=Case,style=casestyle]{casefour}
\declaretheorem[name=Case,style=casestyle]{casefive}
\declaretheorem[name=Case,style=casestyle]{casesix}
\declaretheorem[name=Case,style=casestyle]{caseseven}
\declaretheorem[name=Case,style=casestyle]{caseeight}
\declaretheorem[name=Case,style=casestyle]{casenine}
\declaretheorem[name=Case,style=casestyle]{caseten}
\declaretheorem[name=Case,style=casestyle]{caseeleven}
\declaretheorem[name=Subcase,style=casestyle,numberwithin=casetwo]{scasetwo}
\declaretheorem[name=Subcase,style=casestyle,numberwithin=casethree]{scasethree}
\declaretheorem[name=Subcase,style=casestyle,numberwithin=casefour]{scasefour}
\declaretheorem[name=Subcase,style=casestyle,numberwithin=casefive]{scasefive}
\declaretheorem[name=Subcase,style=casestyle,numberwithin=casesix]{scasesix}
\declaretheorem[name=Subcase,style=casestyle,numberwithin=casenine]{scasenine}
\declaretheorem[name=Subcase,style=casestyle,numberwithin=caseten]{scaseten}
\declaretheorem[name=Subsubcase,style=casestyle,numberwithin=scasenine]
{sscasenine}
\declaretheorem[name=Subsubcase,style=casestyle,numberwithin=scaseten]
{sscaseten}
\newcommand{\CC}{\mathbb C}
\newcommand{\sub}{\subseteq}
\newcommand{\cross}{\times}
\newcommand{\inter}{\cap}
\renewcommand{\int}{\inter}
\newcommand{\om}{\omega}
\newcommand{\pow}{\mathcal{P}}
\newcommand{\OR}{\mathrm{OR}}
\newcommand{\Hull}{\mathrm{Hull}}
\newcommand{\cut}{\backslash}
\newcommand{\Tt}{\mathcal{T}}
\newcommand{\Uu}{\mathcal{U}}
\newcommand{\Vv}{\mathcal{V}}
\newcommand{\Ll}{\mathcal{L}}
\newcommand{\rg}{\mathrm{rg}}
\newcommand{\dom}{\mathrm{dom}}
\newcommand{\cod}{\mathrm{cod}}
\newcommand{\ins}{\trianglelefteq}
\newcommand{\nins}{\ntrianglelefteq}
\newcommand{\pins}{\triangleleft}
\newcommand{\npins}{\ntriangleleft}
\newcommand{\crit}{\mathrm{cr}}
\newcommand{\union}{\cup}
\newcommand{\rest}{\!\upharpoonright\!}
\newcommand{\com}{\circ}
\newcommand{\lh}{\mathrm{lh}}
\newcommand{\Ult}{\mathrm{Ult}}
\newcommand{\Fbar}{{\bar{F}}}
\newcommand{\sats}{\models}
\newcommand{\elem}{\preccurlyeq}
\newcommand{\J}{\mathcal{J}}
\newcommand{\AC}{\mathsf{AC}}
\newcommand{\HOD}{\mathrm{HOD}}
\newcommand{\ZFC}{\mathsf{ZFC}}
\newcommand{\ZF}{\mathsf{ZF}}
\newcommand{\es}{\mathbb{E}}
\newcommand{\eps}{\varepsilon}
\newcommand{\Ubar}{{\bar{U}}}
\newcommand{\pitilde}{\tilde{\pi}}
\newcommand{\Nbar}{{\bar{N}}}
\newcommand{\core}{\mathfrak{C}}
\newcommand{\her}{\mathcal{H}}
\newcommand{\pred}{\mathrm{pred}}
\newcommand{\un}{\union}
\newcommand{\id}{\mathrm{id}}
\newcommand{\sq}{\mathrm{sq}}
\newcommand{\conc}{\ \widehat{\ }\ }
\newcommand{\rSigma}{\mathrm{r}\Sigma}
\newcommand{\rPi}{\mathrm{r}\Pi}
\DeclareMathOperator{\Th}{Th}
\DeclareMathOperator{\card}{card}
\DeclareMathOperator{\cof}{cof}
\DeclareMathOperator{\rank}{rank}
\newcommand{\gammavec}{\vec{\gamma}}
\newcommand{\bfrSigma}{\utilde{\rSigma}}
\newcommand{\psub}{\subsetneq}
\newcommand{\cHull}{\mathrm{cHull}}
\newcommand{\DD}{\mathbb{D}}
\newcommand{\unsq}{\mathrm{unsq}}
\newcommand{\Mbar}{\bar{\M}}
\newcommand{\lpole}{\left\lfloor}
\newcommand{\rpole}{\right\rfloor}
\newcommand{\lrgcrd}{\mathrm{lgcd}}
\newcommand{\univ}[1]{\lpole #1\rpole}
\newcommand{\tu}{\textup}
\renewcommand{\succ}{\mathrm{succ}}
\newcommand{\dropset}{\mathscr{D}}
\newcommand{\movin}{\mathrm{movin}}
\newcommand{\exit}{\mathrm{ex}}
\newcommand{\dfnemph}{\textbf}
\newcommand{\ms}{\mathrm{ms}}
\newcommand{\modelset}{\mathfrak{M}}
\newcommand{\playerI}{\mathrm{I}}
\newcommand{\playerII}{\mathrm{II}}
\newcommand{\Btilde}{\widetilde{B}}  
\newcommand{\rhotilde}{\widetilde{\rho}}
\newcommand{\Mtilde}{{\widetilde{M}}}
\newcommand{\Ntilde}{{\widetilde{N}}}
\newcommand{\Htilde}{{\widetilde{H}}}
\newcommand{\kappatilde}{\widetilde{\kappa}}
\newcommand{\qtilde}{\widetilde{q}}
\newcommand{\ptilde}{\widetilde{p}}
\newcommand{\Etilde}{\widetilde{E}}
\newcommand{\lex}{\mathrm{lex}}
\newcommand{\zetatilde}{\widetilde{\zeta}}
\newcommand{\ztilde}{\widetilde{z}}
\newcommand{\Ftilde}{\widetilde{F}}
\newcommand{\strength}{\mathrm{str}}
\newcommand{\prixi}{\widetilde{\xi}}
\newcommand{\priGamma}{\widetilde{\Gamma}}
\newcommand{\priQ}{\widetilde{Q}}
\newcommand{\priCC}{\widetilde{\CC}}
\newcommand{\pripi}{\widetilde{\pi}}
\newcommand{\xivec}{\vec{\xi}}
\newcommand{\zetavec}{\vec{\zeta}}
\newcommand{\prizeta}{\widetilde{\zeta}}
\newcommand{\priDD}{\widetilde{\DD}}
\newcommand{\priDelta}{\widetilde{\Delta}}
\newcommand{\curlyB}{\mathscr{B}}
\newcommand{\curlyD}{\mathscr{D}}
\newcommand{\curlyM}{\mathscr{M}}
\newcommand{\curlyN}{\mathscr{N}}
\newcommand{\curlyQ}{\mathscr{Q}}
\newcommand{\FF}{\mathbb{F}}
\newcommand{\priR}{\widetilde{R}}
\newcommand{\lgcd}{\mathrm{lgcd}}
\newcommand{\pvec}{\vec{p}}
\renewcommand{\Mbar}{\bar{M}}
\begin{document}
\title{A premouse inheriting strong cardinals from $V$}
\author[fs]{Farmer Schlutzenberg}

\ead{farmer.schlutzenberg@gmail.com}

\begin{abstract}
We identify a premouse inner model $L[\es]$, such 
that for any 
coarsely iterable background universe $R$ modelling $\ZFC$, $L[\es]^R$
is a proper class premouse of $R$ inheriting all strong and Woodin 
cardinals from $R$. Moreover, for each $\alpha\in\OR$, $L[\es]^R|\alpha$
is $(\om,\alpha)$-iterable, via iteration trees which lift to coarse 
iteration trees on $R$.

We prove that  $(k+1)$-condensation follows from $(k+1)$-solidity
together with
$(k,\om_1+1)$-iterability (that is, roughly, iterability with respect to normal 
trees).
We also 
prove that a slight weakening of $(k+1)$-condensation follows from
$(k,\om_1+1)$-iterability (without the $(k+1)$-solidity hypothesis).

The results depend on the theory of generalizations 
of bicephali, which we also develop.
\end{abstract}
\begin{keyword}
bicephalus \sep condensation \sep normal iterability \sep inner model \sep 
strong cardinal
\MSC[2010] 03E45 \sep 03E55
\end{keyword}
\maketitle
\section{Introduction}\label{sec:intro}
Consider fully iterable, sound premice $M,N$ with $M|\rho=N|\rho$  
and $\rho_\om^M=\rho=\rho_\om^N$. Under what 
circumstances can we deduce that either $M\ins N$ or $N\ins M$? This conclusion 
follows 
if $\rho$ is a cutpoint of both models. By \cite[Lemma 
3.1]{stacking_mice},\footnote{The paper 
\cite{stacking_mice} literally deals with premice with Jensen indexing, whereas 
we deal with 
Mitchell-Steel indexing. However, the same result still holds.} the conclusion 
also follows if $\rho$ is 
a regular uncountable cardinal in $V$ and there is no premouse with a 
superstrong 
extender. We will 
show that if  $M||\rho^{+M}=N||\rho^{+N}$
and $M,N$ have a certain 
\emph{joint} iterability property, then $M=N$.

The joint iterability property and the proof that $M=N$, is motivated by the 
\emph{bicephalus} 
argument of \cite[\S9]{fsit}. Bicephali in \cite{fsit} are structures
$B=(P,E,F)$, where both $(P,E)$ and $(P,F)$ are active premice. If $B$ is an 
iterable bicephalus 
and there is no iterable superstrong premouse
then $E=F$ (see \cite[\S9]{fsit} and \cite{extmax}); the proof is by comparison 
of $B$ with itself.
In \S\ref{sec:cephals} we  consider a more general form of bicephali, 
including, for example, 
the structure $C=(\rho,M,N)$, where $\rho,M,N$ are as at the end of the 
previous paragraph.
If $C$ is 
iterable, a comparison of $C$ with itself will be used to show that  $M=N$ in 
this situation.

Hugh Woodin also noticed that generalizations of bicephali can be used in 
certain fine structural 
arguments, probably before the author did; see \cite{woodin_fs_suit}. The 
bicephali used in 
\cite{woodin_fs_suit} have more closure than those considered here, but 
of course, the premice of \cite{woodin_fs_suit} are long extender premice. So 
while there is some 
overlap, it seems that things are quite different.

We 
will also consider bicephali $(\rho',M',N')$ in which $M'$ or $N'$ might fail 
to 
be fully sound. 
However, we will assume that both $M',N'$ project $\leq\rho'$, are 
$\rho'$-sound, and $M',N'$ agree 
below their common value for $(\rho')^+$.
If such a bicehpalus is iterable, it might be that $M'\neq N'$, but we will see 
that in this 
situation, $M'$ is an ultrapower of some premouse by an extender in 
the extender sequence $\es_+^{N'}$ of $N'$ (here $\es_+^{N'}$
includes the active extender of $N'$) or vice versa.

We will also 
prove similar results regarding \emph{cephalanxes}, a blend of bicephali and 
phalanxes. The presence 
of superstrong premice makes cephalanxes somewhat more subtle than bicephali.

We give two applications of these results. First, in 
\S\ref{sec:condensation}, we 
consider proving condensation from normal iterability. Let 
$k<\om$, let $H,M$ be 
$k$-sound premice, $\pi:H\to M$ be a near $k$-embedding\footnote{Actually 
we 
will work with the 
more general class of \emph{$k$-lifting} embeddings; see \ref{dfn:k-lifting}.},
$\rho_{k+1}^H\leq\rho<\rho_k^H$, and suppose $H$ is $\rho$-sound and 
$\rho\leq\crit(\pi)$. We 
wish to prove the conclusion of $(k+1)$-condensation 
for this embedding.\footnote{Approximately, that is, the ``version 
\ldots with 
$\rho_{k+1}^H$ 
replacing $\rho_\om^H$'' in \cite[pp. 87--88]{fsit}, or \cite[Lemma 
1.3]{stacking_mice}, though
this uses Jensen indexing, or \cite[Theorem 9.3.2]{imlc}, though this uses 
Jensen indexing and $\Sigma^*$-fine structure.} The classical
(phalanx-based) proof of condensation 
uses the $(k,\om_1,\om_1+1)^*$-iterability of $M$ (roughly,
iterability for stacks of normal trees), 
through its appeal to  weak Dodd-Jensen. We would like to reduce this assumption 
to $(k,\om_1+1)$-iterability 
(roughly, iterability for normal trees). Given the latter, and also
assuming  $M$ is $(k+1)$-solid, we will deduce the usual 
conclusion of condensation. 
We will also prove a slight weakening of 
$(k+1)$-condensation 
from $(k,\om_1+1)$-iterability,
 without the extra solidity hypothesis. (As we 
are not assuming $(k,\om_1,\om_1+1)^*$-iterability, it is natural to consider 
the circumstance that $M$ 
fails to be $(k+1)$-solid; see 
\S\ref{sec:questions}.\footnote{\label{ftn:fs_from_norm}
Actually, the author has since shown that $(k+1)$-solidity
follows from $(k,\om_1+1)$-iterability. This result will appear
in \cite{fsfni}. So the present paper together with \cite{fsfni}
gives a complete proof of $(k+1)$-condensation from 
$(k,\om_1+1)$-iterability.} But note that 
the assumption that $H$ is $\rho$-sound entails that $(H,p_{k+1}^H\cut\rho)$ is 
$(k+1)$-solid.)
Our proof makes substitutes bicephali and cephalanxes for phalanxes,
and avoids 
(weak) Dodd-Jensen.\footnote{The way we have presented our proof, 
we do make use of the standard proof of condensation, in proving 
\ref{lem:fully_elem_condensation}, 
but in circumstances in 
which Dodd-Jensen is not required. This appeal to the standard proof can, 
however, be removed, by 
arranging things more inductively and using the main structure of the proof of 
\ref{thm:condensation} to prove 
\ref{lem:fully_elem_condensation}.}\footnote{Some of the key arguments
involved here, and extensions thereof regarding solidity and universality,
were presented by the author at the 3rd M\"unster conference
on inner model theory, the core model induction, and hod mice,
in July 2015.  Some notes of those talks, taken
by Schindler, can be seen in \cite{ralf_notes_solidity_talk}.}

Let $W\sats\ZFC$ be coarsely iterable.
Let $N$ be the output of a 
(standard) fully backgrounded 
$L[\es]$-con\-struct\-ion of $W$. 
Then $N$ inherits the Woodin cardinals of $W$, meaning that every 
Woodin cardinal of $W$ is Woodin in $N$. However, 
$\kappa$ can be strong in $W$, 
but not strong in $N$. For example, if $\kappa$
is strong in $W$ but $W$ has no measurable cardinal $\mu>\kappa$,
then $N$ has no measurable cardinal $\geq\kappa$
(see \ref{rem:strong});
in particular, $\kappa$ is not even measurable in $N$,
let alone strong.

In 
\cite{localKc}, assuming that $W$ is a (finely) iterable premouse
with no largest cardinal,
Steel defined the 
\emph{local $K^c$-construction} $K^W_{\mathrm{loc}}$ of $W$,
such that $K^W_{\mathrm{loc}}$ inherits both Woodin and strong cardinals
from $W$.
Along with requiring that $W$ be a premouse,
an important feature used in ensuring that strong cardinals are inherited
is that
the background extenders used to construct
$K^W_{\mathrm{loc}}$ do not have to be $W$-total. As a consequence, 
when one lifts 
iteration trees on 
$M$ to iteration trees $\Uu$ on $V$, the tree $\Uu$ might have drops.

In 
\S\ref{sec:ultra-stack}, working with background theory $\ZF$,
given any transitive class $W\sats\ZFC$ which is (sufficiently) coarsely 
iterable,
we identify a new form of $L[\es]$-construction $\CC$ of $W$.
Letting $L[\es]$ be 
the final model of 
$\CC$ (as computed in $W$), we show that (a) $L[\es]$ is a proper class premouse
of $W$, outright definable over $W$, (b) if $\kappa$ is 
strong 
(Woodin) in $W$, then $\kappa$ is strong (Woodin) in $L[\es]$,
as witnessed by $\es$, and (c)
noting that $W$ might be proper class,
if there is a (class) function $f:\OR^W\to W$
such that $f(\alpha)$ wellorders $V_\alpha^W$ for each $\alpha\in\OR^W$,
then $L[\es]$ is 
iterable, with 
iteration trees on $L[\es]$ lifting to (coarse, hence non-dropping) trees on 
$W$.
Thus, we achieve 
many of the 
properties of 
the the local $K^c$-construction, but with the advantages that $W$ need not be 
a 
premouse, and (even if $W$ is a premouse)
trees $\Uu$ on $W$ resulting from lifting 
trees on $L[\es]$ 
are coarse (and hence non-dropping).\footnote{The key
ideas of the construction were presented by the author at the
MAMLS 2014 meeting at Miami University.}

We finally remark that Steel's local $K^c$-construction 
seems to be more local than $\CC$, and hence as one extra feature 
that
it seems $\CC$ might not: $K^W_{\mathrm{loc}}$ also inherits
all $\lambda$-strong cardinals of $W$, whenever
$\lambda$ is a limit cardinal of $W$.

\subsection{Notation and terminology}\label{subsec:ntn}

\subsubsection{General}\label{subsubsec:general}
The universe $N$ of a first-order structure $M=(N,\ldots)$ is denoted 
$\univ{M}$.

We use the lexicographic order 
on $[\OR]^{<\om}$: $a<b$
iff $a\neq b$ and $\max(a\Delta b)\in b$. We sometimes identify elements of 
$[\OR]^{<\om}$ with 
strictly descending sequences of ordinals. Let 
$a\in[\OR]^{<\om}$ with $a=\{a_0,\ldots,a_{k-1}\}$
where $a_i>a_{i+1}$ for all $i+1<k$. We write $a\rest j$ for 
$\{a_0,\ldots,a_{j-1}\}$.

\subsubsection{Premice}\label{subsubsec:premice} We deal with premice and 
related structures with Mitchell-Steel 
indexing, but with 
extenders of superstrong type permitted on their extender sequence. That is, a 
\dfnemph{super-fine 
extender sequence} $\vec{E}$ is a sequence such that for each 
$\alpha\in\dom(\vec{E})$, $\vec{E}$ 
is acceptable at $\alpha$, and if $E_\alpha\neq\emptyset$ then either:
\begin{enumerate}[label=--]
 \item $E_\alpha$ is a $(\kappa,\alpha)$ pre-extender over 
$\J^{\vec{E}}_\alpha$ 
and $E_\alpha$ is 
the trivial completion of $E_\alpha\rest\nu(E_\alpha)$ and $E_\alpha$ is not 
type Z, or
 \item $\J^{\vec{E}}_\alpha$ has largest cardinal $\nu$ and $E_\alpha$ is a 
$(\kappa,\nu)$ 
pre-extender over $\J^{\vec{E}}_\alpha$ and 
$i_{E_\alpha}(\kappa)=\nu=\nu(E_\alpha)$,
\end{enumerate}
and further, properties 2 and 3 of \cite[Definition 2.4]{outline} hold. We then 
define 
\dfnemph{\tu{(}potential\tu{)} premice} in terms of super-fine extender 
sequences, in the 
usual manner,
with the caveat that we consider a (potential) premouse to be an amenable 
structure $P=(\J_\alpha^\es,\es,\widetilde{F})$,
where $\widetilde{F}$ is the amenable coding of the active extender $F$
of $P$, as described in \cite[2.9--2.10]{outline}. 
We may blur the 
distinction between $F$ and $\widetilde{F}$.
Likewise for 
related terms, such as \emph{segmented-premouse} (see \cite[\S5]{extmax}).
See \cite{operator_mice} for discussion of the modifications of 
the general theory 
needed to deal 
with these changes.\footnote{\label{ftn:superstrong_mod}The only significant 
difference in the basic
definitions (other than  \emph{super-fine extender sequence})  is that
for  \emph{$k$-maximal iteration tree $\Tt$},
one must replace the usual requirement that $\lh(E^\Tt_\alpha)<\lh(E^\Tt_\beta)$
for all $\alpha<\beta$, with the requirement that 
$\lh(E^\Tt_\alpha)\leq\lh(E^\Tt_\beta)$ for all $\alpha<\beta$. However,
we then get that $\lh(E^\Tt_\alpha)=\lh(E^\Tt_\beta)$
iff $\alpha+1=\beta$ and $E^\Tt_\alpha$ is superstrong
and $M^{*\Tt}_{\alpha+1}$ is active type 2 with largest cardinal 
$\crit(E^\Tt_\alpha)$; in this case $M^\Tt_{\alpha+1}$
is active type 2 with ordinal height $\lh(E^\Tt_\alpha)$,
and so $F(M^\Tt_{\alpha+1})$ is the only possibility for $E^\Tt_{\alpha+1}$.}
We sometimes abbreviate \emph{premouse} with \emph{pm}
and \emph{segmented-premouse} with \emph{seg-pm}.
A \dfnemph{premouse 
extender} is the active extender of some premouse.
\emph{ISC} abbreviates ``initial
segment condition''.

Let $P$ be a seg-pm. We write 
$F^P=F(P)$ for the active extender of $P$ (possibly $F^P=\emptyset$), 
$\es^P=\es(P)$ for the 
extender sequence of $P$, 
\emph{excluding} $F^P$, and $\es_+^P=\es_+(P)=\es^P\conc F^P$. If 
$F^P\neq\emptyset$ we write 
$\lh(F^P)=\OR^P$.
We write $Q\ins P$ iff $Q$ is an initial segment of $P$
(that is, $\OR^Q\leq\OR^P$ and $\es^Q_+=(\es^P_+)\rest(\OR^Q+1)$),
and $Q\pins P$ iff $Q\ins P$ but $Q\neq P$.
Given a limit $\alpha\leq\OR^P$, we write $P|\alpha$ for the $Q\ins P$
such that $\OR^Q=\alpha$,
and $P||\alpha$ for its passivization $(\univ{Q},\es^Q,\emptyset)$. (So 
$P||\alpha$ is passive, and $P|\alpha$
is active iff $(\es_+^P)_\alpha\neq\emptyset$.)
If $P$ has a largest cardinal $\delta$, $\lgcd(P)$ denotes $\delta$. If $P$ is 
active, then $\nu(P)=\nu(F^P)$ denote the natural length of $F^P$,
and $\iota(P)=\iota(F^P)$ denote $\max(\lgcd(P),\nu(F^P))$. So if 
$P$ is 
an active premouse 
then $\iota(P)=\nu(F^P)$.
Given also another seg-pm $R$ and an ordinal $\alpha\leq\min(\OR^P,\OR^R)$, 
we write
$(P\sim R)|\alpha$ iff $P|\alpha=R|\alpha$.
We also use such notation with more than two structures, and 
also with ``$||$'' replacing ``$|$''.
We use similar notation for cephals; see \ref{dfn:cephal}.

Let $P$ be an active seg-pm, $F=F^P$
and $i^P_F:P\to\Ult(P,F)$ the ultrapower map.
We say 
that $F$, or 
$P$, has \dfnemph{superstrong type} (or just is \dfnemph{superstrong}) iff 
$i^P_F(\crit(F))<\lh(F)$.
(So if $F$ has superstrong type then $i^P_F(\crit(F))$ is the largest cardinal 
of 
$P$, and then $P$ 
is a premouse iff the initial segment condition holds for $P$.) In 
\cite{extmax}, all premice are 
assumed to be below superstrong type, but certain results 
there (in particular, \cite[2.17, 2.20]{extmax}) hold in our context,
by the same proofs, and when we cite these results, we literally refer to these 
generalizations. This generalization will be covered more explicitly in
\cite{fsfni}.
(However, the proof of
\cite[Theorem 5.3]{extmax} does not go through at the superstrong 
level; Theorem 
\ref{thm:no_iterable_sound_bicephalus} here generalizes that 
result at the 
superstrong level.)\footnote{The proof of Dodd-solidity for $1$-sound, 
$(0,\om_1,\om_1+1)^*$-iterable premice (for Mitchell-Steel indexing) does not 
immediately generalize, although it can be adapted to the superstrong level 
with some further work;
recall that Zeman \cite{zeman_dodd} proves the analogous result for Jensen 
indexing,
which is at the superstrong level. However, in this paper we do not need to 
consider Dodd-solidity.}

\subsubsection{Fine structure}\label{subsubsec:fine_structure}
Let $M$ be a premouse.
As in \cite{outline}, $\core_0(M)$ denotes the squash $M^\sq$ of $M$ if $M$ is 
type 3,
and otherwise denotes $M$ (which is by  definition 
amenable).
If $M$ is non-type 3, we also define $M^\sq=M$, so in all cases, 
$\core_0(M)=M^\sq$. Also in general, $\core_0(M)^\unsq$ denotes $M$.
We will often blur the distinction between $M$ and $\core_0(M)$.

 The \dfnemph{\tu{(}fine structural\tu{)} pm language}
$\Ll$ is 
$\{\dot{\in},\dot{=},\dot{\es},\dot{\widetilde{F}},\dot{F}_\downarrow\}$.
We interpret $\Ll$ over 
$M$ (for seg-pms $M$) and over $\core_0(M)$ (for premice $M$) in the usual 
manner. Over $M$: $\dot{\es}^{M}=\es^{M}$,
$\dot{\widetilde{F}}^{M}=\widetilde{F}^M$,
if $M$ is type 2 then $\dot{F}_\downarrow^M$
is the trivial 
completion of the largest non-type Z initial segment of $F^M$,
and otherwise $\dot{F}_\downarrow^M=\emptyset$.
Over $\core_0(M)$:
as above if $M=\core_0(M)$ is non-type 3, so suppose $M$ is type 3.
Then $\dot{\es}^{\core_0(M)}=\es^{\core_0(M)}=\es^M\rest\nu(F^M)$,
and $\dot{\widetilde{F}}^{\core_0(M)}$
is the set of all restrictions $F^M\rest\alpha$ for 
$\alpha<\nu(F^M)$,
and $\dot{F}^{\core_0(M)}_\downarrow=\emptyset$.

The language for the definability classes $\rSigma_{0}^M$ and $\rSigma_1^M$
is $\Ll$, with these classes interpreted over $\core_0(M)$.
Of course, most of the time, for type 3 premice $M$, we deal with $\core_0(M)$,
but in special circumstances we need to deal directly with $M$ instead,
interpreting $\Ll$ over $M$ as above (in these circumstances we use 
\emph{simple} embeddings and ultrapowers, as discussed below).

We also define the \dfnemph{natural language} $\Ll_{\mathrm{nat}}^M$ of $M$:
if $M$ is passive, 
$\Ll_{\mathrm{nat}}^M=\Ll\cut\{\dot{\widetilde{F}},\dot{F}_\downarrow\}$;
if $M$ is type 1/2, $\Ll_{\mathrm{nat}}=\Ll$,
and if $M$ is type 3, 
$\Ll_{\mathrm{nat}}^M=\Ll\cut\{\dot{F}_\downarrow\}$.

For the basic fine structural notions (soundness, solidity, $\rSigma_{n+1}$, 
etc),
we follow Mitchell-Steel, as 
modified in 
\cite[\S5]{V=HODX}. This modification involves three things.
The first (and main one) is that we
drop the parameter $u_n$
of \cite[\S2]{fsit}, defining  $p_{n+1}$ without
reference to $u_n$.
(Recall $u_0^M=\emptyset$ an if
$n>0$ then $u^M_n=(p^M_n,w^M_n,\widetilde{\rho}^M_{n-1},u_{n-1}^M)$
where 
$w_n$ is the set of $n$-solidity witnesses (in the sense of \cite{fsit}),
 $\widetilde{\rho}^M_{n-1}=\rho_{n-1}^M$ if $\rho_{n-1}^M<\rho_0^M$,
and $\widetilde{\rho}_{n-1}^M=0$ otherwise.)
 The reader who prefers the original 
fine structure simply need put $u_n$ into
all $\rSigma_{n+1}$ hulls and $\rSigma_{n+1}$ theories. 
By \cite{V=HODX}, this change does not have any significant impact;
it just simplifies notation.
The second is that, in the terminology of \cite{fsit},
we use only pure theories, not generalized theories. Thus
(comparing with \cite[Definition 2.3.6]{fsit}), if $M$ is $n$-sound
and $\om<\rho_n^M$, we define 
the predicate $T_n^M$,
where $n\geq 1$, as the set of tuples $(\alpha,q,t)\in\core_0(M)$
with $\alpha<\rho_n^M$ and $q\in\core_0(M)$
and $t=\Th_{\rSigma_n}^M(\alpha\cup\{q\})$,
where this denotes the pure $\rSigma_n$ theory; see below.
This also has no significant impact, as explained in \cite[Lemma 2.10]{fsit}.
The third is just terminological: for the definition of 
\emph{$(n+1)$-solidity} for a structure $N$,
we follow \cite{imlc}, not \cite{fsit}; this is discussed below.

As described in \S\ref{sec:prelim}, we also use $n$-lifting embeddings
where weak $n$-embeddings are used classically (but this does not 
impact any basic definitions).

So, from now on we use the fine structural notions
defined as in \cite[\S5]{V=HODX}. 
Let $n<\om$ and let $M$ be an $n$-sound premouse.
For 
$i\leq n+1$ we write $\vec{p}_i^{M}=(p_1^M,\ldots,p_i^M)$.
Now suppose 
$\om<\rho_n^M$, and let 
$X\sub
\core_0(P)$. Almost as in \cite{extmax}, $\Hull_{n+1}^M(X)$ denotes the 
restriction of (the predicates of ) $\core_0(M)$
to the points $y\in\core_0(M)$ such that for some $\rSigma_{n+1}$ formula 
$\varphi$
and $\vec{x}\in X^{<\om}$, $y$ is the unique $z\in\core_0(M)$ such that
$\core_0(M)\sats\varphi(\vec{x},z)$.  (This is not exactly as in \cite{extmax}, 
because we do not automatically put $u_n^M$ into 
the hull.) And $\cHull_{n+1}^M(X)$ is the transitive 
collapse of this structure. 
Also, $\Th_{\rSigma_{n+1}}^M(X)$ denotes the  $\rSigma_{n+1}$ theory of 
$\core_0(M)$ in parameters in
$X$ (that is, all pairs $(\varphi,\vec{x})$
such that $\varphi$ is an $\rSigma_{n+1}$ formula
and $\vec{x}\in X^{<\om}$ and $\core_0(M)\sats\varphi(\vec{x})$).
This notation differs from \cite{extmax}, in that it denotes a pure theory, not 
a generalized theory, in the terminology of \cite{fsit}; we have no need for 
generalized theories. If $\om<\rho_{n+1}^M$,
we then  define $T_{n+1}^M$ as stated above, and define  $\rSigma_{n+2}$ from 
this
as in \cite{fsit}.

Given $q\in[\rho_0^M]^{<\om}$ and $\alpha\in q$, the \dfnemph{$(n+1)$-solidity 
witness for $(M,q,\alpha)$} (or just \dfnemph{for $(q,\alpha)$}) is 
$w^M_{q,\alpha}=\cHull_{n+1}^M(\alpha\cup\{q',\pvec_n^M\})$
where $q'=q\cut(\alpha+1)$.
A 
\dfnemph{generalized $(n+1)$-solidity witness
for $(M,q,\alpha)$} is  the obvious adapatation from  \cite[\S1.12]{imlc}.
A \dfnemph{\tu{(}generalized\tu{)} $(n+1)$-solidity witness} for $(M,q)$ is one 
for $(M,q,\alpha)$,
for some $\alpha\in q$; a witness \dfnemph{for $M$} is one for $(M,p_{n+1}^M)$. 
We say that $(M,q)$ is \dfnemph{$(n+1)$-solid}
iff $w^M_{q,\alpha}\in M$ for each $\alpha\in q$.
We say that $M$ is \dfnemph{$(n+1)$-solid}
iff $(M,p_{n+1}^M)$ is $(n+1)$-solid.
\footnote{Regarding the $(n+1)$-solidity of $M$,
this follows the (analogous) terminology
of Zeman \cite[p.44, Definition prior to Lemma 
1.12.5]{imlc}, but \emph{not} that of Mitchell-Steel \cite[Definition 
2.8.2]{fsit}
and \cite[Definition 2.15]{outline}; Mitchell-Steel demands
universality as one of the conditions for solidity, whereas Zeman does not
(and neither do we).
In \cite{imlc}, see also its Lemma 5.1.7(c), the paragraph following 
Corollary 5.1.8, Theorem 5.2.1, Lemma 9.2.14, and Theorem 9.3.1, which
treat the $(n+1)$-solidity of a structure $M$
and $(n+1)$-universality for $(M,p_{n+1}^M)$ separately.}
Given $\rho\in[\rho_{n+1}^M,\rho_n^M]$,
we say that $M$ is \dfnemph{$\rho$-solid}
iff $(M,p_{n+1}^M\cut\rho)$ is $(n+1)$-solid,
and that $M$ is \dfnemph{$\rho$-sound}
iff $M$ is $\rho$-solid and $M=\Hull_{n+1}^M(\rho\cup p_{n+1}^M)$.
We say $M$ is \dfnemph{$(n+1)$-sound}
iff $M$ is $\rho_{n+1}^M$-sound.\footnote{
The terminology \emph{$(n+1)$-sound}
follows  Mitchell-Steel,
not Zeman,
as Zeman does not incorporate $(n+1)$-solidity into $(n+1)$-soundness.}
For $\delta\in[\rho_{n+1}^M,\rho_0^M]$, the 
$\delta$-core of $M$ is
$\cHull_{n+1}^M(\delta\un\{\pvec_{n+1}^M\})$.

\subsubsection{Extenders and ultrapowers}\label{subsubsec:ext_and_ult}
Our use of the term 
\emph{extender} is  standard
for inner model theory. \emph{Extenders} need not be total 
over $V$, and need not yield wellfounded ultrapowers. We use the term
basically as in \cite{extmax} (see its introduction), except that in 
\S\ref{sec:ultra-stack} we must also consider extenders over coarse 
structures.

Given an extender $E$ over $M$, $U=\Ult(M,E)$ denotes
the ultrapower of $M$ by $E$, computed using functions in $M$;
this is formed directly, without any squashing, whatever $M$ is,
and if $M$ is an amenable structure, its predicates are shifted
piece-wise as usual. Such an ultrapower is called 
\dfnemph{simple}.
We write $i^M_E$ for the ultrapower embedding $i^M_E:M\to U$.
We may abbreviate $i^M_E$ by $i_E$.
We write $\ms(E)$ for the measure space of 
$E$; 
that 
is, the supremum of all $\kappa+1$ such that $\kappa\in\dom(i_E)$ and $(\sup 
i_E``\kappa)<\lh(E)$ ($E$ might be long).
If $E$ is 
short and $\kappa=\crit(E)$,
we say that $E$ is \dfnemph{weakly amenable \tu{(}to $M$\tu{)}}
iff $\pow(\kappa)\inter M=\pow(\kappa)\inter U$.
Note that if $M$ is an iterable premouse, then by condensation,
this is equivalent to saying that $M||\kappa^{+M}=U|\kappa^{+U}$.
Now suppose  $M$ is an $n$-sound premouse, $E$ is short
and $\crit(E)<\rho_n^M$. Then $\Ult_n(M,E)$ denotes the 
degree
$n$ fine structural ultrapower of $M$ by $E$, and $i^{M,n}_E$ the ultrapower 
map. 
We may abbreviate $i^{M,n}_E$ by $i^M_E$ or $i_E$.
Given $a\in[\lh(E)]^{<\om}$ and an $\bfrSigma_n^M$ function $f$,
$[a,f]^{M,n}_E$ denotes the object represented by the pair $(a,f)$ in 
$\Ult_n(M,E)$.
Recall that if $M$ is type 3, then $\Ult_n(M,E)=\Ult_n(M^\sq,E)^\unsq$.
If $M$ is type 3 then we let $\core_{-1}(M)=\core_0(M)$, and for an extender 
$E$ over $\core_0(M)$, let $\Ult_{-1}(M,E)=\Ult_0(M,E)$.

\subsubsection{Embeddings}\label{subsubsec:embeddings} Given structures $X,Y$, 
if context determines an obvious natural embedding
$i:X\to Y$, we sometimes write $i_{XY}$ for $i$.

Let $M,N$ be segmented-premice. A \dfnemph{simple} embedding
$\pi:M\to N$
is a function $\pi$ with $\dom(\pi)=\univ{M}$ and
$\cod(\pi)=\univ{N}$, such that $\pi$ is $\Sigma_0$-elementary with respect to 
 $\Ll$. (Note that if 
$M$ is active then
$\pi(\lrgcrd(M))=\lrgcrd(N)$, because the amenable predicates for $F^M$ and
$F^N$ determine the largest cardinal.)
If $M,N$ are type 3 premice, a \dfnemph{squashed} embedding $\pi:M\to N$ is, 
literally, an $\rSigma_0$-elementary function 
$\pi:\core_0(M)\to\core_0(N)$.

Let $\pi:M\to N$ be simple. If $M$ is passive then
then $\psi_\pi$ denotes $\pi$. If $M$ is active then
$\psi_\pi:\Ult(M,F^M)\to\Ult(N,F^N)$
denotes the  embedding induced by $\pi$ (via the Shift Lemma).
Now let $\pi:M\to N$ be squashed (so $M,N$ are type 3). Then
$\psi_\pi:\Ult_0(M,F^M)\to\Ult_0(N,F^N)$
denotes the  embedding induced by $\pi$.
So $\pi\sub\psi_\pi$ in both cases.

Let $\pi:M\to N$ be simple or squashed.
We say $\pi$ is (i) \dfnemph{$\nu$-preserving} iff 
$\pi$ is simple or $\psi_\pi(\nu(F^M))=\nu(F^N)$,
(ii) \dfnemph{$\nu$-high}
iff $\pi$ is squashed and $\psi_\pi(\nu(F^M))>\nu(F^N)$,
(iii) \dfnemph{$\nu$-low}
iff $\pi$ is squashed and $\psi_\pi(\nu(F^M))<\nu(F^N)$,
(iv) \dfnemph{$\iota$-preserving} iff either $M,N$ are passive or 
$\psi_\pi(\iota(F^M))=\iota(F^N)$, (v) \dfnemph{c-preserving} iff for 
all 
$\alpha$,  $\alpha$ is a cardinal of $M$ iff $\pi(\alpha)$ is a cardinal 
of 
$N$.

\begin{rem}\label{rem:Q'_from_nu-low}
 Let $\pi:M\to N$ be a squashed embedding (so $M,N$ are type 3).
 Easy elementarity considerations show that if $\pi$ is $\rSigma_1$-elementary 
then $\pi$ is non-$\nu$-low, and if $\rSigma_2$-elementary then 
$\nu$-preserving.
Suppose $\pi$ is $\nu$-low and let $\nu'=\psi_\pi(\nu(F^M))$.
 Then $\nu'<\nu(F^N)$ is an $N$-cardinal,
 so by  ISC, there is $N'\pins N$ with $F^{N'}\rest\nu'=F^N\rest\nu'$.
 Note that we get an $\rSigma_0$-elementary
 $\pi':\core_0(M)\to\core_0(N')$ with the same graph as $\pi$,
 and $\pi'$ is $\nu$-preserving.
\end{rem}

Let $\pi:\core_0(M)\to\core_0(N)$ be $\rSigma_0$-elementary where $M,N$ are 
$n$-sound.
We say $\pi$ is (i) \dfnemph{$p_{n+1}$-preserving} iff 
$\pi(p_{n+1}^M)=p_{n+1}^N$, (ii) \dfnemph{$\vec{p}_{n+1}$-preserving} iff 
$\pi(\vec{p}_{n+1}^M)=\vec{p}_{n+1}^N$, (iii) \dfnemph{$\rho_j$-preserving}
iff either  $\pi(\rho_j^M)=\rho_j^N$ or [$\rho_j^M=\rho_0^M$ and 
$\rho_j^N=\rho_0^N$].

\subsubsection{Iteration trees and iterability}\label{subsubsec:iteration_trees}

Let $\Tt$ be an iteration tree of length $\lh(\Tt)=\lambda$. The objects
associated to $\Tt$ we write as:
tree order ${<^\Tt}$, drop-set $\dropset^\Tt$ 
(the nodes where drops in model occur), models $M^\Tt_\alpha$, degrees 
$\deg^\Tt_\alpha$,
embeddings $i^\Tt_{\alpha\beta}$ and $i^{*\Tt}_{\alpha\beta}$ (defined where 
appropriate),
exit extenders $E^\Tt_\alpha$, exit models 
$\exit^\Tt_\alpha=M^\Tt_\alpha|\lh(E^\Tt_\alpha)$,
ultrapower domains $M^{*\Tt}_{\alpha+1}$ (so if $\Tt$ is fine structural and
$d=\deg^\Tt_{\alpha+1}$ then
$M^\Tt_{\alpha+1}=\Ult_d(M^{*\Tt}_{\alpha+1},E^\Tt_\alpha)$),
and associated ultrapower maps $i^{*\Tt}_{\alpha+1}:M^{*\Tt}_{\alpha+1}\to 
M^\Tt_{\alpha+1}$
(so $i^{*\Tt}_{\alpha+1,\beta}=i^\Tt_{\alpha+1,\beta}\com i^{*\Tt}_{\alpha+1}$),
$\kappa^\Tt_\alpha=\crit(E^\Tt_\alpha)$ and 
$\nu^\Tt_\alpha=\nu(E^\Tt_\alpha)$. We write $\pred^\Tt(\alpha+1)$
for the ${<^\Tt}$-predecessor of $\alpha+1$, and given $\alpha<^\Tt\beta$,
 $\succ^\Tt(\alpha,\beta)$ denotes the least $\gamma\in(\alpha,\beta]_\Tt$. If
$\lh(\Tt)=\theta+1$, then $M^\Tt_\infty=M^\Tt_\theta$, 
$b^\Tt=[0,\infty]_\Tt=[0,\theta]_\Tt$,
and if there is no drop along
$b^\Tt$ then $i^\Tt=i^\Tt_{0\infty}=i^\Tt_{0\theta}$, etc.

Let $n\leq\om$ and $M$ be an $n$-sound premouse.
The notion \dfnemph{$n$-maximal iteration tree $\Tt$ on $M$}
is defined basically as in
\cite[Definition 3.4]{outline}, or \cite[Definition 6.1.2]{fsit},
but we must adapt these definitions to superstrongs.
That is, $\deg^\Tt_0=n$
and for $\alpha+1<\lh(\Tt)$, letting
$\kappa=\kappa^\Tt_\alpha$, we have: 
(i) $\lh(E^\Tt_\beta)\leq\lh(E^\Tt_\alpha)$ for all 
$\beta<\alpha$; (ii) 
$\pred^\Tt(\alpha+1)$ is the least $\beta$ such that $\kappa<\nu^\Tt_\beta$; 
(iii) 
$M^{*\Tt}_{\alpha+1}$ is the largest $N\ins M^\Tt_\beta$ such that 
$E^\Tt_\alpha$ measures 
$\pow(\kappa)\inter N$; and (iv) $\deg^\Tt(\alpha+1)$ is the largest $d\leq\om$ 
such that 
$\kappa<\rho_d(M^{*\Tt}_{\alpha+1})$ and either $[0,\alpha+1]_\Tt$ 
drops or 
$d\leq n$. We will also extend the notion  \emph{$n$-maximal iteration tree} to 
trees on premouse-related 
structures. 
An iteration tree is 
\dfnemph{degree-maximal} if 
$n$-maximal for some $n\leq\om$.

For $\theta\leq\OR$, the 
notions 
\emph{$(n,\theta)$-iteration strategy for} and 
\emph{$(n,\theta)$-iterability of} $M$ are 
as in \cite[Definition 3.9]{outline}
(but using $n$-maximality defined as above).
For 
\emph{$(n,\alpha,\theta)^*$-iteration strategy} and 
\emph{$(n,\alpha,\theta)^*$-iterable} see \cite[p.~1202]{cmwmwc}.
\footnote{The relevant trees are stacks
of degree-maximal trees, each of length $\leq\theta$, starting with $n$-maximal.
The superscript-$*$ has the effect that
if in some round ${<\alpha}$, a degree-maximal tree is produced which has 
length $\theta$,
then the game stops there immediately, and (if it has wellfounded models)
player II has won.}

If $\Tt$ is padded, unless otherwise specified,
if $\beta=\pred^\Tt(\alpha+1)$ then $E^\Tt_\beta\neq\emptyset$.

\section{Fine structural preliminaries}\label{sec:prelim}

\begin{dfn}\label{dfn:k-lifting}
Let $H,M$ be $k$-sound premice with $\rho_k^H,\rho_k^M>\om$. 
We say an embedding
$\pi:\core_0(H)\to\core_0(M)$ is \dfnemph{$k$-lifting} iff
$\pi$ is $\rSigma_0$-elementary with respect to the natural language 
$\Ll^H_{\mathrm{nat}}$ of $H$ (see \S\ref{sec:intro}) and
if $k>0$ then 
$\pi``T_k^H\sub T_k^M$.
\end{dfn}

A $k$-lifting embedding is similar to a $\Sigma_0^{(k)}$-preserving embedding 
of 
\cite{imlc}.
Note that $H,M$ may have different natural languages; maybe 
$\Ll^H_{\mathrm{nat}}\psub\Ll^M_{\mathrm{nat}}$.

\begin{lem}\label{lem:k-lifting_facts}
Let $H,M,k$ be as in \ref{dfn:k-lifting} and let $\pi:\core_0(H)\to\core_0(
M)$.
Then:
\begin{enumerate}
\item\label{item:characterize_k-lifting} $\pi$ is $k$-lifting iff 
for every $\rSigma_{k+1}$ formula $\varphi\in\Ll^H_{\mathrm{nat}}$ and 
$x\in\core_0(H)$, if
$\core_0(H)\sats\varphi(x)$ then 
$\core_0(M)\sats\varphi(\pi(x))$.
\item If $\pi$ is $k$-lifting and 
$H,M$ have different types then $k=0$, $H$ is passive 
and $M$ is active.
\item\label{item:k-lifting_Sigma_k_elem} If $k>0$ and $\pi$ is  $k$-lifting 
then 
$\pi$ is 
$\rSigma_k$-elementary, $(k-1)$-lifting, c-preserving.
\item\label{item:k>1_pi_Sigma_k_elem} Suppose $k>1$ and $\pi$ is 
$\rSigma_k$-elementary. Then $\pi$ is 
$p_{k-2}$-preserving and $\rho_{k-2}$-preserving, and if 
$\rho_{k-1}^H<\rho_0^H$ 
then
$\pi(p_{k-1}^H)=p_{k-1}^M\cut\pi(\rho_{k-1}^H)$ and
$(\sup\pi``\rho_{k-1}^H)\leq\rho_{k-1}^M\leq\pi(\rho_{k-1}^H)$.
\item\label{item:pi(p_k^H)} If $k>0$ and $\pi$ is $\rSigma_k$ elementary and 
$p_{k-1}$-preserving then $\pi(p_k^H)\leq p_k^M$.
\item\label{item:shift_lemma} The Shift Lemma holds with \emph{weak $k$-} 
replaced by 
\emph{$k$-lifting},
or by \emph{$k$-lifting c-preserving}.
\end{enumerate}
\end{lem}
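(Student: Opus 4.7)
The plan is to treat parts \ref{item:characterize_k-lifting}--\ref{item:shift_lemma} in order, with \ref{item:characterize_k-lifting} foundational and the remainder following by standard fine-structural manipulations. The key technical device is the reduction, via generalized solidity witnesses \cite[\S1.12]{imlc}, of $\rSigma_{k+1}$-truth in a $k$-sound premouse to $\rSigma_1$-truth over the amenable $T_k$-predicate: for $\varphi\in\rSigma_k$, both ``$H\sats\varphi(x)$'' and ``$\langle\varphi,x\rangle\in T_k^H$'' are mutually reducible, and each is $\rSigma_{k+1}$-expressible in $\Ll$. For \ref{item:characterize_k-lifting}, the forward direction rewrites $H\sats\varphi(x)$ with $\varphi\in\rSigma_{k+1}$ as a $\rSigma_1$ statement about $T_k^H$; the hypotheses that $\pi$ is $\rSigma_0$-elementary and $\pi``T_k^H\sub T_k^M$ transport this to $M$, yielding $M\sats\varphi(\pi(x))$. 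The caveat ``$\varphi\in\Ll$ or $k>0$'' tracks the degenerate case $k=0$ with $H$ passive but $M$ active, where $T_0$ is trivial and preservation reduces to $\rSigma_1$-preservation in $\Ll=\{\in,\es\}$ only. The converse direction uses that $T_k^H$-membership is itself $\rSigma_{k+1}$-expressible in $\Ll$.

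For \ref{item:k-lifting_Sigma_k_elem}, upward $\rSigma_{k+1}$-preservation yields full $\rSigma_k$-elementarity because $\rPi_k\sub\rSigma_{k+1}$ supplies the downward direction; $(k-1)$-lifting follows because $T_{k-1}^H$ is recoverable from $T_k^H$, giving upward $\rSigma_k$-preservation; and c-preservation follows from $\rSigma_1$-elementarity since being a cardinal is $\rPi_1$. Part (2) is then immediate: if $k\geq 1$, $\rSigma_1$-elementarity preserves ``$F\neq\emptyset$'' in both directions, so the types agree; if $k=0$ and $H$ is active, then $F\in\Ll$ and $\rSigma_0$-elementarity in $\Ll$ forces $M$ active as well.

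For \ref{item:k>1_pi_Sigma_k_elem} and \ref{item:pi(p_k^H)}, the arguments are the standard parameter-preservation calculations: for $j\leq k-2$, $p_j$ and $\rho_j$ are uniquely characterised by low-level formulas from $\vec{p}_j$, hence preserved by $\rSigma_k$-elementarity; at the boundary $j=k-1$, only the truncation $p_{k-1}^M\cut\pi(\rho_{k-1}^H)$ is reliably located, giving the asymmetry in \ref{item:k>1_pi_Sigma_k_elem}, with the inequalities on $\rho_{k-1}^M$ coming from the usual new-subset argument combined with $\rSigma_k$-elementarity applied to the definition of $\rho_{k-1}$. Part \ref{item:pi(p_k^H)} is the standard lex-minimality argument for the top parameter, once $p_{k-1}$-preservation lines up the lower parameters. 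Part \ref{item:shift_lemma} follows because the usual Shift Lemma proof for weak $k$-embeddings only uses upward $\rSigma_{k+1}$-preservation together with $\rSigma_0$-elementarity on the agreement, both of which are supplied by $k$-lifting. The main obstacle is \ref{item:characterize_k-lifting}: pinning down exactly how the language caveat matches the strength of $T_k$-preservation, and verifying that the generalized solidity witness encoding goes through in the present setting (Mitchell-Steel indexing with superstrongs permitted, plus the modifications of \cite{gap_analysis}); once that is settled, the remaining parts are direct.
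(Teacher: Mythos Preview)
Your treatment of parts \ref{item:characterize_k-lifting}--\ref{item:k-lifting_Sigma_k_elem} is fine and matches the paper, which simply calls these ``straightforward''. For parts \ref{item:k>1_pi_Sigma_k_elem} and \ref{item:pi(p_k^H)} you are vaguer than the paper: the paper explicitly uses $(k-1)$-solidity witnesses for part \ref{item:k>1_pi_Sigma_k_elem} and generalized $k$-solidity witnesses for part \ref{item:pi(p_k^H)} (the image under $\pi$ of a solidity witness for $(H,p_k^H)$ is a generalized solidity witness for $(M,\pi(p_k^H))$, which forces $\pi(p_k^H)\leq p_k^M$). Your ``low-level formulas uniquely characterise $p_j,\rho_j$'' works for $j\leq k-2$ but does not by itself give the truncated equality at $j=k-1$; the solidity-witness mechanism is what actually does that work.

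The real gap is part \ref{item:shift_lemma}. You say the usual Shift Lemma proof ``only uses upward $\rSigma_{k+1}$-preservation together with $\rSigma_0$-elementarity on the agreement, both of which are supplied by $k$-lifting''. This conflates hypotheses with conclusion: the issue is not whether the input maps have enough elementarity to run the construction, but whether the \emph{output} map $\sigma:\Ult_k(\Mbar,\Fbar)\to\Ult_k(M,F^N)$ is itself $k$-lifting, i.e.\ satisfies $\sigma``T_k^{\Ubar}\sub T_k^U$. The standard Shift Lemma argument yields $\rSigma_k$-elementarity of $\sigma$, but $T_k$-preservation needs a separate check. The paper supplies this: given $t=\Th_{\rSigma_k}^{\Ubar}(\alpha\cup\{x\})\in T_k^{\Ubar}$, one chooses $y\in\Mbar$, $a\in\nu(\Fbar)^{<\om}$ with $x\in\Hull_k^{\Ubar}(i^{\Mbar}_{\Fbar}(y)\cup a)$ and $\beta<\rho_k^{\Mbar}$ with $i^{\Mbar}_{\Fbar}(\beta)\geq\alpha$, so that $t$ is computable from $u'=i^{\Mbar}_{\Fbar}(u)$ where $u=\Th_{\rSigma_k}^{\Mbar}(\beta\cup\{y\})$; commutativity then gives $\sigma(u')\in T_k^U$, hence $\sigma(t)\in T_k^U$. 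This ultrapower-specific calculation is the content of part \ref{item:shift_lemma}, and your proposal does not supply it.
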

\begin{proof}
Parts \ref{item:characterize_k-lifting}--\ref{item:k-lifting_Sigma_k_elem} are 
straightforward.
For part \ref{item:k>1_pi_Sigma_k_elem}, use $(k-1)$-solidity witnesses 
for $p_{k-1}$. For part \ref{item:pi(p_k^H)} use the fact that if $t$ is a 
$k$-solidity 
witness for $(H,p_k^H)$, then $\pi(t)$ is a generalized $k$-solidity witness 
for 
$(M,\pi(p_k^H))$.

Part \ref{item:shift_lemma}: We adopt the notation of \cite[Lemma 5.2]{fsit} 
(with $n=k$). Let $\Fbar=F^\Nbar$ and 
$\Ubar=\Ult_k(\Mbar,\Fbar)$ and $U=\Ult_k(M,F^N)$. Define the map
$\sigma:\core_0(\Ubar)\to\core_0(U)$
as 
there. It is straightforward to see that $\sigma$ is $\rSigma_k$-elementary. 
Suppose $k>0$. Let us 
observe that $\sigma``T_k^\Ubar\sub T_k^U$. Let $t\in T_k^\Ubar$,
$x\in\Ubar$ and 
$\alpha<\rho_k^\Ubar$ with
$t = \Th_{\rSigma_k}^\Ubar(\alpha\un\{x\})$.
Let $y\in\Mbar$ and $a\in\nu(\Fbar)^{<\om}$ with
$x\in\Hull_k^\Ubar(i^{\Mbar}_\Fbar(y)\un a)$.
Let $\beta<\rho_k^{\Mbar}$ be such that $\beta\geq\crit(\Fbar)$ and 
$i^{\Mbar}_\Fbar(\beta)\geq\alpha$. Let
$u = \Th_{\rSigma_k}^{\Mbar}(\beta\un\{y\})$.
Then $t$ is easily computed from $u'=i^{\Mbar}_{\Fbar}(u)$, and by 
commutativity,
$\sigma(u')\in T_k^U$.
It follows that $\sigma(t)\in T_k^U$, as required.
\end{proof}

\begin{rem}\label{rem:k-lifting_copying}
Clearly for $k<\om$, any $\rSigma_{k+1}$-elementary embedding is $k$-lifting. 
The author 
does not know whether ``weak 
$k$-'' implies ``$k$-lifting'', or vice versa. We will not 
deal with weak $k$-embeddings in this paper.

Standard arguments show that the copying construction propagates $k$-lifting 
c-preserving 
embeddings.
(But this may be false for weak $k$-embeddings; see \cite{recon_res}.)
Almost standard arguments 
show 
that $k$-lifting embeddings are propagated. That is, suppose $\pi:H\to M$ is 
$k$-lifting, and let $\Tt$ be a $k$-maximal iteration tree on $H$. We can define
$\Uu=\pi\Tt$ as usual, assuming it has wellfounded models. Let 
$H_\alpha=M^\Tt_\alpha$ and 
$M_\alpha=M^\Uu_\alpha$. Using the Shift Lemma as usual, we get
$\pi_\alpha:H_\alpha\to M_\alpha$
for each $\alpha<\lh(\Tt)$, and $\pi_\alpha$ is $\deg^\Tt(\alpha)$-lifting, and 
if $\pi$ 
is c-preserving, then so is $\pi_\alpha$. Let us just mention the extra details 
when $\pi$ 
fails to be c-preserving. In this case, $k=0$ and $H$ 
is passive. Suppose that $E^\Tt_0$ is total over $H$, and let 
$\kappa=\crit(E^\Tt_0)$. Suppose that 
$(\kappa^+)^H<\OR^H$ but $\pi((\kappa^+)^H)$ is not a cardinal of $M$. Then 
$\Uu$ drops in model at 
$1$, but $\Tt$ does not. Note though that $\rg(\pi)\sub M^{*\Uu}_1$ and
$\pi:H\to M^{*\Uu}_1$
is $0$-lifting (even if $M^{*\Uu}_1$ is active). So we can still produce
$\pi_1:H_1\to M_1$
via the 
Shift Lemma. This situation generalizes to an arbitrary $\alpha$ in place of 
$0$, when 
$\Tt$ does not drop in model along $[0,\alpha+1]_\Tt$. The other details 
are as usual.
Moreover, if (i) $[0,\alpha]_\Tt$ drops in model or (ii) $\deg^\Tt(\alpha)\leq 
k-2$ or (iii) 
$\deg^\Tt(\alpha)=k-1$ and $\pi$ is $p_{k-1}$-preserving, then $\pi_\alpha$ is 
a 
near 
$\deg^\Tt(\alpha)$-embedding; this uses the argument in \cite{fs_tame}.
\end{rem}

\begin{lem}\label{lem:k-lifting_dichotomy}
Let $k\geq 0$, let $\pi:H\to M$ be $k$-lifting where $H,M$ have the 
same type, and let $\rho_{k+1}^H\leq\rho\leq\rho_k^H$.
Then:
\begin{enumerate}
 \item\label{item:pi_k-embedding} If $p_{k-1}^M,p_k^M\in\rg(\pi)$ and 
$\rho_k^M=\sup\pi``\rho_k^H$ 
then $\pi$ is a $k$-embedding.
 \item\label{item:pi_bounded} If $H$ is $\rho$-sound, $\pi\rest\rho\in M$ 
and $\pi$ is \emph{not} a 
$k$-embedding then $H,\pi\rest\rho_k^H\in M$.
\end{enumerate}
\end{lem}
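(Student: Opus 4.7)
My plan is to prove (\ref{item:pi_k-embedding}) first, then deduce (\ref{item:pi_bounded}) using its contrapositive.

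\textbf{Plan for (\ref{item:pi_k-embedding}).} The case $k=0$ is essentially immediate: $p_{-1}^M,p_0^M$ are empty, and $\rSigma_0$-elementarity together with cofinality $\rho_0^M=\sup\pi``\rho_0^H$ yields $\rSigma_1$-elementarity. For $k\geq 1$, Lemma \ref{lem:k-lifting_facts}(\ref{item:k-lifting_Sigma_k_elem}) gives $\rSigma_k$-elementarity, and iterating Lemma \ref{lem:k-lifting_facts}(\ref{item:k>1_pi_Sigma_k_elem}) gives $p_i$- and $\rho_i$-preservation for $i\leq k-2$. The new work (for $k\geq 2$) is $p_{k-1}$-preservation: using $\pi(p_{k-1}^H)=p_{k-1}^M\cut\pi(\rho_{k-1}^H)$ and $\sup\pi``\rho_{k-1}^H\leq\rho_{k-1}^M\leq\pi(\rho_{k-1}^H)$ from Lemma \ref{lem:k-lifting_facts}(\ref{item:k>1_pi_Sigma_k_elem}), I write $p_{k-1}^M=\pi``q$ for some finite $q\in H$; any $\pi(\alpha)\in p_{k-1}^M$ with $\alpha<\rho_{k-1}^H$ would force $\pi(\alpha)\leq\sup\pi``\rho_{k-1}^H\leq\rho_{k-1}^M<\pi(\alpha)$, a contradiction, while $\alpha=\rho_{k-1}^H$ is ruled out since elements of $p_{k-1}^H$ strictly exceed $\rho_{k-1}^H$; hence $q=p_{k-1}^H$. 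For $p_k$-preservation, Lemma \ref{lem:k-lifting_facts}(\ref{item:pi(p_k^H)}) gives $\pi(p_k^H)\leq p_k^M$, and $p_k^M\in\rg(\pi)$ together with transfer of generalized $(k+1)$-solidity witnesses upgrades this to equality. Combined with $\rho_k^M=\sup\pi``\rho_k^H$ and the upward $\rSigma_{k+1}$-preservation supplied by $k$-liftingness, $\pi$ is a $k$-embedding.

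\textbf{Plan for (\ref{item:pi_bounded}).} If $\pi$ is not a $k$-embedding, (\ref{item:pi_k-embedding}) yields that at least one of (a) $p_{k-1}^M\notin\rg(\pi)$, (b) $p_k^M\notin\rg(\pi)$, or (c) $\sup\pi``\rho_k^H<\rho_k^M$ holds. By $\rho$-soundness, $H=\Hull^H_{k+1}(\rho\cup\{\vec{p}_{k+1}^H\})$, and $k$-liftingness places $\pi``H\sub\Hull^M_{k+1}(\pi``\rho\cup\{\pi(\vec{p}_{k+1}^H)\})$. The plan is to show that in each failure case this $M$-side hull is strictly bounded below $\rho_k^M$ (in case (c), $\pi``\rho\sub\pi``\rho_k^H$ gives the bound directly; in (a), (b), the missing parameter obstructs the hull from reaching $\rho_k^M$). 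Since $\rho_k^M$ is $M$'s $k$-th projectum, bounded $\rSigma_{k+1}^M$-hulls are elements of $M$, so this hull lies in $M$. From it, $H$ is recovered as the inverse transitive collapse, with $(\pi\rest\rho)\in M$ supplying the identification of ordinals below $\rho$; propagating this identification through the Skolem terms defining elements of $\rho_k^H$ inside the hull yields $\pi\rest\rho_k^H\in M$ as well.

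\textbf{Main obstacle.} The delicate part is in (\ref{item:pi_bounded}): showing the $M$-side hull is actually bounded below $\rho_k^M$ in cases (a) and (b), and verifying that it faithfully reconstructs $H$ despite $\pi$ being only $k$-lifting rather than $(k+1)$-elementary. Both points hinge on careful analysis of $\rSigma_{k+1}$-Skolem terms and on leveraging the precise failure of parameter/projectum alignment in each case; the reconstruction of $\pi\rest\rho_k^H$ from $(\pi\rest\rho)$ also needs the reconstructed Hull data to cover every ordinal below $\rho_k^H$, which again uses $\rho$-soundness.
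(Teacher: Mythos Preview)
Your Part (\ref{item:pi_k-embedding}) is essentially the paper's argument, if somewhat more elaborate. The genuine gap is in your plan for (\ref{item:pi_bounded}).

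You claim that in failure cases (a) and (b) the hull $\Hull_{k+1}^M(\pi``\rho\cup\{\pi(\vec{p}_{k+1}^H)\})$ is strictly bounded below $\rho_k^M$, and then invoke ``bounded $\rSigma_{k+1}$-hulls are in $M$''. But this boundedness claim is unjustified and in fact false in general. Suppose case (c) fails, so $\sup\pi``\rho_k^H=\rho_k^M$, and suppose $\rho=\rho_k^H$ (permitted by the hypotheses). Then $\pi``\rho=\pi``\rho_k^H$ is already cofinal in $\rho_k^M$, so the hull is certainly not bounded below $\rho_k^M$. Even when $\rho<\rho_k^H$, there is no reason why an $\rSigma_{k+1}$-hull generated by a bounded set together with a ``wrong'' parameter must have bounded intersection with $\rho_k^M$; the phrase ``the missing parameter obstructs the hull from reaching $\rho_k^M$'' is not an argument.

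The paper does not try to bound the hull. It argues directly that the theory $t=\Th_{\rSigma_k}^M(\alpha\cup\pi(\vec{p}_k^H))$, with $\alpha=\sup\pi``\rho_k^H$ (possibly $\alpha=\rho_k^M$), lies in $M$, and then computes $H$ and $\pi\rest\rho_k^H$ from $t$ together with $\pi\rest\rho$ via the stratification of $\rSigma_{k+1}$ truth. In case (c) one has $\alpha<\rho_k^M$ and $t\in M$ for free. In case (b) (with $\pi(p_{k-1}^H)=p_{k-1}^M$), Lemma \ref{lem:k-lifting_facts}(\ref{item:pi(p_k^H)}) gives $\pi(p_k^H)<p_k^M$, and then $t$ is computable from a $k$-solidity witness for $(M,p_k^M)$, so $t\in M$ even though $\alpha$ may equal $\rho_k^M$. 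In case (a) one has $\pi(p_{k-1}^H)<p_{k-1}^M$, and a $(k-1)$-solidity witness gives $u=\Th_{\rSigma_{k-1}}^M(\rho_{k-1}^M\cup\pi(\vec{p}_{k-1}^H))\in M$; a reflection argument (pushing the stratification of $\rSigma_k$ truth through $\pi$) then computes $t$ from $u$. The solidity witnesses of $M$ are the essential ingredient missing from your plan.
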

\begin{proof}
Part \ref{item:pi_k-embedding}: This is fairly routine. By 
\ref{lem:k-lifting_facts}, we have $\pi(p_{k-1}^H)=p_{k-1}^M$.
The $\rSigma_{k+1}$-elementarity of $\pi$ follows from this, together with the 
facts 
that $\pi$ is $k$-lifting, $p_k^M\in\rg(\pi)$ and $\pi``\rho_k^H$ is unbounded 
in $\rho_k^M$. Now 
let $\pi(q)=p_k^M$. Then
$p_k^H\leq q$ by \ref{lem:k-lifting_facts}, and $q\leq p_k^H$ by $\rSigma_k$ 
elementarity. And $\pi$ is $\rho_{k-1}$-preserving
by \ref{lem:k-lifting_facts} and 
$\rSigma_{k+1}$-elementarity. So $\pi$ is a $k$-embedding.

Part \ref{item:pi_bounded}: Suppose $\sup(\pi``\rho_k^H)<\rho_k^M$.
We use a stratifaction of  $\rSigma_{k+1}$ truth like in 
\cite[\S2]{fsit}.
Assuming familiarity with this, here is a sketch. Let 
$\alpha=\sup\pi``\rho_k^H$. Then 
the theory
$t=\Th_{\rSigma_k}^M(\alpha\un\pi(\vec{p}_k^H))$
is in $M$. Moreover, for any $\rSigma_{k+1}$ formula $\varphi$ and 
$\gammavec\in\rho^{<\om}$,
we have $H\sats\varphi(\gammavec,\vec{p}_{k+1}^H)$
iff there is $\beta<\alpha$ such that
$t\rest(\beta\un\pi(\vec{p}_k^H))$
is ``above'' a witness to 
$\varphi(\pi(\gammavec),\pi(\vec{p}_{k+1}^H))$ (see \cite[\S2]{fsit}). So the 
theory $\Th_{\rSigma_{k+1}}^H(\rho\cup\pvec_{k+1}^H)$ is computable from $t$ 
and $\pi\rest\rho$, so $H\in M$.
A little more work 
gives  $\pi\rest\rho_k^H\in M$, as desired.

Suppose now $\pi(p_{k-1}^H)=p_{k-1}^M$ but $\pi(p_k^H)\neq p_k^M$. Then 
$\pi(p_k^H)< p_k^M$, by \ref{lem:k-lifting_facts}. Again $t\in M$ ($t$ as 
above), as $t$ is 
computed from a 
$k$-solidity witness. The rest is the same.

Now suppose that $k>1$ and $\pi(p_{k-1}^H)\neq p_{k-1}^M$. By
\ref{lem:k-lifting_facts}, 
then $\pi(p_{k-1}^H)<p_{k-1}^M$. 
We may assume $\alpha=\rho_k^M=\sup\pi``\rho_k^H$.

\begin{clm*}
Let $\varphi$ be an $\rSigma_k$ formula, let $x\in H$ and 
$\gammavec\in\alpha^{<\om}$. If
$M\sats\varphi(\pi(x),\gammavec)$ then there is $\eps<\rho_{k-1}^M$, with 
$\max(\gammavec)<\eps$, 
such that the theory
\[ \Th_{\rSigma_{k-1}}^M(\eps\un\{\pi(x,\vec{p}_{k-1}^H)\}) \]
is ``above'' a witness to $\varphi(\pi(x),\gammavec)$.
\end{clm*}
\begin{proof}
Let $\delta<\rho_k^H$ be such that $\pi(\delta)>\max(\gammavec)$. Let
$v = \Th_{\rSigma_k}^H(\delta\un\{x\}\un\vec{p}_{k-1}^H)$.
Note then that for all $\xivec\in\delta^{<\om}$,
\[ (\varphi,(\xivec,x,\vec{p}_{k-1}^H))\in v\ \implies\  
(\psi_\varphi,(\xivec,x,\vec{p}_{k-1}^H))\in v,\] where 
$\psi_\varphi(\xivec,x,\vec{p}_{k-1}^H)$ asserts `There is $\eps<\rho_{k-1}$, 
with 
$\max(\xivec)<\eps$, such that the $\rSigma_{k-1}$ theory in parameters 
$\eps\un\{x\}\un\vec{p}_{k-1}^H$ is ``above'' a witness to
$\varphi(\xivec,x,\vec{p}_{k-1}^H)$'.
(Here the assertion that $\varepsilon<\rho_{k-1}$ does not require
the \emph{parameter} $\rho_{k-1}^H$. For note that the 
assertion ``$\dot{u}<\rho_{k-1}$'',
in the free variable $\dot{u}$, is $\rSigma_k$ 
without parameters, because it is just the assertion
``$\dot{u}\in\OR$ and there is $t\in T_{k-1}$ such that $t$ is a theory
in parameters from $\dot{u}$''.)
But then the same fact holds regarding $\pi(v)$, and since $\pi$ is $k$-liftng,
this proves the claim.
\end{proof}

By $(k-1)$-solidity, 
$u=\Th_{\rSigma_{k-1}}^M(\rho_{k-1}^M\un\pi(\vec{p}_{k-1}^H))$ is in $M$.
Define $t$ as before. By the claim, from $u$ we  compute $t$, so 
$t\in M$. The rest is now  as before.
\end{proof}

\begin{dfn}
Let $Q$ be a $k$-sound premouse. Let $\widetilde{\core}_0(Q)=\core_0(Q)$, and 
for 
$k>0$, let 
$\widetilde{\core}_k(Q)=(Q||\rho_k(Q),T')$,
where $T=\Th_{\rSigma_k}^Q(\rho_k\un\vec{p}_k^{\: Q})$, and $T'$ is 
given from $T$ by 
substituting $\vec{p}_k^{\: Q}$ for a constant symbol $c$.
\end{dfn}

\begin{dfn}
Let $k\geq 0$. Let $Q$ be a $k$-sound premouse with $\rho_k^Q>\om$. 
We say that $(U,\sigma^*)$ is \dfnemph{$k$-suitable for $Q$} iff
(i) $U,\sigma^*\in Q||\rho_k^Q$,
(ii) $U$ is a $k$-sound premouse with $\rho_k^U>\om$ and
(iii) $\sigma^*:\widetilde{\core}_k(U)\to\widetilde{\core}_k(Q)$ is 
$\Sigma_0$-elementary.
\end{dfn}

\begin{rem}
Clearly, if $(U,\sigma^*)$ is $k$-suitable for $Q$ then $\sigma^*$ extends 
uniquely to a 
$\vec{p}_k$-preserving $k$-lifting $\sigma:U\to Q$, and moreover,
$\sup\sigma``\rho_k^U<\rho_k^Q$.
Conversely, if 
$\sigma:U\to Q$ is $\vec{p}_k$-preserving $k$-lifting and 
$\sup\sigma``\rho_k^U<\rho_k^Q$ and 
$\sigma^*=\sigma\rest(U||\rho_k^U)$ with $\sigma^*\in Q$, then $(U,\sigma^*)$ 
is 
$k$-suitable for 
$Q$.
\end{rem}

\begin{lem}\label{lem:definability_k_suitable_triple}
Let $k\geq 0$. Then there is an $\rSigma_{k+1}$ formula $\varphi_k$ such that 
for all $k$-sound 
premice $Q$ with $\om<\rho_k^Q$, and all $U,\sigma^*\in Q$,
we have
\[ Q\sats\varphi_k(U,\sigma^*,\vec{p}_k^{\: Q})\ \iff (U,\sigma^*)\text{ is
}k\text{-suitable for }Q.\]
\end{lem}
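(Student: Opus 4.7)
The plan is to let $\varphi_k(U,\sigma^*,\vec{p}_k^{\:Q})$ assert
\[ \exists\alpha\,\exists t\,\bigl[\,\psi_k(U,\sigma^*,\alpha,t,\vec{p}_k^{\:Q})\,\bigr], \]
where $\psi_k$ is the conjunction of: (a) $\alpha$ is an ordinal with $U,\sigma^*\in Q||\alpha$; (b) for every (code of a) $\rSigma_k$ formula $\chi$ and every $\vec{\gamma}\in\alpha^{<\om}$, $(\chi,\vec{\gamma})\in t$ iff $Q\sats\chi(\vec{\gamma},\vec{p}_k^{\:Q})$; (c) $U$ is a $k$-sound premouse with $\rho_k^U>\om$; and (d) $\sigma^*$ is a $\Sigma_0$-elementary embedding from $\tilde{\core}_k(U)$ into $(Q||\alpha,t')$, where $t'$ is $t$ read with a constant symbol $c$ in place of $\vec{p}_k^{\:Q}$. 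Since $U,\sigma^*,t,\alpha\in Q$, clauses (a), (c), (d) are bounded in $Q$: in particular (d) only inspects $\sigma^*(\tilde{\core}_k(U))\subseteq Q||\alpha$ against the predicate $t'$, which is a set in $Q$.

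For the complexity, clause (b) is $\Delta_{k+1}^Q$ in parameters $(\alpha,t,\vec{p}_k^{\:Q})$: one direction, ``for every $(\chi,\vec{\gamma})\in t$, $Q\sats\chi(\vec{\gamma},\vec{p}_k^{\:Q})$'', is a bounded universal quantification with $\rSigma_k$ body, hence $\rSigma_k$; the other, ``for every $\chi$ and every $\vec{\gamma}\in\alpha^{<\om}$, if $Q\sats\chi(\vec{\gamma},\vec{p}_k^{\:Q})$ then $(\chi,\vec{\gamma})\in t$'', is a bounded universal quantification with $\rPi_k$ body, hence $\rPi_k$. Thus $\psi_k$ is a Boolean combination of $\rSigma_k$ statements (in the relevant parameters), and prepending the unbounded $\exists\alpha\,\exists t$ yields an $\rSigma_{k+1}$ formula, as desired.

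For the equivalence: if $(U,\sigma^*)$ is $k$-suitable, then $U,\sigma^*\in Q||\rho_k^Q$, so we may pick $\alpha<\rho_k^Q$ bounding their ranks; by the definition of $\rho_k^Q$ together with the $k$-soundness of $Q$, the theory $t=\Th_{\rSigma_k}^Q(\alpha\cup\{\vec{p}_k^{\:Q}\})$ lies in $Q$, and the $\Sigma_0$-elementarity of $\sigma^*$ into $\tilde{\core}_k(Q)$ restricts to $\Sigma_0$-elementarity into $(Q||\alpha,t')$ because $t'=T'\rest(Q||\alpha)$. Conversely, if witnesses $\alpha,t$ exist, then $t\in Q$ forces $\alpha<\rho_k^Q$ (else $t$ would be $\Th_{\rSigma_k}^Q(\rho_k^Q\cup\{\vec{p}_k^{\:Q}\})$, contradicting the definition of $\rho_k^Q$), so $U,\sigma^*\in Q||\rho_k^Q$, and the same restriction identity lifts the $\Sigma_0$-elementarity back up to $\tilde{\core}_k(Q)$. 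The only nontrivial point I expect is verifying that the $\rSigma_k$/$\rPi_k$ classes are closed under bounded quantification in the present Mitchell--Steel-with-modifications fine structure, so that clause (b) really has the complexity claimed; this is standard and used pervasively in this context, so the proof will essentially reduce to combining the ingredients above.
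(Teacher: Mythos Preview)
Your approach is correct and is essentially the paper's proof. The paper phrases the key clause as ``there is $\alpha<\rho_k^Q$ such that, letting $t=\Th_{\rSigma_k}^Q(\alpha\cup\vec{p}_k^{\:Q})$, for each $\beta<\rho_k^U$ we have $\sigma^*(u')\subseteq t'$'' (with $u=\Th_{\rSigma_k}^U(\beta\cup\vec{p}_k^{\:U})$), whereas you explicitly quantify over $t$ and assert clause~(b); and the paper states the elementarity condition as inclusion of the pushed-forward theories rather than as $\Sigma_0$-elementarity into $(Q||\alpha,t')$ --- but these are equivalent reformulations and the complexity analysis is the same.
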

\begin{proof}
We assume $k>0$ and leave the other case to the reader.
The most complex clause of $\varphi_k$ says ``There is $\alpha<\rho_k^Q$ such 
that letting
$t=\Th_{\rSigma_k}^Q(\alpha\un\vec{p}_k^{\: Q})$,
then for each $\beta<\rho_k^U$, letting
$u=\Th_{\rSigma_k}^U(\beta\un\vec{p}_k^{\: U})$,
and letting $t',u'$ be 
given from $t,u$ by substituting $\vec{p}_k^{\: Q},\vec{p}_k^{\: U}$ for the 
constant $c$, we have 
$\sigma^*(u')\sub t'$''. This statement is $\rSigma_{k+1}$. The rest is 
clear.
\end{proof}
\begin{dfn}
Let $m\geq 0$ and let $M$ be a segmented-premouse. Then $M$ is 
\dfnemph{$m$-sound} iff either $m=0$ or $M$ is an $m$-sound premouse.
\end{dfn}

\begin{dfn}\label{dfn:1st_order_con}
Let $r\geq 0$ and let $R$ be an $r$-sound premouse. Then we 
say that \dfnemph{suitable 
condensation holds at $(R,r)$} iff for every $(H,\pi^*)$, if $(H,\pi^*)$ is
$r$-suitable for $R$, $H$ is $(r+1)$-sound and
$\crit(\pi)\geq\rho=\rho_{r+1}^H$,
then either $H\pins R$, or $R|\rho$ is active with extender $F$ and 
$H\pins\Ult(R|\rho,F)$.

Let $m\geq 0$ and let $M$ be an $m$-sound segmented-premouse. We say that 
\dfnemph{suitable 
condensation holds below $(M,m)$} iff for every $R\ins M$ and 
$r<\om$ such that either $R\pins M$ or 
$r<m$, suitable condensation holds at $(R,r)$.
We say that \dfnemph{suitable 
condensation holds through $(M,m)$} iff $M$ is a premouse\footnote{We could 
have 
formulated this 
more generally for segmented-premice, but doing so would have increased 
notational load, and we do 
not need such a generalization.} and suitable condensation holds below and at 
$(M,m)$.
\end{dfn}
The following lemma follows easily from 
\ref{lem:definability_k_suitable_triple}:

\begin{lem}\label{lem:suitable_condensation_def}
Let $m\geq 0$. Then there is an $\rPi_{\max(m,1)}$ formula $\Psi_m$ such that 
for all 
$m$-sound segmented-premice $M$, suitable condensation holds below
$(M,m)$ iff 
$M\sats\Psi_m(\vec{p}_{m-1}^M)$, where $\vec{p}_{-1}^M=\emptyset$. Moreover, if 
$M$ 
is 
a premouse,
then suitable 
condensation holds through $(M,m)$ iff 
$M\sats\Psi_{m+1}(\vec{p}_m^M)$.\footnote{This clause only 
adds something because we do not assume that $M$ is $(m+1)$-sound.}
\end{lem}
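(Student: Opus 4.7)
The plan is to write $\Psi_m$ as a conjunction of two pieces reflecting the two reasons a pair $(R,r)$ can arise in the definition of suitable condensation below $(M,m)$: either (a) $R\pins M$, so $R\in M$, or (b) $R=M$ and $r<m$.

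First I would unpack ``suitable condensation holds at $(R,r)$'' internally in $R$. By Lemma \ref{lem:definability_k_suitable_triple}, ``$(H,\pi^*)$ is $r$-suitable for $R$'' is an $\rSigma_{r+1}$ statement in $R$ with parameter $\vec{p}_r^{\: R}$. The remaining hypotheses, that $H$ is $(r+1)$-sound and $\crit(\pi^*)\geq\rho_{r+1}^H$, are $\Delta_1$ in $R$ from $H,\pi^*\in R$, as they depend only on the fine structure of the set $H$. The conclusion, that $H\pins R$ or that $R|\rho_{r+1}^H$ is active with extender $F$ and $H\pins\Ult(R|\rho_{r+1}^H,F)$, is bounded in $R$. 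Thus suitable condensation at $(R,r)$ is expressed by a $\rPi_{r+1}$ formula $\chi_r(\vec{p}_r^{\: R})$ in $R$, uniformly in $r$.

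For Case (a), since $R\pins M$ gives $R\in M$, the satisfaction of $\chi_r$ in $R$ is $\Delta_1$ in $M$ uniformly in $R$ and $r$, using the standard $\Delta_1$ truth predicate for set-sized structures. Hence ``for all $R\pins M$ and all $r<\om$, $R\sats\chi_r(\vec{p}_r^{\: R})$'' is $\rPi_1$ in $M$, with no fine-structural parameters. For Case (b) I take the finite conjunction $\bigwedge_{r<m}\chi_r(\vec{p}_r^{\: M})$ interpreted in $M$; the worst clause is $r=m-1$, yielding a $\rPi_m$ statement in $M$ with parameter $\vec{p}_{m-1}^{\: M}$. Conjoining the two cases yields $\Psi_m$ of complexity $\rPi_{\max(m,1)}$ with parameter $\vec{p}_{m-1}^{\: M}$ (when $m=0$, Case (b) is empty and only the $\rPi_1$ part remains, matching $\vec{p}_{-1}^{\: M}=\emptyset$). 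For the ``through $(M,m)$'' variant I additionally conjoin $\chi_m(\vec{p}_m^{\: M})$, which is $\rPi_{m+1}$ with parameter $\vec{p}_m^{\: M}$.

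The main obstacle is justifying the claimed uniform complexity bounds: specifically, that $(r+1)$-soundness of $H$ and the values of $\rho_{r+1}^H$, $p_{r+1}^H$, and $\crit(\pi^*)$ for $H,\pi^*\in R$ are $\Delta_1$ in $R$ uniformly in $r$, and that the conclusion clause involving the possible ultrapower $\Ult(R|\rho,F)$ remains bounded in $R$ even when $R$ has superstrong initial segments. These rely on the amenable coding of the extender sequence and of the $\rSigma_r$-theories, and are routine given the paper's fine-structural conventions, but they must be tracked carefully so as not to inflate the overall complexity above $\rPi_{\max(m,1)}$.
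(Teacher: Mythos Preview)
Your proposal is correct and takes essentially the same approach as the paper, which simply states that the lemma follows easily from Lemma~\ref{lem:definability_k_suitable_triple} and gives no further detail. Your decomposition into the cases $R\pins M$ (handled by a single $\rPi_1$ quantifier over proper segments) and $R=M$ with $r<m$ (contributing the $\rPi_m$ part via $\chi_{m-1}$), together with your complexity bookkeeping, is exactly the argument that one-line proof is pointing to.
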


\begin{rem}
Our proof of condensation from normal iterability (\ref{thm:condensation}) 
will use our 
analysis of bicephali and cephalanxes (\S\ref{sec:cephals}). This analysis 
will 
depend on the premice involved satisfying enough condensation, at levels lower 
in 
model or degree. As we 
will only have normal iterability, we can't appeal to the 
standard 
condensation theorem for this. One could get arrange everything inductively,
proving condensation and analysing bicephali and cephalanxes simultaneously. 
But it is simpler 
to use the following lemmas,
which will be generalized by \ref{thm:condensation}.
\end{rem}

\begin{lem}[Condensation for $\om$-sound 
mice]\label{lem:fully_elem_condensation}
Let $h\leq m<\om$ and $H,M$ be premice. Suppose 
 $H$ is $(h+1)$-sound, $M$ is $(m+1)$-sound, $M$ is $(m,\om_1+1)$-iterable,
and either $\rho_{m+1}^M=\om$ or $m\geq 
h+5$. Suppose $\pi:H\to M$ is $h$-lifting $\vec{p}_h$-preserving and
$\crit(\pi)\geq\rho=\rho_{h+1}^H$.
Then either
$H\ins M$ or $M|\rho$ is active with $H\pins\Ult(M|\rho,G)$.
\end{lem}
\begin{proof}
Let $\pi$, etc, be a counterexample. Let 
$\pi^*=\pi\rest(H||\rho_h^H)$.

We claim $H\in M$.
Suppose not. By \ref{lem:k-lifting_dichotomy}, $\pi$ is an $h$-embedding, 
and
$\rho\geq\rho_{h+1}^M$. Note $\pi(p_{h+1}^H)\leq p_{h+1}^M\cut\rho$ (use 
generalized 
solidity witnesses). If $\pi(p_{h+1}^H)<p_{h+1}^M\cut\rho$  we are done. 
Otherwise $\rho_{h+1}^M\un p_{h+1}^M\sub\rg(\pi)$, so $H=M$, contradiction.

We may assume $\rho_{m+1}^M=\om$, 
by replacing $M$ 
with $\bar{M}=\cHull_{m+1}^{M}(\vec{p}_m)$: all relevant facts
pass to $\bar{M}$ as $H\in M$, $\crit(\pi)\geq\rho$ and by 
\ref{lem:k-lifting_facts}(\ref{item:characterize_k-lifting}). Now use almost 
the usual condensation proof, but  when comparing the phalanx 
$(M,H,\rho)$ 
with $M$,  use
an $(m,h)$-maximal tree on $(M,H,\rho)$,  $m$-maximal  on $M$. As
$H\in M$, and using fine structure in place of weak 
Dodd-Jensen, this gives contradiction.
\end{proof}

\begin{lem}[Suitable condensation]\label{lem:suitable_condensation} Let $M$ be 
an $m$-sound, 
$(m,\om_1+1)$-iterable segmented-premouse. Then suitable condensation holds 
below $(M,m)$, and if 
$M$ is a premouse, through $(M,m)$.
\end{lem}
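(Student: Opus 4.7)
Fix $(R,r)$ as in the definition of suitable condensation holding below or through $(M,m)$, and let $(H,\pi^*)$ be $r$-suitable for $R$, with $H$ being $(r+1)$-sound and $\crit(\pi)\geq\rho=\rho_{r+1}^H$, where $\pi:H\to R$ denotes the unique $\vec{p}_r$-preserving $r$-lifting extension of $\pi^*$ (cf.\ the remark following the definition of $r$-suitability). Since $H,\pi^*\in R||\rho_r^R\sub R$, we have $H\in R$, so in particular $H\neq R$. The goal is to show $H\pins R$ or $R|\rho$ is active with extender $F$ and $H\pins\Ult(R|\rho,F)$.

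First suppose $R\pins M$. Then $R$ is an $\om$-sound premouse (as a proper initial segment of the premouse $M$), and $R$ inherits $(\om,\om_1+1)$-iterability from $M$, since any normal tree on $R$ may be regarded as a normal tree on $M$ with first extender of length at most $\OR^R$. Choose any $m_*\geq r+5$. Then Lemma~\ref{lem:fully_elem_condensation} applies with $H,R$ in place of $H,M$ and with $h=r$, $m=m_*$, yielding the trichotomy $H=R$, $H\pins R$, or $R|\rho$ is active and $H\pins\Ult(R|\rho,F)$. The first alternative is ruled out since $H\in R$.

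Now suppose $R=M$, and either $r<m$, or $M$ is a premouse and $r=m$. Here one cannot directly invoke Lemma~\ref{lem:fully_elem_condensation}, because $M$ need only be $m$-sound (not $(m+1)$-sound), and the gap condition $m\geq r+5$ may fail. Instead we mimic its proof: form the phalanx $\pP=((M,m),(H,r),\rho)$ and compare it with $M$, running an $(m,r)$-maximal tree $\Uu$ on $\pP$ against an $m$-maximal tree $\Tt$ on $M$. The iterability of $\pP$ at these degrees follows by realizing $\pP$ into $M$ via the pair $(\id_M,\pi)$ and copying $\Uu$ to a tree on $M$ via the $k$-lifting Shift Lemma (Lemma~\ref{lem:k-lifting_facts}(\ref{item:shift_lemma}), propagated as in Remark~\ref{rem:k-lifting_copying}); the iterability of the $M$-side is given by hypothesis. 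Because $H\in M$ and $\pi$ is $\vec{p}_r$-preserving $r$-lifting with $\crit(\pi)\geq\rho$, the usual fine-structural accounting in such a phalanx comparison (as in the proof of Lemma~\ref{lem:fully_elem_condensation}, with the fine-structural circumstances replacing the weak Dodd--Jensen appeal) forces the comparison to terminate with $H\pins M$ or $M|\rho$ active with extender $F$ and $H\pins\Ult(M|\rho,F)$.

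The main technical obstacle is the iterability of the phalanx $\pP$ in the second case. Propagating the $r$-lifting, $\vec{p}_r$-preserving character of $\pi$ along copied trees requires the Shift Lemma variant of \ref{lem:k-lifting_facts}(\ref{item:shift_lemma}) together with the drop-and-$c$-preservation subtleties catalogued in Remark~\ref{rem:k-lifting_copying}; once this is in hand, the fine-structural hypotheses on $(H,\pi)$ already rule out the pathological outcomes of the comparison, so no weak Dodd--Jensen input is needed.
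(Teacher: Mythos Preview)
Your case $R\pins M$ is fine and matches the paper. The gap is in the case $R=M$. The ``fine-structural circumstances'' you borrow from the proof of Lemma~\ref{lem:fully_elem_condensation} are only available there \emph{after} that proof has already replaced $M$ by $\cHull_{m+1}^M(\vec{p}_m^M)$ to arrange $\rho_{m+1}^M=\omega$. Without this reduction, $M$ is merely $m$-sound, and when your phalanx comparison terminates with both branches above $M$ and non-dropping, you have no mechanism to conclude that the two iteration maps $i,j:M\to M^*$ agree (which is exactly what replaces the weak Dodd--Jensen appeal): that step uses $M=\Hull_{m+1}^M(\vec{p}_{m+1}^M)$ together with $i(\vec{p}_{m+1}^M)=j(\vec{p}_{m+1}^M)=\vec{p}_{m+1}^{M^*}$, both of which require $(m+1)$-soundness with $\rho_{m+1}^M=\omega$.

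The paper closes this gap not by running the phalanx comparison directly on $M$, but by invoking Lemma~\ref{lem:suitable_condensation_def}: ``suitable condensation through $(M,m)$'' is expressed by an $\rPi_{m+1}$ formula $\Psi_{m+1}(\vec{p}_m^M)$, and the uncollapse $\sigma:\cHull_{m+1}^M(\vec{p}_m^M)\to M$ is an $m$-embedding sending $\vec{p}_m$ to $\vec{p}_m^M$, so it suffices to prove the lemma for the hull. The hull has $\rho_{m+1}=\omega$, is $\omega$-sound, and is $(m,\omega_1+1)$-iterable by lifting through $\sigma$; at that point Lemma~\ref{lem:fully_elem_condensation} (or the end of its proof) applies directly. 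You never invoke Lemma~\ref{lem:suitable_condensation_def}, which is precisely the device the paper set up to make this reduction.
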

\begin{proof}
If $M$ is not a pm, use \ref{lem:fully_elem_condensation}. 
Suppose 
$M$ is a pm. By \ref{lem:suitable_condensation_def}, we may assume 
$\rho_{m+1}^M=\om$, replacing $M$ 
with
$\cHull^M_{m+1}(\vec{p}_m^M)$. Now argue as in
\ref{lem:fully_elem_condensation}.
\end{proof}

\section{The bicephalus \& the cephalanx}\label{sec:cephals}
\begin{dfn}\label{dfn:bicephalus}
An \dfnemph{exact bicephalus} is a tuple $B=(\rho,M,N)$ such that:
\begin{enumerate}
 \item $M$ and $N$ are premice.
 \item $\rho<\min(\OR^M,\OR^N)$ and $\rho$ is a cardinal of both $M$ and $N$.
 \item\label{item:bicephalus_agmt} $M||\rho^{+M}=N||\rho^{+N}$.
 \item $M$ is $\rho$-sound and for some $m\in\{-1\}\un\om$, we have
$\rho_{m+1}^M\leq\rho$. Likewise for $N$ and $n\in\{-1\}\un\om$.
\end{enumerate}
We say $B$ is \textbf{non-trivial} iff $M\neq N$.
Write $\rho^B=\rho$ and $M^B=M$
and
$N^B=N$, and
$m^B,n^B$ for the least $m,n$ as above. Let $(\rho^+)^B$ be
$\rho^{+M}=\rho^{+N}$. We say $B$ has \dfnemph{degree}
$(m^B,n^B)$. We say that $B$ is \dfnemph{sound} iff $M$ is $m^B+1$-sound and
$N$ is $n^B+1$-sound.
\end{dfn}

From now on we 
just say \emph{bicephalus}
instead of \emph{exact bicephalus}.
In connection with bicephali of degree $(m,n)$ with $\min(m,n)=-1$, we need the 
following:

\begin{dfn}
The terminology/notation \dfnemph{(near) $(-1)$-embedding}, 
\dfnemph{$(-1)$-lifting embedding}, 
$\Ult_{-1}$, $\core_{-1}$, and \dfnemph{degree $(-1)$ iterability} are defined 
by replacing `$-1$' 
with `$0$'. For $n>-1$ and appropriate premice $M$, the core embedding 
$\core_{n}(M)\to\core_{-1}(M)$ is just the core embedding 
$\core_n(M)\to\core_0(M)$.
\end{dfn}

\begin{dfn}Let $q<\om$.
A \dfnemph{passive right half-cephalanx} of \dfnemph{degree} $q$ is a tuple 
$B=(\gamma,\rho,Q)$ such that:
\begin{enumerate}
\item $Q$ is a premouse,
\item $\gamma$ is a cardinal of $Q$ and $\gamma^{+Q}=\rho<\OR^Q$,
\item $Q$ is $\gamma$-sound,
\item $\rho_{q+1}^Q\leq\gamma<\rho_q^Q$.
\end{enumerate}

An \dfnemph{active right half-cephalanx} (of \dfnemph{degree} $q=0$) is a 
$B=(\gamma,\rho,Q)$ with:
\begin{enumerate}
\item $Q$ is an active segmented-premouse,
\item $\gamma$ is the largest cardinal of $Q$ and $\gamma<\rho=\OR^Q$.
\end{enumerate}

A \dfnemph{right half-cephalanx} $B$ is either a passive, or active, right 
half-cephalanx.
We write $\gamma^B,\rho^B,Q^B,q^B$ for $\gamma,\rho,Q,q$ as above. If $B$ is 
active, we write 
$S^B=R^B=\Ult(Q,F^Q)$. If $B$ is passive, we write $S^B=Q$.
\end{dfn}

Note that if $B=(\gamma,\rho,Q)$ is a right-half cephalanx, then $B$ is active 
iff $Q|\rho$ is 
active. So it might be that $B$ is passive but $Q$ is active with $\rho<\OR^Q$.

\begin{dfn}\label{dfn:cephalanx}
Let $m\in\{-1\}\un\om$ and $q<\om$. A \dfnemph{cephalanx} of \dfnemph{degree 
$(m,q)$} is a 
tuple $B=(\gamma,\rho,M,Q)$ such that, letting $B'=(\gamma,\rho,Q)$, we have:
\begin{enumerate}
 \item $(\gamma,\rho,Q)$ is a right-half cephalanx of degree $q$,
 \item $M$ is a premouse,
 \item $\rho=\gamma^{+M}<\OR^M$,
 \item $M||\rho^{+M}=S^{B'}||\rho^{+M}$,
 \item $M$ is $\rho$-sound,
 \item $\rho_{m+1}^M\leq\rho<\rho_m^M$.
\end{enumerate}

We say that $B$ is \dfnemph{active} (\dfnemph{passive}) iff $B'$ is active 
(passive). 
\footnote{Note that a passive cephalanx $(\gamma,\rho,M,Q)$ might be such that 
$M$ and/or $Q$ 
is/are active.}
We write $\gamma^B,\rho^B$, etc, for $\gamma,\rho$, etc. We write $R^B$ for 
$R^{B'}$, if it is 
defined, and $S^B$ for $S^{B'}$. We say $B$ is \dfnemph{exact} iff 
$(\rho^+)^{S^B}=\rho^{+M}$, and $B$ is \dfnemph{sound}
iff $M$ is $(m+1)$-sound.

Suppose $B$ is active. Let $R=R^B$. We say $B$ is \dfnemph{non-trivial} iff 
$M\npins R$. If $B$ 
is non-exact, let $N^B$ 
denote the $N\pins R$ such that $\rho^{+N}=\rho^{+M}$ and $\rho_\om^N=\rho$, 
and let $n^B$ denote
the $n\in\{-1\}\un\om$ such that $\rho_{n+1}^N=\rho<\rho_n^N$.

Now suppose $B$ is passive. We say $B$ is 
\dfnemph{non-trivial} iff $M\nins Q$. Let $N^B$ denote the $N\ins Q$ such that 
$\rho^{+N}=\rho^{+M}$ and $\rho_\om^N\leq\rho$. Let $n^B$ be the 
$n\in\{-1\}\un\om$ such that 
$\rho_{n+1}^{N^B}\leq\rho<\rho_n^{N^B}$.

A \textbf{pm-cephalanx} is a cephalanx $(\gamma,\rho,M,Q)$ such that $Q$ is a 
premouse.\qedhere
\end{dfn}

\begin{dfn}\label{dfn:cephal}
A \dfnemph{cephal} is either a bicephalus or a cephalanx.
Let $B$ be a cephal, and let $M=M^B$.

A short extender $E$ is \dfnemph{weakly amenable
to $B$} iff $\crit(E)<\rho^B$
and $E$ is weakly amenable to $M$.

For $\alpha\leq\rho^B$, let $B||\alpha=M||\alpha$, and for $\alpha<\rho^B$, let 
$B|\alpha=M|\alpha$ 
and $(\alpha^+)^B=(\alpha^+)^M$. We write $P\pins B$ iff $P\pins B||\rho^B$.
Let $C,\alpha$ be such that $\alpha\leq\rho^B$, and either $C$ is a 
segmented-premouse and 
$\alpha\leq\OR^C$, or $C$ is a cephal and $\alpha\leq\rho^C$. Then we write
$(B\sim C)||\alpha$ iff $B||\alpha=C||\alpha$.
If also $\alpha<\rho^B$ and either $C$ is a segmented-premouse or 
$\alpha<\rho^C$, we use the same 
notation with ``$|$'' replacing ``$||$''. We also use similar notation with 
more than two 
structures.

A structure with the first order properties of a cephal
or other related structures is \dfnemph{wellfounded}
if each of the constituent models are wellfounded.
\end{dfn}

We will consider ultrapowers and iterates of cephals,
and also other related structures, and hence the wellfoundedness of such 
iterates.
Because of the symmetry of bicephali and partial symmetry of cephalanxes, 
we often state 
facts for just one side of the symmetry,  even when they hold for both.
The proofs of the next two lemmas are routine and  omitted.
In \ref{lem:ult_comm_moving_crit_F^Q}--\ref{rem:apply_to_iteration_map} below, 
the extender $E$ 
might be long.

\begin{lem}\label{lem:ult_comm_moving_crit_F^Q}
Let $Q$ be an active segmented-premouse. Let $E$ be
an extender over $Q$ with $\ms(E)\leq\crit(F^Q)+1$. Let
$R=\Ult(Q,F^Q)$ and $Q'=\Ult(Q',E)$ and
$R'=\Ult(Q',F^{Q'})$. Then $R'=\Ult(R,E)$ and the ultrapower embeddings
commute. Moreover, $i^R_E=\psi_{i^Q_E}$.
\end{lem}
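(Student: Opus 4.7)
Set $\kappa = \crit(F^Q)$ and $j = i^Q_E : Q \to Q'$. The hypothesis $\ms(E) \leq \kappa+1$ bounds the critical points of the measures of $E$ by $\kappa$, so $E$ only acts on subsets of ordinals $\leq\kappa$ in $Q$. The plan is a standard commuting-ultrapower computation, with the Shift Lemma supplying the identification $i^R_E = \psi_j$.

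First I would verify that $\Ult(R,E)$ is defined, i.e.\ that $E$ is naturally an extender over $R$. Because $F^Q$ has critical point $\kappa$, we have $R|\kappa = Q|\kappa$, so when $\crit(E)<\kappa$ the relevant measures already live in $Q$. In the boundary case $\crit(E) = \kappa$, each new subset $X \in \pow(\kappa)^R$ is represented as $X = [a,f]^Q_{F^Q}$ with $a \in [\nu(F^Q)]^{<\om}$ and $f \in Q$, and $E$-measurability of $X$ reduces to $Q$-level data via Los's theorem for $F^Q$, using $\ms(E) \leq \kappa+1$.

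Next, I would identify $\Ult(R,E)$ with $R'$. The Shift Lemma gives $\psi_j : R \to R'$ acting by $[a,f]^Q_{F^Q} \mapsto [j(a), j(f)]^{Q'}_{F^{Q'}}$. A general element of $\Ult(R,E)$ has the form $[b,g]^R_E$; writing $g = [a,G]^Q_{F^Q}$ with $a,G \in Q$, define $\sigma : \Ult(R,E) \to R'$ by
\[ \sigma([b,g]^R_E) \;=\; [j(a),\, (j(G))(b)]^{Q'}_{F^{Q'}}. \]
Well-definedness, $\rSigma_0$-elementarity, and surjectivity all follow from two applications of Los's theorem (for $E$ over $R$, and for $F^Q, F^{Q'}$), together with the fact that $j$ commutes with the $F^Q$-ultrapower construction at the level of $Q$. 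Applied to a constant class $g \equiv x$ with $x = [a,f]^Q_{F^Q}$, the map $\sigma$ sends $i^R_E(x)$ to $\psi_j(x)$; more generally $\sigma$ is an isomorphism identifying $i^R_E$ with $\psi_j$. Commutativity of the ultrapower square then follows at once, as $\psi_j$ is constructed precisely so that $\psi_j \circ i^Q_{F^Q} = i^{Q'}_{F^{Q'}} \circ j$.

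The main subtlety is the boundary case $\crit(E) = \kappa$, where $R$ properly enlarges $\pow(\kappa)$ beyond $Q$ and one must verify that the natural extension of $E$ to an extender over $R$ is canonical and produces the same ultrapower as first applying $j$ and then taking the $F^{Q'}$-ultrapower. The bound $\ms(E) \leq \kappa+1$ is exactly what makes the two orders of operation interchange cleanly: any strictly larger measure space would allow $E$ to measure subsets of ordinals moved nontrivially by $F^Q$, destroying the commutativity.
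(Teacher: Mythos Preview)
The paper omits the proof entirely, stating just before the lemma that ``the proofs of the next two lemmas are routine and are omitted.'' Your sketch is exactly the standard routine argument the authors have in mind: build the isomorphism between $\Ult(R,E)$ and $R'$ directly from the Shift Lemma map $\psi_j$ and verify it intertwines the ultrapower embeddings. One small point of care: in your displayed formula for $\sigma$, the expression $(j(G))(b)$ only makes sense once you regard $G$ as a two-variable function (with one slot for the $F^Q$-seed and one for the $E$-seed) and read $(j(G))(b)$ as the partial application fixing the $E$-slot at $b$; equivalently, $\sigma([b,g]^R_E)=\psi_j(g)(b)$, which is well-defined because the generators $b$ lie below $j(\kappa)$. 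With that reading your argument is correct and matches the paper's intended (omitted) proof.
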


\begin{lem}\label{lem:ult_comm_not_moving_crit_F^Q}
Let $Q$ be an active segmented-premouse. Let $E$ be
an extender over $Q$ with $(\crit(F^Q)^+)^Q<\crit(E)$. Let
$R=\Ult(Q,F^Q)$ and $R^*=\Ult(R,E)$ and $Q'=\Ult(Q,E)$. Then
$\Ult(Q,F^{Q'})=R^*$ and the ultrapower embeddings commute.\footnote{Note that 
in the conclusion, 
it is
$\Ult(Q,F^{Q'})$, not $\Ult(Q',F^{Q'})$.} Let $\psi:R\to R^*$ be given by the
Shift Lemma \tu{(}applied to $\id:Q\to Q$
and $i^Q_E$\tu{)}. Then $i^R_E=\psi$.
\end{lem}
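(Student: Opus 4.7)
Writing $\kappa = \crit(F^Q)$ and $F = F^Q$, the plan is to exploit the commutation between $E$ and $F$ forced by $(\kappa^+)^Q < \crit(E)$. The hypothesis yields $i^Q_E \rest (Q|(\kappa^+)^Q) = \id$, so every measure $F_a$ (for $a \in [\nu(F)]^{<\omega}$) is fixed by $i^Q_E$ and $F^{Q'}_{i^Q_E(a)} = F_a$ as subsets of $P([\kappa]^{|a|})^Q = P([\kappa]^{|a|})^{Q'}$; only $\nu(F^{Q'}) = i^Q_E(\nu(F))$ may strictly exceed $\nu(F)$. This identity of measures is the single algebraic input driving everything.

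First, I apply the Shift Lemma to the pair $\id \maps Q \to Q$ (the map on the model being ultrapowered) and $i^Q_E \maps Q \to Q'$ (the map on the source of the extender). These agree on $\kappa + 1 \supseteq \ms(F) + 1$ since $\crit(E) > (\kappa^+)^Q$, so the Shift Lemma produces a $\rSigma_0$-elementary
\[
\psi \maps R \to \Ult(Q, F^{Q'}), \qquad \psi([a, f]^Q_F) = [i^Q_E(a), f]^Q_{F^{Q'}},
\]
with well-definedness and elementarity following from $F^{Q'}_{i^Q_E(a)} = F_a$ and the order-preservation of $i^Q_E$ on $[\nu(F)]^{<\omega}$. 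Note the target uses functions drawn from $Q$, so it is literally $\Ult(Q, F^{Q'})$, not $\Ult^{Q'}$.

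Next I identify $\Ult(Q, F^{Q'}) = R^*$ as the same structure by writing down mutually inverse maps on representatives. Each $[b, g]^R_E \in R^*$ has $g = [a, \tilde h]^Q_F$ for some $a \in [\nu(F)]^{<\omega}$ and $\tilde h \in Q$; uncurrying $\tilde h$ produces a $Q$-function $[\kappa]^{|a|} \times [\crit(E)]^{|b|} \to Q$ from which one reads off an $\Ult(Q, F^{Q'})$-representative. Conversely, each seed $c' \in [\nu(F^{Q'})]^{<\omega}$ admits a representation $c' = i^Q_E(H)(b)$ for some $H \in Q$ and $b \in [\nu(E)]^{<\omega}$, and then $[c', h]^Q_{F^{Q'}}$ corresponds to $[b, g]^R_E$ with $g(\vec\beta) = [H(\vec\beta), h]^Q_F$. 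That these assignments are mutually inverse and transport atomic formulas is a direct ultrapower calculation reducing both sides to membership of a common subset of $[\kappa]^{|a|} \times [\crit(E)]^{|b|}$ in the product measure $F_a \times E_b$ computed inside $Q$, using $F^{Q'}_{i^Q_E(a)} = F_a$ and $i^Q_E \rest P([\kappa]^{<\omega})^Q = \id$.

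Finally, under this identification, $\psi = i^R_E$ is a direct check on a generic $[a, f]^Q_F \in R$: the Shift Lemma formula gives $[i^Q_E(a), f]^Q_{F^{Q'}}$, while $i^R_E([a, f]^Q_F) = i^R_E(i^Q_F(f))(i^R_E(a))$ unpacks, under the identification, to the same class of $R^*$. The main obstacle is the bookkeeping in the middle step: one must confirm that the restriction to $Q$-functions (rather than allowing arbitrary $Q'$-functions) suffices to capture every class of $R^*$, despite a generic $[\kappa]^{|c'|} \to Q'$ function in $Q'$ not lying in $Q$. The decomposition $c' = i^Q_E(H)(b)$ is exactly the device supplying a $Q$-representative for each $F^{Q'}$-class, and the identity $F^{Q'}_{i^Q_E(a)} = F_a$ is what makes the consequent ultrapower calculations collapse entirely to computations inside $Q$.
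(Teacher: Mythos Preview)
The paper omits this proof as routine, so there is no written argument to compare against; your approach via the Shift Lemma plus a Fubini-style identification of the double ultrapower is the expected one, and the driving observation $(F^{Q'})_{i^Q_E(a)} = F_a$ is correct.

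There is, however, a genuine technical point in your identification step that is not yet handled. In the backward direction you write $c' = i^Q_E(H)(b)$ with $H \in Q$ and set $g(\vec\beta) = [H(\vec\beta), h]^Q_F$, intending $[b,g]^R_E$ to name an element of $R^* = \Ult(R,E)$. For that you need $g \in R$, and this is not automatic: while each value $g(\vec\beta) = i^Q_F(h)(H(\vec\beta))$ lies in $R$, the function $H$ itself has domain $[\crit(E)]^{|b|}$ with $\crit(E) > (\kappa^+)^Q$, and the universes of $Q$ and $R = \Ult(Q,F)$ agree only up to $(\kappa^+)^Q$; there is no reason the set $H$ belongs to $R$. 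A symmetric issue lurks in the forward direction: your uncurried function has domain $[\kappa]^{|a|} \times [\crit(E)]^{|b|}$, not $[\kappa]^{<\omega}$, so it is not literally a representing function for $\Ult(Q, F^{Q'})$, and you have not said what seed $c'$ to pair it with.

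Both gaps are repairable without changing the architecture. One clean route: rather than building explicit inverses, show the Shift-Lemma map $\psi$ is onto $\Ult(Q, F^{Q'})$ by checking that its range contains $i^Q_{F^{Q'}}(q)$ for every $q \in Q$ (immediate, since $\psi \circ i^Q_F = i^Q_{F^{Q'}}$) and contains every seed $c' < \lh(F^{Q'})$. For the latter, write $c' = i^Q_E(H)(b)$ and observe that $c'$ is the image under $\psi$ of the $R$-element $i^Q_F(H)$ evaluated at the $E$-seed $b$ inside $R^*$; this uses only that $i^Q_F(H) \in R$, which is unproblematic. Since $\psi$ is $\in$-preserving and both targets are transitive, surjectivity yields the desired equality, and then $\psi = i^R_E$ follows by comparing the two maps on generators.
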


\begin{dfn}
Let $E$ be a (possibly long) extender.
For a seg-pm $M$,
$E$ is
\dfnemph{reasonable for $M$} iff $E$ is over $M$
and either $M$ is 
passive or
letting $\kappa=\crit(F^M)$, $i^M_E$ is continuous at
$(\kappa^+)^M$, and if $M\sats$``$\kappa^{++}$ exists'' then $i^M_E$ is 
continuous at 
$(\kappa^{++})^M$.
For a bicephalus $B=(\rho,M,N)$, $E$ is \dfnemph{reasonable for 
$B$} iff $E$ 
is over $B||\rho$, if $m^B\leq 0$ then $E$ is reasonable for $M$, and if 
$n^B\leq 0$ then $E$ 
is reasonable for $N$.
For a cephalanx $B=(\gamma,\rho,M,Q)$,  $E$ is \dfnemph{reasonable 
for $B$} iff $E$ is 
over 
$B||\rho$,\footnote{\label{ftn:measure_space_E_cephal}Hence $E$ is equivalent to
an extender $E'$ with $\ms(E')\leq\gamma+1$.} if $q^B\leq 0$ then $E$ is 
reasonable for $Q$, if $m^B\leq 0$ then $E$ is 
reasonable for $M$, and if 
$N^B$ is defined and $n^B\leq 0$ then $E$ is reasonable for $N^B$.
\end{dfn}

\begin{lem}\label{lem:ult_comm}
Let $Q$ be an active seg-pm, $E$ reasonable
for $Q$. Let $Q'=\Ult(Q,E)$, $R=\Ult(Q,F^Q)$, $R'=\Ult(Q',F^{Q'})$,
$R^*=\Ult(R,E)$. Let $\kappa=\crit(F^Q)$ and $\eta=(\kappa^{++})^Q$. Further:
\begin{enumerate}[label=--]
\item If 
$\eta<\OR^Q$, let
$\gamma=i_{QR}(\eta)$, $\gamma^*=i^R_E(\gamma)$, $\eta'=i^Q_E(\eta)$.
Then $\gamma^*=i_{Q'R'}(\eta')$.
\item If $\eta=\OR^Q$, let $\gamma=\OR^R$, 
$\gamma^*=\OR^{R^*}$ and 
$\eta'=\OR^{Q'}$.
\end{enumerate}
Then in either case,
$(R^*\sim R')|\gamma^*$ and
\[ i^R_E\com i_{QR}\rest(Q|\eta) =i_{Q'R'}\com i^Q_E\rest(Q|\eta).\]
Moreover, let
$\psi:R|\gamma\to R'|\gamma'$ be induced by the Shift Lemma with
$i_{QQ'}\rest (Q|\eta)$ and $i_{QQ'}$. Then $\psi=i^R_E\rest(R|\gamma)$.
\end{lem}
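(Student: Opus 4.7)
Plan.

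The lemma is a common generalization of Lemmas~\ref{lem:ult_comm_moving_crit_F^Q} and~\ref{lem:ult_comm_not_moving_crit_F^Q}, and I would prove it by a uniform application of the Shift Lemma; the role of the reasonableness of $E$ is precisely to guarantee the cofinalities of $i^Q_E$ at $(\kappa^+)^Q$ and at $\eta$ that are needed to make the restricted map $i^Q_E\rest(Q|\eta)$ elementary with the correct target.

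First I would set up the ingredients. By reasonableness, $i^Q_E$ is continuous at $(\kappa^+)^Q$, and also at $\eta=(\kappa^{++})^Q$ whenever this exists in $Q$. So in the case $\eta<\OR^Q$ we have $\eta'=\sup i^Q_E``\eta=i^Q_E(\eta)$, and in the case $\eta=\OR^Q$ the convention $\eta'=\OR^{Q'}$ is automatic. In both cases the restriction $\pi:=i^Q_E\rest(Q|\eta)\maps Q|\eta\to Q'|\eta'$ is $\rSigma_0$-elementary and contained in $\sigma:=i^Q_E\maps Q\to Q'$. Since $(\kappa^+)^Q<\eta$, the extender $F^Q$ is over $Q|\eta$, and $\sigma(F^Q)=F^{Q'}$.

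Next I would apply the Shift Lemma to the pair $(\pi,\sigma)$ and the extender $F^Q$ to obtain an induced $\rSigma_0$-elementary embedding
\[
\psi\maps \Ult(Q|\eta,F^Q)\longrightarrow \Ult(Q'|\eta',F^{Q'}),\qquad \psi\bigl([a,f]\bigr)=[\sigma(a),\sigma(f)].
\]
Under the natural identifications $\Ult(Q|\eta,F^Q)\iso R|\gamma$ (since $\gamma=i_{Q,R}(\eta)$, or $\gamma=\OR^R$ in the other case) and $\Ult(Q'|\eta',F^{Q'})\iso R'|\gamma^*$, the embedding $\psi$ becomes a map $R|\gamma\to R'|\gamma^*$ satisfying the commutativity $\psi\com i_{Q,R}\rest(Q|\eta) = i_{Q',R'}\com i^Q_E\rest(Q|\eta)$ by construction.

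Finally I would verify $\psi=i^R_E\rest(R|\gamma)$, which immediately yields $(R^*\sim R')|\gamma^*$ and thereby both parts of the statement. Every $x\in R|\gamma$ has the form $x=i_{Q,R}(f)(a)$ with $f\in Q|\eta$ and $a\in[\nu(F^Q)]^{<\om}$; both $\psi(x)$ and $i^R_E(x)$ can then be computed, using the Shift Lemma formula for the former and the $\Ult(R,E)$-definition for the latter, and both give $i_{Q',R'}(i^Q_E(f))(i^Q_E(a))$ under the identification. The main obstacle I anticipate is exactly this identification step: one must compute $i^R_E(x)$ directly from the ultrapower definition of $R^*$, without presupposing the commutativity of $E$ with $F^Q$, and then match the resulting representation to $\psi(x)$. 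Reasonableness is used here once more, to ensure that the generators $a$, whose relevant measure-space data sits in $Q|\eta$, are moved by $i^R_E$ compatibly with $i^Q_E$.
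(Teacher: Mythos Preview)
Your setup via the Shift Lemma is fine and does produce the map $\psi:R|\gamma\to R'|\gamma^*$ together with the square $\psi\circ i_{Q,R}\rest(Q|\eta)=i_{Q',R'}\circ i^Q_E\rest(Q|\eta)$. The gap is exactly where you flag it: the verification that $\psi=i^R_E\rest(R|\gamma)$. Your proposed computation of $i^R_E(x)$ as $i_{Q',R'}(i^Q_E(f))(i^Q_E(a))$ is circular. The map $i^R_E$ lands in $R^*=\Ult(R,E)$, while $\psi$ lands in $R'$; to even compare them you need the identification $(R^*\sim R')|\gamma^*$, which is part of the conclusion. Put differently, from elementarity alone you get $i^R_E(x)=i^R_E(i_{Q,R}(f))\bigl(i^R_E(a)\bigr)$ in $R^*$, and neither factor is \emph{a priori} equal to its $R'$-counterpart: the generators $a$ lie above $(\kappa^+)^Q$, where $Q$ and $R$ disagree, so there is no reason for $i^Q_E(a)=i^R_E(a)$; and $i^R_E(i_{Q,R}(f))=i_{Q',R'}(i^Q_E(f))$ is precisely the commutativity in question.

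The paper does not attempt this direct computation. Instead it factors $E$: let $G$ be the extender of length $i_E(\kappa)$ derived from $E$, and let $j:\Ult(Q,G)\to\Ult(Q,E)$ be the factor map. Then $\ms(G)\leq\kappa+1$, so Lemma~\ref{lem:ult_comm_moving_crit_F^Q} applies to $G$ and gives full commutativity (with $R^*=R'$) at that stage. Reasonableness of $E$ is used exactly once, to show $\crit(j)>(i_G(\kappa)^{++})^{\Ult(Q,G)}$, so that Lemma~\ref{lem:ult_comm_not_moving_crit_F^Q} applies to the extender derived from $j$. Composing the two steps gives the lemma. This factoring is the missing idea: it reduces the problem to the two extreme cases where the relationship between $E$ and $\crit(F^Q)$ is simple enough that the double-ultrapower computation is genuinely routine.
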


\begin{proof}
Let $G$ be the length $i_E(\kappa)$ extender derived from $E$. Let
$j:\Ult(Q,G)\to\Ult(Q,E)$
be the factor embedding. Then $\crit(j)>(i^Q_G(\kappa)^{++})^{U_G}$ since $E$ 
is 
reasonable. Apply
\ref{lem:ult_comm_moving_crit_F^Q} to $G$, and then
\ref{lem:ult_comm_not_moving_crit_F^Q} to the extender derived from
$j$.
\end{proof}

\begin{dfn}\label{dfn:expansion}
Let $M$ be a type 3 premouse. The \dfnemph{expansion} of $M$ is the active 
segmented-premouse 
$M_*$ such that $M_*|\crit(F^{M_*})=M|\crit(F^M)$, and $F^{M_*}$ is 
the Jensen-indexed version of $F^M$. That is, let $F=F^M$, let $\mu=\crit(F)$, 
let 
$\gamma=(\mu^+)^M$, let $\gamma'=i_{F}(\gamma)$, let 
$R=\Ult(M,F^M)$; then
$M_*||\OR(M_*)=R|\gamma'$, and $F^{M_*}$ is the length $i_F(\mu)$ extender 
derived from 
$i_F$.\end{dfn}

Combining \cite[\S9]{fsit} with a simple variant of 
\ref{lem:ult_comm} one gets:
 
\begin{fact}\label{fact:type_3_ult}
Let $Q$ be a type 3 premouse. Let $E$ be an extender
over $Q^\sq$, reasonable for $Q$. Let $Q_*$ be the expansion of $Q$, let 
$U_*=\Ult(Q_*,E)$ 
and $U=\Ult_0(Q,E)$. Suppose $U_*$ is 
wellfounded. Then $U$ is wellfounded and $U_*$ is its expansion. Let
$i_*:Q_*\to U_*$ and $i_0:Q\to U$ be the ultrapower
maps  \tu{(}literally, $\dom(i_*)=Q_*$ and $\dom(i_0)=Q^\sq$\tu{)}. 
Then 
$i_0=i_*\rest 
Q^\sq$, and $i_*=\psi_{i_0}\rest Q_*$.\end{fact}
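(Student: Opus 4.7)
The plan is to deduce the fact by applying Lemma \ref{lem:ult_comm} (or a minor variant) to the expansion $Q_*$, and then matching the resulting structure on $U_* = \Ult(Q_*, E)$ with the expansion of $U = \Ult_0(Q,E)$. The two active segmented-premice $Q$ and $Q_*$ share their bottom parts ($Q_*|\kappa = Q|\kappa$ with $\kappa = \crit(F^Q)$) and satisfy $(\kappa^+)^{Q_*} = (\kappa^+)^Q$, so ``$E$ reasonable for $Q$'' immediately gives ``$E$ reasonable for $Q_*$''. Thus \ref{lem:ult_comm} applies to $Q_*$ and yields that $\Ult^{U_*}$ agrees with $\Ult(\Ult^{Q_*}, E)$ up to the relevant image ordinal, together with the commutativity of the four ultrapower embeddings in the square.

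Next I would identify the bottom of $U_*$ with the bottom of the expansion of $U$. Since $Q_*|\kappa = Q|\kappa$ and the $E$-ultrapower only uses functions in $Q|\kappa$ below the critical point of $F^{Q_*}$ (equivalently of $F^U$), we get $U_*|\crit(F^{U_*}) = U|\crit(F^U)$. For the top extender: by construction $F^{U_*}$ is read off from $i^{Q_*}_E$ applied to $F^{Q_*}$, which is the Jensen-indexed derivative of $F^Q$; by the commutativity just obtained (and the way Jensen indexing is derived from the ultrapower embedding), this matches the Jensen-indexed derivative of $F^U$. Hence $U_*$ satisfies the defining clauses of $\Ult_0(Q,E)^\textrm{expansion}$ in \ref{dfn:expansion}, so $U$ is wellfounded and $U_*$ is its expansion.

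For the embedding identities, the key point is that $i_0$ and $i_*$ are both given by the formula $[a,f]_E \mapsto f(a)$, just on different domains. Functions $f \in Q^\sq$ are available in $Q_*$ (both contain $Q|\nu$ at the relevant level), and for such $f$ the equivalence classes agree, so $i_0 = i_* \rest Q^\sq$. For $i_* = \psi_{i_0} \rest Q_*$, recall that $\psi_{i_0}: \Ult^Q \to \Ult^U$ is defined via the Shift Lemma from $i_0$ using $F^Q, F^U$; since the universe of $Q_*$ above $\kappa$ consists of initial segments of $R = \Ult^Q$, and $F^{Q_*}, F^{U_*}$ are the length-$\nu(F^Q)$, length-$\nu(F^U)$ extenders derived from $i_{F^Q}, i_{F^U}$ respectively, the Shift Lemma computation for $\psi_{i_0}$ evaluates precisely to the ultrapower map $i_*$ on $Q_*$.

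The main obstacle is the careful bookkeeping around the squash/unsquash and the two different indexings (Mitchell--Steel vs.\ Jensen) of the type 3 extender: one has to verify that ``form the ultrapower'' and ``pass to the Jensen-indexed expansion'' really do commute in the strong sense required, which is exactly what the variant of \ref{lem:ult_comm} plus the \cite[\S9]{fsit} calculations are used for. Once that square is established, everything else is immediate from the definitions.
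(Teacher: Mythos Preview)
Your proposal is correct and matches the paper's own justification exactly: the paper states this as a Fact with the one-line remark that it follows from the calculations in \cite[\S9]{fsit} combined with a simple variant of Lemma~\ref{lem:ult_comm}, which is precisely the route you outline. Your additional detail about why $E$ reasonable for $Q$ implies $E$ reasonable for $Q_*$, and about how the embedding identities $i_0=i_*\rest Q^\sq$ and $i_*=\psi_{i_0}\rest Q_*$ fall out of the commutative square, is a faithful unpacking of what the paper leaves implicit.
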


\begin{rem}\label{rem:apply_to_iteration_map}
We will apply \ref{lem:ult_comm} and
\ref{fact:type_3_ult} when $E$
is the extender of  iteration map $i^\Tt_{\alpha\beta}$ or 
$i^{*\Tt}_{\alpha\beta}$, where
$(\alpha,\beta]_\Tt$ does not drop and $\deg^\Tt(\alpha)=0$.
\end{rem}

\begin{dfn}[Ultrapowers of bicephali]\label{dfn:biceph_ult} Let $B=(\rho,M,N)$
be a bicephalus of degree $(m,n)$ and $E$  an extender reasonable for 
$B$. 
We have the ultrapower map
$i=i_E^{M,m}:M\to\Ult_{m}(M,E)$,
and  $j=i_E^{N,n}$. Let
$\rho' = \sup i``\rho=\sup j``\rho$
and define
\[ \Ult(B,E) = (\rho',\Ult_{m}(M,E),\Ult_{n}(N,E)).\qedhere\]
\end{dfn}

\begin{dfn}\label{dfn:augmented_biceph}
Let $B$ be a bicephalus. The associated \dfnemph{augmented bicephalus} is the 
tuple
$B_*=(\rho,M,N,M_*,N_*)$
where if $m\geq 0$ then $M_*=M$, and otherwise $M_*$ is the expansion of $M$; 
likewise for $N_*$.
(Note that if $m=-1$ then $M$ is type 3 
and $\rho=\nu(F^M)$.)
Let $E$ reasonable for $B$. If $m\geq 0$ let $\widetilde{M}=\Ult_m(M,E)$; 
otherwise 
let 
$\widetilde{M}=\Ult(M_*,E)$. Likewise for $\widetilde{N}$. We define
$\Ult(B_*,E) = \Ult(B,E)\conc\left<\widetilde{M},\widetilde{N}\right>$.
\end{dfn}

\begin{lem}\label{lem:biceph_ult} Let $B=(\rho,M,N)$ be a bicephalus of degree 
$(m,n)$, $B_*=(\rho,M,N,M_*,N_*)$, $E$ reasonable for $B$, 
$U=\Ult(B,E)$ and
$\widetilde{U}=\Ult(B_*,E)=(\rho^U,M^U,N^U,\widetilde{M},\widetilde{N})$. 
Let $i=i^{M,m}_E$ and $j=j^{N,n}_E$.
If $m\geq 0$ let 
$i_*=i$;  otherwise let $i_*:M_*\to\widetilde{M}$
be the ultrapower map. Likewise $j_*$.
If
$\widetilde{U}$ is 
wellfounded then:
\begin{enumerate}[label=\tu{(}\arabic*\tu{)}]
\item\label{item:ult_is_aug_biceph} $U$ is a bicephalus of degree $(m,n)$ and 
$\widetilde{U}=U_*$.
\item\label{item:triviality} $U$ is trivial iff $B$ is trivial.
\item\label{item:param_above_rho_preserved} 
$i(p_{m+1}^{M}\cut\rho)=p_{m+1}^{M^U}\cut\rho^U$.
\item\label{item:biceph_i_E_agmt}
$i_*\rest(\rho^+)^B=j_*\rest(\rho^+)^B$ and $i_*,j_*$ are 
continuous/cofinal at $(\rho^+)^B$.\footnote{That is, if 
$(\rho^+)^B\in\dom(i_*)$ then $i_*$ is continuous there; if $m\geq 
0$ and 
$(\rho^+)^B=\rho_0^M$ then $\rho_0^{M^U}=\sup i``(\rho^+)^B$; if $m=-1$ 
and 
$(\rho^+)^B=\OR(M_*)$ then $\OR((M^U)_*)=\sup i_*``(\rho^+)^B$.}
\item\label{item:m=-1_psi_agmt} $i_*=\psi_{i}\rest M_*$.
\item\label{item:sound_equiv} Suppose $E$ is short and weakly amenable to $B$. 
Then 
$M^U$ is 
$(m+1)$-sound iff $M$ is
$(m+1)$-sound and $\crit(E)<\rho_{m+1}^M$. If $M^U$ is $(m+1)$-sound then
$\rho_{m+1}^{M^U}=\sup i``\rho_{m+1}^M$ and
$p_{m+1}^{M^U}=i(p_{m+1}^{M})$.
\end{enumerate}
Likewise regarding $N,n,E$.
\end{lem}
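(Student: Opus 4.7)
The plan is to break into cases by the degrees $m,n$, treating the easy case $m,n\geq 0$ by standard ultrapower preservation and reducing the $m=-1$ or $n=-1$ case to the standard case via the expansion using Fact \ref{fact:type_3_ult} and Lemma \ref{lem:ult_comm}. The key observation driving almost everything is that $M$ and $N$ agree on a common initial segment up to $(\rho^+)^B$, so the ultrapower maps $i_E^M$ and $i_E^N$ (and their expansions when $m$ or $n$ is $-1$) agree on that segment, and continuity at $(\rho^+)^B$ --- which is exactly what ``$E$ is reasonable for $B$'' was crafted to guarantee --- lets one push that agreement through to $((\rho^U)^+)^{M^U}$.

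To prove parts \ref{item:biceph_i_E_agmt} and \ref{item:m=-1_psi_agmt}, I would first handle \ref{item:m=-1_psi_agmt}: when $m\geq 0$ we have $M_*=M$ and the assertion is trivial, and when $m=-1$ it is the moreover clause of Fact \ref{fact:type_3_ult}. Then for \ref{item:biceph_i_E_agmt}, the agreement $i_E^{M_*}\rest(\rho^+)^B = i_E^{N_*}\rest(\rho^+)^B$ follows from $M_*$ and $N_*$ both containing $M||(\rho^+)^B=N||(\rho^+)^B$ as an initial segment (in the degree $-1$ case, via Definition \ref{dfn:expansion}, since $(\rho^+)^B$ sits at or below where the expansion differs from $M$) together with $\Sigma_0$-preservation of the ultrapower map. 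The continuity/cofinality at $(\rho^+)^B$ is a standard fine-structural fact: if $(\rho^+)^B<\rho_m^M$ then $i_E^{M}$ is continuous there by $\rSigma_0$-elementarity on bounded subsets, and if $(\rho^+)^B$ equals $\rho_0^M$ or $\OR(M_*)$ the map is cofinal by construction of $\Ult_m$ (resp.\ by the moreover clause of Fact \ref{fact:type_3_ult}).

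From these, part \ref{item:ult_is_aug_biceph} is immediate: applying the common map to the common segment and using the just-established continuity gives $M^U||((\rho^U)^+)^{M^U}=N^U||((\rho^U)^+)^{N^U}$, while $\rho^U$ remains a cardinal of both ultrapowers and $\rho_{m+1}^{M^U}\leq\rho^U$ (resp.\ for $n$) by standard $\Ult_m$-preservation; the identification $\tilde U=U_*$ is then just a rewriting using \ref{item:m=-1_psi_agmt}. Part \ref{item:param_above_rho_preserved} follows from the fact that $i_E^M$ is $\rSigma_{m+1}$-preserving on parameters in $[\crit(E),\OR^M)$, combined with solidity witnesses for the entries of $p_{m+1}^M\cut\rho$ (the witnesses lie above $\rho$ and so are moved by $i_E^M$ correctly). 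Triviality \ref{item:triviality} is then clear: if $M\neq N$ then some $\rSigma_0$ statement (say an amenable predicate bit) distinguishes them and is preserved upward by $i_E^M$ and $i_E^N$, and conversely if $M=N$ then obviously $M^U=N^U$. Finally, part \ref{item:sound_equiv} is the usual semi-close preservation of soundness: semi-closeness of $E$ to $M$ means every element of $\Ult_m(M,E)$ is $\rSigma_{m+1}^M$-definable from $i_E^M(p_{m+1}^M)\cup\sup i_E^M``\rho_{m+1}^M\cup a$ for some $a\in[\nu(E)]^{<\om}$, giving the stated formulas for $\rho_{m+1}^{M^U}$ and $p_{m+1}^{M^U}$, while $\crit(E)\geq\rho_{m+1}^M$ would inject a new subset of $\rho_{m+1}^{M^U}$.

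The main obstacle is bookkeeping in the $m=-1$ case, where the ``natural'' ultrapower on the $M$-side is $\Ult_0(M,E)$ but the relevant object for the bicephalus condition lives in $\Ult(M_*,E)$; this is exactly the content of Fact \ref{fact:type_3_ult}, and applying it uniformly on both sides --- together with Lemma \ref{lem:ult_comm} to see that the action of $E$ commutes correctly with the ultrapower by $F^M$ used to form $M_*$ --- is what ensures that the two ``halves'' of the bicephalus remain coherent after ultrapower. Everything else is essentially the standard theory of $m$-maximal ultrapowers applied separately to $M$ and to $N$.
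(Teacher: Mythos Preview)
Your overall plan matches the paper's, and parts \ref{item:param_above_rho_preserved}, \ref{item:biceph_i_E_agmt}, \ref{item:m=-1_psi_agmt}, \ref{item:sound_equiv}, and \ref{item:ult_is_aug_biceph} are handled essentially as the paper handles them (the paper makes the argument for \ref{item:biceph_i_E_agmt} slightly more explicit by introducing the auxiliary ultrapower $W=\Ult(B||(\rho^+)^B,E)$ and noting, for $m>0$, that all functions into $(\rho^+)^B$ used in forming $\Ult_m(M,E)$ already live in $B||(\rho^+)^B$ since $(\rho^+)^B\leq\rho_m^M$; your sketch is compatible with this but you should be aware that this is the point where the $m>0$ case needs a word).

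There is, however, a real gap in your argument for \ref{item:triviality}. You write that if $M\neq N$ then ``some $\rSigma_0$ statement (say an amenable predicate bit) distinguishes them and is preserved upward by $i_E^M$ and $i_E^N$''. The problem is that $M$ and $N$ agree below $(\rho^+)^B$, so any distinguishing parameter must lie above $(\rho^+)^B$; but above $(\rho^+)^B$ the maps $i_E^M$ and $i_E^N$ have disjoint domains and there is no reason a $\rSigma_0$ fact about $i_E^M(x)$ in $M^U$ should say anything about $N^U$. The paper's argument is more delicate: one first reduces to $m=n$ (if $m\neq n$ then $m^U\neq n^U$ by \ref{item:ult_is_aug_biceph}, forcing $M^U\neq N^U$). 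Then $\rho$-soundness is used essentially: since $M=\Hull_{m+1}^M(\rho\cup\{\vec{p}_{m+1}^M\})$ and likewise for $N$, any difference is witnessed by an $\rSigma_{m+1}$ formula $\varphi$ and some $\alpha<\rho$ with
\[
M\models\varphi(p_{m+1}^M\cut\rho,\alpha)\quad\iff\quad N\models\neg\varphi(p_{m+1}^N\cut\rho,\alpha).
\]
Now $i_E^M(\alpha)=i_E^N(\alpha)$ by \ref{item:biceph_i_E_agmt}, and $i_E^M(p_{m+1}^M\cut\rho)=p_{m+1}^{M^U}\cut\rho^U$ (and likewise for $N$) by \ref{item:param_above_rho_preserved}, so the $\rSigma_{m+1}$-elementarity of the ultrapower maps gives $M^U\neq N^U$. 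Your $\rSigma_0$ argument does not access $\rho$-soundness or part \ref{item:param_above_rho_preserved}, and without them the step fails.
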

\begin{proof}
Part \ref{item:sound_equiv} is by \cite[2.20]{extmax}, 
\ref{item:param_above_rho_preserved} is a standard calculation using 
generalized 
solidity witnesses 
(see \cite{imlc}), and \ref{item:m=-1_psi_agmt} is by \ref{fact:type_3_ult} 
(\ref{item:m=-1_psi_agmt} is trivial when $m\geq 0$).

Part \ref{item:biceph_i_E_agmt}: Let
$W=\Ult(B||(\rho^+)^B,E)$
and $k:B||(\rho^+)^B\to W$ the ultrapower map
and $\widetilde{\rho}=k(\rho)$.
We claim that ($\dagger$):
$k=i_*\rest(\rho^+)^B$ and 
$\widetilde{M}||(\widetilde{\rho}^+)^{\widetilde{M}}=W$.

If $m\leq 0$ this is 
immediate. If $m>0$, then because $(\rho^+)^B\leq\rho_{m}^{M}$, by 
\cite[\S 6]{fsit}, all functions forming the
ultrapower $M^U$ with codomain $(\rho^+)^B$ are in fact in $B||(\rho^+)^B$, 
which gives ($\dagger$).
Now \ref{item:biceph_i_E_agmt} follows from ($\dagger$).

Part 
\ref{item:ult_is_aug_biceph}:
By \ref{fact:type_3_ult}, $\widetilde{M}$ is the expansion of $M^U$. We
have 
$\rho^U\leq\widetilde{\rho}$ and by ($\dagger$),
$\widetilde{M}||(\widetilde{\rho}^+)^{\widetilde{M}}=
\widetilde{N}||(\widetilde{\rho}^+)^{\widetilde{N}}$.
If $m\geq 0$ then 
$\widetilde{\rho}<\rho_m(M^U)$. The rest of 
\ref{item:ult_is_aug_biceph} is routine.

Part \ref{item:triviality}: Assume $M\neq N$ but $m=n$. 
By $\rho$-soundness, there is an $\rSigma_{m+1}$ formula
$\varphi$ and
$\alpha<\rho$ with
\[ M\sats\varphi(p_{m+1}^{M}\cut\rho,\pvec_m^M,\alpha)\ \ \ 
\iff\ \ \ N\sats\neg\varphi(p_{m+1}^{N}\cut\rho,\pvec_m^N,\alpha).\]
Now $i,j$ are $\rSigma_{m+1}$-elementary, and by ($\dagger$), 
$i(\alpha)=j(\alpha)$; let $\alpha'=i(\alpha)$. So by 
\ref{item:param_above_rho_preserved}, we get $M^U\neq N^U$ and in fact
\[ M^U\sats\varphi(p_{m+1}^{M^U}\cut\rho^U,\pvec_m^{M^U},\alpha')\ \ \ \iff\ \ 
\ 
N^U\sats\neg\varphi(p_{m+1}^{N^U}\cut\rho^U,\pvec_m^{N^U},\alpha').\qedhere\]
\end{proof}

\begin{dfn}[Ultrapowers of cephalanxes]\label{dfn:uceph_ult} Let 
$B=(\gamma,\rho,M,Q)$ be a
cephalanx of degree $(m,q)$ and let $E$ be reasonable for $B$. Let 
$M'=\Ult_m(M,E)$,
$\gamma'=i^{M,m}_E(\gamma)$ and $\rho'=\sup 
i^{M,m}_E``\rho$. If $B$ 
is 
active let $Q'=\Ult(Q,E)$
(recall the ultrapower $\Ult(Q,E)$ is simple; it might be that $Q$ is type 
3, and we could have $\crit(E)=\gamma$.)
If $B$ is passive let $Q'=\Ult_q(Q,E)$.
Then we define $\Ult(B,E)=(\gamma',\rho',M',Q')$.
\end{dfn}

\begin{lem}\label{lem:passive_cephalanx_ult} In the context of 
\ref{dfn:uceph_ult}, suppose that 
$B$ is passive, 
and that $U=\Ult(B,E)$ is wellfounded.  Let $i=i^{M,m}_E$ and $j=i^Q_E$. 
Then:
\begin{enumerate}[label=\tu{(}\arabic*\tu{)}]
\item\label{item:ult_ceph} $U$ is a passive cephalanx of degree $(m,q)$.
\item\label{item:i_E_agmt_rho} $i\rest\rho=j\rest\rho$.
\item If $\rho\in\core_0(M)$ then $\rho'=i(\rho)$; otherwise 
$\rho'=\rho_0(M^U)$. 
Likewise $Q,j,Q^U$.
\item\label{item:psi_i_E_continuity_rho} 
$\psi_{i}(\rho)=\psi_j(\rho)=\rho'$.
\item\label{item:psi_i_E_continuity_rho^+} If $\rho^{+M}\in\dom(\psi_{i})$ 
then $\psi_{i}$ 
is continuous at $\rho^{+M}$;
otherwise $M$ is passive, $\OR^M=\rho^{+M}$ and $\OR(M^U)=\sup i``\OR^M$.
\item\label{item:psi_i_E_agmt_rho^+} 
$\psi_{i}\rest\rho^{+M}=\psi_{j}\rest\rho^{+M}$.
\item\label{item:param_preserved} 
$i(p_{m+1}^{M}\cut\rho)=p_{m+1}^{M^U}\cut\rho'$.
\item\label{item:soundness_preserved} Suppose $E$ is short and weakly amenable 
to 
$B$. Then $M^U$ is 
$(m+1)$-sound iff $M$ is $(m+1)$-sound and $\crit(E)<\rho_{m+1}^M$. If $M^U$ is 
$(m+1)$-sound then 
$\rho_{m+1}^{M^U}=\sup i``\rho_{m+1}^M$ and 
$p_{m+1}^{M^U}=i(p_{m+1}^M)$.
\item\label{item:exactness_preserved} If $B$ is non-exact then $U$ is non-exact.
\item\label{item:exactness_lost} If $B$ is exact \tu{(}so $N^B=Q$\tu{)} but $U$ 
is not, then $0\leq 
n^B<q$.
\item\label{item:non_triviality_preserved} Suppose that $B$ is non-trivial and 
that
suitable condensation holds below $(Q,q)$. Let $N=N^B$ and $n=n^B$.
Then:
\begin{enumerate}[label=\tu{(}\roman*\tu{)}]
\item\label{item:U_non-trivial} $U$ is non-trivial,
\item\label{item:N^U=Ult_n(N,E)} $N^U=\Ult_{n^B}(N,E)$ and $n^U=n$,
\item\label{item:N_pres} Parts 
\ref{item:i_E_agmt_rho}--\ref{item:soundness_preserved} hold with 
`$M$' 
replaced by `$N$' and `$m$' by `$n$'.
\end{enumerate}
\end{enumerate}
\end{lem}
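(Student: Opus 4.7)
The plan is to work through the eleven clauses in roughly the indicated order, using the same templates as in the proof of Lemma~\ref{lem:biceph_ult} for the bicephalus case, but being careful about the two distinct critical ordinals $\gamma$ and $\rho$, and about the possibility that $M$ and $Q$ are taken at different ultrapower degrees $m$ and $q$.

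First I would establish the agreement/continuity package \ref{item:i_E_agmt_rho}--\ref{item:psi_i_E_agmt_rho^+}, which is the heart of the argument. Since $B$ is passive, $M|\rho=Q|\rho$ and $M\|(\rho^+)^M=Q\|(\rho^+)^M$, and $\rho=(\gamma^+)^M=(\gamma^+)^Q$ is a cardinal of each. Exactly as in the proof of \ref{lem:biceph_ult}\ref{item:biceph_i_E_agmt}, form
\[
W=\Ult(B\|(\rho^+)^M,E),\qquad j\maps B\|(\rho^+)^M\to W,
\]
and observe that because $\rho_{m+1}^M\leq\rho$ and $\rho_{q+1}^Q\leq\gamma<\rho$, every function used to form either $\Ult_m(M,E)$ or $\Ult_q(Q,E)$ whose output is bounded in $(\rho^+)^M$ already lies in $B\|(\rho^+)^M$ (this is the standard \cite[\S6]{fsit} fact used in the bicephalus lemma). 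This immediately gives \ref{item:i_E_agmt_rho}, and by pushing the same comparison up to the expansion side via Fact~\ref{fact:type_3_ult} one gets \ref{item:psi_i_E_continuity_rho}, \ref{item:psi_i_E_continuity_rho^+} and \ref{item:psi_i_E_agmt_rho^+}; clause \ref{item:ult_ceph_rho_value} is then just the two trivial cases of where $\rho$ sits relative to $\rho_0(M)$.

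Next I would read off \ref{item:ult_ceph} from this: $\Ult_m(M,E)$ and $\Ult_q(Q,E)$ are premice by the usual preservation results, $\gamma'=i_E^M(\gamma)=i_E^Q(\gamma)$ is a cardinal of $Q^U$ because the relevant bounded subsets all live in $W$, and $(\gamma'^+)^{M^U}=\rho'$ by \ref{item:psi_i_E_continuity_rho}, while $M^U\|(\rho'^+)^{M^U}=Q^U\|(\rho'^+)^{M^U}$ is forced by \ref{item:psi_i_E_agmt_rho^+}. The $\rho'$-soundness of $M^U$ and the degree computation are routine from the degree-$m$ and degree-$q$ preservation. Clauses \ref{item:param_preserved} and \ref{item:soundness_preserved} are then the standard generalized-solidity-witness calculation and a direct citation of \cite[2.20]{extmax}, exactly as in \ref{lem:biceph_ult}\ref{item:param_above_rho_preserved}, \ref{item:sound_equiv}.

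For the exactness clauses \ref{item:exactness_preserved} and \ref{item:exactness_lost}: if $B$ is non-exact with $N=N^B\pins Q$, $(\rho^+)^N=(\rho^+)^M$ and $n=n^B$, then $N$ is an initial segment of $Q$ below $(\rho^+)^Q$, and $\Ult_n(N,E)$ (which makes sense since $E$ is reasonable for $N$) embeds cofinally into a proper initial segment of $Q^U$ by the Shift Lemma applied to $N\pins Q$; this initial segment has ordinal height $\sup i^Q_E``(\rho^+)^N=(\rho'^+)^{M^U}$ by \ref{item:psi_i_E_agmt_rho^+}, strictly below $(\rho'^+)^{Q^U}$ (since $\Ult_q(Q,E)$ continues past). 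For \ref{item:exactness_lost}, suppose $B$ is exact (so $N^B=Q$, and $n=n^B$ is the degree measured at $\rho$, while $q$ is measured at $\gamma$). If $n=q$ then $\Ult_q(Q,E)$ is just $\Ult_n(Q,E)$ and standard continuity of $i^Q_E$ at $(\rho^+)^Q=(\rho^+)^M$ gives $(\rho'^+)^{Q^U}=(\rho'^+)^{M^U}$, so $U$ is exact; hence non-exactness of $U$ forces $n<q$, and of course $n\geq0$ by the case already handled.

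The main obstacle, and the last step, is clause \ref{item:non_triviality_preserved}. Here I invoke suitable condensation below $(Q,q)$ to control the image of $N=N^B$ inside $Q^U$. Form $N^*=\Ult_n(N,E)$, which makes sense because $E$ is reasonable for $N$ (by definition of reasonable for $B$). The Shift Lemma applied to $N\pins Q$ together with the agreement in \ref{item:psi_i_E_agmt_rho^+} produces a $\Sigma_0$-elementary $\sigma\maps N^*\to Q^U$ with $\crit(\sigma)\geq\rho_{n+1}^{N^*}=\sup i^N_E``\rho$ and with $\sigma$ restricted to the appropriate core embedding $r$-suitable for $Q^U$, for $r$ the right degree dictated by \ref{item:soundness_preserved}. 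Suitable condensation below $(Q,q)$, which by \ref{lem:suitable_condensation_def} is a first-order statement preserved by $i^Q_E$, then holds in $Q^U$ below its corresponding level, and forces either $N^*\pins Q^U$ or the ultrapower-by-$F$ alternative. Once this identification is made, \ref{item:N^U=Ult_n(N,E)} and \ref{item:N_pres} follow by repeating the earlier arguments of this lemma with $N,n$ in place of $M,m$, and \ref{item:U_non-trivial} follows because if $M^U\pins Q^U$ then pulling back through $\sigma$ and $i^M_E$ would give $M\pins Q$, contradicting non-triviality of $B$. The delicate point in this last step is the active alternative in condensation: one must check that if $Q^U|\rho'$ is active with extender $G$ and $N^*\pins\Ult(Q^U|\rho',G)$ instead of $N^*\pins Q^U$, then this configuration is ruled out by the reasonableness of $E$ and the fact that $B$ is passive (so $Q|\rho$ is passive, and hence $Q^U|\rho'$ is passive).
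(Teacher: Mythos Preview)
Your plan for parts \ref{item:ult_ceph}--\ref{item:exactness_lost} is essentially the paper's: reduce to the bicephalus lemma \ref{lem:biceph_ult}, cite \cite[\S6]{fsit} for the function-locality fact, and read off \ref{item:exactness_preserved}, \ref{item:exactness_lost} from \ref{item:psi_i_E_continuity_rho^+} and \ref{item:psi_i_E_agmt_rho^+}. No issues there.

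For \ref{item:non_triviality_preserved} your overall shape (factor map $\sigma$ from $\Ult_n(N,E)$ into $Q^U$, then condensation, then rule out the active branch because $Q^U|\rho'$ is passive) matches the paper, but there is a genuine gap. You assert that $(N^*,\sigma^*)$ is $r$-suitable for $Q^U$ and then invoke suitable condensation. Suitability, however, requires $N^*\in Q^U$. In the exact case $N=Q$, writing $U_n=\Ult_n(Q,E)$ and $U_q=\Ult_q(Q,E)=Q^U$, the subcase $((\rho')^+)^{U_n}=((\rho')^+)^{U_q}$ gives $U_n\notin U_q$ (it would collapse $((\rho')^+)^{U_q}$), so condensation is not available. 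The paper handles this subcase separately: one shows $\sup\sigma``\rho_n^{U_n}=\rho_n^{U_q}$ (else Lemma~\ref{lem:k-lifting_dichotomy} would put $U_n\in U_q$), so by \ref{lem:k-lifting_dichotomy} $\sigma$ is an $n$-embedding, and since $\rho_{n+1}^{U_q}\cup p_{n+1}^{U_q}\sub\rg(\sigma)$ one gets $U_n=U_q$ outright. Only in the other subcase $((\rho')^+)^{U_n}<((\rho')^+)^{U_q}$ does one get $U_n,\sigma^*\in U_q$ and apply suitable condensation. The same split is needed in the non-exact case with target $i^Q_E(N)$ in place of $U_q$.

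Your non-triviality argument is also too loose. ``Pulling back through $\sigma$ and $i^M_E$'' does not obviously yield $M\ins Q$ from $M^U\ins Q^U$: $\sigma$ is a map out of $\Ult_n(N,E)$, not out of $M^U$, and there is no canonical inverse. The paper instead first establishes \ref{item:N^U=Ult_n(N,E)}, so $N^U=\Ult_n(N,E)$, and then invokes the proof of \ref{lem:biceph_ult}\ref{item:triviality} directly: a single $\rSigma_{m+1}$ formula in a parameter $\alpha<\rho$ distinguishing $M$ from $N$ is carried by $i^M_E,i^N_E$ (which agree below $\rho$) to one distinguishing $M^U$ from $N^U$. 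That, together with $N^U\ins Q^U$, gives $M^U\nins Q^U$.

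Minor points: the identity $\rho_{n+1}^{N^*}=\sup i^N_E``\rho$ is not generally correct (you only have $\rho_{n+1}^N\leq\rho$, not equality), and what you actually need and get is $\crit(\sigma)>\rho'$. Also, your reference \texttt{item:ult\_ceph\_rho\_value} does not exist; presumably you mean clause (3).
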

We also have $j(p_{q+1}^Q\cut\gamma)=p_{q+1}^{Q^U}\cut\gamma^U$, but we 
won't need this.
\begin{proof}
Parts 
\ref{item:i_E_agmt_rho}--\ref{item:soundness_preserved} are much as in 
\ref{lem:biceph_ult}. (For
\ref{item:psi_i_E_agmt_rho^+}, note that for $A\in\pow(\rho)\inter M$, the 
value 
of $\psi_{i}(A)$ is determined by the values of 
$\psi_{i}(A\inter\alpha)$ for $\alpha<\rho$;
likewise for $\psi_{j}(A)$.) So \ref{item:ult_ceph} follows.
Part \ref{item:exactness_preserved} follows from 
\ref{item:psi_i_E_continuity_rho^+} and \ref{item:psi_i_E_agmt_rho^+}; part 
\ref{item:exactness_lost} is easy.

Part \ref{item:non_triviality_preserved}: Consider
the case that $B$ is 
exact. Part 
\ref{item:N_pres} is  as for $M$, so consider 
\ref{item:U_non-trivial} and \ref{item:N^U=Ult_n(N,E)}.
As $B$ is exact, 
$N=Q$. By the proof of
\ref{lem:biceph_ult}, we have $\Ult_n(Q,E)\neq\Ult_m(M,E)$, so it suffices to 
see 
that
\[ U_n=\Ult_n(Q,E)\ins Q^U=\Ult_q(Q,E)=U_q,\]
so assume $n<q$. If $n=-1$ then $U_n=U_q$, so 
assume $n\geq 0$, so $\rho\in\core_0(Q)$ and
\[ \rho_{q+1}^Q\leq\gamma<\rho=\rho_q^Q=\rho_{n+1}^Q<\rho_n^Q. \]
Let $\sigma:U_n\to U_q$ be the natural factor map.
Let $j_n:Q\to U_n$ and $j_q:Q\to U_q$ be the ultrapower maps. Then $\sigma\com 
j_n=j_q$,
$\sigma$ is $\vec{p}_{n+1}$-preserving $n$-lifting and $\crit(\sigma)>\rho'$.
Also, $U_n,U_q$ are $(n+1)$-sound 
and $\rho_{n+1}^{U_n}=\rho'=\rho_{n+1}^{U_q}$.

Suppose $((\rho')^+)^{U_n}=((\rho')^+)^{U_q}<\crit(\sigma)$. Then
$\rho_n^{U_q}=\sup\sigma``\rho_n^{U_n}$,
since otherwise, using the previous paragraph and as in the proof of 
\ref{lem:k-lifting_dichotomy}, we get $U_n\in U_q$, collapsing 
$((\rho')^+)^{U_q}$ 
in $U_q$. So by \ref{lem:k-lifting_dichotomy}, $\sigma$ is an 
$n$-embedding. But
$\rho_{n+1}^{U_q}\un 
p_{n+1}^{U_q}\sub\rg(\sigma)$,
so then $U_n=U_q$, which suffices.

Now suppose  $((\rho')^+)^{U_n}<((\rho')^+)^{U_q}$. Then much as before,
$\rho_n^{U_q}>\sup\sigma``\rho_n^{U_n}$.
Let $\sigma^*=\sigma\rest(U_n||\rho_n^{U_n})$. By \ref{lem:k-lifting_dichotomy} 
we get 
$U_n,\sigma^*\in U_q$
and $(U_n,\sigma^*)$ is $n$-suitable for $U_q$. By suitable condensation 
below $(Q,q)$
and \ref{lem:suitable_condensation_def}
and since $U_q|\rho'$ is passive, therefore $U_n\pins U_q$, which 
suffices.

If $B$ is non-exact, so $N\pins Q$,  let
$U_n=\Ult_n(N,E)$, consider the factor embedding
$\sigma:U_n\to j(N)$
and show $U_n\ins j(N)$.
This completes the proof.
\end{proof}

\begin{lem}\label{lem:active_cephalanx_ult} In the context of 
\ref{dfn:uceph_ult}, suppose that $B$ 
is active, 
and that $U=\Ult(B,E)$ and $R^U$ are wellfounded. Let $i=i^{M,m}_E$ and 
$j=i^Q_E$. Then:
\begin{enumerate}[label=\tu{(}\arabic*\tu{)}]
\item\label{item:active_ult_ceph} $U$ is an active cephalanx of degree $(m,0)$. 
\item\label{item:active_i_E^M_continuity_rho} If $\rho\in\core_0(M)$ then 
$\rho'=i(\rho)$; 
otherwise $\rho'=\rho_0(M^U)$.
\item\label{item:active_i_E^M_preservation} 
\ref{lem:passive_cephalanx_ult}\ref{item:i_E_agmt_rho} and
\ref{lem:passive_cephalanx_ult}\ref{item:psi_i_E_continuity_rho}--
\ref{item:soundness_preserved}
 hold.
\item\label{item:active_exactness_preserved} $U$ is exact iff $B$ is exact.
\item\label{item:active_non_triviality_preserved} Suppose that $B$ is non-exact 
and non-trivial and 
that
suitable condensation holds below $(Q,0)$. Let $N=N^B$ and $n=n^B$.
Then:
\begin{enumerate}[label=\tu{(}\roman*\tu{)}]
\item\label{item:active_U_non-trivial} $U$ is non-trivial,
\item\label{item:active_N^U=Ult_n(N,E)} $N^U=\Ult_{n^B}(N,E)$ and $n^U=n$,
\item\label{item:active_N_pres} Parts 
\ref{item:active_i_E^M_continuity_rho}--\ref{item:active_i_E^M_preservation} 
hold with `$M$' 
replaced by `$N$' and `$m$' by `$n$'.
\end{enumerate}
\end{enumerate}
\end{lem}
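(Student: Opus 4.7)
\medskip

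My plan is to mirror the proof of Lemma \ref{lem:passive_cephalanx_ult}, with the key extra ingredient being the commutation lemmas \ref{lem:ult_comm} and Fact \ref{fact:type_3_ult}, which let us convert the simple ultrapower $\Ult(Q,E)$ into information about $R^U=\Ult(Q^U,F^{Q^U})$. The preservation statements on the $M$-side are identical to the passive case, since $M$ is still a premouse, $E$ is reasonable for $M$, and we are forming the degree $m$ ultrapower; in particular, item \ref{item:active_i_E^M_preservation} reduces directly to the corresponding items in \ref{lem:passive_cephalanx_ult}, using generalized solidity witnesses for \ref{item:param_preserved} and \cite[2.20]{extmax} for \ref{item:soundness_preserved}. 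The agreement with the $Q$-side below $\rho$ and below $(\rho^+)^M$ (which replaces \ref{item:i_E_agmt_rho}, \ref{item:psi_i_E_continuity_rho} and \ref{item:psi_i_E_agmt_rho^+}) is computed from the fact that $B||(\rho^+)^M$ is common to $M$ and $Q$, and the corresponding ultrapower of $B||(\rho^+)^M$ by $E$ is formed using functions that lie in both $M$ and $Q$, by the $\rho_m^M>\rho$ and the active structure of $Q$.

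For part \ref{item:active_ult_ceph}, the fact that $Q^U=\Ult(Q,E)$ is an active segmented-premouse is routine: a simple ultrapower of an active segmented-premouse is active. The largest cardinal of $Q^U$ is $i^Q_E(\gamma)=\gamma'$ by the agreement above, and $\rho^U=\OR^{Q^U}$ by the definition of simple ultrapowers for active segmented-premice. The agreement $M^U||(\rho^U)^+=S^U||(\rho^U)^+$ follows from the common ultrapower of $B||(\rho^+)^M$ computed in both $M^U$ and $S^U$, together with continuity/cofinality of $i^M_E$ and $i^Q_E$ at $\rho$ and $(\rho^+)^M$ (reasonableness plus the $m$-sound degree analysis). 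The soundness of $M^U$ at $\rho^U$ follows from the $\rho$-soundness of $M$ and the usual preservation. For part \ref{item:active_exactness_preserved}, exactness is $(\rho^+)^{R^B}=(\rho^+)^M$; applying Lemma \ref{lem:ult_comm} to $Q$ and $E$ identifies $R^U$ up to $i^R_E(i_{Q,R}((\rho^+)^Q))$ with $\Ult(R^B,E)$, and this calculation, combined with the agreement $i^M_{E}\!\!\upharpoonright\!(\rho^+)^M=\psi_{i^Q_E}\!\!\upharpoonright\!(\rho^+)^M$, shows that $U$ is exact iff $B$ is.

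Part \ref{item:active_non_triviality_preserved} is the main obstacle. Assume $B$ is non-exact and non-trivial, let $N=N^B\pins R^B$ and $n=n^B$. We need to identify $\Ult_n(N,E)$ as $N^U\pins R^U$. First, $N\in Q$, since $N\pins R^B$ with $(\rho^+)^N<(\rho^+)^{R^B}$ and $Q$ knows $R^B$ up to sufficient height via $F^Q$; hence $\Ult_n(N,E)$ is formed from functions in $Q$ and we can compare it with $R^U$. Apply Lemma \ref{lem:ult_comm} (or Fact \ref{fact:type_3_ult} if $M$ is type $3$, i.e.\ $m=-1$) to obtain a natural embedding
\[
 \sigma\colon \Ult_n(N,E)\to i^Q_E(N),
\]
where $i^Q_E(N)$ denotes the image of $N$ under the simple ultrapower map of $Q$ into $Q^U$, which sits as an initial segment of $R^U$ via $F^{Q^U}$. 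The map $\sigma$ is $\vec{p}_{n+1}$-preserving and $n$-lifting with $\mathrm{crit}(\sigma)>\rho^U$, exactly as in the passive case. Now the argument from the proof of \ref{lem:passive_cephalanx_ult}\ref{item:non_triviality_preserved} applies verbatim: either $\sigma$ is an $n$-embedding and hence the identity (giving $\Ult_n(N,E)\pins R^U$), or Lemma \ref{lem:k-lifting_dichotomy} yields a suitable pair $(\Ult_n(N,E),\sigma^*)$ to which suitable condensation at the relevant level of $Q$ applies. The non-triviality of $U$ then follows from the usual $\rSigma_{n+1}$-disagreement argument of \ref{lem:biceph_ult}\ref{item:triviality}, transferred through $i^M_E$ and $\psi_{i^Q_E}$ using \ref{item:param_preserved}. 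The main subtlety is the correct identification of $\sigma$'s codomain as an initial segment of $R^U$ via the commutation lemma; once that is in place, the $Q$-side analysis reduces to the passive case.
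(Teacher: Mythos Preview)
Your overall strategy matches the paper's proof exactly (the paper simply cites \ref{lem:ult_comm}, \ref{fact:type_3_ult}, and the proof of \ref{lem:passive_cephalanx_ult}), and most of your elaboration is sound. There is, however, a genuine slip in part \ref{item:active_non_triviality_preserved}: you write ``$N\in Q$'', but in the active non-exact case $N\pins R^B=\Ult(Q,F^Q)$ while $\OR^Q=\rho$, so $N$ is not a segment of $Q$ at all. Accordingly, ``$i^Q_E(N)$'' is not meaningful; the correct target of the factor map $\sigma$ is $i^{R^B}_E(N)$, where $i^{R^B}_E:R^B\to\Ult(R^B,E)$ is the simple ultrapower map of $R^B$. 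The role of \ref{lem:ult_comm} is then precisely to identify $\Ult(R^B,E)$ with $R^U$ up through the level $\gamma^*$, so that $i^{R^B}_E(N)\pins R^U$; this is where taking $\eta=(\kappa^{++})^Q$ in \ref{lem:ult_comm} matters (as the paper's footnote points out, $(\kappa^+)^Q$ would not suffice when $Q$ has superstrong type). Once the codomain is corrected in this way, the condensation argument from the passive case applies verbatim, as you indicate.
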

\begin{proof}
This follows from \ref{lem:ult_comm}, \ref{fact:type_3_ult} and the proof of 
\ref{lem:passive_cephalanx_ult}.\footnote{In \ref{lem:ult_comm} we set 
$\eta=(\kappa^{++})^Q$, and 
the reader might wonder why we didn't just use $\eta=(\kappa^+)^Q$. We need the 
larger value here 
if $Q$ has superstrong type.}
\end{proof}

\begin{lem}\label{lem:ceph_it}
Let $C$ be a  degree $(m,k)$ cephal. If $C$ is a bicephalus let $B=C_*$; 
otherwise let 
$B=C$. Let
$\left<E_\alpha\right>_{\alpha<\lambda}$ be a
sequence of short extenders. Let $B_0=B$,
$B_{\alpha+1}=\Ult(B_\alpha,E_\alpha)$, and
 $B_\gamma$ be the direct limit at limit $\gamma$. Suppose for each
$\alpha\leq\lambda$, $B_\alpha$ is wellfounded and if $\alpha<\lambda$
then $E_\alpha$ is weakly amenable to
$B_\alpha$.

If $C$ is a bicephalus \tu{(}passive cephalanx, active cephalanx, 
respectively\tu{)} then the 
conclusions of \ref{lem:biceph_ult} \tu{(}\ref{lem:passive_cephalanx_ult}, 
\ref{lem:active_cephalanx_ult}, respectively\tu{)} apply to $B$ and 
$B_\lambda$, 
together 
with the associated iteration maps, after deleting the sentence ``Suppose 
$E$ is short 
and weakly amenable to $B$.'' and replacing the phrase
``$\crit(E)<\rho_{m+1}^M$'' with ``$\crit(E_\alpha)<\rho_{m+1}^{M_\alpha}$ for
each $\alpha<\gamma$''.
\end{lem}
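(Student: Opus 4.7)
The plan is induction on $\lambda$. The successor case reduces directly to the appropriate single-step lemma (\ref{lem:biceph_ult}, \ref{lem:passive_cephalanx_ult}, or \ref{lem:active_cephalanx_ult}), applied to $B_\alpha$ and $E_\alpha$, using the inductive hypothesis together with the fact that $E_\alpha$ is semi-close to $B_\alpha$. In particular, the replacement of the hypothesis ``$E$ is short and semi-close to $B$'' with the hypothesis on each $E_\alpha$ is exactly what is needed to iterate clauses like \ref{lem:biceph_ult}\ref{item:sound_equiv} and \ref{lem:passive_cephalanx_ult}\ref{item:soundness_preserved}: soundness is preserved at step $\alpha+1$ iff $\crit(E_\alpha)<\rho_{m+1}^{M_\alpha}$, so soundness is preserved through all of $\lambda$ iff it is preserved at each successor step. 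The agreement, continuity, and parameter-preservation clauses are all local statements about a single ultrapower map, and their iterates commute via the factor maps, so they descend to the composite map.

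For the limit case, let $\lambda$ be a limit ordinal and suppose the conclusion holds below $\lambda$. Form the direct limit $B_\lambda = \dirlim_{\alpha<\lambda} B_\alpha$ under the iteration maps; this exists because each $B_\alpha$ is wellfounded by hypothesis and the iteration maps commute (in each component). For bicephali, the two component sequences $\langle M_\alpha\rangle$ and $\langle N_\alpha\rangle$ form commuting systems, and the limit is $(\rho_\lambda,M_\lambda,N_\lambda)$ with $M_\lambda = \dirlim M_\alpha$, etc.; for cephalanxes, one takes a direct limit in each of the $M$-component and the $Q$-component. The agreement clause (\ref{lem:biceph_ult}\ref{item:biceph_i_E_agmt} and its analogues) at stage $\lambda$ follows from the agreement at each stage $\alpha<\lambda$ by passing to the limit. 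The parameter-preservation clause \ref{lem:biceph_ult}\ref{item:param_above_rho_preserved} and its analogue \ref{lem:passive_cephalanx_ult}\ref{item:param_preserved} are $\rSigma_{m+1}$-elementary, so they pass to the direct limit of $\rSigma_{m+1}$-elementary maps. The soundness clause is read as above: if soundness fails at some stage $\alpha+1<\lambda$, it fails thereafter, and the hypothesis ``$\crit(E_\beta)<\rho_{m+1}^{M_\beta}$ for each $\beta<\gamma$'' characterizes when soundness is preserved through the limit.

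For the wellfoundedness and structural identification of $B_\lambda$ as a cephal of the same type and degree, we use that each $B_\alpha$ is a cephal of that type by the inductive hypothesis, and the relevant defining properties (cardinality of $\rho$, agreement $M\|(\rho^+)^M = N\|(\rho^+)^N$ or the cephalanx analogue, $\rho$-soundness, the projectum inequalities $\rho_{m+1}\leq\rho<\rho_m$) are $\rSigma_{m+1}$-expressible and so preserved under direct limits of $\rSigma_{m+1}$-elementary maps; this reuses the proof structure of \ref{lem:biceph_ult}\ref{item:ult_is_aug_biceph}. In the active cephalanx case, the identification $\tilde{M}_\lambda$ as the expansion of $M_\lambda$ at the limit follows from \ref{fact:type_3_ult} applied at each successor stage together with commutativity of the expansion with direct limits.

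The main obstacle is the non-triviality clause \ref{lem:passive_cephalanx_ult}\ref{item:non_triviality_preserved} (and its active analogue) at the limit stage, where the identification $N^{B_\lambda} = \dirlim_{\alpha<\lambda} N^{B_\alpha}$ needs care: at each step, the factor map $\sigma_\alpha\maps \Ult_n(N_\alpha,E_\alpha) \to \text{(piece of }M_{\alpha+1}\text{ or }Q_{\alpha+1})$ is shown to realize $\Ult_n(N_\alpha,E_\alpha)$ as an initial segment of the $M$-side or $Q$-side of $B_{\alpha+1}$, using suitable condensation (preserved below $(Q_\alpha,q)$ because $q^{B_\alpha}=q$ for all $\alpha$, by the successor case), and these factor maps commute with the iteration maps on each side, so compose to give an initial segment realization at $\lambda$. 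One checks that semi-closeness of $E_\alpha$ to $B_\alpha$ suffices to keep suitable condensation in force at $(Q_\alpha,q)$ from the definability statement \ref{lem:suitable_condensation_def}, since the $\rPi_{\max(q,1)}$ statement $\Psi_q(\vec{p}_{q-1}^{Q})$ is preserved under the degree-$q$ ultrapower. Thus $N^{B_\lambda}$ is as claimed, proving \ref{lem:passive_cephalanx_ult}\ref{item:U_non-trivial}, \ref{item:N^U=Ult_n(N,E)}, and, by the previous paragraph, \ref{item:N_pres}.
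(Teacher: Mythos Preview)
Your proposal is correct and follows essentially the same inductive plan as the paper, but the paper handles the limit stage more economically. The key observation you do not exploit is that Lemmas \ref{lem:biceph_ult}, \ref{lem:passive_cephalanx_ult}, and \ref{lem:active_cephalanx_ult} were already stated for arbitrary \emph{reasonable} extenders, not just short ones. So at a limit $\gamma$, the paper simply replaces the entire iteration $B_0\to B_\gamma$ by the single (long) extender it induces and applies the single-step lemma once; this delivers the agreement, continuity, structural, and non-triviality clauses in one stroke, with no need to track factor maps $\sigma_\alpha$ or argue that the defining properties of a cephal are $\rSigma_{m+1}$-expressible. For the soundness clause \ref{lem:biceph_ult}\ref{item:sound_equiv} at limits, the paper invokes \cite[2.20]{extmax} directly rather than threading through the successor stages.

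Your direct-limit argument is not wrong, but it is more laborious, and a couple of your justifications are loose: the claim that ``$\rho$-soundness, the projectum inequalities $\rho_{m+1}\leq\rho<\rho_m$\ldots are $\rSigma_{m+1}$-expressible'' is not quite how one shows these are preserved (they are preserved because the iteration maps are $m$-embeddings and by \cite[2.20]{extmax}, not merely by $\rSigma_{m+1}$-elementarity), and the non-triviality paragraph would need more care to actually identify $N^{B_\lambda}$ with the direct limit. The long-extender trick sidesteps all of this.
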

\begin{proof}
If $C$ is a bicephalus, this mostly follows from \ref{lem:biceph_ult}, 
\cite[2.20]{extmax} and 
\ref{fact:type_3_ult} by induction.
At limit stages, use \cite[2.20]{extmax} directly to prove
\ref{lem:biceph_ult}\ref{item:sound_equiv}. To see 
\ref{lem:biceph_ult}\ref{item:biceph_i_E_agmt}, 
replace the iteration used to define $C_\gamma$ with a single (possibly long) 
extender $E$, and 
apply \ref{lem:biceph_ult}. The cephalanx cases are similar.
\end{proof}

\begin{dfn}[Iteration trees on bicephali]\label{dfn:bicephalus_tree}
Let $B=(\rho,M,N)$ be a bicephalus of degree $(m,n)$ and let 
$\eta\in\OR\cut\{0\}$. An 
\dfnemph{iteration tree on $B$}, of \dfnemph{length $\eta$}, is a pair
$\Tt=\left(<_\Tt,
\left<E_\alpha\right>_{\alpha+1<\eta}\right)$
such that there are sequences
\[ \left<B_\alpha,M_\alpha,N_\alpha\right>_{\alpha<\eta}\text{ and }
\left<B^*_{\alpha+1},M^*_{\alpha+1},N^*_{\alpha+1}\right>_{\alpha+1<\eta}, \]
of models, sequences of embeddings
\[ \left<i_{\alpha\beta},j_{\alpha\beta}\right>_{\alpha,\beta<\eta}\text{ and }
 \left<i^*_{\alpha+1},j^*_{\alpha+1}\right>_{\alpha+1<\eta},\]
sequences of ordinals
$\left<\rho_\alpha\right>_{\alpha<\eta}$ and
$\left<\crit_\alpha,\nu_\alpha,\lh_\alpha,\rho^*_{\alpha+1}\right>_{
\alpha+1<\eta}$,
sets $\curlyB,\curlyM,\curlyN\sub\eta$ (specifying types and origins of 
structures), a function 
``$\deg$'' with domain $\eta$ (specifying 
degrees), and
a set $\curlyD\sub\eta$ (specifying drops in model), such that:
\begin{enumerate}
 \item $<_\Tt$ is an iteration tree order on $\eta$, with the usual properties.
\item $B_0=(\rho_0,M_0,N_0)=B$ and $\deg(0)=(m,n)$ and $i_{00}=\id$ and 
$j_{00}=\id$.
 \item $\curlyB,\curlyM,\curlyN$ are disjoint and for each $\alpha<\eta$, either
 \begin{enumerate}
  \item $\alpha\in\curlyB$ and $B_\alpha=(\rho_\alpha,M_\alpha,N_\alpha)$ is a 
bicephalus of degree
$(m,n)=\deg(\alpha)$, or
\item $\alpha\in\curlyM$ and $B_\alpha=M_\alpha$ is a
segmented-premouse and $N_\alpha=\emptyset$, or
\item $\alpha\in\curlyN$ and
$B_\alpha=N_\alpha$ is a segmented-premouse and $M_\alpha=\emptyset$.
\end{enumerate}
 \item For each $\alpha+1<\eta$:
 \begin{enumerate}[label=\tu{(}\roman*\tu{)}]
 \item Either $E_\alpha\in\es_+(M_\alpha)$ or 
$E_\alpha\in\es_+(N_\alpha)$.
\item $\crit_\alpha=\crit(E_\alpha)$ and $\nu_\alpha=\nu(E_\alpha)$ 
and $\lh_\alpha=\lh(E_\alpha)$.
\item For all $\beta<\alpha$ we have $\lh_\beta\leq\lh_\alpha$.
\item $\pred^\Tt(\alpha+1)$ is the
least $\beta$ such that $\crit_\alpha<\nu_\beta$.
\end{enumerate}
\end{enumerate}
Fix $\alpha+1<\eta$ and $\beta=\pred^\Tt(\alpha+1)$ and
$\kappa=\crit_\alpha$.
\begin{enumerate}[resume*]
\item\label{item:B_alpha+1_biceph} Suppose $\beta\in\curlyB$ and
$\kappa<\rho_\beta$ and $E_\alpha$ is total over $B_\beta||\rho_\beta$. Then
$\deg(\alpha+1)=(m,n)$ and
\[ (\rho^*_{\alpha+1},M^*_{\alpha+1},N^*_{\alpha+1})=B^*_{\alpha+1}=B_\beta
 \text{
and }
B_{\alpha+1}=\Ult(B^*_{\alpha+1},E), \]
$i^*_{\alpha+1}:M^*_{\alpha+1}\to M_{\alpha+1}$
is the ultrapower map, $i_{\alpha+1,\alpha+1}=\id$ and
\[ i_{\gamma,\alpha+1}=i^*_{\alpha+1}\com i_{\gamma\beta}:M_\gamma\to 
M_{\alpha+1} \]
for $\gamma\leq_\Tt\beta$;
likewise for $j^*_{\alpha+1}$ etc.
 \item\label{item:B_alpha+1=M_alpha+1} Suppose that $E_\beta\in\es_+(M_\beta)$. 
Suppose that either
$\beta\notin\curlyB$ (so $\beta\in\curlyM$), or $\kappa<\rho_\beta$ and
$E_\alpha$ is not total over $B_\beta||\rho_\beta$. Then we put
$\alpha+1\in\curlyM$,
$N_{\alpha+1}=N^*_{\alpha+1}=\emptyset$, and 
$j^*_{\alpha+1}$, etc, are undefined. We set $M^*_{\alpha+1}\ins M_\beta$ and 
$\deg(\alpha+1)$, etc, in the manner for
degree-maximal trees. Let $k=\deg(\alpha+1)$. Then
\[ M_{\alpha+1}=\Ult_k(M^*_{\alpha+1},E_\alpha) \]
and $i^*_{\alpha+1}$, etc, are defined in the usual manner. We set 
$B^*_{\alpha+1}=M^*_{\alpha+1}$ 
and $B_{\alpha+1}=M_{\alpha+1}$.
\item Suppose that $E_\beta\notin\es_+(M_\beta)$ (so 
$E_\beta\in\es_+(N_\beta)$) 
and $B_{\alpha+1}$ 
is not defined through 
clause \ref{item:B_alpha+1_biceph}. Then we proceed 
symmetrically to clause \ref{item:B_alpha+1=M_alpha+1} (interchanging ``$M$'' 
with 
``$N$'').
\item $\alpha+1\in\curlyD$ iff either $\emptyset\neq M^*_{\alpha+1}\pins 
M_\beta$ or 
$\emptyset\neq N^*_{\alpha+1}\pins N_\beta$.
\item For every limit $\lambda<\eta$, $\curlyD\inter[0,\lambda)_\Tt$ is
finite, and
$\lambda\in\curlyB$ iff $[0,\lambda)_\Tt\sub\curlyB$; the 
models $M_\lambda$, etc, and embeddings $i_{\alpha,\lambda}$, etc, are
defined via direct limits, and 
$\deg(\lambda)$ is the common value of $\deg(\alpha)$ for
all sufficiently large $\alpha<_\Tt\lambda$.
\end{enumerate}

For $\alpha<\lh(\Tt)$, $\curlyB(\alpha)$ denotes 
$\max(\curlyB\inter[0,\alpha]_\Tt)$.
\end{dfn}

\begin{lem}\label{lem:biceph_it_fine_structure}
 Let $\Tt$ be an iteration tree on a bicephalus of degree $(m,n)$ and let
$\alpha<\lh(\Tt)$. We write $B_\alpha=B^\Tt_\alpha$, etc. Then:
 \begin{enumerate}
  \item\label{item:semiclose} If $\alpha+1<\lh(\Tt)$ then $E_\alpha$ is 
weakly amenable to
$B^*_{\alpha+1}$.
  \item\label{item:close} If $\alpha+1<\lh(\Tt)$ and $\alpha+1\notin\curlyB$ 
then
$E_\alpha$ is close to $B^*_{\alpha+1}$.
 \item\label{item:curlyB_in_seg} $\curlyB$ is closed downward under $<_\Tt$ and 
if 
$\alpha\in\curlyM$ then $\curlyN\inter[0,\alpha]_\Tt=\emptyset$.
 \item If\label{item:no_model_drop_m_geq_0} $\alpha\in\curlyM$ and 
$[0,\alpha]_\Tt\inter\curlyD=\emptyset$ then $m\geq 0$.
  \item\label{item:unsound_premouse_no_drop} If 
$\alpha\in\curlyM$, $[0,\alpha]_\Tt\inter\curlyD=\emptyset$,
$\deg(\alpha)=m$ and $\beta=\curlyB(\alpha)$ then:
\begin{enumerate}[label=--]
\item $M_\beta$ is $\rho_\beta$-sound, whereas $M_\alpha$ is 
$\rho_\beta$-solid but not $\rho_\beta$-sound,
\item $M_\beta$ is the $\rho_\beta$-core of
$M_\alpha$ and $i_{\beta\alpha}$ is the $\rho_\beta$-core embedding,
\item $\rho_{m+1}(M_\beta)=\rho_{m+1}(M_\alpha)$,
\item 
$i_{\beta\alpha}(p_{m+1}^{M_\beta}\cut\rho_\beta)=p_{m+1}^{M_\alpha}
\cut\rho_\beta$.
\end{enumerate}
 \item\label{item:unsound_premouse_drop} Suppose $\alpha\in\curlyM$ and
$[0,\alpha]_\Tt$ drops in model or degree. Let $k=\deg^\Tt(\alpha)$. Then the 
core
embedding $\core_{k+1}(M_\alpha)\to M_\alpha$ relates to
$\Tt$ in the manner usual for degree-maximal iteration trees.
\end{enumerate}
\end{lem}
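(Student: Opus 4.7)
The plan is to prove all six items by simultaneous induction on $\alpha<\lh(\Tt)$, splitting into cases according to whether $\alpha$ is $0$, a successor, or a limit, and whether $\alpha\in\curlyB$, $\curlyM$, or $\curlyN$. Items (\ref{item:semiclose}) and (\ref{item:close}) reduce to standard Mitchell--Steel closeness calculations: when $\alpha+1\notin\curlyB$, $B^*_{\alpha+1}$ is already a segmented-premouse and the usual proof applies, giving full closeness; when $\alpha+1\in\curlyB$, the extender $E_\alpha$ belongs to (say) $\es_+(M_\beta)$, so the standard argument gives closeness to $M_\beta$, and the agreement $M_\beta||(\rho_\beta^+)^{B_\beta}=N_\beta||(\rho_\beta^+)^{B_\beta}$ together with $\crit_\alpha<\rho_\beta$ then upgrades this to semi-closeness on the $N$-side, hence to $B_\beta$ as a whole.

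Item (\ref{item:curlyB_in_seg}) is by direct inspection of Definition \ref{dfn:bicephalus_tree}: once clause (\ref{item:B_alpha+1=M_alpha+1}) or its symmetric counterpart applies, $B_{\alpha+1}$ becomes a single segmented-premouse and all $<_\Tt$-successors inherit this, so they lie in $\curlyM$ (respectively $\curlyN$); this simultaneously yields $<_\Tt$-downward closure of $\curlyB$ and the impossibility of $\curlyM$ and $\curlyN$ both appearing on a single branch. For item (\ref{item:no_model_drop_m_geq_0}) I argue contrapositively: if $m=-1$ then $M$ is type $3$ with $\rho=\nu(F^M)=\rho_0^M$, and at every $\gamma\in\curlyB$ an extender $E_\alpha\in\es_+(M_\gamma)$ with $\crit_\alpha<\rho_\gamma$ is necessarily total over $B_\gamma||\rho_\gamma=M_\gamma|\rho_\gamma$, so clause (\ref{item:B_alpha+1_biceph}) always fires and the branch stays in $\curlyB$; therefore any exit into $\curlyM$ requires either $\crit_\alpha\geq\rho_\gamma$ or failure of totality on $B_\gamma||\rho_\gamma$, both of which produce a proper drop in model at that stage.

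For item (\ref{item:unsound_premouse_no_drop}), let $\beta=\curlyB(\alpha)$ be the last bicephalus stage on $[0,\alpha]_\Tt$, and let $\gamma+1$ be the immediate successor on the branch outside $\curlyB$. By the no-drop hypothesis, $M^*_{\gamma+1}=M_\beta$. At the transition step, Lemma \ref{lem:biceph_ult}(\ref{item:param_above_rho_preserved}) together with the $\rho_\beta$-soundness built into the definition of a bicephalus gives that $i_{\beta,\gamma+1}$ preserves $p_{m+1}^{M_\beta}\cut\rho_\beta$ and fixes $\rho_{m+1}$ suitably; standard $m$-maximal propagation above $\gamma+1$ then extends these facts to $i_{\beta,\alpha}$. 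The identification $M_\beta=\core_{\rho_\beta}(M_\alpha)$ follows by comparing $i_{\beta,\alpha}$ with the $\rho_\beta$-core embedding of $M_\alpha$, using generalized $\rho_\beta$-solidity witnesses (as in \cite[\S1.12]{imlc}): the image $i_{\beta,\alpha}(p_{m+1}^{M_\beta}\cut\rho_\beta)$ is $(m+1)$-solid for $M_\alpha$, $\rho_\beta$ and $p_{m+1}^{M_\beta}$ together generate $M_\beta$, and $i_{\beta,\alpha}$ is the unique $\vec{p}_{m+1}$-preserving $m$-lifting embedding into $M_\alpha$ with image hull equal to $\Hull_{m+1}^{M_\alpha}(\rho_\beta\cup\{p_{m+1}^{M_\alpha}\cut\rho_\beta,\vec{p}_m^{M_\alpha}\})$, so it coincides with the $\rho_\beta$-core embedding. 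Item (\ref{item:unsound_premouse_drop}) is a direct reduction to standard theory: past the last drop, the branch is a standard $k$-maximal iteration of a $(k+1)$-sound premouse.

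The main obstacle I expect is item (\ref{item:unsound_premouse_no_drop}): the transition step from the bicephalus into $\curlyM$ has to be tracked with care, because the ultrapower on the $M$-side during the bicephalus phase can modify ordinals above $\rho_\beta$ in ways that a priori jeopardize $\rho_\beta$-soundness; verifying that $\rho_\beta$-solidity is preserved while $\rho_\beta$-soundness is precisely what fails requires combining Lemma \ref{lem:biceph_ult}\ref{item:param_above_rho_preserved} with the generalized solidity-witness machinery, and is the only place where the bicephalus-specific features really do work rather than standard premouse theory dressed up.
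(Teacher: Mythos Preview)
Your approach is essentially the paper's: items (\ref{item:semiclose}), (\ref{item:curlyB_in_seg}), (\ref{item:no_model_drop_m_geq_0}) are declared easy there, items (\ref{item:unsound_premouse_no_drop}) and (\ref{item:unsound_premouse_drop}) follow ``as usual'' once (\ref{item:close}) is available, and (\ref{item:close}) is reduced to the proof of \cite[6.1.5]{fsit}.

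The one thing the paper adds that you do not is a bicephalus-specific observation needed for (\ref{item:close}): if $[0,\xi]_\Tt$ has not dropped in model and $E_\xi=F(M_\xi)$, then $[0,\xi+1]_\Tt$ does not drop either, and $\chi=\pred^\Tt(\xi+1)$ is the \emph{least} $\chi'\in[0,\xi]_\Tt$ with $\crit(F(M_{\chi'}))=\crit_\xi$. This is precisely the trace-back step of the standard closeness argument, reformulated so as to pass correctly through the bicephalus portion of the branch; your phrase ``standard Mitchell--Steel closeness calculations'' is right in spirit but hides this wrinkle, which is the only place where (\ref{item:close}) needs anything beyond the literal premouse argument.

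One small correction to your (\ref{item:no_model_drop_m_geq_0}): the case $\crit_\alpha\geq\rho_\beta$ does not by itself force a drop in model; that is exactly the route by which one exits $\curlyB$ into $\curlyM$ \emph{without} dropping when $m\geq 0$. When $m=-1$ the point is rather that this case cannot occur: $\rho_\beta=\nu(F^{M_\beta})$ is then the largest cardinal of $M_\beta$, so every $E_\beta\in\es_+(M_\beta)$ has $\nu_\beta\leq\rho_\beta$, whence $\crit_\alpha<\nu_\beta\leq\rho_\beta$ automatically, leaving only the non-totality route into $\curlyM$ (which does drop).
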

\begin{proof}
Parts \ref{item:semiclose}, \ref{item:curlyB_in_seg} and 
\ref{item:no_model_drop_m_geq_0} are 
easy. For part \ref{item:close}, use essentially the proof of 
\cite[6.1.5]{fsit}, combined with the following simple observation. Let 
$\xi+1<\lh(\Tt)$ 
be such that $[0,\xi]_\Tt$ does not drop in model and $E_\xi=F(M_\xi)$. Let 
$\chi=\pred^\Tt(\xi+1)$. Then
$[0,\xi+1]_\Tt$ does not drop in model and $\chi$ is the least
$\chi'\in[0,\xi]_\Tt$ such that $\crit(F(M_{\chi'}))=\crit_\xi$. We
omit further details of the proof of part \ref{item:close}.

Parts \ref{item:unsound_premouse_no_drop} and
\ref{item:unsound_premouse_drop} now follow as usual.
\end{proof}

\begin{dfn}[Iteration trees on cephalanxes]\label{dfn:iter_tree_on_cephalanx}
Let $B$ be a cephalanx. The notion of an \dfnemph{iteration
tree $\Tt$ on $B$} is defined much as in \ref{dfn:bicephalus_tree}. The key
differences are as follows. The models of the tree are all either cephalanxes 
or 
segmented-premice\footnote{\label{ftn:cephalanx_premice}In fact, for the 
cephalanxes $B$ we will produce -- in the 
proof of \ref{thm:condensation}) -- the models of all trees on $B$ will be 
either 
cephalanxes or premice.}, and if $B$ is passive, then the models are all either 
cephalanxes or 
premice. We write $(M_\alpha,i_{\alpha\beta})$ and
$(Q_\alpha,k_{\alpha\beta})$, etc, for the models and embeddings above $M^B$ and
$Q^B$ respectively. We write 
$B_\alpha=(\gamma_\alpha,\rho_\alpha,M_\alpha,Q_\alpha)$ when 
$B_\alpha$ is
a cephalanx, and otherwise $B_\alpha=M_\alpha\neq\emptyset$ or
$B_\alpha=Q_\alpha\neq\emptyset$, and write $\curlyQ$ for the set of $\alpha$ 
such that $B_\alpha=Q_\alpha$. Let 
$\iota_\alpha=\iota(E_\alpha)$. Other notation is as in 
\ref{dfn:bicephalus_tree}.

Let $\alpha+1<\lh(\Tt)$. Then:
\begin{enumerate}[label=--]
 \item Either $E_\alpha\in\es_+(M_\alpha)$ or $E_\alpha\in\es_+(Q_\alpha)$.
\end{enumerate}
Let $\kappa=\crit_\alpha$. Then:
\begin{enumerate}[resume*]
\item $\pred^\Tt(\alpha+1)$ is the least $\beta$ such that $\kappa<\iota_\beta$.
\end{enumerate}
Suppose $\beta\in\curlyB$. Then:
\begin{enumerate}[resume*]
 \item If $E_\beta\in\es_+(M_\beta)$ and either $\rho_\beta<\kappa$ or 
$E_\alpha$ is not total 
over $M_\beta$ then  $\alpha+1\in\curlyM$ and
$M^*_{\alpha+1}\ins M_\beta$ and $Q_{\alpha+1}=\emptyset$.
 \item If $E_\beta\notin\es_+(M_\beta)$ and either 
$\rho_\beta<\kappa$ or $E_\alpha$ is not 
total over $Q_\beta$ then $\alpha+1\in\curlyQ$ and $Q^*_{\alpha+1}\ins Q_\beta$ 
and $M_{\alpha+1}=\emptyset$.
\end{enumerate}
Now suppose that $\kappa<\rho_\beta$ and $E_\alpha$ is total 
over $B_\beta||\rho_\beta$ (so $\kappa\leq\gamma_\beta$). Then:
\begin{enumerate}[resume*]
 \item Suppose either
$\kappa<\gamma_\beta$ or $E_\beta\in\es_+(M_\beta)$. Then
$B^*_{\alpha+1}=B_\beta$.\footnote{Here if $\kappa=\gamma_\beta$ (so 
$E_\beta\in\es_+(M_\beta)$), one might wonder why we do not just set 
$M^*_{\alpha+1}=\emptyset$ 
and $Q^*_{\alpha+1}=Q_\beta$. This might be made to work, but doing this, it 
seems that 
$E_\alpha$ might not be close to $Q^*_{\alpha+1}$.}
 \item If $\kappa=\gamma_\beta$ and $E_\beta\notin\es_+(M_\beta)$\footnote{When 
this situation 
arises with one of the active cephalanxes we will
produce, $Q$ and $Q_\beta$ must be type 2 premice.} then 
$M_{\alpha+1}=\emptyset$ and 
$Q^*_{\alpha+1}=Q_\beta$.\footnote{In this situation it 
would have been possible to set $B^*_{\alpha+1}=B_\beta$, and the reader might 
object that we are 
dropping information unnecessarily here. But for the cephalanxes we will 
produce, our proof of 
iterability would break down if we set $B^*_{\alpha+1}=B_\beta$, and it will 
turn out that we have 
in fact carried sufficient information (at least, for our present purposes).}
\end{enumerate}
The remaining details are like in \ref{dfn:bicephalus_tree}.
\end{dfn}

\begin{lem}\label{lem:cephalanx_it_fine_structure}
 Let $\Tt$ be an iteration tree on a cephalanx $B=(\gamma,\rho,M,Q)$ of degree 
$(m,q)$ and let
$\alpha+1<\lh(\Tt)$.
 Then parts \ref{item:semiclose}--\ref{item:unsound_premouse_drop}
of \ref{lem:biceph_it_fine_structure}, replacing `$\curlyN$' with `$\curlyQ$', 
hold. Parts \ref{item:curlyB_in_seg}--\ref{item:unsound_premouse_drop}, 
replacing `$\curlyM$' with `$\curlyQ$'
, `$M$' with `$Q$', `$m$' with 
`$q$', 
`$\rho$' with `$\gamma$', and
`$\curlyN$' with `$\curlyM$', hold.
\end{lem}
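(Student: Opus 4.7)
The plan is to mirror the proof of Lemma \ref{lem:biceph_it_fine_structure} essentially verbatim, inserting case splits according to whether the tree extender $E_\alpha$ lies on $M_\alpha$ or $Q_\alpha$, and whether the cephalanx $B$ (and each $B_\beta$) is active or passive. Parts (\ref{item:semiclose}), (\ref{item:curlyB_in_seg}) and (\ref{item:no_model_drop_m_geq_0}) go through with no new ideas: semi-closeness is the same routine verification, and the closure properties of $\curlyB$ under $<_\Tt$, together with the fact that $\curlyM$ and $\curlyQ$ sit on disjoint sides of the tree once one has left $\curlyB$, follow directly from the rules at successors in the definition of the cephalanx iteration tree.

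For the fine-structural parts (\ref{item:unsound_premouse_no_drop})--(\ref{item:unsound_premouse_drop}), I would argue exactly as in the bicephalus case, on each side separately. On the $M$-side, let $\beta = \curlyB(\alpha)$; continuity of $i_{\beta,\alpha}$ at $\rho_\beta$, together with the preservation statements collected in Lemmas \ref{lem:passive_cephalanx_ult} and \ref{lem:active_cephalanx_ult} (passed through branches via Lemma \ref{lem:ceph_it}), give that $M_\alpha$ is $\rho_\beta$-solid but not $\rho_\beta$-sound, with $M_\beta$ its $\rho_\beta$-core and $i_{\beta,\alpha}$ the core embedding, preserving $p_{m+1}\!\restriction\!\rho$. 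The $Q$-side analogue, with $(\gamma,q)$ replacing $(\rho,m)$, is identical except that it uses the preservation of $p_{q+1}\!\restriction\!\gamma$ in the remark following Lemma \ref{lem:passive_cephalanx_ult}. After a first drop on a given side, the usual maximal-tree analysis of the core embedding to $M_\alpha$ (resp.\ $Q_\alpha$) applies, which handles (\ref{item:unsound_premouse_drop}).

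For part (\ref{item:close}), closeness of $E_\alpha$ to $B^*_{\alpha+1}$ when $\alpha+1 \notin \curlyB$, I would reproduce the induction from \cite[6.1.5]{fsit}, using the same observation recorded in the proof of Lemma \ref{lem:biceph_it_fine_structure}: when $E_\xi = F(M_\xi)$ (or $F(Q_\xi)$) and $[0,\xi]_\Tt$ does not drop in model, then $\pred^\Tt(\xi+1)$ is the least $\chi' \in [0,\xi]_\Tt$ with $\crit(F(M_{\chi'})) = \crit_\xi$ (resp.\ with $\crit(F(Q_{\chi'})) = \crit_\xi$). In the cases where $\beta = \pred^\Tt(\alpha+1) \in \curlyB$ and the successor rules dictate that $B^*_{\alpha+1}$ is a proper initial segment of either $M_\beta$ or $Q_\beta$, closeness is verified exactly as in the standard maximal-tree setting, since $B^*_{\alpha+1} \pins B_\beta$ on the relevant side.

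The main obstacle I expect is the closeness analysis at those $\alpha+1$ with $\beta \in \curlyB$ and $\kappa = \gamma_\beta$ and $E_\beta \notin \es_+(M_\beta)$, where the tree definition deliberately sets $B^*_{\alpha+1} = Q_\beta$ (with $M_{\alpha+1} = \emptyset$) rather than $B_\beta$. The footnote in the definition explicitly warns that choosing $B^*_{\alpha+1} = B_\beta$ here can destroy closeness of $E_\alpha$. So I have to verify that the resolvent-based calculation of $\rSigma_1$-definitions of $E_\alpha$-fragments goes through using just $Q_\beta$; this amounts to checking that, in the configurations produced, $Q_\beta$ is a type $2$ premouse and the generators of $E_\alpha$ below $\nu_\alpha$ can be analysed entirely from $F^{Q_\beta}$ and its predecessors on the $Q$-branch, paralleling \cite[6.1.5]{fsit}. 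Given that verification, the remainder of the proof assembles as in Lemma \ref{lem:biceph_it_fine_structure}.
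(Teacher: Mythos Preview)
Your proposal is correct and takes essentially the same approach as the paper; in fact the paper's own proof is the single sentence ``This is proved like \ref{lem:biceph_it_fine_structure}'', so your elaboration is exactly the kind of unpacking the paper leaves to the reader. Your flagging of the $\kappa=\gamma_\beta$, $E_\beta\notin\es_+(M_\beta)$ case is appropriate caution, but no new idea is needed there beyond the standard \cite[6.1.5]{fsit} induction.
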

\begin{proof}
This is mostly like \ref{lem:biceph_it_fine_structure}.
Part \ref{item:no_model_drop_m_geq_0},
when replacing `$\curlyM$' with `$\curlyQ$', etc,
and when $q=-1$, follows easily from the iteration rules.
For consider this situation and suppose $\alpha+1\in\curlyQ$ and 
$\beta=\pred^\Tt(\alpha+1)\in\curlyB$
but $\alpha+1\notin\mathscr{D}^\Tt$.
Since $q=-1$, $B$ is active, so $\OR^Q=\rho$,
so $\OR^{Q_\beta}=\rho_\beta$, so $\kappa<\rho_\beta$,
and since $\alpha+1\notin\mathscr{D}^\Tt$, $E_\alpha$ is total over 
$B_\beta||\rho_\beta$, so $\kappa\leq\gamma_\beta$.
Therefore (since $\alpha+1\notin\curlyB$)
$\kappa=\gamma_\beta$ and $E_\beta\notin\es_+(M_\beta)$.
Therefore $E_\beta=F^{Q_\beta}$ (as $\rho_\beta=\OR^{Q_\beta}$
and $Q_\beta||\rho_\beta=M_\beta||\rho_\beta$).
But $\nu(F^{Q_\beta})\leq\gamma_\beta$,
so $\iota_\beta=\gamma_\beta$, but by the iteration rules,
$\kappa<\iota_\beta$, a contradiction.
\end{proof}
\begin{dfn}\label{dfn:exit^Tt_alpha}
Let $\Tt$ be an iteration tree on a cephal $B$ and $\alpha+1<\lh(\Tt)$.
We write $\exit^\Tt_\alpha$ for the active segmented-premouse $P$ such that 
$E^\Tt_\alpha=F^P$,
if $B$ is a bicephalus then $P\ins M^\Tt_\alpha$ or 
$P\ins N^\Tt_\alpha$, and  if $B$ is a cephalanx then $P\ins M^\Tt_\alpha$ or 
$P\ins 
Q^\Tt_\alpha$.
\end{dfn}

\begin{dfn}
Let $B$ be a cephal. A
\dfnemph{potential tree on $B$} is a tuple
\[
\Tt=\left(<_\Tt,
\left<E_\alpha\right>_{\alpha+1<\eta}\right),
\]
such that if $\eta$ is a limit then $\Tt$ is an iteration tree on $B$, and if 
$\eta=\gamma+1$ then 
$\Tt\rest\gamma$ is an iteration tree on $B$, and $\Tt$ satisfies all 
requirements of 
\ref{dfn:bicephalus_tree}, except that we drop the requirement that $B_\gamma$ 
be a cephal or 
premouse, and add the requirement that $M_\gamma$, $N_\gamma$, $Q_\gamma$,
$\Ult(M_\gamma,F^{M_\gamma})$, $\Ult(N_\gamma,F^{N_\gamma})$, and 
$\Ult(Q_\gamma,F^{Q_\gamma})$ are 
all wellfounded (if defined).
\end{dfn}

The next lemma is easy:

\begin{lem}\label{lem:ceph_it_tree}
Let $\Tt$ be a potential tree on a cephal $B$. Then $\Tt$ is an iteration tree.
Moreover, if $\alpha<\beta<\lh(\Tt)$ and $\beta\in\curlyB^\Tt$ then we can 
apply 
\ref{lem:ceph_it} 
to $B_\alpha,B_\beta$ and the sequence of extenders used along 
$(\alpha,\beta]_\Tt$.
Further, assume that if $B$ is an active cephalanx and 
$\lgcd(Q^B)<\nu(F^{Q^B})$ then $Q^B$ is a premouse. Then every model of $\Tt$ 
is 
either a cephal or 
a premouse. 
\end{lem}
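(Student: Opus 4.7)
I would prove all three conclusions by a single induction on $\length(\Tt)$, delegating to the ultrapower lemmas of this section at successor stages and to direct limits at limits.

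For the first conclusion, the definition of potential tree already forces every model to be wellfounded; what remains is the structural requirement that each $B_\gamma$ be a cephal, a premouse, or a segmented-premouse in the configuration demanded by \ref{dfn:bicephalus_tree} (and its cephalanx counterpart). At a successor stage $\gamma+1$ where $B_{\gamma+1}$ arises as an ultrapower above a cephal $B^*_{\gamma+1}$, the output is a cephal of the same kind by \ref{lem:biceph_ult}\ref{item:ult_is_aug_biceph}, \ref{lem:passive_cephalanx_ult}\ref{item:ult_ceph}, or \ref{lem:active_cephalanx_ult}\ref{item:active_ult_ceph}, as appropriate; applying these lemmas requires that $E^\Tt_\gamma$ be semi-close to (and reasonable for) $B^*_{\gamma+1}$, which is furnished inductively by \ref{lem:biceph_it_fine_structure}\ref{item:semiclose} and \ref{lem:cephalanx_it_fine_structure}. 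If instead a drop has occurred and $B^*_{\gamma+1}$ is a segmented-premouse, the conclusion is routine from standard premouse ultrapower theory. At a limit stage, the direct limit is wellfounded by the potential-tree hypothesis, and cephal structure passes through the direct limit coordinatewise (via the $M$-side and $N$-side, respectively $Q$-side, direct limits).

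For the second conclusion, I use that $\curlyB^\Tt$ is closed downward under $<_\Tt$ by \ref{lem:biceph_it_fine_structure}\ref{item:curlyB_in_seg} (and its cephalanx analogue in \ref{lem:cephalanx_it_fine_structure}). Hence for $\beta\in\curlyB^\Tt$, every $\alpha\leq_\Tt\beta$ lies in $\curlyB^\Tt$ and no model-drop occurs along $[0,\beta]_\Tt$. The iteration from $B_\alpha$ to $B_\beta$ is then literally a finite or transfinite sequence of ultrapowers of cephals by the successive extenders used along the branch, each semi-close to the corresponding cephal, so \ref{lem:ceph_it} applies directly.

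For the third conclusion, I show by induction that every $Q_\gamma$ produced by $\Tt$ is a premouse (not just a segmented-premouse). Under the extra hypothesis, either $Q^B$ is already a premouse, or else $\lgcd(Q^B)\geq\nu(F^{Q^B})$, in which case $F^{Q^B}$ is of superstrong type and the relevant portion of the ISC is vacuous for $Q^B$. Since the ISC is preserved under the (possibly degree-$0$) ultrapowers forming the $Q$-side models, and the $M$-side models are premice by standard preservation, the induction goes through. The main obstacle, and the spot requiring the most case analysis, is the active-cephalanx ultrapower: here one must track how $\lgcd$ relates to $\nu(F)$ under the iteration maps so that the ISC hypothesis persists along the tree; this is done by tracking $\psi_\pi$ through \ref{lem:ult_comm} and \ref{fact:type_3_ult}, exactly as in the preservation arguments of \ref{lem:active_cephalanx_ult}.
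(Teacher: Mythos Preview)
The paper omits the proof (declaring the lemma ``easy''), and your plan for the first two conclusions is the natural one and is correct.

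For the third conclusion there is a gap. Your inference ``$\lgcd(Q^B)\geq\nu(F^{Q^B})$ implies $F^{Q^B}$ has superstrong type, so the relevant portion of the ISC is vacuous'' is neither clearly true for arbitrary active segmented-premice nor the right mechanism here. Even granting it, the cephal-side structures $Q_\beta$ for $\beta\in\curlyB^\Tt$ are formed by \emph{simple} ultrapowers, which need not preserve premousehood (this is exactly why they are only required to be segmented-premice), so ``ISC is preserved along the $Q$-side'' does not go through for those models. The correct observation is that $\lgcd(Q^B)\geq\nu(F^{Q^B})$ is precisely the condition $\iota(Q^B)=\gamma^B$, and under it the tree rule placing $\alpha+1\in\curlyQ^\Tt$ with $Q^*_{\alpha+1}=Q_\beta$ cannot fire: that rule requires $E_\beta\notin\es_+(M_\beta)$, hence $E_\beta=F(Q_\beta)$, together with $\crit^\Tt_\alpha=\gamma_\beta<\iota_\beta$, but here $\iota_\beta=\gamma_\beta$. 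So $\curlyQ^\Tt=\emptyset$ (this is the content of \ref{lem:special_facts}(b), which appears just after), and every non-cephal model lies in $\curlyM^\Tt$, where standard preservation gives premousehood. In the complementary case $\lgcd(Q^B)<\nu(F^{Q^B})$, the extra hypothesis makes $Q^B$ a premouse outright, and then one checks directly that the first entry into $\curlyQ^\Tt$ produces a premouse and that subsequent fine-structural ultrapowers within $\curlyQ^\Tt$ preserve this.
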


\begin{dfn}[Iterability for cephals] Let $B$ be a bicephalus and
$\alpha\in\OR$. The \dfnemph{length $\theta$ iteration game for $B$} is
defined in the obvious way: given $\Tt\rest\alpha+1$ with $\alpha+1<\theta$, 
player $\playerI$ must 
choose an extender $E_\alpha$, and given $\Tt\rest\lambda$ for a limit 
$\lambda<\theta$, player 
$\playerII$ must choose $[0,\lambda]_\Tt$. The first player to 
break one of these rules or one of the conditions of \ref{dfn:bicephalus_tree} 
loses, and otherwise 
player $\playerII$ wins.

The iteration game for cephalanxes is defined similarly.

We say that a cephal $B$ is \dfnemph{$\alpha$-iterable} if there is a winning
strategy for player $\playerII$ in the length $\alpha$ iteration
game for $B$.\end{dfn}

\begin{lem}\label{lem:condensation_iterates}
 Let $B$ be an $(\om_1+1)$-iterable cephal of degree $(m,k)$. Let $\Tt$ be an 
iteration 
tree on $B$ and $\alpha<\lh(\Tt)$. Then:
\begin{enumerate}[label=--]
 \item Suppose $M^\Tt_\alpha\neq\emptyset$. If $\alpha\in\curlyB^\Tt$ let 
$d=m$; 
otherwise let 
$d=\deg^\Tt(\alpha)$. Then suitable condensation holds through 
$(M^\Tt_\alpha,\max(d,0))$.
 \item Suppose $B$ is a cephalanx and $Q^\Tt_\alpha\neq\emptyset$. If 
$\alpha\in\curlyB^\Tt$ let 
$d=k$; otherwise let $d=\deg^\Tt(\alpha)$. Then suitable condensation holds 
below 
$(Q^\Tt_\alpha,\max(d,0))$, and if either $[0,\alpha]_\Tt$ drops or 
$Q,Q^\Tt_\alpha$ are premice, 
then suitable condensation holds through $(Q^\Tt_\alpha,\max(d,0))$.
\end{enumerate}
\end{lem}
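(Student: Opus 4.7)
The plan is to reduce the statement to Lemma \ref{lem:suitable_condensation} by extracting from the iteration strategy for $B$ an appropriate normal iteration strategy for each side model. Fix a winning strategy $\Sigma$ for player $\playerII$ in the $(\om_1+1)$-iteration game for $B$.

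For each $\alpha<\lh(\Tt)$ with $M^\Tt_\alpha\neq\emptyset$, let $d$ be as in the statement and set $e=\max(d,0)$. Note that $M^\Tt_\alpha$ is a premouse: $M^B$ is a premouse by definition of cephal, and drops from a bicephalus/cephalanx to the $M$-side land on proper initial segments of $M^\Tt_\beta$, which are premice, while ultrapowers on the $M$-side are premice by \ref{lem:biceph_ult}, \ref{lem:passive_cephalanx_ult}, and \ref{lem:active_cephalanx_ult}. Thus, once I establish that $M^\Tt_\alpha$ is $(e,\om_1+1)$-iterable, Lemma \ref{lem:suitable_condensation} yields suitable condensation through $(M^\Tt_\alpha,e)$, as required.

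To prove iterability, I take an arbitrary $e$-maximal iteration tree $\Uu$ on $M^\Tt_\alpha$ and form the concatenation $\Tt'$ on $B$: the first $\alpha+1$ stages agree with $\Tt$, and above stage $\alpha$ I use the extenders and tree order of $\Uu$. The condition ``$\pred(\eta+1)$ is least $\beta$ with $\crit_\eta<\nu_\beta$'' (or $\iota_\beta$, in the cephalanx case) is the same in $\Uu$ and $\Tt'$, so the tree order transfers. At each successor stage above $\alpha$, the clauses of \ref{dfn:bicephalus_tree} and its cephalanx analogue determine whether $\Tt'$ continues on the cephal or drops to the $M$-side; in either case, by \ref{lem:ceph_it} (for the bicephalus/cephalanx stages) and standard ultrapower theory (after dropping), the $M$-side of $B^{\Tt'}_\eta$ coincides with $M^\Uu_\eta$, and the corresponding drops in $\Uu$ match those in $\Tt'$. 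Thus $\Tt'$ is a potential tree, hence an iteration tree by \ref{lem:ceph_it_tree}. Applying $\Sigma$ at each limit stage of $\Uu$ yields a cofinal wellfounded branch whose $M$-side projection gives a branch for $\Uu$.

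The $Q$-side argument is entirely symmetric, the only difference being that when $B$ is an active cephalanx and $Q=Q^B$ is an active segmented-premouse of superstrong type (not a premouse), $Q^\Tt_\alpha$ is a premouse precisely when either $[0,\alpha]_\Tt$ drops (sending us onto a proper initial segment, which is a premouse) or $Q$ is itself a premouse; in these cases Lemma \ref{lem:suitable_condensation} provides the ``through'' conclusion, otherwise only the ``below'' conclusion. The main technical obstacle is the drop bookkeeping: one must verify that $\Uu$ and the $M$-side (or $Q$-side) of $\Tt'$ drop in model and degree at the same stages, so that their models really coincide. For stages within $\curlyB^{\Tt'}$ this is exactly the content of \ref{lem:ceph_it}; once $\Tt'$ leaves $\curlyB^{\Tt'}$, its continuation is a standard maximal tree on a (segmented-)premouse and the agreement is immediate. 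Given Lemmas \ref{lem:biceph_it_fine_structure} and \ref{lem:cephalanx_it_fine_structure}, this bookkeeping is routine.
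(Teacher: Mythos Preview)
Your concatenation approach has a genuine gap. You claim that for stages of $\Tt'$ beyond $\alpha$, the predecessor condition ``least $\beta$ with $\crit_\eta<\nu_\beta$'' agrees in $\Uu$ and in $\Tt'$. But in $\Tt'$ the search for $\beta$ ranges over all stages $\leq\alpha+\eta$, including those $<\alpha$, whereas in $\Uu$ it ranges only over stages corresponding to $[\alpha,\alpha+\eta]$. If $\crit(E^\Uu_0)<\nu^\Tt_\beta$ for some $\beta<\alpha$, then $\pred^{\Tt'}(\alpha+1)<\alpha$ and $M^{*\Tt'}_{\alpha+1}$ is (a segment of) some $M^\Tt_\beta$ with $\beta<\alpha$, not $M^\Tt_\alpha$; so $M^{\Tt'}_{\alpha+1}\neq M^\Uu_1$. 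Worse, the length-increasing requirement $\lh^{\Tt'}_\beta\leq\lh^{\Tt'}_\alpha$ for $\beta<\alpha$ may simply fail, since $\lh(E^\Uu_0)$ is unconstrained relative to $\lh^\Tt_\beta$. Concatenating a normal tree onto a nontrivial initial segment of another does not yield a normal tree; this is exactly why iterability of iterates is \emph{not} automatic from a normal strategy, and why the whole paper works under $(\om_1+1)$-iterability rather than $(\om_1,\om_1+1)$-iterability.

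The paper's argument avoids this entirely. It establishes the conclusion only for the trivial tree: your translation of a tree on $M^B$ to a tree on $B$ is correct when $\alpha=0$, so $M^B$ really is $(m,\om_1+1)$-iterable and Lemma~\ref{lem:suitable_condensation} applies (likewise for $Q^B$). For nontrivial $\Tt$ the paper does not prove that $M^\Tt_\alpha$ is iterable at all. Instead it uses Lemma~\ref{lem:suitable_condensation_def}: suitable condensation through $(M,d)$ is expressed by an $\rPi_{d+1}$ formula in $\vec{p}_d^{\,M}$, and the iteration maps along $[0,\alpha]_\Tt$ (from the last drop onward) are $\rSigma_{d+1}$-elementary and $\vec{p}_d$-preserving, so the property transfers from the source model to $M^\Tt_\alpha$. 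When $[0,\alpha]_\Tt$ drops, the source $M^{*\Tt}_{\beta+1}$ is a proper segment of an earlier model already known to satisfy suitable condensation below, hence satisfies it through any degree; elementarity then carries it forward.
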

\begin{proof}
If $\Tt$ is trivial, use
\ref{lem:suitable_condensation} (for example, $M^B$ is $(m,\om_1+1)$-iterable).
This extends to longer trees $\Tt$ by \ref{lem:suitable_condensation_def} and 
the elementarity of 
the iteration maps.
\end{proof}

\section{Analysis of iterable cephals}

In this section we prove the main facts about iterable bicephali and 
cephalanxes, which establish strong fine structural restrictions on them.

\begin{dfn}
Let $m<\om$ and $M$ a $\rho$-sound premouse  with
$\rho_{m+1}^M\leq\rho\leq\rho_m^M$. Let 
$\kappa<\rho$, let
$H=\cHull_{m+1}^M(\kappa\un\pvec_{m+1}^M)$ and
$\pi:H\to M$ the uncollapse.

Then $M$ has an 
\dfnemph{$(m,\rho)$-good core at $\kappa$} iff
$H||(\kappa^+)^H=M||(\kappa^+)^M$,
$H$ is $\kappa$-sound,
$\crit(\pi)=\kappa$,
$\pi(\kappa)\geq\rho$ and
$\pi(p_{m+1}^H\cut\kappa)=p_{m+1}^M\cut\kappa$.
In this context, let $H^M_{m,\kappa}=H$ and let $G^M_{m,\kappa,\rho}$ be the 
length $\rho$ extender 
derived from $\pi$.
\end{dfn}

\begin{rem}
Note that if $M$ has an $(m,\rho)$-good core at $\kappa$ then, with 
$\pi,H$ as above, we have $\rho_{m+1}^M\leq\kappa$, $M$ is not $(m+1)$-sound, 
$G=G^M_{m,\kappa,\rho}$ is weakly amenable to $H$, $M=\Ult_m(H,G)$ and 
$i^{H,m}_G=\pi$.\end{rem}

\begin{tm}\label{thm:no_iterable_sound_bicephalus} Let $B=(\rho,M,N)$ be an 
$(\om_1+1)$-iterable 
non-trivial bicephalus. Then $B$ is not sound. Let $m=m^B$
and $n=n^B$. Then exactly one of the following holds:
\begin{enumerate}[label=\tu{(}\alph*\tu{)}]
 \item\label{item:it_biceph_a} $N$ is active type 1 or type 3 with largest 
cardinal $\rho$,
and letting $\kappa=\crit(F^N)$, then $m\geq 0$ and $M$ has an $(m,\rho)$-good 
core at $\kappa$, 
and 
$G^M_{m,\kappa,\rho}=F^N\rest\rho$.
 \item\label{item:it_biceph_b}Vice versa.
 \end{enumerate}
\end{tm}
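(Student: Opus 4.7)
The plan is to prove the theorem by a self-comparison of $B$, running an iteration tree on $B$ itself under the hypothesised $(\om_1+1)$-strategy $\Sigma$ in which, at each stage, we use an extender witnessing the least disagreement between the two sides of the current bicephalus. Concretely, I would define $\Tt$ recursively by: at stage $\alpha$, if $B_\alpha=(\rho_\alpha,M_\alpha,N_\alpha)$ is still a (non-trivial) bicephalus, let $\gamma_\alpha\geq(\rho_\alpha^+)^{B_\alpha}$ be the least index of disagreement between the extender sequences of $M_\alpha$ and $N_\alpha$, and take $E_\alpha$ to be the non-empty extender at $\gamma_\alpha$ on whichever side carries one (either, if both do). If at some stage the bicephalus structure drops (via clause \ref{item:B_alpha+1=M_alpha+1} or its symmetric counterpart in \ref{dfn:bicephalus_tree}), the tree continues as a usual $k$-maximal tree on the surviving side, with $\Sigma$ choosing branches at limits. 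A standard reflection into a countable hull (together with $(\om_1+1)$-iterability and the usual Mitchell--Steel termination arguments for comparisons) shows $\Tt$ has countable length $\theta$, and by the standard termination argument either $B_\theta$ is a trivial bicephalus with $M_\theta=N_\theta$, or the main branch has dropped and $B_\theta$ is a pure premouse.

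For the non-soundness part: assume $B$ is sound. By \ref{lem:biceph_ult}\ref{item:sound_equiv} and the limit case of \ref{lem:ceph_it}, soundness, and in particular the standard parameters $p_{m+1}^{M},p_{n+1}^{N}$ above $\rho$, are preserved (or appropriately related) along the non-dropping portions of $\Tt$. Standard calculations using generalised solidity witnesses, together with \ref{lem:biceph_it_fine_structure}, then show that the main branch cannot drop and $B_\theta$ must be trivial with $M_\theta=N_\theta$; pulling this equality back through the iteration maps (using \ref{lem:biceph_ult}\ref{item:param_above_rho_preserved},\ref{item:biceph_i_E_agmt}) forces $M=N$, contradicting non-triviality. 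Thus $B$ is not sound.

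For the trichotomy, I would examine the first stage at which asymmetry is forced. Since $B$ is non-trivial, there is an initial least-disagreement index $\gamma_0\geq(\rho^+)^B$; without loss of generality $E^{M}_{\gamma_0}\neq\emptyset$ and (once we orient things correctly) we are in the situation of conclusion \ref{item:it_biceph_a} or \ref{item:it_biceph_b}. By tracking the ultrapower $\Ult(B,E_0)$ via \ref{lem:biceph_ult} and \ref{fact:type_3_ult}, I would show that for the comparison even to continue past stage $0$ without a drop immediately terminating it, $E_0$ must carry very specific data: its critical point $\kappa$ must be a cardinal of both sides, and the asymmetry of $E^M_{\gamma_0}$ versus $E^N_{\gamma_0}$ must exactly reflect a ``good core'' phenomenon on one side, namely that one of $M,N$ is active of type 1 or 3 with largest cardinal $\rho$ and critical point $\kappa$, while the other has an $(\cdot,\rho)$-good core at $\kappa$ whose associated length-$\rho$ extender coincides with $F\rest\rho$. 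The precise identification $G^M_{m,\kappa,\rho}=F^N\rest\rho$ then comes out of a direct fine-structural comparison of the ultrapower embeddings, using \ref{lem:k-lifting_dichotomy} to rule out intermediate possibilities.

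The main obstacle is the case analysis at the terminal model, in particular the fine-structural identification of $F^N$ and its type when $N$ may have superstrong type. The key technical tools are the ultrapower-commutativity lemmas \ref{lem:ult_comm} and \ref{fact:type_3_ult} (for handling type 3 and the squashed structure), the preservation facts in \ref{lem:biceph_ult} and \ref{lem:ceph_it}, and the dichotomy \ref{lem:k-lifting_dichotomy}. A second, more routine, difficulty is showing termination of the self-comparison for bicephali in the superstrong setting, which the paper's iteration-tree framework (\ref{dfn:bicephalus_tree}, \ref{lem:biceph_it_fine_structure}) was set up to accommodate. Finally, the ``exactly one'' clause of the trichotomy follows from the observation that \ref{item:it_biceph_a} and \ref{item:it_biceph_b} impose incompatible activity/passivity patterns on the two sides at the top of $\rho$.
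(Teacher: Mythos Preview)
Your approach diverges from the paper's at the most basic level: you propose a \emph{single} iteration tree $\Tt$ on $B$ in which $M_\alpha$ is compared against $N_\alpha$ inside the same tree, whereas the paper runs a \emph{pair} of padded trees $(\Tt,\Uu)$, each on $B$ via $\Sigma$, and compares models selected from $\Tt$ against models selected from $\Uu$ (bookkeeping sets $\modelset^\Tt_\alpha,\modelset^\Uu_\alpha\sub\{M_\alpha,N_\alpha\}$ record which side of each bicephalus is currently in play, and are used to avoid compatible extenders).

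The single-tree scheme has a genuine gap at the termination step. When $\crit(E_\alpha)<\rho_\beta$ and $E_\alpha$ is total over $B_\beta||\rho_\beta$, clause \ref{item:B_alpha+1_biceph} of \ref{dfn:bicephalus_tree} sets $B_{\alpha+1}=\Ult(B_\beta,E_\alpha)$, so \emph{both} $M_\beta$ and $N_\beta$ are moved by the same extender. Nothing here lines the two sides up: there is no reason the least disagreement of $B_{\alpha+1}$ should exceed $\gamma_\alpha$, so the comparison makes no progress. The ``standard reflection'' you invoke produces, for a length-$(\om_1{+}1)$ comparison, two compatible extenders coming from \emph{distinct} branches --- but in a single tree there is only one branch through $\om_1$, hence no contradiction. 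And if instead $\crit(E_\alpha)\geq\rho_\beta$, the tree leaves $\curlyB^\Tt$ immediately and you are left with a lone premouse and nothing to compare it to; ``continuing as a $k$-maximal tree on the surviving side'' then has no content.

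Your treatment of the dichotomy is also too optimistic. The conclusion that one side is active of type 1 or 3 with $\OR=(\rho^+)^B$, and that the other is a degree-$m$ ultrapower by that active extender, does not emerge from inspecting $E_0$; in the paper it is forced only at the \emph{terminal} stage of the two-tree comparison, where one side still carries a bicephalus $\Btilde$ while the other has become an unsound premouse equal to $M^\Tt_\alpha$. The key steps are that $\alpha=\beta+1$ and $E^\Uu_\beta=F^{\Ntilde}$ is type 1 or 3 with $\lh=(\rhotilde^+)^{\Btilde}$. Non-soundness of $M$ is then obtained not by pulling back an equality $M_\theta=N_\theta$ (indeed $\Btilde$ is still non-trivial, by \ref{lem:biceph_ult}), but by a direct hull contradiction: soundness of $M$ would place $\kappatilde=i^\Tt_{0,\alpha}(\kappa)$ inside $\Hull_{m+1}^{\Mtilde}(\zetatilde\un\ztilde\un\pvec_m^{\Mtilde})$, while the factoring $\Mtilde=\Ult_m(N^{*\Uu}_\alpha,\Ftilde)$ coming from the $\Uu$-side keeps $\kappatilde$ out of that hull. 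Only after this is conclusion (a) pulled back along the iteration maps.
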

\begin{proof} 
We may assume $\ZFC$, as we can
work in an inner model which contains $B$ and is closed under an iteration 
strategy $\Sigma$ for $B$,
such as $\HOD_{B,\Sigma}$ or $L[B,\Sigma]$.
So we
may also assume $B$ is countable.
We
mimic the self-comparison argument used in \cite[\S 9]{fsit}. Fix an
$(\om_1+1)$-iteration strategy $\Sigma$ for $B$. We form a pair of padded
iteration
trees $(\Tt,\Uu)$ on $B$, each via $\Sigma$, by comparison. We will
show that the comparison terminates, using the ISC and some more.
Examining the circumstances under which the comparison terminates,
we will show that $B$ is unsound, and the comparison produces
an iterate $B'$ of $B$, also a cephal,
such that $B'$ has  a good core.
A new feature of the proof (in contrast to the classical phalanx comparisons)
is that we 
then need to show that the iteration map from $B$ to $B'$
cannot introduce this property, so$B$ also has a good core.

The trees $(\Tt,\Uu)$ may be padded, but for each $\alpha$ we will have either
$E^\Tt_\alpha\neq\emptyset$ or $E^\Uu_\alpha\neq\emptyset$.
See \S\ref{subsubsec:iteration_trees} regarding padding and tree 
ordering.
At stage $\alpha$ of the comparison, given $\alpha\in\curlyB^\Tt$, we may 
set $E^\Tt_\alpha=\emptyset$, and simultaneously declare that, if $\Tt$ is 
to later use a non-empty extender, then letting $\beta>\alpha$ be least such 
that 
$E^\Tt_\beta\neq\emptyset$, we will have 
$E^\Tt_\beta\in\es_+(M^\Tt_\alpha)=\es_+(M^\Tt_\beta)$. Or 
instead, we may declare that $E^\Tt_\beta\in\es_+(N^\Tt_\alpha)$. Toward this, 
we 
define non-empty sets
\[\modelset^\Tt_\beta\sub\{M^\Tt_\beta,N^\Tt_\beta\}\cut\{\emptyset\}.\] We 
will 
require that if $E^\Tt_\beta\neq\emptyset$, then $E^\Tt_\beta\in\es_+(P)$ for 
some 
$P\in\modelset^\Tt_\beta$. All models in
$\modelset^\Tt_\beta$ will be non-empty.

We also define sets $S^\Tt_\beta\sub t^\Tt_\beta\sub\{0,1\}$ for convenience.
Let $0\in t^\Tt_\beta$ iff $M^\Tt_\beta\neq\emptyset$, and 
$1\in t^\Tt_\beta$ iff $N^\Tt_\beta\neq\emptyset$. Let $0\in S^\Tt_\beta$ iff 
$M^\Tt_\beta\in\modelset^\Tt_\beta$, and $1\in S^\Tt_\beta$ iff 
$N^\Tt_\beta\in\modelset^\Tt_\beta$. 
(We will explicitly define either $\modelset^\Tt_\beta$ or $S^\Tt_\beta$, 
implicitly defining the 
other.)

The preceding definitions also extend to $\Uu$.

We start with
$B^\Tt_0=B=B^\Uu_0$ and $S^\Tt_0=\{0,1\}=S^\Uu_0$.

Suppose we have defined $(\Tt,\Uu)\rest\lambda$ for some limit $\lambda$. Then 
$(\Tt,\Uu)\rest\lambda+1$ is determined by $\Sigma$, and
$S^\Tt_\lambda=\lim_{\alpha<_\Tt\lambda}S^\Tt_\alpha$, and $S^\Uu_\lambda$ is 
likewise.

Now suppose we have defined $(\Tt,\Uu)\rest\alpha+1$ and $S^\Tt_\alpha$ and
$S^\Uu_\alpha$; we determine what to do next (at \dfnemph{stage} $\alpha$).

Let $\widetilde{\nu}(F)=\nu(F)$ for $F$ an extender,
and  $\widetilde{\nu}(\emptyset)=\infty$ (with $\infty>\alpha$
for  $\alpha\in\OR$).

\begin{casetwo} There is $\xi\in\OR$ such that for some
$Y\in\modelset^\Tt_\alpha$ and
$Z\in\modelset^\Uu_\alpha$, we have $\xi\leq\OR^Y\inter\OR^Z$ and $Y|\xi\neq
Z|\xi$.

Let $\xi$ be least such and $\nu=$ the minimum 
value of $\min(\widetilde{\nu}(F^{Y|\xi}),\widetilde{\nu}(F^{Z|\xi}))$ over all 
choices 
of pairs $(Y,Z)$ witnessing the 
choice of $\xi$ (there are at most 4).

\begin{scasetwo} For some  $(Y,Z)$ witnessing the choice of $\xi$, $Y|\xi$ 
and $Z|\xi$ are
both active and $\nu(F^{Y|\xi})=\nu(F^{Z|\xi})=\nu$.

Fix such $Y,Z$. We set 
$E^\Tt_\alpha=F^{Y|\xi}$ and $E^\Uu_\alpha=F^{Z|\xi}$. This 
determines $(\Tt,\Uu)\rest\alpha+2$. Also set
$S^\Tt_{\alpha+1}=t^\Tt_{\alpha+1}$ and $S^\Uu_{\alpha+1}=t^\Uu_{\alpha+1}$.
\end{scasetwo}

\begin{scasetwo}\label{scase:two} Otherwise.

Then take $Y,Z$ witnessing the choice of $\xi$ and such that either (i)
$Y|\xi$ is active, $\nu(F^{Y|\xi})=\nu$, and if $Z|\xi$ is active then 
$\nu(F^{Z|\xi})>\nu$; or (ii)
 vice versa.

 Say $Y|\xi$ is active with $\nu(F^{Y|\xi})=\nu$. Then we
set $E^\Tt_\alpha=F^{Y|\xi}$ and $E^\Uu_\alpha=\emptyset$. This
determines $(\Tt,\Uu)\rest\alpha+2$. Set $S^\Tt_{\alpha+1}=t^\Tt_{\alpha+1}$. 
Now
suppose there is $X\in\modelset^\Uu_\alpha$ with $X|\xi$ active and 
$\nu(F^{X|\xi})=\nu$. Then 
$X|\xi=Y|\xi$, so we must 
avoid setting $E^\Uu_\beta=F^{X|\xi}$ at some $\beta>\alpha$. So we set 
$\modelset^\Uu_{\alpha+1}=\{Z\}$, and set $S^\Uu_{\alpha+1}$ accordingly. 
If there is no 
such $X$ then set $S^\Uu_{\alpha+1}=S^\Uu_\alpha$.
(In any case, later extenders used in $\Uu$ will be incompatible with 
$E^\Tt_\alpha$.) The 
remaining cases are covered by symmetry.
\end{scasetwo}
\end{casetwo}

\begin{casetwo}\label{case:terminate} Otherwise.
 
Then we stop the
comparison at stage $\alpha$.\end{casetwo}

This completes the definition of $(\Tt,\Uu)$. For $\alpha<\lh(\Tt,\Uu)$, let 
$S^\Tt(\alpha)$ be the 
largest $\beta\leq_\Tt\alpha$ such that $S^\Tt_\beta=\{0,1\}$; here if 
$\alpha\in\curlyB^\Tt$ then
$B^\Tt_\beta=B^\Tt_\alpha$. Let $S^\Uu(\alpha)$ be likewise.
For $\alpha+1<\lh(\Tt,\Uu)$, let $\lh_\alpha=\lh(E^\Tt_\alpha)$
and $\nu_\alpha=\nu(E^\Tt_\alpha)$
if $E^\Tt_\alpha\neq\emptyset$, and $\lh_\alpha=\lh(E^\Uu_\alpha)$
and $\nu_\alpha=\nu(E^\Uu_\alpha)$
otherwise. (Note that if $E^\Tt_\alpha\neq\emptyset\neq E^\Uu_\alpha$
then $\lh(E^\Tt_\alpha)=\lh(E^\Uu_\alpha)$ and 
$\nu(E^\Tt_\alpha)=\nu(E^\Uu_\alpha)$.)

\begin{clm}\label{clm:comp_ext_incompat} Let $\alpha+1,\beta+1<\lh(\Tt,\Uu)$. 
Then (i) if $\alpha<\beta$ then $\lh_\alpha\leq\lh_\beta$ 
and $\nu_\alpha<\nu_\beta$; and
(ii) if 
$E^\Tt_\alpha\neq\emptyset\neq 
E^\Uu_\beta$ then $E^\Tt_\alpha\rest\nu_\alpha\neq 
E^\Uu_\beta\rest\nu_\beta$.
\end{clm}
\begin{proof}
Part  (i) is by standard considerations together with the 
definition
 of $\modelset^\Uu_{\alpha+1}$ in Subcase \ref{scase:two} above
 (which prevents  having $E^\Uu_{\alpha+1}=E^\Tt_\alpha$, for example).
Part (ii): If $\alpha+1=\beta+1$, this is directly by 
construction,
and if $\alpha+1<\beta+1$, use part (i)
and the ISC as usual.
\end{proof}

\begin{clm}\label{clm:bicephalus_comparison_terminates} The comparison 
terminates at some countable stage.\end{clm}

\begin{proof}
By the proof 
that standard comparison terminates,  with
 Claim \ref{clm:comp_ext_incompat}.
\end{proof}

So let $\alpha$ be such that the comparison stops at stage $\alpha$.

\begin{clm} \label{clm:1_model} $\card(S^\Tt_\alpha)=\card(S^\Uu_\alpha)=1$ and
$\modelset^\Tt_\alpha=\modelset^\Uu_\alpha$.
\end{clm}
\begin{proof}
If $\alpha\in\curlyB^\Tt$ then $B^\Tt_\alpha$ is non-trivial, by 
\ref{lem:ceph_it_tree}; likewise 
for $\Uu$. So because Case \ref{case:terminate} attains at stage $\alpha$, we 
do 
not have 
$S^\Tt_\alpha=S^\Uu_\alpha=\{0,1\}$.

It is not true that ($\dagger$) $P\pins Q$ or $Q\pins P$  for some 
$P\in\modelset^\Tt_\alpha$ and $Q\in\modelset^\Uu_\alpha$. For suppose 
($\dagger$) holds; we may assume $Q\pins P$. Then $Q$ is 
sound, so by \ref{lem:biceph_it_fine_structure}, $\alpha\in\curlyB^\Uu$,
so by ($\dagger$) and 
Case \ref{case:terminate} hypothesis, $\card(S^\Uu_\alpha)=1$. Say 
$S^\Uu_\alpha=\{0\}$.
Let $\beta=S^\Uu(\alpha)$. Then $B^\Uu_\beta=B^\Uu_\alpha$ and 
$E^\Tt_\beta\in\es_+(N^\Uu_\beta)$ and $E^\Uu_\gamma=\emptyset$ for all 
$\gamma\in[\beta,\alpha)$. 
Let $\varrho=\rho^\Uu_\beta$. Then
$\lh^\Tt_\beta\geq(\varrho^+)^{B^\Uu_\beta}$. So
$\pow(\varrho)\inter P=\pow(\varrho)\inter B^\Uu_\beta$, contradicting
the fact that $M^\Uu_\beta=Q\pins P$.

Now suppose that $S^\Tt_\alpha=\{0,1\}$ but
$\card(S^\Uu_\alpha)=1$. Let $\delta$ be 
least
such that $M^\Tt_\alpha|\delta\neq N^\Tt_\alpha|\delta$. Let 
$Q\in\modelset^\Uu_\alpha$. Then
$Q\pins M^\Tt_\alpha||\delta=N^\Tt_\alpha||\delta$, so ($\dagger$) holds, 
contradiction. So 
$\card(S^\Tt_\alpha)=\card(S^\Uu_\alpha)=1$, and because ($\dagger$) fails, 
$\modelset^\Tt_\alpha=\modelset^\Uu_\alpha$.
\end{proof}

\begin{clm}\label{clm:not_both_drop} $\alpha\in\curlyB^\Tt\Delta\curlyB^\Uu$.
\end{clm}
\begin{proof}
Either $\Tt$ or $\Uu$ is non-padded cofinally in $\alpha$
(that is, if $\alpha=\beta+1$ then either $E^\Tt_\beta\neq\emptyset$
or $E^\Uu_\beta\neq\emptyset$,
and if $\alpha$ is a limit then either $E^\Tt_\beta\neq\emptyset$ for 
cofinally many $\beta<\alpha$, or $E^\Uu_\beta\neq\emptyset$ for cofinally 
many $\beta<\alpha$). By this and Claim \ref{clm:1_model}, we get
$\alpha\notin\curlyB^\Tt\inter\curlyB^\Uu$, so assume that 
$\alpha\notin\curlyB^\Tt\cup\curlyB^\Uu$. 
Then standard calculations using \ref{lem:biceph_it_fine_structure}
give that 
$\Tt,\Uu$ use 
compatible extenders, a contradiction.
\end{proof}

By the previous claims, we can assume  
$\alpha\in\curlyB^\Tt\cut\curlyB^\Uu$, 
$S^\Tt_\alpha=\{0\}$ and $S^\Uu_\alpha=\{1\}$, so
$\Btilde=B^\Tt_\alpha$ is a bicephalus, $\alpha\in\curlyN^\Uu$, and 
$M^\Tt_\alpha=N^\Uu_\alpha$; the other
cases are almost symmetric. We will show  conclusion (a) of the theorem 
holds; under 
symmetric assumptions (b) can hold instead. Let $\beta=S^\Tt(\alpha)$. Let 
$\rhotilde=\rho(\Btilde)$. Then $\Btilde=B^\Tt_\beta$ and for all 
$\gamma\in[\beta,\alpha)$, we have 
$E^\Tt_\gamma=\emptyset\neq E^\Uu_\gamma$ and 
$(\rhotilde^+)^{\Btilde}\leq\lh^\Uu_\gamma$.

\begin{clm}\label{clm:E^Uu_beta_type_1_3}
 $\alpha=\beta+1$ and $\lh^\Uu_\beta=(\rhotilde^+)^{\Btilde}$ and $E^\Uu_\beta$ 
is type 1 or 
type 3.
\end{clm}
\begin{proof}
Suppose not. Then by \ref{lem:biceph_it_fine_structure}, 
$N^\Uu_\alpha$ is not 
$\rhotilde$-sound (recall that if $\alpha>\beta+1$ and 
$\lh^\Uu_{\beta+1}=\lh^\Uu_\beta$ then 
$E^\Uu_{\beta+1}$ is type 2). But by \ref{lem:biceph_it_fine_structure}, 
$M^\Tt_\alpha$ is $\rhotilde$-sound. So $M^\Tt_\alpha\neq N^\Uu_\alpha$, 
contradiction.
\end{proof}

Let $\Btilde=(\rhotilde,\Mtilde,\Ntilde)=B^\Tt_\alpha=B^\Tt_\beta$. Since 
$E^\Uu_\beta\in\es_+(\Ntilde)$, and $\lh^\Uu_\beta=(\rhotilde^+)^{\Btilde}$,
$\Ntilde|(\rhotilde^+)^{\Btilde}$ projects to $\rhotilde$, so 
$\OR^{\Ntilde}=(\rhotilde^+)^{\Btilde}$ and $F^{\Ntilde}=E^\Uu_\beta$. Let 
$\Ftilde=F^{\Ntilde}$ 
and 
$\kappatilde=\crit(\Ftilde)$.
It follows that (a) of the theorem holds regarding $\Btilde$; using the 
iteration embeddings we will deduce that $B$ is not sound, and (a) holds 
regarding $B$.
Note that either $\OR(\Mtilde)>\OR(\Ntilde)$, or  $\OR(\Mtilde)=\OR(\Ntilde)$, 
$\Ntilde$ has 
superstrong type and $\Mtilde$ is type 2; in either case $m\geq 0$.
Also $\OR^N=(\rho^+)^B$ and $N$ is active with $F=F^N$,
a preimage of $\Ftilde$. Let $\kappa=\crit(F)$; so $\kappa<\rho$.

\begin{clm}
$M$ is not $m+1$-sound, so $B$ is not sound.
\end{clm}
\begin{proof} Suppose $M$ is $m+1$-sound. Let $z=z_{m+1}^M$ and 
$\zeta=\zeta_{m+1}^M$. By 
\cite[2.17]{extmax}, $z=p_{m+1}^M$ and 
$\zeta=\rho_{m+1}^M\leq\rho$. So
$\kappa\in\Hull_{m+1}^M(\zeta\un z\un\pvec_m^M)$.
Let $\ztilde=z_{m+1}^{\Mtilde}$ and $\zetatilde=\zeta_{m+1}^{\Mtilde}$. By 
\cite[2.20]{extmax}, $\ztilde=i^\Tt_{0\alpha}(z)$ and $\zetatilde=\sup 
i^\Tt_{0\alpha}``\zeta$, 
so $\zetatilde\leq\rhotilde$ and
\begin{equation}\label{eqn:kappatilde_in} 
i^\Tt_{0\alpha}(\kappa)\in\Hull_{m+1}^{\Mtilde}(\zetatilde\un\ztilde\un\pvec_m^{
\Mtilde}). 
\end{equation}
Let $\Htilde=N^{*\Uu}_{\alpha}$. Then 
$\Mtilde=N^\Uu_{\alpha}=\Ult_m(\Htilde,\Ftilde)$ and
$\zetatilde=\sup i^{\Htilde}_{\Ftilde}``\zeta^{\Htilde}$, and since 
$\zetatilde\leq\rhotilde$, 
therefore $\zetatilde\leq\kappatilde$. Also, 
$\ztilde=i^{\Htilde}_{\Ftilde}(z_{m+1}^{\Htilde})$. But
$\kappatilde\notin\rg(i^{\Htilde}_{\Ftilde})$, so
\begin{equation}\label{eqn:kappatilde_out} 
\kappatilde\notin\Hull_{m+1}^{\Mtilde}(\zetatilde\un\ztilde\un\pvec_m^{\Mtilde}
). \end{equation}
But $i^\Tt_{0\alpha}\rest\rho=j^\Tt_{0\alpha}\rest\rho$, so 
$i^\Tt_{0\alpha}(\kappa)=\kappatilde$, contradicting lines 
(\ref{eqn:kappatilde_in}) and 
(\ref{eqn:kappatilde_out}).
\end{proof}

We can now complete the proof:

\begin{clm}\label{clm:pull_back_facts_to_B}
Conclusion \tu{(}a\tu{)} of the theorem holds.\end{clm}
\begin{proof}
Suppose $N$ is type 1. Let $\ptilde=p_{m+1}^\Mtilde\cut\rhotilde$ and
$\Htilde=\cHull_{m+1}^\Mtilde(\kappatilde\un\ptilde\un\pvec_m^{\Mtilde})$
and 
$\pitilde:\Htilde\to\Mtilde$  the 
uncollapse. Then $\Htilde=N^{*\Uu}_\alpha$, $\pitilde=j^{*\Uu}_{\alpha}$, 
$\Htilde$ is 
$\kappatilde$-sound and letting $\qtilde=p_{m+1}^\Htilde\cut\kappatilde$, we 
have
$\pitilde(\qtilde)=\ptilde$ and
$\rho_m^\Mtilde=\sup\pitilde``\rho_m^\Htilde$ and
\[ 
\Htilde||(\kappatilde^+)^{\Htilde}=\Mtilde||(\kappatilde^+)^{\Mtilde}=
\Ntilde||(\kappatilde^+)^{\Ntilde}. \]

We have $\kappa,H,\pi$ as in (a); let $p=p_{m+1}^M\cut\rho$.
We show $(\sup\pi``\rho_m^H)=\rho_m^M$.
Let $\gamma<\rho_m^M$.
We have $(\sup\widetilde{\pi}``\rho_m^{\widetilde{H}})=\rho_m^{\widetilde{M}}$.
So letting $i=i^\Tt_{0\alpha}$,
\[\widetilde{M}\sats\text{``There is }\beta>i(\gamma)\text{ with 
}\beta\in\Hull_{m+1}(\widetilde{\kappa}\cup\{\widetilde{p},
\pvec_m^{\widetilde{M}}\})\text{''},\]
 an  $\rSigma_{m+1}$ assertion about 
$i(\gamma,
\kappa,p,p_m^M)$, which pulls back to $M$, which suffices.

So $\pi:H\to M$ is an $m$-embedding. Let $\pi(p^H)=p$.
Let 
$\left<H_{\gamma}\right>_{\gamma<\rho_m^H}$ be the natural 
stratification of $\Hull_{m+1}^H(\kappa\un 
\{p^H,\pvec_m^H\})$ (the uncollapsed hull), and 
$M_{\pi(\gamma)}=\pi``H_\gamma$ and
$\pi_\gamma:H_\gamma\to M_{\pi(\gamma)}$ be the restriction of $\pi$.
(For example if $m=0$ 
and $M$ is passive, 
\[ H_{\gamma}=\Hull_1^{H|\gamma}(\kappa\un\{p^H\}).\]
If $M$ 
is active or $m>0$ use the 
stratification of 
$\rSigma_{m+1}$ truth described in \cite[\S2]{fsit}.
Note  $H_\gamma$ need not be transitive.)
So $H=\bigcup_{\gamma<\rho_\gamma^H}H_\gamma$.
For $\gamma$ large enough we have $\kappa\in H_\gamma$
and $H_\gamma$ is transitive below $(\kappa^+)^{H_\gamma}$,
so $\pi_\gamma\rest\pow(\kappa)\sub\pi$,
and in particular $\kappa'=\pi(\kappa)=\pi_\gamma(\kappa)$.
For such $\gamma$, let
$E_\gamma$ 
be the (short) $(\kappa,\kappa')$-extender derived from 
$\pi_\gamma$. Then 
$H_\gamma,E_\gamma\in M$ (as $\sup(\pi_\gamma``\rho_m^H)<\rho_m^M$) and 
$(\kappa^+)^{H_\gamma}<(\kappa^+)^M$. Let 
$\pitilde_\gamma:\Htilde_\gamma\to\Mtilde_\gamma$ and $\kappatilde'$ and 
$\Etilde_\gamma$ be 
defined likewise over $\Mtilde$ (for large enough $\gamma<\rho_m^{\Htilde}$). 
We 
have $(\Htilde_\gamma\sim\Mtilde)||(\kappatilde^+)^{\Htilde_\gamma}$ and
$\Etilde_\gamma\rest\rhotilde\sub F^\Ntilde$ for each $\gamma$;
the former is because by \ref{lem:fully_elem_condensation}, 
$\Ntilde\sats$``Lemma 
\ref{lem:fully_elem_condensation} holds for my proper segments''.

Now $i``\rg(\pi)\sub\rg(\widetilde{\pi})$ since $i=i^\Tt_{0\alpha}$ is an 
$m$-embedding and 
$i(\kappa,p)=(\kappatilde,\ptilde)$.
So for all $\gamma<\rho_m^H$, we have
$i(\pi(\gamma))\in\rg(\widetilde{\pi})$.
And note that $i(\kappa')=\kappatilde'$ and 
 if $\gamma<\rho_m^H$ is sufficiently large and 
$\widetilde{\pi}(\widetilde{\gamma})=i(\pi(\gamma))$, then
$i(H_\gamma)=\Htilde_{\widetilde{\gamma}}$
and $i(E_\gamma)=\Etilde_{\widetilde{\gamma}}$.
Also $\rho_m^\Mtilde=\sup i``\rho_m^M$ and 
$\OR^{\Ntilde}=\sup j``\OR^N$ and $i,j$ are continuous at 
$(\kappa^+)^N=(\kappa^+)^M$ and $j``F^N\sub 
F^\Ntilde$. It follows easily that 
$(H_\gamma\sim M)||(\kappa^+)^{H_\gamma}$ and $E_\gamma\rest\rho\sub F^N$ for
all sufficiently large $\gamma<\rho_m^H$. Therefore 
$H||(\kappa^+)^H=M||(\kappa^+)^M$ and $F^N\rest\rho$ is derived from 
$\pi$.

So $F^N$ is weakly amenable to $H$, $M=\Ult_m(H,F^N)$, and 
$\pi=i^{M,m}_{F^N}$ (we can 
factor $\pi:H\to M$ through $\Ult_m(H,F^N)$, and 
$\nu(F^N)=\rho$). 
So by 
\cite{extmax}, $\pi(z_{m+1}^H)=z_{m+1}^M$, but 
$z_{m+1}^M\cut\rho=p_{m+1}^M\cut\rho$, and therefore 
$z_{m+1}^H\cut\kappa=p_{m+1}^H\cut\kappa$, so $H$ is $\kappa$-sound. This 
completes the proof 
assuming that $N$ is type 1.

If instead, $N$ is type 3, then almost the same argument works.
\end{proof}

This completes the proof of the theorem.
\end{proof}

We now move on to analogues of \ref{thm:no_iterable_sound_bicephalus} for 
cephalanxes.

\begin{dfn}\label{dfn:passive_ceph_good_core}
Let $B$ be a passive cephalanx of degree $(m,q)$ and let $N=N^B$. We say that 
$B$ has a 
\dfnemph{good core} iff $m\geq 0$ and $N$ is active and letting $F=F^N$, 
$\kappa=\crit(F)$ and 
$\nu=\nu(F)$, we have:
(i) $\OR^N=\rho^{+M}$, (ii) $N$ is type 1 or 3,
(iii) $M$ has an $(m,\nu)$-good core at $\kappa$,
(iv) $G^M_{m,\kappa,\nu}=F\rest\nu$, and
(v) if $N$ is type 1 then $H^M_{m,\kappa}=Q$ and $m=q$.
\end{dfn}

\begin{tm}\label{thm:no_iterable_sound_passive_cephalanx}\label{thm:nispc}
Let $B=(\gamma,\rho,M,Q)$ be an $(\om_1+1)$-iterable, non-trivial, passive 
cephalanx of degree $(m,q)$. Then $B$ has a good core, so 
$B$ is not sound \tu{(}that is, $M$ is not $(m+1)$-sound\tu{)}.
\end{tm}
\begin{proof}
The proof is based on that of \ref{thm:no_iterable_sound_bicephalus}. The main 
difference 
occurs in the rules guiding the comparison.
We may assume  $B$ is countable.
We define padded  trees $\Tt,\Uu$ on $B$, and sets 
$S^\Tt_\alpha,S^\Uu_\alpha,\modelset^\Tt_\alpha,\modelset^\Uu_\alpha$, much as 
before. We 
start with $S^\Tt_0=S^\Uu_0=\{0,1\}$. At limit stages, proceed as in 
\ref{thm:no_iterable_sound_bicephalus}. Suppose we have defined 
$(\Tt,\Uu)\rest\alpha+1$, 
$S^\Tt_\alpha$ and $S^\Uu_\alpha$ and if 
$\card(S^\Tt_\alpha)=\card(S^\Uu_\alpha)=1$ then $B^\Tt_\alpha\nins 
B^\Uu_\alpha\nins 
B^\Tt_\alpha$ (otherwise the comparison has already terminated).

\begin{casethree} $\card(S^\Tt_\alpha)=\card(S^\Uu_\alpha)=1$.

Choose extenders as usual (as in \ref{thm:no_iterable_sound_bicephalus}).
\end{casethree}

\begin{casethree} $S^\Tt_\alpha=\{0,1\}$ and if $S^\Uu_\alpha=\{0,1\}$ then 
$\rho^\Tt_\alpha\leq\rho^\Uu_\alpha$.
 
So $B^\Tt_\alpha$ is a cephalanx; let 
$B^\Tt=(\gamma^\Tt,\rho^\Tt,M^\Tt,Q^\Tt)=B^\Tt_\alpha$. Let 
$B^\Uu=B^\Uu_\alpha$. We will have by induction 
that for every $\beta<\alpha$, $\lh^\Tt_\beta\leq\rho^\Tt$ and 
$\lh^\Uu_\beta\leq\rho^\Tt$. Since $B$ is passive, $B^\Tt|\rho^\Tt$ and 
$B^\Uu|\rho^\Tt$ are well-defined premice.

\begin{scasethree}
$B^\Tt|\rho^\Tt\neq 
B^\Uu|\rho^\Tt$. 

Choose extenders as usual. 
\end{scasethree}

Suppose $B^\Tt|\rho^\Tt=B^\Uu|\rho^\Tt$.
We say \dfnemph{in $\Tt$ we move into 
$M^\Tt$} if
either  [$E^\Tt_\alpha\neq\emptyset$ and $E^\Tt_\alpha\in\es_+(M^\Tt)$] or
[$E^\Tt_\alpha=\emptyset$ and 
$S^\Tt_{\alpha+1}=\{0\}$]. Likewise \dfnemph{move into $Q^\Tt$}, and 
likewise 
with regard to $\Uu$ 
if $S^\Uu_\alpha=\{0,1\}$. In each case below we will move into some model in 
$\Tt$; we may do likewise for $\Uu$. These choices produce premice
$R,S$ 
from which to 
choose $E^\Tt_\alpha,E^\Uu_\alpha$, as in the proof of 
\ref{thm:no_iterable_sound_bicephalus},\footnote{That is, we also minimize on
$\nu(E)$,
so if $E^\Tt_\alpha\neq\emptyset\neq E^\Uu_\alpha$
then $\nu(E^\Tt_\alpha)=\nu_\alpha=\nu(E^\Uu_\alpha)$.}, given that $R\nins 
S\nins R$ (for example, 
if $S^\Uu_\alpha=\{1\}$ and in $\Tt$ we move into $M^\Tt$, then $R=M^\Tt$ and 
$S=Q^\Uu$). If $R\ins 
S$ or $S\ins R$ then we terminate the comparison, saying the comparison 
\dfnemph{terminates early}. If $B^\Uu$ is a cephalanx and we do not move into 
any 
model in $\Uu$ and $E^\Uu_\alpha=\emptyset$ then we set 
$S^\Uu_{\alpha+1}=\{0,1\}$.

\begin{scasethree}\label{scase:card_2_card_1}
$\card(S^\Uu_\alpha)=1$ and $B^\Tt|\rho^\Tt=B^\Uu|\rho^\Tt$.

Let $\{P\}=\modelset^\Uu_\alpha$. If $Q^\Tt\ins P$ move into 
$M^\Tt$ in $\Tt$;
if $Q^\Tt\not\ins P$ move into $Q^\Tt$.\end{scasethree}

\begin{scasethree}$S^\Tt_\alpha=S^\Uu_\alpha=\{0,1\}$ and
$B^\Tt|\rho^\Tt=B^\Uu|\rho^\Tt$.

Let $(\gamma^\Uu,\rho^\Uu,M^\Uu,Q^\Uu)=B^\Uu$. So
$\rho^\Tt\leq\rho^\Uu$. Then:
\begin{enumerate}[label=--]
 \item If $Q^\Tt=Q^\Uu$ and $\rho^\Tt=\rho^\Uu$: Move into
 $Q^\Tt$ in $\Tt$ and $M^\Uu$ in $\Uu$. 
\item If $Q^\Tt=Q^\Uu$ and $\rho^\Tt<\rho^\Uu$: Move into $M^\Tt$
in $\Tt$, and if also 
$M^\Tt|\rho^\Uu=B^\Uu|\rho^\Uu$ then move into $Q^\Uu$ in $\Uu$.
\item If $Q^\Tt\pins Q^\Uu$: Move into $M^\Tt$ in $\Tt$
(note here $\rho^\Tt<\rho^\Uu$ and 
$Q^\Tt\pins B^\Uu||\rho^\Uu$).
\item If $Q^\Uu\pins Q^\Tt$: Move into $Q^\Tt$ in $\Tt$
and $M^\Uu$ in $\Uu$ (note here
$\rho^\Tt\leq\gamma^\Uu<\rho^\Uu$).
\item If $Q^\Tt\nins Q^\Uu\nins Q^\Tt$: Move into $Q^\Tt$ in $\Tt$;
if also
$Q^\Tt|\rho^\Uu=B^\Uu|\rho^\Uu$, move into $Q^\Uu$ in $\Uu$.
\end{enumerate}
\end{scasethree}
\end{casethree}

The remaining cases are  by symmetry.
Define $\lh_\alpha$ and $\nu_\alpha$ as for
\ref{thm:no_iterable_sound_bicephalus}.

\begin{clmthree}\label{clm:lh_leq_rho_cephalanx_comparison}
Let $\alpha<\beta<\lh(\Tt,\Uu)$. 
Then (i) if $\beta+1<\lh(\Tt,\Uu)$ then $\lh_\alpha\leq\lh_\beta$ 
and $\nu_\alpha<\nu_\beta$; and
(ii) if $S^\Tt_\beta=\{0,1\}$ then $\lh_\alpha\leq\rho^\Tt_\beta$.
\end{clmthree}
\begin{proof} By induction.
Part (i) is as for  \ref{thm:no_iterable_sound_bicephalus}.
Part (ii): If there are cofinally many $\alpha'<\beta$
with $E^\Tt_{\alpha'}\neq\emptyset$, use part (i)
and rules of iteration trees. Otherwise,
fix $\alpha<\beta$ least with $E^\Tt_{\alpha'}=\emptyset$
for all $\alpha'\in[\alpha,\beta)$. Note $S^\Tt_\alpha=\{0,1\}$,
and if 
there is 
$\alpha'\in[\alpha,\beta)$ with $\rho^\Tt_\alpha<\lh_{\alpha'}$,
and $\alpha'$ is least such, we move into a model of 
$B^\Tt_{\alpha'}=B^\Tt_\alpha=B^\Tt_\beta$
in $\Tt$ at stage $\alpha'$, so $S^\Tt_\beta\neq\{0,1\}$, 
contradiction.\end{proof}

It follows as before that the comparison terminates.

\begin{clmthree}\label{clm:not_terminates_early} Let $\alpha<\lh(\Tt,\Uu)$. 
Then 
\tu{(}i\tu{)} the 
comparison does not terminate early at stage $\alpha$; \tu{(}ii\tu{)} if at 
stage $\alpha$, in 
$\Tt$ we move into $R$, then for every $\beta\in(\alpha,\lh(\Tt,\Uu))$, 
we have
$R\npins 
S$ for all 
$S\in\modelset^\Uu_\beta$.
\end{clmthree}
\begin{proof}
By induction on $\alpha$. Suppose for example that Subcase 
\ref{scase:card_2_card_1} 
attains at stage $\alpha$. We have $P\in\modelset^\Uu_\alpha$.

Suppose $Q^\Tt\ins P$, so in $\Tt$ we move into $M^\Tt$. We have 
$M^\Tt|\rho^\Tt=P|\rho^\Tt$ and $N^\Tt\ins Q^\Tt\ins P$ and $M^\Tt\neq N^\Tt$ 
and both $M^\Tt,N^\Tt$ project $\leq\rho^\Tt$ and
\[ M^\Tt||((\rho^\Tt)^+)^{M^\Tt}=N^\Tt||((\rho^\Tt)^+)^{N^\Tt}. \]
So $M^\Tt\nins P$ and taking 
$\lambda$ least 
with $M^\Tt|\lambda\neq N^\Tt|\lambda$, we have
$\rho^\Tt<\lambda\leq\min(\OR(M^\Tt),\OR(N^\Tt))$.
So the comparison does not terminate early at stage $\alpha$, and as $M^\Tt$ 
projects 
$\leq\rho^\Tt$, for no $\beta>\alpha$ is $M^\Tt\pins 
S\in\modelset^\Uu_\beta$.

Now suppose $Q^\Tt\nins P$, so in $\Tt$ we move into $Q^\Tt$. If 
$\alpha\notin\curlyB^\Uu$ then 
$P=B^\Uu$ is unsound. Otherwise there is $\delta<\alpha$ such 
that at stage $\delta$, in $\Uu$ we move into $P$. In either case (by 
induction 
in the latter), 
$P\npins Q^\Tt$. So the comparison does not terminate early at stage 
$\alpha$. Let 
$\lambda$ be least with $Q^\Tt|\lambda\neq P|\lambda$. Then 
$\rho^\Tt<\lambda$ and since 
$Q^\Tt$ projects $\leq\gamma^\Tt$, there is no $\beta>\alpha$ such that 
$Q^\Tt\pins 
S\in\modelset^\Uu_\beta$.

The proof is similar in the remaining subcases.
\end{proof}

Let $\alpha+1=\lh(\Tt,\Uu)$. As in the proof of
\ref{thm:no_iterable_sound_bicephalus},
and by Claim \ref{clm:not_terminates_early}, we have 
$\card(S^\Tt_\alpha)=\card(S^\Uu_\alpha)=1$ and 
$\alpha\in\curlyB^\Tt\Delta\curlyB^\Uu$. We may
assume $\alpha\in\curlyB^\Tt$, so $B^\Tt_\alpha=(\gamma',\rho',M',Q')$ is 
a 
cephalanx and 
$B^\Uu_\alpha$ is a non-sound pm. So  $P\ins B^\Uu_\alpha$ 
where $\{P\}=\modelset^\Tt_\alpha$. 
But by Claim \ref{clm:not_terminates_early}, $P\npins B^\Uu_\alpha$, so 
$P=B^\Uu_\alpha$.
Let $\beta=S^\Tt(\alpha)$.

\begin{clmthree} $S^\Tt_\alpha=\{0\}$.\end{clmthree}
\begin{proof}
Suppose $S^\Tt_\alpha=\{1\}$, so $Q'=P=B^\Uu_\alpha$ is $\gamma'$-sound. At 
stage $\beta$, in 
$\Tt$ we move into $Q'$. For all $\xi\in[\beta,\alpha)$, 
$E^\Tt_\xi=\emptyset$, so $E^\Uu_\xi\neq\emptyset$, and $\rho'<\lh_\xi$, 
because 
$B'|\rho'=B^\Uu_\beta|\rho'$, and therefore $\rho'\leq\nu^\Uu_\xi$, because 
$\rho'$ is a 
cardinal of $Q'$. But then $B^\Uu_\alpha$ is not 
$\gamma'$-sound, contradicting the fact that $Q'=B^\Uu_\alpha$.\end{proof}

So $M'=P=B^\Uu_\alpha$. Let $N'=N^\Tt_\alpha$.

\begin{clmthree}\label{clm:analyse_B'_passive_cephalanx} 
$\OR(N')=((\rho')^+)^{M'}$,
 $N'$ is active type 1 or type 3,
 $\alpha=\beta+1$,
 $E^\Uu_\beta=F^{N'}$,
and if $N'$ is type 1 then $B^{*\Uu}_\alpha=Q'$.
\end{clmthree}
\begin{proof}
Assume, for example, that Subcase \ref{scase:card_2_card_1} attains at stage 
$\beta$. 
So $N'\ins Q'\ins B^\Uu_\beta$. We have $M'\neq N'$, 
both $M',N'$ project $\leq\rho'$, and
\[ M'||((\rho')^+)^{M'}=N'||((\rho')^+)^{N'}. \]
We have $E^\Tt_\beta=\emptyset$, so $E^\Uu_\beta\neq\emptyset$ and note that
$E^\Uu_\beta\in\es_+(N')$ and $\lh^\Uu_\beta\geq((\rho')^+)^{M'}$. Since 
$M'=B^\Uu_\alpha$ is $\rho'$-sound it 
follows that
$\alpha=\beta+1$ and $\nu^\Uu_\beta=\rho'$, so $E^\Uu_\beta$ is type 1 or type 
3. Therefore 
$N'|\lh^\Uu_\beta$ projects to $\rho'$, so $\OR(N')=\lh^\Uu_\beta$.

Now suppose further that $N'$ is type 1; we want to see that 
$B^{*\Uu}_\alpha=Q'$. We have $Q'\ins 
B^\Uu_\beta$ and $\crit(F^{N'})=\gamma'$ and $\rho_\om(Q')\leq\gamma'$ and
\[ \pow(\gamma')\inter Q'=\pow(\gamma')\inter N'.\]
So it suffices to see that 
$\pred^\Uu(\alpha)=\beta$. We may assume that $\lh^\Uu_\delta=\rho'$ for some 
$\delta<\beta$. 
Then $\rho'$ is a cardinal of $B^\Uu_\beta$, so $Q'\npins B^\Uu_\beta$, so 
$Q'=B^\Uu_\beta$. 
So $B^\Uu_\beta$ is $\gamma'$-sound, so there is a unique $\delta$ such that 
$\lh^\Uu_\delta=\rho'$, and moreover, $E^\Uu_\delta$ is type 3 and 
$\beta=\delta+1$. 
Therefore $\pred^\Uu(\alpha)=\beta$, as required.
\end{proof}

To complete the proof, one can now argue like in Claim 
\ref{clm:pull_back_facts_to_B} of \ref{thm:no_iterable_sound_bicephalus}.
\end{proof}
\begin{rem}\label{rem:active_cephalanx_maybe_sound}
We next consider active 
cephalanxes $B=(\gamma,\rho,M,Q)$. Here things are more subtle, for two 
reasons. 
First, if $Q$ is 
type 3 then $Q^\Tt_\alpha$ can fail  the ISC; this 
complicates comparison termination. Second, if $Q$ is superstrong then 
comparison termination 
is complicated 
further, and more importantly, we do not see how to show $B$ has a
good core (\ref{dfn:good_core_active_cephalanx}), nor how to rule out the 
possibility that $B$ 
is 
exact and $M$ is 
sound with $\rho_{m+1}^M=\rho$.
It is easy enough to illustrate how the latter might occur. Let $Q$ be 
a sound 
superstrong pm and $\kappa=\crit(F^Q)$ and  $J$ be a sound pm 
with
$J||(\kappa^{++})^J=Q|(\kappa^{++})^Q$ and 
$\rho_{m+1}^J=(\kappa^+)^Q=(\kappa^+)^J<\rho_m^J$.
Let $M=\Ult_m(J,F^Q)$ and $B=(\gamma,\rho,M,Q)$, where $\rho=\OR^Q$ and 
$\gamma=\lgcd(Q)$. Suppose  $M$ is wellfounded. Then $B$ is an exact, sound 
cephalanx. 
(We have $\rho_{m+1}^M=\rho<\rho_m^M$ and $M$ is $(m+1)$-sound, and $B$ is 
exact 
because $i^{J,m}_{F^Q}$ 
and $i^Q_{F^Q}$ are continuous at $(\kappa^{++})^J$.) It seems 
$J,Q$ might arise as iterates of a single mouse, so it seems $B$ might 
be iterable.\end{rem}

\begin{dfn}
Let $\Tt$ be an iteration tree on an active cephalanx $B$ and 
$\alpha+1<\lh(\Tt)$.
We say $\alpha$ is 
\dfnemph{$\Tt$-special} iff $\alpha\in\curlyB^\Tt$ and 
$E^\Tt_\alpha=F(Q^\Tt_\alpha)$.
\end{dfn}

\begin{lem}\label{lem:special_facts} Let $\Tt$ be an iteration tree on an 
active 
cephalanx $B$ 
and $\alpha<\lh(\Tt)$. Then:
\begin{enumerate}[label=\tu{(}\alph*\tu{)}]
\item\label{item:superstrong_equiv} If $\alpha\in\curlyB^\Tt$
then $Q^\Tt_\alpha$ has superstrong type iff $Q$ does.
 \item If $\iota(Q^B)=\gamma^B$ then $\curlyQ=\emptyset$.
\end{enumerate}
Suppose also that $\alpha+1<\lh(\Tt)$. Then:
\begin{enumerate}[resume*]
\item\label{item:model_for_special_ext} If $\alpha$ is $\Tt$-special then 
$\alpha+1\in\curlyB^\Tt$ and $\pred^\Tt(\alpha+1)$ is the least 
$\eps\in[0,\alpha]_\Tt$
such that either $\eps=\alpha$ or 
$\crit(F(Q^\Tt_\eps))<\crit(i^\Tt_{\eps\alpha})$.
\item If $B$ is a pm-cephalanx and $\exit^\Tt_\alpha$ is not a premouse then 
$\alpha$ is 
$\Tt$-special \tu{(}so $\exit^\Tt_\alpha=Q^\Tt_\alpha$\tu{)} and $Q$ is type 3.
\end{enumerate}
\end{lem}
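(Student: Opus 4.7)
My plan is to establish (a)--(d) in order, with most of the real work in (b), leveraging the $Q$-side analysis of the ultrapower lemmas \ref{lem:ult_comm}, \ref{fact:type_3_ult}, and \ref{lem:active_cephalanx_ult}, together with the structural result \ref{lem:ceph_it_tree}.

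For part \textup{(a)}, I observe that $\alpha \in \curlyB^\Tt$ implies $[0,\alpha]_\Tt \subseteq \curlyB^\Tt$ by the analogue in \ref{lem:cephalanx_it_fine_structure} of part \ref{item:curlyB_in_seg} of \ref{lem:biceph_it_fine_structure}, so the $Q$-side of $B^\Tt_\alpha$ is obtained from $Q$ by a continuous sequence of ultrapowers and direct limits, yielding a simple (or squashed, in the type 3 case) $\rSigma_0$-elementary embedding $k\maps Q \to Q^\Tt_\alpha$. Via \ref{fact:type_3_ult} and \ref{lem:active_cephalanx_ult}, the active extender and $\OR^Q$ are preserved, in the augmented sense, even through type 3 expansions. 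The superstrong-type condition ``$i_{F^Q}(\crit(F^Q)) < \OR^Q$'' is $\rSigma_0$ in the augmented structure, hence preserved and reflected by $k$.

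For part \textup{(b)}, (a) gives $\iota(Q^\Tt_\beta) = \gamma^\Tt_\beta$, i.e.\ $\nu(F^{Q^\Tt_\beta}) \leq \gamma^\Tt_\beta$, for every $\beta \in \curlyB^\Tt$. I will show $\curlyQ^\Tt = \emptyset$ by induction; suppose $\alpha+1$ is the least element of $\curlyQ^\Tt$. Then $\beta = \pred^\Tt(\alpha+1) \in \curlyB^\Tt$, since $\beta \in \curlyQ^\Tt$ contradicts minimality while $\beta \in \curlyM^\Tt$ would by the tree rules force $\alpha+1 \in \curlyM^\Tt$. Inspecting the cephalanx tree definition, the two routes from $\curlyB^\Tt$ into $\curlyQ^\Tt$ are (i) a $Q$-side drop, and (ii) the case $\kappa = \gamma^\Tt_\beta$ with $E^\Tt_\alpha$ from the $Q$-side. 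I rule out (ii) because under $\iota(Q^\Tt_\beta) = \gamma^\Tt_\beta$ no extender on $\es_+(Q^\Tt_\beta)$ has critical point $\gamma^\Tt_\beta$: extenders in $\es(Q^\Tt_\beta)$ are indexed strictly below $\gamma^\Tt_\beta$, and $F^{Q^\Tt_\beta}$ itself has $\crit < \gamma^\Tt_\beta$. For (i), the cephalanx agreement $M^\Tt_\beta||((\gamma^\Tt_\beta)^+)^{B^\Tt_\beta} = Q^\Tt_\beta||((\gamma^\Tt_\beta)^+)^{B^\Tt_\beta}$ combined with the predecessor constraint $\kappa < \iota^\Tt_\beta$ and $\iota(Q^\Tt_\beta) = \gamma^\Tt_\beta$ forces $E^\Tt_\alpha$ to act on $B^\Tt_\beta||\rho^\Tt_\beta$ exactly as an $M$-side extender does, hence total over $Q^\Tt_\beta$ with $\kappa < \rho^\Tt_\beta$, contradicting the drop hypothesis.

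For part \textup{(c)}, this mirrors the observation inside the proof of part \ref{item:close} of \ref{lem:biceph_it_fine_structure}, now on the $Q$-side. The sequence $\langle \crit(F(Q^\Tt_\xi)) \rangle_{\xi \in [0,\alpha]_\Tt}$ is non-decreasing, since the $Q$-side iteration embeddings $k_{\xi,\xi'}$ preserve or raise critical points; so the least $\eps \in [0,\alpha]_\Tt$ with $\crit(F(Q^\Tt_\eps)) = \crit^\Tt_\alpha$ is also the least $\beta \in [0,\alpha]_\Tt$ with $\crit^\Tt_\alpha < \iota^\Tt_\beta$, i.e.\ $\eps = \pred^\Tt(\alpha+1)$. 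Because $\crit^\Tt_\alpha < \gamma^\Tt_\eps \leq \rho^\Tt_\eps$ and $E^\Tt_\alpha$ agrees with $F(Q^\Tt_\eps)$ on $B^\Tt_\eps||\rho^\Tt_\eps$, $E^\Tt_\alpha$ is total over $B^\Tt_\eps||\rho^\Tt_\eps$, so the tree rules place $\alpha+1 \in \curlyB^\Tt$. For part \textup{(d)}, proper initial segments of segmented-premice are premice, so if $P^\Tt_\alpha$ is not a premouse then $P^\Tt_\alpha$ equals $M^\Tt_\alpha$ or $Q^\Tt_\alpha$; iterates of $M$ remain premice, so $P^\Tt_\alpha = Q^\Tt_\alpha$, $E^\Tt_\alpha = F(Q^\Tt_\alpha)$, and $\alpha \in \curlyB^\Tt \cup \curlyQ^\Tt$. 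If $\alpha \in \curlyQ^\Tt$, then by the contrapositive of (b) we have $\iota(Q^B) > \gamma^B$, and \ref{lem:ceph_it_tree} then yields that every model of $\Tt$ is a cephal or premouse, a contradiction. Hence $\alpha \in \curlyB^\Tt$ and $\alpha$ is $\Tt$-special; the only mechanism in this setting producing a non-premouse segmented-premouse is the type 3 expansion of \ref{dfn:expansion} invoked via \ref{fact:type_3_ult}, so $Q$ must be type 3. The hardest step is the case analysis in (b), in particular verifying that the cephalanx agreement together with the predecessor constraint really does rule out a $Q$-side drop when $\iota(Q^\Tt_\beta) = \gamma^\Tt_\beta$.
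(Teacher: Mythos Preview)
Your proposal is essentially correct and supplies far more detail than the paper, whose entire proof is the single sentence ``For \ref{item:superstrong_equiv}, recall that in $\Tt$, we only form simple ultrapowers of $Q^B$ and its images.'' Parts (a), (b), and (d) are handled cleanly; your use of \ref{lem:ceph_it_tree} for (d) is exactly the right move.

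There is one imprecision in your argument for (c). You write that the least $\eps\in[0,\alpha]_\Tt$ with $\crit(F(Q^\Tt_\eps))=\crit^\Tt_\alpha$ ``is also the least $\beta\in[0,\alpha]_\Tt$ with $\crit^\Tt_\alpha<\iota^\Tt_\beta$, i.e.\ $\eps=\pred^\Tt(\alpha+1)$.'' But $\iota^\Tt_\beta=\iota(E^\Tt_\beta)$ depends on the extender \emph{used} at stage $\beta$, which need not be $F(Q^\Tt_\beta)$, so the first equality is not obviously meaningful; and $\pred^\Tt(\alpha+1)$ is the least $\beta$ \emph{overall} with $\kappa<\iota^\Tt_\beta$, not merely the least such $\beta$ on the branch. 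The missing step is to argue that $\pred^\Tt(\alpha+1)$ actually lies on $[0,\alpha]_\Tt$. This follows by the standard induction along the branch (exactly the ``simple observation'' you cite from the proof of \ref{lem:biceph_it_fine_structure}): if $\eps>0$, the extender taking you from the branch-predecessor $\eps^-$ to $\eps$ has critical point $\leq\crit(F(Q^\Tt_{\eps^-}))<\kappa$, which together with $\pred^\Tt$-minimality for that step bounds $\iota^\Tt_\gamma\leq\kappa$ for all $\gamma<\eps$; and the step from $\eps$ onward fixes $\kappa$, giving $\kappa<\iota^\Tt_\eps$. So your reference is correct and the conclusion stands, but the sentence as written conflates two different quantities.
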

\begin{proof}
For \ref{item:superstrong_equiv}, recall that in $\Tt$, we only form simple 
ultrapowers of $Q^B$ and its images.
\end{proof}

\begin{lem}\label{lem:lh_becomes_card}
 Let $\Tt$ be an iteration tree on an active pm-cephalanx 
$B=(\gamma,\rho,M,Q)$. 
 Let $\alpha<\beta<\lh(\Tt)$. Let $\lambda=\lh^\Tt_\alpha$. Then either:
\begin{enumerate}
 \item\label{item:not_cephalanx} $\beta\notin\curlyB^\Tt$ and either 
\tu{(}i\tu{)} 
$\lambda<\OR(B^\Tt_\beta)$ and $\lambda$ is a cardinal of $B^\Tt_\beta$, or 
\tu{(}ii\tu{)} 
$\beta=\alpha+1$, $E^\Tt_\alpha$ has superstrong type, 
$\lambda=\OR(B^\Tt_\beta)$ and $B^\Tt_\beta$ 
is an active type 2 premouse; or
 \item\label{item:is_cephalanx} $\beta\in\curlyB^\Tt$ and either \tu{(}i\tu{)} 
$\lambda<\rho(B^\Tt_\beta)$ and $\lambda$ is a cardinal of $B^\Tt_\beta$, or 
\tu{(}ii\tu{)} 
$\beta=\alpha+1$, $E^\Tt_\alpha$ has superstrong type, 
$\lambda=\rho(B^\Tt_\beta)$,
and letting $\eps=\pred^\Tt(\beta)$, $\crit^\Tt_\alpha=\gamma(B^\Tt_\eps)$.
\end{enumerate}
\end{lem}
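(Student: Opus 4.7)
The plan is to induct on $\beta > \alpha$, with the real content concentrated in the base case $\beta = \alpha+1$; the inductive step will follow from monotonicity of lengths and agreement between successive models.

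For the base case, set $\eps = \pred^\Tt(\alpha+1)$ and $\kappa = \crit^\Tt_\alpha$, so that $B^\Tt_\beta$ is formed as the appropriate ultrapower of $B^{*\Tt}_\beta$ by $E^\Tt_\alpha$. I would split on whether $\beta \in \curlyB^\Tt$ and whether $E^\Tt_\alpha$ has superstrong type. When $\beta \notin \curlyB^\Tt$, $B^\Tt_\beta$ is a segmented-premouse and the analysis reduces to the standard Mitchell--Steel fact (now allowing superstrong type per the conventions in the introduction) that $\lh(E^\Tt_\alpha)$ is a cardinal of the ultrapower, either strictly below its ordinal height (clause (1)(i)) or, in the exceptional superstrong case where the drop hits a type 2 background, equal to its ordinal height (clause (1)(ii)). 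When $\beta \in \curlyB^\Tt$, Lemmas \ref{lem:passive_cephalanx_ult} and \ref{lem:active_cephalanx_ult} give $\rho(B^\Tt_\beta) = \sup i^M_{E^\Tt_\alpha}``\rho(B^\Tt_\eps)$. In the non-superstrong subcase, $\lambda = \lh^\Tt_\alpha$ is strictly less than this supremum and is a cardinal on both the $M$- and $Q$-sides (by the agreement below $\rho$ built into the cephalanx definition), yielding (2)(i). In the superstrong subcase, $\lambda = i_{E^\Tt_\alpha}(\kappa)$; setting this equal to $\sup i^M_{E^\Tt_\alpha}``\rho(B^\Tt_\eps)$ forces $\kappa$ to be the largest cardinal of $B^\Tt_\eps|\rho(B^\Tt_\eps)$, which is $\gamma(B^\Tt_\eps)$, giving (2)(ii).

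For the successor step $\beta > \alpha+1$, I would use the monotonicity clause $\lh^\Tt_{\beta-1} \geq \lh^\Tt_\alpha = \lambda$ together with the standard agreement that $B^\Tt_\beta$ and $B^\Tt_{\beta-1}$ coincide below $\lh^\Tt_{\beta-1}$. The inductive hypothesis at the pair $(\alpha,\beta-1)$ exhibits $\lambda$ as a cardinal of $B^\Tt_{\beta-1}$ at or below $\lh^\Tt_{\beta-1}$, and this property transfers across the agreement to $B^\Tt_\beta$; since $\beta > \alpha+1$, the exceptional superstrong clause is automatically ruled out and we land in clause (i). The limit case follows by direct-limit continuity, using that critical points of iteration maps into $B^\Tt_\beta$ from a cofinal tail of $[0,\beta)_\Tt$ exceed $\lambda$. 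The main obstacle is the superstrong cephalanx case (2)(ii): verifying $\crit^\Tt_\alpha = \gamma(B^\Tt_\eps)$ precisely, which hinges on the cofinality/continuity behavior of the ultrapower embedding at $\rho$ established in Lemma \ref{lem:passive_cephalanx_ult}(\ref{item:psi_i_E_continuity_rho}) and its active analogue in Lemma \ref{lem:active_cephalanx_ult}(\ref{item:active_i_E^M_continuity_rho}), combined with Lemma \ref{lem:special_facts} to control which models of $\Tt$ can actually produce this configuration.
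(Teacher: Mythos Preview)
Your overall shape (induction on $\beta$ with the content in the base case) matches the paper's, but your successor step has a real gap. You write that the inductive hypothesis at $(\alpha,\beta-1)$ ``exhibits $\lambda$ as a cardinal of $B^\Tt_{\beta-1}$ at or below $\lh^\Tt_{\beta-1}$, and this property transfers across the agreement to $B^\Tt_\beta$''. But when $\beta-1=\alpha+1$ and the inductive hypothesis yields clause~(2)(ii), we have $\lambda=\rho(B^\Tt_{\alpha+1})=\OR(Q^\Tt_{\alpha+1})$, and if $E^\Tt_{\alpha+1}=F(Q^\Tt_{\alpha+1})$ (i.e.\ $\alpha+1$ is $\Tt$-special) then $\lh^\Tt_{\alpha+1}=\lambda$. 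So the agreement between $B^\Tt_{\alpha+1}$ and $B^\Tt_{\alpha+2}$ only holds \emph{below} $\lambda$, which does not by itself force $\lambda$ to be a cardinal of $B^\Tt_{\alpha+2}$, nor does it give $\lambda<\rho(B^\Tt_{\alpha+2})$. Your sentence ``since $\beta>\alpha+1$, the exceptional superstrong clause is automatically ruled out'' is the desired conclusion, not an argument.

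The paper plugs exactly this hole: when $\lambda=\lh^\Tt_{\alpha+1}$, either $E^\Tt_{\alpha+1}$ is type~2 (the case (1)(ii) situation, which is easy) or $\alpha+1$ is $\Tt$-special. In the latter case one invokes Lemma~\ref{lem:special_facts}\ref{item:model_for_special_ext}: it gives $\alpha+2\in\curlyB^\Tt$ and identifies $\chi=\pred^\Tt(\alpha+2)$ so that $B^{*\Tt}_{\alpha+2}=B^\Tt_\chi$ is a cephalanx with $\mu=\crit^\Tt_{\alpha+1}<\gamma(B^\Tt_\chi)$. From this one reads off directly that $\lambda<\rho(B^\Tt_{\alpha+2})$ and that $\lambda$ is a cardinal there. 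You do mention Lemma~\ref{lem:special_facts}, but only in connection with verifying the base-case identity $\crit^\Tt_\alpha=\gamma(B^\Tt_\eps)$; the crucial application is in the step from $\alpha+1$ to $\alpha+2$, and that is missing from your plan.
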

Therefore if $\lh^\Tt_\alpha<\lh^\Tt_\beta$ then $\lh^\Tt_\alpha$ is a cardinal 
of 
$\exit^\Tt_\beta$.
\begin{proof}
If $\beta=\alpha+1$ it is straightforward to prove the conclusion. Now suppose 
$\beta>\alpha+1$. If 
$\lambda<\lh^\Tt_{\alpha+1}$ it is straightforward, so suppose 
$\lambda=\lh^\Tt_{\alpha+1}$. Then since the lemma held for $\beta=\alpha+1$, 
either 
$E^\Tt_{\alpha+1}$ is type 2, in which case things are straightforward, or 
$\alpha+1$ is $\Tt$-special, so letting $\mu=\crit^\Tt_{\alpha+1}$ and 
$\chi=\pred^\Tt(\alpha+2)$, we have 
that $B^{*\Tt}_{\alpha+2}=B^\Tt_\chi$ is a cephalanx and 
$\mu<\gamma(B^\Tt_\chi)$, which implies 
that $\lambda<\rho(B^\Tt_{\alpha+2})$ and $\lambda$ is a cardinal of 
$B^\Tt_{\alpha+2}$. The rest 
is clear.
\end{proof}

\begin{dfn}
 Let $B=(\gamma,\rho,M,Q)$ be an active cephalanx of degree $(m,0)$. We say 
that 
$B$ is 
\dfnemph{exceptional} iff
(i) $B$ is exact,
(ii) $Q$ has superstrong type, and
(iii) either $\rho_{m+1}^M=\rho$ or $M$ is not $\gamma$-sound.\qedhere
\end{dfn}

\begin{lem}\label{lem:gamma-sound_characterize}
 Let $M$ be an $m$-sound premouse and let $\rho_{m+1}^M\leq\gamma<\rho_m^M$. 
Then $M$ is 
$\gamma$-sound iff $M=\Hull_{m+1}^M(\gamma\un z_{m+1}^M\un\pvec_m^M)$.
\end{lem}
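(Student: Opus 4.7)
Unfolding the definition of \emph{$\gamma$-soundness} in the case $\rho_{m+1}^M\leq\gamma<\rho_m^M$, what needs proving is that
\[ M=\Hull_{m+1}^M(\gamma\un\vec{p}_{m+1}^M)\ \text{with $p_{m+1}^M\cut\gamma$ $(m+1)$-solid}\ \iff\ M=\Hull_{m+1}^M(\gamma\un z_{m+1}^M\un\vec{p}_m^M). \]
The whole content of the lemma is thus the identification $z_{m+1}^M=p_{m+1}^M\cut\gamma$ under whichever side is assumed. The key input is \cite[2.17]{extmax}, by which $(z_{m+1}^M,\zeta_{m+1}^M)$ is the canonical minimal solid-parameter/projectum-like pair with $M=\Hull_{m+1}^M(\zeta_{m+1}^M\un z_{m+1}^M\un\vec{p}_m^M)$, agreeing with $(p_{m+1}^M,\rho_{m+1}^M)$ whenever $M$ is $(m+1)$-sound, and in general with $\zeta_{m+1}^M\leq\rho_{m+1}^M\leq\gamma$.

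\textbf{Forward direction.} Assume $M$ is $\gamma$-sound. Split $p_{m+1}^M=(p_{m+1}^M\inter\gamma)\un(p_{m+1}^M\cut\gamma)$. Entries of $p_{m+1}^M\inter\gamma$ are already in $\gamma$, so they can be dropped from the generating set, giving
\[ M=\Hull_{m+1}^M\bigl(\gamma\un(p_{m+1}^M\cut\gamma)\un\vec{p}_m^M\bigr), \]
with $p_{m+1}^M\cut\gamma$ $(m+1)$-solid by hypothesis. Using the generalized-solidity-witness machinery from \cite[\S1.12]{imlc}, an easy minimality comparison of $p_{m+1}^M\cut\gamma$ against $z_{m+1}^M$ (both are $(m+1)$-solid parameters for $M$ above the projectum-like ordinal $\gamma\geq\zeta_{m+1}^M$, each generating $M$ together with $\gamma\un\vec{p}_m^M$) forces $p_{m+1}^M\cut\gamma=z_{m+1}^M$, whence the desired hull identity.

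\textbf{Backward direction.} Assume $M=\Hull_{m+1}^M(\gamma\un z_{m+1}^M\un\vec{p}_m^M)$. First observe $z_{m+1}^M\inter\gamma=\emptyset$: any element of $z_{m+1}^M$ below $\gamma$ would be absorbed by the $\gamma$-part of the generating set, contradicting the minimality of $z_{m+1}^M$ in its defining hull equation over $\zeta_{m+1}^M\leq\gamma$. Now run the same minimality comparison as above in reverse: $z_{m+1}^M$ is a solid parameter lying wholly above $\gamma$ which, together with $\gamma\un\vec{p}_m^M$, generates $M$; any candidate strictly below it lex--would contradict either the solidity or minimality of $p_{m+1}^M$. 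This yields $z_{m+1}^M=p_{m+1}^M\cut\gamma$, and re-adjoining $p_{m+1}^M\inter\gamma\sub\gamma$ to the hull gives $M=\Hull_{m+1}^M(\gamma\un\vec{p}_{m+1}^M)$ with $p_{m+1}^M\cut\gamma$ $(m+1)$-solid, i.e.\ $\gamma$-soundness.

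\textbf{Main obstacle.} The only real content is the minimality comparison that matches $z_{m+1}^M$ with $p_{m+1}^M\cut\gamma$; this is a routine but slightly delicate fine-structural argument, and the crucial point enabling it is the inequality $\zeta_{m+1}^M\leq\rho_{m+1}^M\leq\gamma$, which lets us rebase the canonical hull equation from $\zeta_{m+1}^M$ to $\gamma$ without losing minimality.
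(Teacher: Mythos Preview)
Your approach is essentially the paper's: the proof there is the single line ``This follows from \cite[2.17]{extmax}'', and you have correctly identified that the content is the comparison between $p_{m+1}^M\cut\gamma$ and $z_{m+1}^M$ using that reference together with $\zeta_{m+1}^M\leq\rho_{m+1}^M\leq\gamma$.

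One small correction: your claims $z_{m+1}^M\cap\gamma=\emptyset$ and $z_{m+1}^M=p_{m+1}^M\cut\gamma$ are too strong. In general $z_{m+1}^M$ may have elements in $[\zeta_{m+1}^M,\gamma)$; for instance if $M$ is $(m{+}1)$-sound and $\gamma>\rho_{m+1}^M$ then $z_{m+1}^M=p_{m+1}^M$, which can meet that interval. The correct identity (and what the paper records elsewhere as Claim~\ref{clm:zeta_z_H}) is $z_{m+1}^M\cut\gamma=p_{m+1}^M\cut\gamma$. This does not damage your argument, since any elements of $z_{m+1}^M$ below $\gamma$ are absorbed into the hull over $\gamma$ and the solidity of $z_{m+1}^M\cut\gamma$ follows from that of $z_{m+1}^M$; just replace $z_{m+1}^M$ by $z_{m+1}^M\cut\gamma$ throughout your comparison.
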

\begin{proof}This follows from \cite[2.17]{extmax}.\end{proof}
\begin{lem}\label{lem:exceptional_equiv}
Let $B,B'$ be active cephalanxes such that $B'$ is an iterate of 
$B$. Then $B'$ is exceptional iff $B$ is exceptional.
\end{lem}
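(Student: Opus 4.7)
The plan is to reduce to a single ultrapower step and then invoke the preservation lemmas already established. Let $B = (\gamma,\rho,M,Q)$ have degree $(m,0)$. Suppose $B' = B^\Tt_\alpha$ for some iteration tree $\Tt$ on $B$ with $\alpha \in \curlyB^\Tt$. Writing $\alpha_0 < \alpha_1 < \cdots$ for the (monotone) enumeration of $[0,\alpha]_\Tt \cap \curlyB^\Tt$ and noting that the ``exceptional'' predicate is continuous at limits (since $M^\Tt_\lambda$, $\rho^\Tt_\lambda$, $\gamma^\Tt_\lambda$ and the fine-structural parameters arise as direct limits), it suffices to treat a successor step $B' = \Ult(B,E)$ where $E$ is reasonable and semi-close to $B$. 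By \ref{lem:cephalanx_it_fine_structure} and \ref{lem:ceph_it}, the iteration embedding $i^M_E$ on $M$ is then a $\vec{p}_m$- and $z_{m+1}$-preserving $m$-embedding in the sense needed below.

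By \ref{lem:active_cephalanx_ult}\ref{item:active_ult_ceph}, \ref{item:active_exactness_preserved} and \ref{lem:special_facts}\ref{item:superstrong_equiv} the first three defining clauses of ``exceptional'' (active, exact, $Q$ superstrong) transfer in both directions. It remains to show that the disjunction ``$\rho_{m+1}^M = \rho$ \emph{or} $M$ is not $\gamma$-sound'' passes to $B'$ and back. Combining \ref{lem:active_cephalanx_ult}\ref{item:active_i_E^M_continuity_rho} (noting $\rho < \rho_m^M$, so $\rho \in \core_0(M)$ and $\rho' = i^M_E(\rho)$) with \ref{lem:active_cephalanx_ult}\ref{item:active_i_E^M_preservation} and \ref{lem:passive_cephalanx_ult}\ref{item:soundness_preserved}, one gets $\rho_{m+1}^{M'} = \sup i^M_E\text{\textquotedblleft}\rho_{m+1}^M$; since $i^M_E$ is strictly increasing below $\rho_m^M$, this gives $\rho_{m+1}^M = \rho \iff \rho_{m+1}^{M'} = \rho'$.

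For $\gamma$-soundness, use \ref{lem:gamma-sound_characterize}: $M$ is $\gamma$-sound iff $M = \Hull_{m+1}^M(\gamma \cup z_{m+1}^M \cup \vec{p}_m^M)$, and likewise for $M',\gamma'$. Since $i^M_E$ is $\vec{p}_m$-preserving and (by \cite[2.20]{extmax}) sends $z_{m+1}^M$ to $z_{m+1}^{M'}$, and since $\gamma' = i^M_E(\gamma)$ (because $i^M_E$ and $i^Q_E$ agree on $\rho > \gamma$ and $i^M_E$ is continuous/injective at $\gamma$), the standard ultrapower-of-a-hull computation yields that an arbitrary element $[a,f]_E \in M'$ lies in the hull $H' := \Hull_{m+1}^{M'}(\gamma' \cup z_{m+1}^{M'} \cup \vec{p}_m^{M'})$ iff $f$ can be chosen inside $H := \Hull_{m+1}^M(\gamma \cup z_{m+1}^M \cup \vec{p}_m^M)$ (using that the generators $a < \nu(E) \leq \gamma'$ are below $\gamma'$ since $E$ is semi-close with $\crit(E) < \gamma$). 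Hence $M' = H'$ iff $M = H$, completing the equivalence.

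The main obstacle is this last step: one must verify carefully, in both directions, that the ultrapower representation is compatible with the hull, in particular that if $M$ fails to be $\gamma$-sound then some $[a,f]_E$ with $f \notin H$ witnesses failure of $M' = H'$, and conversely that failure of $M' = H'$ forces some representing $f$ to lie outside $H$. Once this bookkeeping is done, iterating along successive $\alpha_i$ and taking direct limits at limit stages completes the proof.
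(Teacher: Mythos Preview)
Your Hull-preservation argument in the final paragraph is essentially the paper's proof. The paper's one-line argument (citing \ref{lem:ceph_it_tree}, \ref{lem:special_facts}\ref{item:superstrong_equiv}, \ref{lem:gamma-sound_characterize}, and \cite[2.20]{extmax}) rests on the observation that, given the first two clauses, the third clause ``$\rho_{m+1}^M=\rho$ or $M$ is not $\gamma$-sound'' is equivalent simply to $M\neq\Hull_{m+1}^M(\gamma\cup z_{m+1}^M\cup\vec{p}_m^M)$. One direction is \ref{lem:gamma-sound_characterize}; for the other, note that if $M$ equals this hull then there is an $\rSigma_{m+1}^M$ partial surjection of $\gamma$ onto $\rho=(\gamma^+)^M$, forcing $\rho_{m+1}^M\leq\gamma<\rho$, whence \ref{lem:gamma-sound_characterize} applies and gives $\gamma$-soundness. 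So the entire third clause reduces to the single Hull condition, whose preservation follows from $z_{m+1}$-preservation (\cite[2.20]{extmax}) together with the standard ultrapower/hull calculation you sketch. Your split into separate disjuncts is therefore unnecessary.

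As written, that split also has gaps. For (a) you invoke \ref{lem:passive_cephalanx_ult}\ref{item:soundness_preserved} to obtain $\rho_{m+1}^{M'}=\sup i^M_E``\rho_{m+1}^M$, but that conclusion is stated there only under the hypothesis that $M'$ is $(m+1)$-sound, which you do not verify. (It can be repaired: if $\rho_{m+1}^M=\rho$ then $\rho$-soundness of $M$ is exactly $(m+1)$-soundness, and since $\crit(E)<\rho=\rho_{m+1}^M$ this transfers to $M'$; the converse is similar.) For (b), \ref{lem:gamma-sound_characterize} carries the hypothesis $\rho_{m+1}^M\leq\gamma$, which fails precisely when $\rho_{m+1}^M=\rho$, so ``$M$ is $\gamma$-sound'' is not literally equivalent to ``$M$ equals the hull'' across all cases. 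Both issues dissolve once you collapse the disjunction to the single Hull condition as above.
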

\begin{proof}
By \ref{lem:ceph_it_tree}, 
\ref{lem:special_facts}\ref{item:superstrong_equiv} and 
\ref{lem:gamma-sound_characterize} and 
\cite[2.20]{extmax}.
\end{proof}

\begin{dfn}\label{dfn:exceptional_core}
Let $B=(\gamma,\rho,M,Q)$ be an active cephalanx of degree $(m,0)$.
Then $B$ has an \dfnemph{exceptional core} iff $Q$ has superstrong 
type and the following holds.
Let $F=F^Q$, $\kappa=\crit(F)$,
$X=i^Q_F``(\kappa^+)^Q$, $m'=\max(m,0)$,
\[ H=\cHull^M_{m'+1}(X\un z_{m+1}^M\un\pvec_m^M), \]
$\pi:H\to M$  the uncollapse.
Then $\pi``(\kappa^+)^H=X$
and $H||(\kappa^{++})^H=M|(\kappa^{++})^M$.
\end{dfn}
\begin{lem}\label{lem:exceptional_core_facts}
Let $B=(\gamma,\rho,M,Q)$ be an active pm-cephalanx of degree $(m,0)$. Suppose 
$B$ has an 
exceptional core. Let $F,\kappa,m',H,\pi$ be as in 
\ref{dfn:exceptional_core}. Then:
\begin{enumerate}
\item\label{item:M_is_ult(H,F)} $M=\Ult_{m'}(H,F)$ and $\pi=i^{H,m'}_F$ is an 
$m'$-embedding.
\item\label{item:z_pres} $\pi(z_{m+1}^H)=z_{m+1}^M$ and 
$\pi(p_{m+1}^H\cut(\kappa^+)^H)=p_{m+1}^M\cut\rho$.
\item\label{item:H_soundness} $\rho_{m+1}^H\leq(\kappa^+)^H<\rho_m^H$ and $H$ 
is 
$(\kappa^+)^H$-sound.
 \item\label{item:rho_m+1=kappa^+} If $\rho_{m+1}^H=(\kappa^+)^H$ then 
$\rho_{m+1}^M=\rho$ and 
$H,M$ are $(m+1)$-sound.
\item\label{item:rho_m+1_leq_kappa} If $\rho_{m+1}^H\leq\kappa$ then 
$\rho_{m+1}^H=\rho_{m+1}^M$ and $M$ is not $(m+1)$-sound.
\item\label{item:bounded_gens} If $M=\Hull_{m'+1}^M(\alpha\un 
z_{m+1}^M\un\pvec_m^M)$ where 
$\alpha<\rho$ 
and $\alpha$ is least such, then $\alpha\in\rg(\pi)$.
\end{enumerate}
\end{lem}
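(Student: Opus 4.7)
The plan is to use (1) as the structural backbone: once we identify $\pi$ with the $m$-ultrapower map $i^H_F$, parts (2)--(6) follow from standard premouse-ultrapower facts (notably \cite[2.20]{extmax}) together with direct inspection of the hull defining $H$.

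For (1), I first observe that $\crit(\pi)=\kappa$: every $\alpha<\kappa$ lies in $X$ (since $i^Q_F$ fixes it), while $\kappa\notin X$ because $i^Q_F$ jumps from the identity on $\kappa$ directly to $i^Q_F(\kappa)=\gamma$, skipping the interval $[\kappa,\gamma)$. Using $H||(\kappa^{++})^H=M|(\kappa^{++})^M$, the restriction $\pi\rest(\kappa^+)^H$ is the unique order-isomorphism onto $X$, hence coincides with $i^Q_F\rest(\kappa^+)^M$; combined with $\pow(\kappa)\cap H=\pow(\kappa)\cap M$, the length-$\gamma$ extender derived from $\pi$ equals $F$. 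Hence $\pi$ factors as $\pi=k\com i^H_F$ for a factor map $k\colon\Ult_m(H,F)\to M$ that is the identity on $\gamma$. Since $M=\Hull^M_{m'+1}(X\cup z_{m+1}^M\cup\vec{p}_m^M)$ by hypothesis, and every generator lies in $\rg(k)$ (the $X$-elements by the derived-extender calculation, the parameters by {\L}o\'s), $k$ is surjective and hence the identity, so $\pi=i^H_F$. That $\pi$ is an $m$-embedding (and not merely near) follows from $H$'s $m$-soundness together with the semi-closeness of $F$ to $H$, via \cite[2.20]{extmax}.

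Part (2) is then immediate from \cite[2.20]{extmax} applied to $\pi=i^H_F$. For (3), $H$ is $(\kappa^+)^H$-sound by direct inspection of the hull construction, giving $\rho_{m+1}^H\leq(\kappa^+)^H$; the inequality $(\kappa^+)^H<\rho_m^H$ follows from $\pi((\kappa^+)^H)=\sup X=i^Q_F((\kappa^+)^M)=\rho$ (using continuity of $\pi$ at the regular cardinal $(\kappa^+)^H$, and the superstrong identity $\lh(F)=\rho=i^Q_F((\kappa^+)^M)$) compared with $\pi(\rho_m^H)=\rho_m^M>\rho$. For (4), $\rho_{m+1}^H=(\kappa^+)^H>\kappa=\crit(F)$, so the preservation iff of \cite[2.20]{extmax} yields that $M$ is $(m+1)$-sound with $\rho_{m+1}^M=\sup\pi``\rho_{m+1}^H=\rho$, and $H$ is also $(m+1)$-sound by construction. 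For (5), $\crit(F)=\kappa\geq\rho_{m+1}^H$, so the same iff forces $M$ to fail $(m+1)$-soundness, while $\pi\rest\rho_{m+1}^H=\id$ gives $\rho_{m+1}^M=\rho_{m+1}^H$.

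For (6), by the $(\kappa^+)^H$-soundness from (3), let $\bar{\alpha}\leq(\kappa^+)^H$ be least with $H=\Hull^H_{m'+1}(\bar{\alpha}\cup z_{m+1}^H\cup\vec{p}_m^H)$. A {\L}o\'s computation applied to $\pi=i^H_F$ shows $M$ is generated over $z_{m+1}^M\cup\vec{p}_m^M$ by $\pi(\bar{\alpha})\cup\gamma$, which reduces to $\pi(\bar{\alpha})$ since $\gamma=\pi(\kappa)\leq\pi(\bar{\alpha})$; hence $\alpha\leq\pi(\bar{\alpha})\in\rg(\pi)$, and a minimality argument (using that $\alpha$ is the definable least generator in $M$ and that $\pi\rest\kappa=\id$) pins $\alpha$ down as either $\pi(\bar{\alpha})$ itself or a smaller element of $\rg(\pi)$. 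The main obstacle is establishing surjectivity of $k$ in (1): one must verify that every element of the hull equal to $M$ is captured by the {\L}o\'s expansion of $\Ult_m(H,F)$, a computation that in the superstrong setting requires careful bookkeeping about how functions and parameters transfer from $H$ to $M$ through $F$.
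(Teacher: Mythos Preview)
Your treatment of parts (1)--(4) is correct and matches the paper, which labels them ``standard''.

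For part (5) there is a gap: both the iff from \cite[2.20]{extmax} and the claim that $\pi\rest\rho_{m+1}^H=\id$ yields $\rho_{m+1}^M=\rho_{m+1}^H$ require that $F$ be close to $H$, which you never establish. The paper makes this explicit: since $Q$ is a type 3 premouse and $H||(\kappa^{++})^H=M||(\kappa^{++})^M$, the extender $F=F^Q$ is close to $H$. Without closeness, $\pi\rest\rho_{m+1}^H=\id$ only gives $\rho_{m+1}^M\leq\rho_{m+1}^H$; the reverse inequality needs that every $\rSigma_{m+1}^M$ subset of $\kappa$ is $\rSigma_{m+1}^H$. (The paper also gives a different argument for non-soundness: if $M$ were $(m+1)$-sound then $M=\Hull_{m+1}^M(\rg(\pi)\cup q)$ for some finite $q\subseteq\gamma$, contradicting that the generators of $F$ are unbounded in $\gamma$. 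Your route via \cite[2.20]{extmax} is fine once closeness is in place.)

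For part (6) there are two problems. First, the step ``$\gamma=\pi(\kappa)\leq\pi(\bar\alpha)$'' presupposes $\bar\alpha\geq\kappa$, which you do not justify. If $\bar\alpha<\kappa$ your {\L}o\'s computation only yields $M=\Hull^M_{m'+1}(\gamma\cup z_{m+1}^M\cup\vec{p}_m^M)$, so merely $\alpha\leq\gamma$. (This case can be handled, but not by what you wrote: one uses instead that $X=\pi``(\kappa^+)^H\subseteq\Hull^M_{m'+1}(\bar\alpha\cup z_{m+1}^M\cup\vec{p}_m^M)$ directly, since $X$ already generates $M$.) Second, your concluding ``minimality argument'' is too vague: knowing $\alpha\leq\pi(\bar\alpha)\in\rg(\pi)$ does not give $\alpha\in\rg(\pi)$, and ``$\alpha$ is the definable least generator'' is not an $\rSigma_{m'+1}$ description, so the elementarity of $\pi$ does not apply straightforwardly.

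The paper's argument for (6) is organized differently and avoids these issues. Using continuity of $\pi$ at $(\kappa^+)^H$ and $\rSigma_{m'+1}$-elementarity, the existence of such $\alpha<\rho$ in $M$ reflects to $H$: there is $\beta<(\kappa^+)^H$ with $H=\Hull^H_{m'+1}(\beta\cup z_{m+1}^H\cup\vec{p}_m^H)$. Taking $\beta$ least, one argues $\alpha=\pi(\beta)$. The point your sketch lacks is the biconditional: for $\beta'<(\kappa^+)^H$, $\beta'$ generates $H$ iff $\pi(\beta')$ generates $M$, which follows from elementarity together with the fact that $X$ already generates $M$. This gives minimality in both directions without any case split on whether $\bar\alpha\geq\kappa$.
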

\begin{proof}
Part \ref{item:M_is_ult(H,F)}:
If $m'=-1$ this is easy, so suppose $m=m'\geq 0$.
The uncollapse map 
$\pi:H\to M$ is a near $m$-embedding.
Let $\pi(\bar{z})=z_{m+1}^M$.
We have
\begin{equation}\label{eqn:H_is_hull_of_kappa^+} 
H=\Hull_{m+1}^H((\kappa^+)^H\cup\{\bar{z},\pvec_m^H\}),\end{equation}
and $H||(\kappa^{++})^H=M|(\kappa^{++})^M$,
so $H$ collapses $(\kappa^{++})^M$ and $H\notin M$.
It follows that $\pi$ is an $m$-embedding.
Now
\begin{equation}\label{eqn:M_is_hull_of_gamma^+} 
M=\Hull_{m+1}^M(\gamma^{+M}\cup\{z_{m+1}^M,\pvec_m^M\}), \end{equation}
Let $M'=\Ult_m(H,F)$ and $\pi'=i^{H,m}_F$ and
$\sigma:M'\to M$ the natural factor map. Then $\pi'$
is an $m$-embedding and $\sigma$ is $m$-lifting
and $\sigma\com\pi'=\pi$, and since $\pi$ is an $m$-embedding,
in fact so is $\sigma$. But $\crit(\sigma)\geq\lh(F)=\gamma^{+M}$
and $z_{m+1}^M\in\rg(\sigma)$, so by line (\ref{eqn:M_is_hull_of_gamma^+}),
$M'=M$ and $\sigma=\id$.

Parts \ref{item:z_pres}--\ref{item:rho_m+1=kappa^+}:
If $m=-1$ this is trivial, so suppose $m=m'\geq 0$.
By \cite{extmax} and part \ref{item:M_is_ult(H,F)},
$\bar{z}=z_{m+1}^H$
(where $\pi(\bar{z})=z_{m+1}^M$), so by \ref{lem:gamma-sound_characterize} 
and line (\ref{eqn:H_is_hull_of_kappa^+}) above, $H$ is 
$(\kappa^+)^H$-sound and $p_{m+1}^H\cut(\kappa^+)^H=\bar{z}\cut(\kappa^+)^H$,
so
\[ \pi(p_{m+1}^H\cut(\kappa^+)^H)=z_{m+1}^M\cut\rho=p_{m+1}^M\cut\rho,\]
since $M$ is $\rho$-sound. The rest follows from \cite{extmax}.

Part \ref{item:rho_m+1_leq_kappa}: Because $\rho_{m+1}^H\leq\kappa$, we have 
$m\geq 0$.
Since $Q$ is a type 3 premouse (as $B$
is a pm-cephalanx) and 
$M||(\kappa^{++})^M=H||(\kappa^{++})^H$, $F$ is close to $H$, so
$\rho_{m+1}^M=\rho_{m+1}^H\leq\kappa$. Suppose $M$ is $(m+1)$-sound, so
$M=\Hull_{m+1}^M(\kappa\un\pvec_{m+1}^M)$.
Then
$M=\Hull_{m+1}^M(\rg(\pi)\un q)$
some $q\in\gamma^{<\om}$. But the generators of $F$ are unbounded in $\gamma$, 
a 
contradiction.

Part \ref{item:bounded_gens}: Suppose there is $\alpha<\rho$ such that
\begin{equation}\label{eqn:alpha_gens_M} M = \Hull_{m'+1}^M(\alpha\un 
z_{m+1}^M\un\pvec_m^M). 
\end{equation}
Let $\alpha$ be least such.
Note that $\alpha\geq\gamma$,
since $Q$ is a premouse and $F$ has superstrong type.
Now if $\alpha>\gamma$ then $\alpha$ is a successor.
For if not, then since $\alpha<\rho$,
there is a surjection $f:\gamma\to\alpha$ in $M$,
so there is $\xi<\alpha$
with
\[ f\in\Hull_{m'+1}^M(\{\xi,z_{m+1}^M,\pvec_m^M\}),\]
but then $\max(\xi+1,\gamma)<\alpha$ suffices in place of $\alpha$,
a contradiction.

Since $\pi$ is cofinal in $\rho=\gamma^{+M}$ and $\pi$ is 
$\rSigma_{m'+1}$-elementary and 
$\pi(z_{m+1}^H)=z_{m+1}^M$, the existence
of $\alpha$ reflects to $H$, in that 
there is $\beta<(\kappa^+)^H$ 
such that
\[ H = \Hull_{m'+1}^H(\beta\un z_{m+1}^H\un\pvec_m^H). \]
Let $\beta$ be least such. As above,
either $\beta\leq\kappa$ or $\beta=\zeta+1$ for some 
$\zeta$. If $\beta\leq\kappa$ then note $\alpha=\gamma=\pi(\kappa)$.
So suppose $\beta=\zeta+1>\kappa$. We claim $\pi(\beta)=\alpha$.
For
\[ \zeta\notin\Hull_{m'+1}^H(\zeta\cup 
z_{m+1}^H\cup\pvec_m^H), \]
and this non-membership is an $\rPi_{m'+1}$ assertion in these parameters,
which therefore lifts to $M$ and $\pi(\zeta)$, etc,
so $\pi(\beta)\leq\alpha$.
Conversely, because
\[ H=\Hull_{m'+1}^H((\zeta+1)\cup z_{m+1}^H\cup\pvec_m^H),\]
we get
$\pi``H=
\Hull_{m'+1}^M(\pi``(\zeta+1)\cup z_{m+1}^M\cup\pvec_m^M)$  is unbounded
in $\rho=\gamma^{+M}$,
but $\gamma\leq\pi(\zeta)$, so
\[ \gamma^{+M}\sub\Hull_{m'+1}^M(\pi(\zeta+1)\cup 
z_{m+1}^M\cup\pvec_m^M),\]
so $\pi(\beta)\geq\alpha$.
\end{proof}

\begin{dfn}\label{dfn:good_core_active_cephalanx}
Let $B=(\gamma,\rho,M,Q)$ be an active cephalanx.
 We say 
$B$ has a 
\dfnemph{good core} iff
the following statements ((a)--(c)) hold:

\begin{enumerate}[label=\tu{(}\alph*\tu{)}]
 \item 
 $B$ has degree $(m,0)$ with $m\geq 0$.
\item Either
\begin{enumerate}[label=\tu{(}\roman*\tu{)}]
 \item $B$ is exact, and let $F=F^Q$; or
\item\label{item:B_non-exact_etc} $B$ is non-exact, and letting 
$N=N^B$, we have
$\OR^N=\rho^{+M}$ and $N$ is active type 1 or 3; let $F=F^N$.
\end{enumerate}
\item  Let $\kappa=\crit(F)$ and $\nu=\nu(F)$. Then
\begin{enumerate}[label=\tu{(}\roman*\tu{)}]
\item $M$ has an $(m,\nu)$-good core at $\kappa$, and 
$G^M_{m,\kappa,\nu}=F\rest\nu$.
\item Suppose case \ref{item:B_non-exact_etc} above holds and $N$ is type 1; so 
$\kappa=\gamma$. Then:
\begin{enumerate}[label=--]
\item If $Q$ is type 2 then $H^M_{m,\kappa}=Q$.
\item Suppose $Q$ is not type 2, nor superstrong. Let 
$\mu=\crit(F^Q)$. Then $M$ has an $(m,\gamma)$-good core at $\mu$, and 
$G^M_{m,\mu,\gamma}=F^Q$.\qedhere
\end{enumerate}
\end{enumerate}
\end{enumerate}
\end{dfn}

\begin{rem}
It seems that $B$ might have an exceptional non-good core.
\end{rem}

\begin{tm}\label{thm:no_iterable_sound_active_cephalanx} 
Let $B=(\gamma,\rho,M,Q)$ be an $(\om_1+1)$-iterable, non-trivial, active 
pm-cephalanx, of degree 
$(m,q)$. Then:
\begin{enumerate}[label=--]
\item If $B$ is 
exceptional 
then $B$ has an exceptional core \tu{(}see \ref{dfn:exceptional_core},
\ref{lem:exceptional_core_facts}\tu{)}.
\item If $B$ is non-exceptional then $m\geq 0$ and $B$ has a 
good core \tu{(}see \ref{dfn:good_core_active_cephalanx}\tu{)}, so $B$ 
is not sound
\tu{(}i.e. $M$ is not $(m+1)$-sound\tu{)}.
\end{enumerate}
\end{tm}
\begin{proof}
Suppose first that $B$ is exact and $Q$ is superstrong, but $B$ is not 
exceptional.
Then $\rho_{m+1}^M\leq\gamma\leq(\gamma^{++})^M=\rho^{+M}\leq\rho_m^M$ and $M$ 
is $\gamma$-sound, as is $Q$. Note then that $m\geq 0$,
since otherwise $\OR^M=\rho_{-1}^M=\rho_{m}^M=((\rho_{m+1}^M)^+)^M$. So 
$C=(\gamma,M,Q)$ is a 
non-trivial bicephalus, and note that $C$ is $(\om_1+1)$-iterable. So by 
\ref{thm:no_iterable_sound_bicephalus},
$M$ has an $(m,\gamma)$-good core corresponding to $F^Q$,
and because $B$ is exact, this implies that
$B$ has a good core (\ref{dfn:good_core_active_cephalanx}).
So we may now assume 
that:
\begin{equation}\label{eqn:non-excep_case} \text{If }B\text{ is exact and 
}Q\text{ is 
superstrong, then }B\text{ is exceptional.}\end{equation}

Under this assumption, the proof is based on that of 
\ref{thm:no_iterable_sound_passive_cephalanx}. 
The main differences 
occur in the rules guiding the comparison, the proof that the comparison 
terminates, and when 
$B$ is exceptional. We may assume $B$ is countable. 

We define $\Tt,\Uu$ on $B$ and sets
$S^\Tt_\alpha,S^\Uu_\alpha,\modelset^\Tt_\alpha,\modelset^\Uu_\alpha$, much as 
before. (But if $Q$ is type 1 or 3 then  $S^\Tt_\alpha\neq\{1\}\neq 
S^\Uu_\alpha$ for all $\alpha$.)
If $S^\Tt_\alpha=\{0,1\}$ and 
$B^\Tt_{\alpha+1}=B^\Tt_\alpha$ and $S^\Tt_{\alpha+1}=\{0\}$,
we say that (in $\Tt$) we \dfnemph{move into $M^\Tt_\alpha$ at stage $\alpha$}.
 We never \emph{move into $Q^\Tt_\alpha$};
that is, if $\alpha\in\curlyB^\Tt$ 
then $S^\Tt_\alpha\neq\{1\}$. Likewise for $\Uu$.
Also, we allow
$\alpha$ such that
$E^\Tt_\alpha=\emptyset=E^\Uu_\alpha$, but only when
we move into $M^\Tt_\alpha$ or $M^\Uu_\alpha$  at 
stage $\alpha$.
If we move into $M^\Tt_\alpha$ in at stage $\alpha$,
we will have $E^\Tt_\alpha=\emptyset$
and either $E^\Uu_\alpha=\emptyset$ or $E^\Uu_\alpha=F(Q^\Tt_\alpha)$.
Likewise for $\Uu$. We will not move into
both $M^\Tt_\alpha$ (in $\Tt$) and $M^\Uu_\alpha$ (in $\Uu$) at
stage $\alpha$. If $\alpha\in\curlyB^\Tt$
and $\lh(E^\Tt_\alpha)>\rho(B^\Tt_\alpha)$
(that is, 
$E^\Tt_\alpha\in\es_+(M^\Tt_\alpha)\cut\es_+(Q^\Tt_\alpha)$), then we will have
$S^\Tt_\alpha=\{0\}$, so there is some $\beta<\alpha$
such that $B^\Tt_\beta=B^\Tt_\alpha$ and
at stage $\beta$ we move into $M^\Tt_\beta=M^\Tt_\alpha$. Likewise for $\Uu$.

Suppose we 
have defined $(\Tt,\Uu)\rest\alpha+1$, $S^\Tt_\alpha$ and $S^\Uu_\alpha$.
Suppose there are $A\in\modelset^\Tt_\alpha$ and $B\in\modelset^\Uu_\alpha$
such that $A\nins B\nins A$; otherwise the comparison terminates
at stage $\alpha$.
We next determine what to do at stage $\alpha$.
In certain cases we
implicitly
specify two 
segmented-premice $A,B$, with $A\nins B\nins A$, from which to 
select $E^\Tt_\alpha,E^\Uu_\alpha$.
We then find the least disagreement
between $A,B$, and then minimize on $\iota(E)$, 
rather 
than $\nu(E)$, when 
selecting extenders. (For example, if 
$E^\Tt_\alpha\neq\emptyset\neq 
E^\Uu_\alpha$ then $\iota^\Tt_\alpha=\iota^\Uu_\alpha$.)

Let 
$B^\Tt=B^\Tt_\alpha$, 
$M^\Tt=M^\Tt_\alpha$, etc.
If $S^\Tt=\{0,1\}$, then we will have 
by induction that ($\dagger$) for every $\beta<\alpha$, if 
$E^\Tt_\beta\neq\emptyset$ then $\lh^\Tt_\beta\leq\rho^\Tt$ and 
$\iota^\Tt_\beta\leq\gamma^\Tt$, and 
if $E^\Uu_\beta\neq\emptyset$ then $\lh^\Uu_\beta\leq\rho^\Tt$.
Likewise regarding $\rho^\Uu,\gamma^\Uu$ if $S^\Uu=\{0,1\}$.
We leave the maintenance of ($\dagger$) to the
reader.

We  say that $\alpha$ is \dfnemph{$(\Tt,\Uu)$-unusual} iff $S^\Tt=\{0,1\}$ 
and either
\begin{enumerate}[label=\tu{(}\roman*\tu{)}]
\item there is $\xi<\alpha$ such that
$F(Q^\Tt)\rest\nu(F(Q^\Tt))=E^\Uu_\xi\rest\nu^\Uu_\xi$, or
\item there are $\xi_0<\xi_1<\alpha$ such that
\begin{enumerate}[label=--]
 \item $\alpha=\xi_1+1$,
\item $S^\Tt_{\xi_0}=\{0,1\}$ and $E^\Tt_{\xi_0}=\emptyset$ and 
$E^\Uu_{\xi_0}=F(Q^\Tt_{\xi_0})$ and 
$S^\Tt_{{\xi_0}+1}=\{0\}$,
 \item $S^\Uu_{\xi_1}=\{0,1\}$ and $E^\Uu_{\xi_1}=\emptyset$ and 
$E^\Tt_{\xi_1}=F(Q^\Uu_{\xi_1})$ and $S^\Uu_{{\xi_1}+1}=\{0\}$,
\item $\crit^\Tt_{\xi_1}=\gamma(B^\Tt_{\xi_0})$.
\end{enumerate}
\end{enumerate}
In case (i)/(ii) we say that $\alpha$ 
is \dfnemph{type} (i)/(ii). We define
\dfnemph{$(\Uu,\Tt)$-unusual} symmetrically.

\begin{casefour} $\alpha$ is both $(\Tt,\Uu)$- and $(\Uu,\Tt)$-unusual.

 We terminate the comparison here \dfnemph{with unusual failure 1}.
 (We will show that this can not occur.)
\end{casefour}

From now on, we assume that $\alpha$ is not both $(\Tt,\Uu)$- 
and $(\Uu,\Tt)$-unusual.

\begin{casefour} Either:
\begin{enumerate}[label=--]
 \item $\card(S^\Tt)=\card(S^\Uu)=1$, or
 \item $S^\Tt=\{0,1\}$ and $B^\Tt||\rho^\Tt\neq 
B^\Uu||\rho^\Tt$ and if $S^\Uu=\{0,1\}$ then $\rho^\Tt\leq\rho^\Uu$, or
 \item $S^\Uu=\{0,1\}$ and $B^\Uu||\rho^\Uu\neq 
B^\Tt||\rho^\Uu$ and if $S^\Tt=\{0,1\}$ then $\rho^\Uu\leq\rho^\Tt$.
\end{enumerate}

Select extenders by least disagreement
and minimization on $\iota(E)$ (there is no moving into models).
\end{casefour}

\begin{casefour}\label{case:Tt_2_models}
 $S^\Tt=\{0,1\}$, $B^\Tt||\rho^\Tt=B^\Uu||\rho^\Tt$ and
if $S^\Uu=\{0,1\}$ then 
$\rho^\Tt<\rho^\Uu$.

\begin{scasefour}\label{scase:alpha_T,U-unusual} $\alpha$ is 
$(\Tt,\Uu)$-unusual (hence not 
$(\Uu,\Tt)$-unusual).

If $B^\Uu|\rho^\Tt$ is 
active,
we terminate \dfnemph{with unusual failure 2} (we will show this cannot occur).
Otherwise, in $\Tt$ we move into $M^\Tt$, and we set 
$E^\Tt_\alpha=\emptyset=E^\Uu_\alpha$ and $S^\Uu_{\alpha+1}=S^\Uu_\alpha$.
\end{scasefour}

\begin{scasefour}\label{scase:unusual_failure_2} $\alpha$ is 
$(\Uu,\Tt)$-unusual (hence not 
$(\Tt,\Uu)$-unusual).

 We terminate \dfnemph{with unusual failure 3}.
 (We will show  this cannot occur.)
\end{scasefour}

\begin{scasefour}\label{case:card_2_card_1} $\alpha$ is neither $(\Tt,\Uu)$- 
nor $(\Uu,\Tt)$-unusual.

If $Q^\Tt\ins B^\Uu$ then in $\Tt$ we move into $M^\Tt$,
and we set $E^\Uu_\alpha=F(Q^\Tt)$.

If $Q^\Tt\nins B^\Uu$ then we select extenders from $Q^\Tt$
and $B^\Uu$.\footnote{It might be 
that $B^\Uu|\OR(Q^\Tt)$ is active with extender $E$ and 
$\iota(F(Q^\Tt))>\iota(E)$, 
in which case 
$E^\Tt_\alpha=\emptyset$ and $E^\Uu_\alpha=E$. In this case we keep 
$S^\Tt_{\alpha+1}=\{0,1\}$. 
If $E$ is superstrong, we could end up with $F(Q^\Tt)$ active on 
some 
$S\in\modelset^\Uu_{\alpha+1}$.}
\end{scasefour}

\end{casefour}

\begin{casefour}\label{case:Uu_2_models}
 $S^\Uu=\{0,1\}$, $B^\Uu||\rho^\Uu= 
B^\Tt||\rho^\Uu$ and
if $S^\Tt=\{0,1\}$
then $\rho^\Uu<\rho^\Tt$.

We have subcases 4.1--4.3
analogous to 3.1--3.3.

\end{casefour}

\begin{casefour}\label{case:both_B_matching}
 $S^\Uu=\{0,1\}=S^\Tt$,
 and $B^\Tt||\rho^\Tt= 
B^\Uu||\rho^\Uu$ (so $\rho^\Tt=\rho^\Uu$).

\begin{scasefour}\label{scase:both_B_matching_unusual}
$\alpha$ is either $(\Tt,\Uu)$-unusual or $(\Uu,\Tt)$-unusual.

 We terminate  \dfnemph{with unusual failure 4}.
 (We will show  this cannot occur.)
\end{scasefour}

\begin{scasefour} Otherwise.

If $Q^\Tt\neq Q^\Uu$, we select extenders from $Q^\Tt$ and 
$Q^\Uu$.\footnote{Here it would have been equivalent to set
$E^\Tt_\alpha=F(Q^\Tt)$ and $E^\Uu_\alpha=F(Q^\Uu)$.
We don't do this because if $Q$ is type 2, it seems it might break the 
rule that we
minimize on $\iota(E)$ before selecting extenders - albeit in a trivial manner.
(Suppose $Q$ is type 2. Then so are $Q^\Tt$ and $Q^\Uu$.
Suppose $\iota(Q^\Tt)=\nu(Q^\Tt)<\nu(Q^\Uu)=\iota(Q^\Uu)$, so 
$E^\Tt_\alpha=F(Q^\Tt)$
and $E^\Uu_\alpha=\emptyset$ and $S^\Uu_{\alpha+1}=\{0,1\}$.
Since $Q^\Tt$ is type 2, we have $B^\Tt_{\alpha+1}|\OR(Q^\Uu)$
is well-defined and is passive, so we end up with 
$E^\Tt_\alpha=\emptyset$ and $E^\Uu_{\alpha+1}=F(Q^\Uu)$.)}

If $Q^\Tt=Q^\Uu$, either
\begin{enumerate}[label=--]
 \item 
in $\Tt$ we move into $M^\Tt$,
and set $E^\Uu_\alpha=F(Q^\Tt)=F(Q^\Uu)$, or
\item in $\Uu$ we move into $M^\Uu$,
and set $E^\Tt_\alpha=F(Q^\Tt)=F(Q^\Uu)$.\footnote{We choose
a side randomly. We could have specified one,
but at a loss of symmetry.}
\end{enumerate}
\end{scasefour}
\end{casefour}

This completes the rules of comparison. Given $\alpha<\lh(\Tt,\Uu)$
such that $\alpha\in\curlyB^\Tt$ but  
$S^\Tt_\alpha=\{0\}$, we set $\movin^\Tt(\alpha)=$
the $\beta\leq_\Tt\alpha$ such that
$B^\Tt_\alpha=B^\Tt_\beta$ and at stage $\beta$,
in $\Tt$ we move into $M^\Tt_\beta=M^\Tt_\alpha$. Likewise for $\movin^\Uu$.

By Claim \ref{clm:T-unusual}(\ref{item:no_unusual_termination}) below,
the comparison
does not terminate unusually. By Claim 
\ref{clm:T-unusual}(\ref{item:no_exts_match}), no two 
extenders used in $\Tt$ and $\Uu$ are equivalent
to each other.
If $B$ is active and $Q$ type 3 then
$Q^\Tt_\alpha$ might fail the ISC,
so this needs an argument.

\begin{clmfour}\label{clm:T-unusual}
Let $\eta\leq\lh(\Tt,\Uu)$. Then:
\begin{enumerate}
 \item\label{item:type_1_3_S_not_1}
If $Q$ is type 1/3 then $S^\Tt_\alpha\neq\{1\}\neq 
S^\Uu_\alpha$ for all $\alpha<\eta$.

\item\label{item:type_1_2_all_premice} If $Q$ is type 1/2 then for 
every
$\alpha<\eta$, $M^\Tt_\alpha$, $Q^\Tt_\alpha$ are premice (or empty).

 \item\label{item:rules_moving_in} For all $\alpha+1<\eta$:
 \begin{enumerate}
  \item\label{item:when_both_exts_empty} 
 $E^\Tt_\alpha=\emptyset=E^\Uu_\alpha$ 
 $\Longrightarrow$ [$\alpha$ is either $(\Tt,\Uu)$- or $(\Uu,\Tt)$-unusual].
 \item\label{item:when_unusual}
 $E^\Tt_\alpha=\emptyset=E^\Uu_\alpha$ 
 $\Longleftarrow$ [$\alpha$ is either $(\Tt,\Uu)$- or $(\Uu,\Tt)$-unusual].
 \item\label{item:if_move_in} If we move into $M^\Tt_\alpha$ at stage
 $\alpha$ then $B^\Tt_\alpha||\rho^\Tt_\alpha=B^\Uu_\alpha||\rho^\Tt_\alpha$
 and $E^\Tt_\alpha=\emptyset$ and [either $E^\Uu_\alpha=\emptyset$
 or $E^\Uu_\alpha=F(Q^\Tt_\alpha)$] and if $S^\Uu_\alpha=\{0,1\}$
 then $\rho^\Tt_\alpha\leq\rho^\Uu_\alpha$.
 \item\label{item:when_ext_above_Q} If $\alpha\in\curlyB^\Tt$ and 
$E^\Tt_\alpha\in\es_+(M^\Tt_\alpha)\cut\es_+(Q^\Tt_\alpha)$
 then $S^\Tt_\alpha=\{0\}$.
 \end{enumerate}
\item\label{item:no_exts_match} For all $\alpha+1,\beta+1<\eta$, if 
$E^\Tt_\alpha\neq\emptyset\neq 
E^\Uu_\beta$ then
$E^\Tt_\alpha\rest\nu^\Tt_\alpha\neq E^\Uu_\beta\rest\nu^\Uu_\beta$.

\item\label{item:T,U-unusual} Let $\alpha<\eta$ be $(\Tt,\Uu)$-unusual. Then:
\begin{enumerate}[label=\tu{(}\roman*\tu{)}]
 \item\label{item:Q_type_3_and_Q_alpha_not_pm} $Q$ is type 3 and $Q^\Tt_\alpha$ 
is not a premouse.
 \item\label{item:alpha_not_U,T-unusual} $\alpha$ is not $(\Uu,\Tt)$-unusual.
 \item\label{item:Case_5_attains} Case \ref{case:Tt_2_models} of the comparison 
rules attains
 at stage $\alpha$ and
$B^\Tt_\alpha||\rho^\Tt_\alpha=B^\Uu_\alpha|\rho^\Tt_\alpha$ is passive.
 \item\label{item:gamma_not_later_crit_of_F^Q} For all 
$\beta\in[\alpha,\eta)$, if $\beta\in\curlyB^\Tt$ then
$\crit(F(Q^\Tt_\beta))\neq\gamma^\Tt_\alpha$,
and if $\beta\in\curlyB^\Uu$ then 
$\crit(F(Q^\Uu_\beta))\neq\gamma^\Tt_\alpha$.
 \item\label{item:type(i)} Suppose $\alpha$ is type \tu{(}i\tu{)}, as witnessed 
by $\xi$. Then:
\begin{enumerate}[label=\tu{(}\alph*\tu{)}]
\item\label{item:Q_not_superstrong} $Q$ is not superstrong, 
\item\label{item:alpha=xi+1,emptyext}  $E^\Tt_\xi=\emptyset$ and $\alpha=\xi+1$,
\item\label{item:lh^Uu_xi<gamma} $\lh^\Uu_\xi<\gamma^\Tt_\alpha$,
\item\label{item:triv_comp_of_E^Uu_xi_rest_is_pm_ext} the trivial completion of 
$E^\Uu_\xi\rest\nu^\Uu_\xi$ is a type 3 premouse extender,\footnote{Recall that 
a
premouse extender
is the active extender of some premouse.}
\end{enumerate}
\item\label{item:type(ii)} Suppose that $\alpha$ is type \tu{(}ii\tu{)}, as 
witnessed by ${\xi_0},{\xi_1}$. Then:
\begin{enumerate}[label=\tu{(}\alph*\tu{)}]
 \item\label{item:Q_is_superstrong} $Q$ is superstrong,
 \item\label{item:B_is_exact} $B$ is exact,
 \item\label{item:chi=pred^Tt(alpha)} 
$M^{*\Tt}_\alpha=B^\Tt_{\xi_0}$,
 \item\label{item:ext_is_composition} $F(Q^\Tt_\alpha)=E^\Tt_{\xi_1}\com 
E^\Uu_{\xi_0}$.
\item\label{item:double_successor} 
$M^\Tt_\alpha||((\rho^\Tt_\alpha)^+)^{M^\Tt_\alpha}=
M^\Uu_\alpha||((\rho^\Tt_\alpha)^+)^{M^\Uu_\alpha}$.
 \end{enumerate}
\end{enumerate}
\item\label{item:U,T-unusual} Likewise for $(\Uu,\Tt)$-unusual $\alpha<\eta$.
\item\label{item:no_unusual_termination} The comparison
does not terminate unusually at any stage $\alpha<\eta$.
\end{enumerate}
\end{clmfour}
\begin{proof}
We prove all parts together by simultaneous induction on $\eta$.

Parts \ref{item:type_1_3_S_not_1},
\ref{item:type_1_2_all_premice},
\ref{item:when_both_exts_empty},
\ref{item:if_move_in},
\ref{item:when_ext_above_Q}: by
the rules of comparison and for normal trees.

Part \ref{item:no_exts_match}: Suppose otherwise.
Then by part \ref{item:type_1_2_all_premice}
and the rules of comparison, $Q$ 
is type 3, so part \ref{item:type_1_3_S_not_1} applies.
Let $(\alpha,\beta)$ be 
the lexicographically least counterexample (with $\alpha+1,\beta+1<\eta$).
Let 
$\lambda=\lh^\Tt_\alpha$.

Suppose that 
$\lh^\Uu_\beta=\lambda$. So $E^\Tt_\alpha=E^\Uu_\beta$, so
by the rules of extender selection,  $\alpha\neq\beta$.
So suppose $\alpha<\beta$ (the other case is almost symmetric).
The rules give some $\delta\in[\alpha,\beta)$ 
with
$E^\Uu_\delta\neq\emptyset$; let 
$\delta$ be least such and let $G=E^\Uu_\delta$. Then 
$\lh(G)=\lambda=\lh^\Uu_\beta$, so $G$ has superstrong 
type, $\iota^\Tt_\alpha=\iota(G)$ and $\delta=\alpha$ but $E^\Tt_\alpha\neq G$. 
Let 
$\eps=\pred^\Uu(\alpha+1)$. By \ref{lem:lh_becomes_card}, 
$\alpha+1\in\curlyB^\Uu$ and $\crit(G)=\gamma(B^\Uu_\eps)$. So 
$\alpha$ is not $\Uu$-special, so $G$ is a premouse extender.
Note  $\beta=\alpha+1$ and 
$F(Q^\Uu_{\alpha+1})=E^\Uu_\beta=E^\Tt_\alpha$.
Standard extender factoring arguments 
(for example, 
see \cite[\S5]{extmax}) now show that 
there is $\alpha'<\alpha$ such that $E^\Tt_{\alpha'}=G$. But 
$(\alpha',\alpha)<_\lex(\alpha,\beta)$, 
contradiction.

So we may assume that $\lambda=\lh^\Tt_\alpha<\lh^\Uu_\beta$; so $\alpha<\beta$.
Then $\exit^\Uu_\beta$ is not a premouse because, letting 
$\nu=\nu^\Tt_\alpha$, we have $\nu<\lh^\Tt_\alpha$ and
$\lh^\Tt_\alpha$ is a cardinal of $\exit^\Uu_\beta$, but 
$E^\Uu_\beta\rest\nu\notin 
\exit^\Uu_\beta$. So 
$\beta$ is $\Uu$-special. But then
$\beta$ is $(\Uu,\Tt)$-unusual (of type (i)).
So by induction (with parts \ref{item:U,T-unusual}
and \ref{item:no_unusual_termination}, as $\beta+1<\eta$),
$\beta$ is not $(\Tt,\Uu)$-unusual.
Note then that, by induction,
in the rules of comparison, Subcase 4.1 of Case \ref{case:Uu_2_models}
attains at stage $\beta$,
so $E^\Uu_\beta=\emptyset$,
a contradiction. 

Part \ref{item:T,U-unusual}: Let $\alpha<\eta$ be $(\Tt,\Uu)$-unusual.
So $S^\Tt_\alpha=\{0,1\}$. Let 
$B^\Tt=B^\Tt_\alpha$, $M^\Tt=M^\Tt_\alpha$, etc.
Let 
$F=F(Q^\Tt)$ and $\mu=\crit(F)$.

\begin{caseeleven}
 $\alpha$ is $(\Tt,\Uu)$-unusual of type (i).

Parts
\ref{item:Q_type_3_and_Q_alpha_not_pm}, 
\ref{item:type(i)}\ref{item:lh^Uu_xi<gamma}:
Let us show $\lh^\Uu_\xi<\rho^\Tt=\lh(F)$.
Suppose not. Then $\lh^\Uu_\xi=\rho^\Tt=\lh(F)$ and 
$E^\Uu_\xi=F$. It follows that $E^\Tt_\delta\neq\emptyset$ for some 
$\delta\in[\xi,\alpha)$. Let 
$\delta$ be least such and  $G=E^\Tt_\delta$. As in  part 
\ref{item:no_exts_match}, $G$ is a superstrong premouse extender also used in 
$\Uu$, 
contradicting part
\ref{item:no_exts_match}.

Since $\lh^\Uu_\xi<\rho^\Tt$, $Q^\Tt$ is not a 
premouse, so $Q$ is type 3. It easily follows that 
$\lh^\Uu_\xi<\gamma^\Tt$, since if $\gamma$ is a successor cardinal in $Q$ then 
$Q^\Tt$ is a premouse.

Part \ref{item:type(i)}\ref{item:Q_not_superstrong} ($Q$ is not superstrong):
Suppose otherwise. Then because $Q^\Tt$ is not a premouse, there 
is 
$\delta<_\Tt\alpha$ 
such that $Q^\Tt_\delta$ is a premouse and 
$\crit(j^\Tt_{\delta\alpha})=\gamma(B^\Tt_\delta)$ (otherwise $j^\Tt_{0\alpha}$ 
is 
continuous at $\gamma^B$ and $Q^\Tt$ is a premouse). So $Q^\Tt$ 
fails the ISC. So $E^\Uu_\xi$ is not a premouse 
extender and $\xi$ is $\Uu$-special. But then 
$F(Q^\Uu_\xi)$ has superstrong type, so $\lh^\Uu_\xi=\rho^\Tt$, a 
contradiction.

Part \ref{item:type(i)}\ref{item:alpha=xi+1,emptyext} 
($E^\Tt_\xi=\emptyset$, $\alpha=\xi+1$): We have 
$i_F(\mu)>\rho^\Tt$,
so  $B^\Uu_{\xi+1}|\rho^\Tt=Q^\Tt||\rho^\Tt$. Now suppose there is 
$\delta\in[\xi,\alpha)$ such 
that 
$E^\Tt_\delta\neq\emptyset$. Fix the least such with 
$\delta+1\leq_\Tt\alpha$. Let 
$\eps=\pred^\Tt(\delta+1)$. So $\eps\in\curlyB^\Tt$ and 
$\kappa=\crit^\Tt_\delta\leq\gamma^\Tt_\eps$. If $\kappa<\nu(F(Q^\Tt_\eps))$ 
then 
$\nu(F)>\nu^\Uu_\xi$, contradiction. So $\kappa\geq\nu(F(Q^\Tt_\eps))$. But then
as before, $E^\Tt_\delta$ is a premouse extender used in 
both $\Tt,\Uu$, a contradiction.
The desired conclusions follow.

Part \ref{item:type(i)}\ref{item:triv_comp_of_E^Uu_xi_rest_is_pm_ext}:
By an extender factoring as before;
otherwise there is $\delta<\xi$
such that $E^\Uu_\delta$ is a premouse extender
also used in $\Tt$, a contradiction.

Part \ref{item:alpha_not_U,T-unusual} ($\alpha$ is not $(\Uu,\Tt)$-unusual):
Suppose otherwise.
Then $S^\Tt_\alpha=\{0,1\}=S^\Uu_\alpha$,
so $\alpha$ is type (i) with respect to both kinds of
unusualness
(directly by definition). But by part \ref{item:type(i)} and symmetry,
therefore
 $\alpha=\xi+1$ with $E^\Tt_\xi=\emptyset$ (by $(\Tt,\Uu)$-unusualness of type 
(i)), but  $E^\Tt_\xi\neq\emptyset$ (by $(\Uu,\Tt)$-unusualness of type (i)),
a contradiction.

Part
\ref{item:Case_5_attains} (Case \ref{case:Tt_2_models} 
attains and $B^\Tt||\rho^\Tt=B^\Uu|\rho^\Tt$):
This is because $\alpha$ is non-$(\Uu,\Tt)$-unusual,
$S^\Tt_\alpha=\{0,1\}$
and since by part \ref{item:type(i)} and its proof,
we have $Q^\Tt||\rho^\Tt=B^\Uu|\rho^\Tt$ 
and $\rho^\Tt<i_F(\mu)=i^{*\Uu}_\alpha(\mu)$.

Part \ref{item:gamma_not_later_crit_of_F^Q}:
We have $\mu=\crit(Q^\Tt_\alpha)<\gamma^\Tt_\alpha$.
If $\alpha\in\curlyB^\Uu$ then  
$\crit(F(Q^\Uu_\alpha))\neq\gamma^\Tt_\alpha$ since $\alpha=\xi+1$
and $\lh^\Uu_\xi<\rho^\Tt<i^{*\Uu}_{\alpha}(\mu)$.
Moreover, $\rho^\Tt$ is a cardinal in $M^\Uu_\alpha$,
so if $\beta\geq\alpha$ and $E^\Uu_\beta\neq\emptyset$
then $\rho^\Tt\leq\iota(E^\Uu_\beta)$, which
easily gives that if $\beta+1\in\curlyB^\Uu$
then $\crit(F(Q^\Uu_\beta))\neq\gamma^\Tt_\alpha$.
And at stage $\alpha$, in $\Tt$ we move into $M^\Tt$,
so the rest is similar.

\end{caseeleven}

\begin{caseeleven}
$\alpha$ is $(\Tt,\Uu)$-unusual of type (ii). 

Let 
$F_0=F(Q^\Tt_{\xi_0})=E^\Uu_{\xi_0}$, $\mu_0=\crit(F_0)$,
$F_1=F(Q^\Uu_{\xi_1})=E^\Tt_{\xi_1}$, $\mu_1=\crit(F_1)$.

Part \ref{item:alpha_not_U,T-unusual} ($\alpha$ is not $(\Uu,\Tt)$-unusual): By 
definition
of $(\Tt,\Uu)$-unusual type (ii),  $S^\Uu_\alpha=\{0\}$.

Part 
\ref{item:type(ii)}\ref{item:Q_is_superstrong} ($Q$ is superstrong): Suppose 
not.
Let $\delta\in[{\xi_0},{\xi_1})$ be least such that 
$\delta+1\leq_\Uu\xi_1$. Let 
$G=E^\Uu_\delta$ and 
$\theta=\crit(G)$. Then 
$\theta\leq{\mu_1}<i_G(\theta)$, so ${\mu_1}\notin\rg(j^\Uu_{0{\xi_1}})$, but 
${\mu_1}=\crit(F(Q^\Uu_{\xi_1}))$, so 
${\mu_1}\in\rg(j^\Uu_{0{\xi_1}})$, contradiction.

Part \ref{item:type(ii)}\ref{item:chi=pred^Tt(alpha)} 
($M^{*\Tt}_\alpha=B^\Tt_{\xi_0}$):
Let 
$\beta<\alpha$ with 
$E^\Tt_\beta\neq\emptyset$, so $\beta\neq{\xi_0}$. If $\beta<{\xi_0}$ 
then
$\iota^\Tt_\beta\leq\iota(F_0)$
by ($\dagger$), and $\iota(F_0)=\mu_1$ as $Q$ is superstrong. As
$S^\Tt_{{\xi_0}+1}=\{0\}$, 
$\rho^\Tt_{\xi_0}\leq\iota^\Tt_\beta$  if $\beta>{\xi_0}$. 
This 
suffices.

Parts \ref{item:Q_type_3_and_Q_alpha_not_pm}, 
\ref{item:Case_5_attains},
\ref{item:type(ii)}\ref{item:ext_is_composition}:
These are easy consequences of the fact that
$Q$ is  superstong  and $\alpha$ type (ii)
(in particular, $Q^\Tt_\alpha$ fails the ISC,
so is not a premouse).

Part \ref{item:gamma_not_later_crit_of_F^Q}:
much like in the type (i) case.

Part \ref{item:type(ii)}\ref{item:B_is_exact} ($B$ is exact):
We have ${\xi_1}\in\curlyB^\Uu$ and 
$\crit(F(Q^\Uu_{\xi_1}))={\mu_1}=\iota(E^\Uu_{\xi_0})$. Let 
$\eps=\pred^\Uu({\xi_0}+1)$. Since $\mu_1\in\rg(j^\Uu_{0\xi_1})$,
it is 
easy to see that either:
\begin{enumerate}[label=\tu{(}\arabic*\tu{)}]
 \item\label{item:correct} $\xi_0+1\leq_\Uu\xi_1$ (so 
$\eps,\xi_0+1\in\curlyB^\Uu$)
and $\crit(F(Q^\Uu_{\xi_0+1}))=\mu_1$ 
and $\crit(F(Q^\Uu_\eps))={\mu_0}=\crit(E^\Uu_{\xi_0})$, or
\item\label{item:incorrect} $\xi_0+2\leq_\Uu\xi_1$ (so $\xi_0+2\in\curlyB^\Uu$) 
and $\crit(F(Q^\Uu_{\xi_0+2}))=\mu_1$
and $\eps,\xi_0+1\in\curlyB^\Uu$ and 
$\gamma(B^\Uu_\eps)=\mu_0=\crit(E^\Uu_{\xi_0})$
and $\gamma(B^\Uu_{\xi_0+1})=\mu_1$
and $E^\Uu_{{\xi_0}+1}=F(Q^\Uu_{{\xi_0}+1})$.
\end{enumerate}
We claim that \ref{item:correct} holds,
so suppose \ref{item:incorrect} holds.
Now $E^\Uu_\varepsilon\neq\emptyset$ just by definition,
as $\varepsilon=\pred^\Uu({\xi_0}+1)$. We have
 $M^{*\Uu}_{\xi_0+1}=B^\Uu_\eps$ and by the normality rules,
 $\crit(E^\Uu_{\xi_0})=\gamma(B^\Uu_\eps)<\iota(E^\Uu_\eps)$,
 and since $Q$ is superstrong, therefore $E^\Uu_\eps\notin \es_+(Q^\Uu_\eps)$.
 So by part \ref{item:when_ext_above_Q}, $S^\Tt_\eps=\{0\}$,
 so there is 
$\eps'<\eps$ with
 $B^\Uu_{\eps'}=B^\Uu_\eps$, where at stage $\eps'$
 we move into $M^\Uu_{\eps'}=M^\Uu_\eps$ in $\Uu$.
Note that $\eps'$ is not $(\Uu,\Tt)$-unusual, by part 
\ref{item:gamma_not_later_crit_of_F^Q}
and since $\gamma(B^\Uu_\eps)={\mu_0}=\crit(F(Q^\Tt_{\xi_0}))$.
And $\eps'$ is not $(\Tt,\Uu)$-unusual (type (ii)) as $S^\Uu_{\eps'}=\{0,1\}$.
So by \ref{item:when_both_exts_empty} and \ref{item:if_move_in},
$E^\Tt_{\eps'}=F(Q^\Uu_\eps)$ and $E^\Uu_{\eps'}=\emptyset$.
But then ${\xi_0}+1$ is $(\Uu,\Tt)$-unusual, so 
$E^\Uu_{{\xi_0}+1}=\emptyset\neq 
F(Q^\Uu_{{\xi_0}+1})$, contradiction.

Since $E^\Uu_{\xi_0}$ is 
total 
over 
$B^\Uu_\eps$, 
$({\mu_0}^{++})^{Q^\Tt_{\xi_0}}\leq({\mu_0}^{++})^{B^\Uu_\eps}$ and
$(Q^\Tt_{\xi_0}\sim B^\Uu_\eps)||({\mu_0}^{++})^{Q^\Tt_{\xi_0}}$.
Let $U_0=\Ult(Q^\Tt_{\xi_0},F_0)$.
Since $k=i_{F_0}^{Q^\Tt_{\xi_0}}$ is continuous at 
$({\mu_0}^{++})^{Q^\Tt_{\xi_0}}$, then
$({\mu_1}^{++})^{U_0}\leq({\mu_1}^{++})^{B^\Tt_{{\xi_0}+1}}$
and
$(U_0\sim 
B^\Uu_{{\xi_0}+1})||({\mu_1}^{++})^{U_0}$.

Suppose $B$ is not exact. By \ref{lem:ceph_it_tree}, neither is 
$B^\Tt_{\xi_0}$. So
$({\mu_1}^{++})^{M^\Tt_{\xi_0}}<({\mu_1}^{++})^{U_0}$
and
$(M^\Tt_{\xi_0}\sim U_0\sim 
B^\Uu_{{\xi_0}+1})||({\mu_1}^{++})^{M^\Tt_{\xi_0}}$,
but by non-triviality, $M^\Tt_{\xi_0}\npins U_0$. So
$M^\Tt_{{\xi_0}}\npins B^\Uu_{{\xi_0}+1}$.
We have $E^\Tt_{\xi_0}=\emptyset$ and $S^\Tt_{{\xi_0}+1}=\{0\}$ and 
$M^\Tt_{{\xi_0}+1}=M^\Tt_{\xi_0}$. So
\[ 
({\mu_1}^{++})^{M^\Tt_{\xi_0}}\geq({\mu_1}^{++})^{\exit^\Tt_{\xi_1}}=({\mu_1}^{
++ } )^{ B^\Uu_{\xi_1}} =({\mu_1}^{
++} )^{B^\Uu_{{\xi_0}+1}}
>({\mu_1}^{++})^{M^\Tt_{\xi_0}}, \]
contradiction. 

Part \ref{item:type(ii)}\ref{item:double_successor}:
Arguing as just above, we have
$\mu_0''=({\mu_0}^{++})^{B^\Uu_\eps}=({\mu_0}^{++})^{Q^\Tt_{\xi_0}}$
and $(Q^\Tt_{\xi_0}\sim\exit^\Uu_{\xi_0}\sim B^\Uu_\eps)||\mu_0''$.
By exactness on both sides, this easily gives the claimed agreement
between $M^\Tt_\alpha$ and $M^\Uu_\alpha$.
\end{caseeleven}

Part \ref{item:U,T-unusual} is symmetric.

Part \ref{item:when_unusual}: Suppose
$\alpha+1<\eta$ and $\alpha$ is $(\Tt,\Uu)$-unusual.
By
part \ref{item:T,U-unusual},
Case \ref{case:Tt_2_models} attains,
$\alpha$ is non-$(\Uu,\Tt)$-unusual,
and $M^\Uu_\alpha|\rho^\Tt_\alpha$ is passive,
so Subcase \ref{scase:alpha_T,U-unusual} attains
and $E^\Tt_\alpha=\emptyset=E^\Uu_\alpha$.

Part \ref{item:no_unusual_termination} follows similarly from part 
\ref{item:T,U-unusual}.
\end{proof}

\begin{clmfour}\label{clm:implies_unusual_ii}
Suppose $Q$ is type 3,
 $S^\Uu_{\xi_1}=\{0,1\}$, 
 $E^\Tt_{\xi_1}=F(Q^\Uu_{\xi_1})$, $\xi_1+1\in\curlyB^\Tt$ and
$\crit^\Tt_{\xi_1}=\gamma^\Tt_{\eps}$ where 
${\eps}=\pred^\Tt({\xi_1}+1)$. Then 
${\xi_1}+1$ is 
$(\Tt,\Uu)$-unusual of type \tu{(}ii\tu{)}.
\end{clmfour}
\begin{proof}
Suppose not. Note that $E^\Tt_\eps\in\es_+(M^\Tt_\eps)\cut\es_+(Q^\Tt_\eps)$,
so $\xi_0=\movin^\Tt(\eps)$ exists. If $E^\Uu_{\xi_0}=F(Q^\Tt_{\xi_0})$
then $\xi_1+1$ is $(\Tt,\Uu)$-unusual type (ii).
But if $E^\Uu_{\xi_0}=\emptyset$ then, as $E^\Tt_{\xi_0}=\emptyset$,
$\xi_0$ is $(\Tt,\Uu)$-unusual, so by Claim
\ref{clm:T-unusual}(\ref{item:T,U-unusual})
\ref{item:gamma_not_later_crit_of_F^Q}
$\gamma^\Tt_{\xi_0}\neq\crit(F(Q^\Uu_{\xi_1}))$, contradiction.
\end{proof}

\begin{clmfour}\label{clm:comparison_terminates}
 The comparison terminates at some countable stage.
\end{clmfour}
\begin{proof}
We may assume that $Q$ is type 3, since otherwise every 
extender used 
in $(\Tt,\Uu)$ is a premouse extender and the usual argument works.

Suppose 
$(\Tt,\Uu)$ reaches length $\om_1+1$. Let $\eta\in\OR$ be large,
$\varrho:X\to V_\eta$ be elementary with $X$ countable transitive and 
everything relevant in 
$\rg(\varrho)$. Let $\mu=\crit(\varrho)$. Let 
$W=B^\Tt_{\om_1}||\om_1=B^\Uu_{\om_1}||\om_1$. As is routine, 
either 
$i^\Tt_{\mu\om_1}$ or $j^\Tt_{\mu\om_1}$ is defined,  if 
$i^\Tt_{\mu\om_1}$ is 
defined then $(\mu^+)^{M^\Tt_\mu}=(\mu^+)^W$ and
$(M^\Tt_{\mu}\sim W)||(\mu^+)^W$
and
$i^\Tt_{\mu\om_1}\sub\varrho$
and likewise if $j^\Tt_{\mu,\om_1}$ is defined. Likewise for $\Uu$.

Let $\xi_0$ be least such that
$E^\Tt_{\xi_0}\neq\emptyset$
and $\kappa<_\Tt\xi_0+1<_\Tt\om_1$,
and $\alpha$ likewise for $\Uu$.
Let us assume 
that ${\xi_0}\leq\alpha$; otherwise it is 
symmetric. Let
$\iota=\iota^\Tt_{\xi_0}\leq\iota^\Tt_\alpha$.
As usual, $E^\Tt_{\xi_0}\rest\iota=E^\Uu_\alpha\rest\iota$.

\begin{sclmfour}\label{sclm:E^Tt_alpha_E^Uu_beta_distinctions}
We have:
\begin{enumerate}[label=\tu{(}\alph*\tu{)}]
\item\label{item:pm_extender} The trivial completion of 
$E^\Tt_{\xi_0}\rest\nu^\Tt_{\xi_0}$ is a premouse extender.
 \item\label{item:distinctions} ${\xi_0}<\alpha$ and 
$\iota^\Tt_{\xi_0}<\iota^\Uu_\alpha$ 
and $\nu^\Tt_{\xi_0}<\nu^\Uu_\alpha$ and $\lh^\Tt_{\xi_0}<\lh^\Uu_\alpha$.
\item\label{item:beta_Uu-special} $E^\Uu_\alpha\rest\nu^\Tt_{\xi_0}\notin 
\exit^\Uu_\alpha$, so 
$\exit^\Uu_\alpha$ is not a premouse and $\alpha$ is $\Uu$-special.
\end{enumerate}
\end{sclmfour}
\begin{proof}
Part \ref{item:pm_extender}: We have $\nu^\Tt_{\xi_0}\leq\nu^\Uu_\alpha$ because
$\iota^\Tt_{\xi_0}\leq\iota^\Uu_\alpha$ and by compatibility.
So part \ref{item:pm_extender} follows from standard extender factoring 
(otherwise 
we get some 
premouse extender which factors into $E^\Tt_{\xi_0}$, used in both $\Tt,\Uu$; 
cf.~\cite[\S5]{extmax}).

Part \ref{item:distinctions}: If
$\iota^\Tt_{\xi_0}=\iota^\Uu_\alpha$ then
$E^\Tt_{\xi_0}=E^\Uu_\alpha$, contradicting Claim 
\ref{clm:T-unusual}(\ref{item:no_exts_match}). So 
$\iota^\Tt_{\xi_0}<\iota^\Uu_\alpha$, so
${\xi_0}<\alpha$.
We have $\nu^\Tt_{\xi_0}\leq\nu^\Uu_\alpha$. But
$\nu^\Tt_{\xi_0}\neq\nu^\Uu_\alpha$ by Claim 
\ref{clm:T-unusual}(\ref{item:no_exts_match}). So 
$\nu^\Tt_{\xi_0}<\nu^\Uu_\alpha$.
We have $\lh^\Tt_{\xi_0}\leq\lh^\Uu_\alpha$. Suppose 
$\lh^\Tt_{\xi_0}=\lambda=\lh^\Uu_\alpha$.
Let $P=\exit^\Uu_\alpha$ and $\delta=\lgcd(P)=\lgcd(\exit^\Tt_{\xi_0})$. Then 
$E^\Tt_{\xi_0}\notin P$. Since 
$\nu^\Tt_{\xi_0}<\nu^\Uu_\alpha$ and by part \ref{item:pm_extender}, therefore 
$P$ 
is not a premouse.
So $\alpha$ is $\Uu$-special, so $\iota^\Tt_{\xi_0}<\delta=\iota^\Uu_\alpha$. 
But 
$\iota^\Tt_{\xi_0}\geq\delta$ as $\delta=\lgcd(\exit^\Tt_{\xi_0})$, a 
contradiction. So 
$\lh^\Tt_{\xi_0}<\lh^\Uu_\alpha$.

Part \ref{item:beta_Uu-special}: $\lh^\Tt_{\xi_0}$ is a cardinal of 
$P=\exit^\Uu_\alpha$ and $E^\Tt_{\xi_0}\notin P$, by 
\ref{lem:lh_becomes_card} and agreement between models of 
$\Tt$ and $\Uu$. Since $\nu^\Tt_{\xi_0}<\nu^\Tt_\alpha$ and by 
part 
\ref{item:pm_extender}, $P$ fails the ISC, and so $\alpha$ is $\Uu$-special.
\end{proof}

Let $\eps<_\Uu\alpha$ be largest such that
$F(Q^\Uu_{\eps})\rest\nu(F(Q^\Uu_{\eps}))$ satisfies the ISC. Let 
${\xi_1}+1=\min((\eps,\alpha]_\Uu)$. So 
$E^\Uu_{\xi_1}\neq\emptyset$ but ${\xi_1}$ is not $\Uu$-special. Let 
$F_0=F(Q^\Uu_\eps)$; then
$E^\Tt_{\xi_0}\rest\nu^\Tt_{\xi_0}=F_0\rest\nu(F_0)$.
Let $\delta$ be least such that
$E^\Tt_{\delta}\neq\emptyset$
and $\xi_0+1<_\Tt\delta+1<_\Tt\om_1$.
Let $\iota_1=\min(\iota^\Uu_{\xi_1},\iota^\Tt_{\delta})$. Extender factoring 
gives 
$E^\Tt_{\delta}\rest\iota_1=E^\Uu_{\xi_1}\rest\iota_1$.

\begin{sclmfour}\label{sclm:E^Tt_pi_E^Uu_delta_distinctions} 
(a) $\exit^\Uu_{\xi_1}$ is a premouse, (b) $\delta>{\xi_1}$ and 
$\iota^\Tt_{\delta}>\iota^\Uu_{\xi_1}$ and 
 $\nu^\Tt_{\delta}>\nu^\Uu_{\xi_1}$
 and $\lh^\Tt_{\delta}>\lh^\Uu_{\xi_1}$, (c)
 $E^\Tt_{\delta}\rest\nu^\Uu_{\xi_1}\notin \exit^\Tt_{\delta}$, so 
$\exit^\Tt_{\delta}$ 
fails the ISC and ${\delta}$ is $\Tt$-special.
\end{sclmfour}
\begin{proof}
Like Subclaim 
\ref{sclm:E^Tt_alpha_E^Uu_beta_distinctions} and  because ${\xi_1}$ is not 
$\Uu$-special.\end{proof}

\begin{sclmfour}\label{sclm:Q^Uu_alpha=Q^Uu_xi+1} 
$Q^\Uu_\alpha=Q^\Uu_{\xi_1+1}$,
so $E^\Uu_\alpha$ is equivalent to $E^\Uu_{\xi_1}\com E^\Tt_{\xi_0}$.
\end{sclmfour}
\begin{proof}
Suppose not. Fix  $\gamma'$ least with
$E^\Uu_{\gamma'}\neq\emptyset$
and $\xi_1+1<_\Uu\gamma'+1\leq_\Uu\alpha$.
Fix $\gamma$ least with $E^\Tt_{\gamma}\neq\emptyset$
and $\gamma+1<_\Tt\delta$ and $F(Q^\Tt_{\gamma+1})\rest\nu(F(Q^\Tt_{\gamma+1}))$
fails the ISC. Then both $\exit^\Tt_{\gamma}$
and $\exit^\Uu_{\gamma'}$ are premice, and
extender factoring gives $\exit^\Tt_\gamma=\exit^\Tt_{\gamma'}$,
contradiction.
\end{proof}

\begin{sclmfour}\label{sclm:Q_superstrong_and_alpha_unusual}$Q$ is superstrong 
and $\xi_1+1$ is $(\Uu,\Tt)$-unusual
type (ii).\end{sclmfour}
\begin{proof}
Let $\mu_0=\mu=\crit(E^\Tt_{\xi_0})=\crit(Q^\Uu_\eps)$.
Recall $F_0=F(Q^\Uu_\eps)$.
Let $\mu_1=\crit(E^\Tt_\delta)=\crit(E^\Uu_{\xi_1})$.
As $M^{*\Uu}_{\xi_1+1}=B^\Uu_{\eps}$, we have
$\mu_1\leq\gamma^\Uu_\eps\leq i_{F_0}(\mu_0)$.
And as $\delta$ is $\Tt$-special, we 
have $\xi_0+1=\pred^\Tt(\delta+1)\leq_\Tt\delta$ and
\[ 
\mu_1=\crit(F(Q^\Tt_\delta))=i^\Tt_{0\delta}(\crit(F^Q))=i^\Tt_{0,\xi_0+1}
(\crit(F^Q)),\]
so
$\mu_1\in\rg(i^\Tt_{\mu_0,\xi_0+1})$.
But $\mu_1>\mu_0$, so $\mu_1\geq i^\Tt_{\mu_0,\xi_0+1}(\mu_0)$.
But $E^\Tt_{\xi_0}$ is equivalent to $F_0$, so putting things together,
$i^\Tt_{\mu_0,\xi_0+1}(\mu_0)=\mu_1=\gamma^\Uu_\eps=i_{F_0}
(\mu_0)$.

It follows that
$Q^\Uu_\eps$ and $Q$ are superstrong, $Q^\Uu_\eps$ is therefore a premouse,
 $\nu(F_0)=\mu_1=\nu(E^\Tt_{\xi_0})$ and 
$\exit^\Tt_{\xi_0}=Q^\Uu_\eps=Q^\Uu_{\xi_0}$
(and $B^\Uu_\eps=B^\Uu_{\xi_0}$, though maybe $\eps>\xi_0$).
Therefore in $\Uu$, we move into $M^\Uu_{\xi_0}$
at stage $\xi_0$, and $E^\Tt_{\xi_0}=F(Q^\Uu_{\xi_0})$,
and $\gamma^\Uu_{\xi_0}=\mu_1$.

Let $\varphi<_\Tt\delta$ be largest such that 
$F(Q^\Tt_\varphi)\rest\nu(Q^\Tt_\varphi)$ satisfies the ISC.
Since $Q$ is superstrong,
$Q^\Tt_\varphi$ is a superstrong premouse,
so $\nu(Q^\Tt_\varphi)=\gamma^\Tt_\varphi$. By the claims above,
$\gamma^\Tt_\varphi=\nu(E^\Uu_{\xi_1})$ and in fact
$Q^\Tt_\varphi=\exit^\Uu_{\xi_1}$, and hence
$Q^\Tt_{\xi_1}=\exit^\Uu_{\xi_1}$
(and $B^\Tt_\varphi=B^\Tt_{\xi_1}$, though maybe $\varphi>\xi_1$), and in $\Tt$,
we move into $M^\Tt_{\xi_1}$ at stage $\xi_1$.

We have therefore established that $\xi_1+1$ is $(\Uu,\Tt)$-unusual
of type (ii), as witnessed by $\xi_0,\xi_1$.
\end{proof}

By the last subclaim and rules of comparison, 
$E^\Tt_{\xi_1+1}=\emptyset=E^\Uu_{\xi_1+1}$,
and in $\Uu$, we move into $M^\Uu_{\xi_1+1}$ at stage $\xi_1+1$.
So $\xi_1+2\in\curlyM^\Uu$ and
$M^\Uu_{\xi_1+2}=M^\Uu_{\xi_1+1}$,
so note
 $\OR(Q^\Uu_{\xi_1+1})<\zeta=\lh(E^\Tt_{\xi_1+2})$ or 
$\OR(Q^\Uu_{\xi_1+1})<\zeta=\lh(E^\Uu_{\xi_1+2})$, whichever extender is 
defined.
But  $E^\Uu_\alpha=F(Q^\Uu_\alpha)$
and $\alpha\geq\xi_1+1$,
so  $\alpha>\xi_1+1$, but by Subclaim \ref{sclm:Q^Uu_alpha=Q^Uu_xi+1}, 
$Q^\Uu_\alpha=Q^\Uu_{\xi_1+1}$,
so $\lh(E^\Uu_\alpha)<\zeta$, a contradiction,
completing the proof that comparison terminates.
\end{proof}

We now analyse the manner in which 
the comparison terminates.
Let $\alpha+1=\lh(\Tt,\Uu)$. Let $B^\Tt=B^\Tt_\alpha$, etc.
We say that the comparison \dfnemph{terminates early}
if $\alpha=\beta+1$ for some $\beta$ and $E^\Tt_\beta=\emptyset=E^\Uu_\beta$.
We begin with the non-exceptional case:

\begin{clmfour}\label{clm:B_non-excep_good_core}
Suppose that $B$ is non-exceptional.
Then:
\begin{enumerate}[label=--]
\item $\alpha\in\curlyB^\Tt\Delta\curlyB^\Uu$ and $\card(S^\Tt)=\card(S^\Uu)=1$ 
and $\modelset^\Tt=\modelset^\Uu$.
\item $m\geq 0$ and the cephalanx $C\in\{B^\Tt,B^\Uu\}$ has a good core.
\end{enumerate}
\end{clmfour}
\begin{proof}
Note that ($*$)
either $B$
is non-exact or 
$Q$ is non-superstrong,
because $B$ is non-exceptional and by line (\ref{eqn:non-excep_case}).
So if any $\beta$ is $(\Tt,\Uu)$- or $(\Uu,\Tt)$-unusual,
by Claim \ref{clm:T-unusual}, it is type (i).

Suppose that $\card(S^\Uu)=2$, so $\modelset^\Tt=\{Z\}$
with $Z\pins B^\Uu$. So $Z$ is sound, $\alpha\in\curlyB^\Tt$,
 $\beta=\movin^\Tt(\alpha)$ is defined and $M^\Tt=Z=M^\Tt_\beta$.
So either (a) $E^\Uu_\beta=F(Q^\Tt_\beta)$ or (b) [$\beta=\xi+1$ is 
$(\Tt,\Uu)$-unusual
type (i) and $E^\Uu_\xi$ is equivalent to $F(Q^\Tt_\beta)$].
If $B,B^\Tt_\beta$ are non-exact then since they are non-trivial,
$M^\Tt_\beta\neq N^{B^\Tt_\beta}\pins B^\Uu$,
contradicting that $M^\Tt_\beta=M^\Tt\pins 
B^\Uu$. If $B,B^\Tt_\beta$ are exact,  so $Q,Q^\Tt_\beta$  
non-superstrong, then note that we get enough agreement
that 
$((\rho^\Tt_\beta)^+)^{M^\Tt_\beta}=((\rho^\Tt_\beta)^+)^{B^\Uu_{\beta+1}}$,
which again gives a contradiction. Likewise $\card(S^\Tt)=1$.

So  $\card(S^\Tt)=\card(S^\Uu)=1$. Likewise
$\modelset^\Tt=\modelset^\Uu$. We  have 
$\alpha\in\curlyB^\Tt\cup\curlyB^\Uu$  as usual.
Suppose $\alpha\in\curlyB^\Tt\inter\curlyB^\Uu$.
Let $\beta^\Tt=\movin^\Tt(\alpha)$ and $\beta^\Uu=\movin^\Uu(\alpha)$.
Then $\beta^\Tt\neq\beta^\Uu$, so suppose $\beta^\Tt<\beta^\Uu$.
Then $E^\Tt_\gamma=\emptyset$ for all $\gamma\geq\beta^\Tt$,
and hence $\beta^\Uu$ is $(\Uu,\Tt)$-unusual, hence type (i) (see
above), so $\beta^\Uu=\xi+1$ and $E^\Tt_{\xi}\neq\emptyset$, 
contradiction.

So $\alpha\in\curlyB^\Tt\Delta\curlyB^\Uu$
and we may assume $\alpha\in\curlyB^\Tt\cut\curlyB^\Uu$.
So $S^\Tt=\{0\}$ and letting 
$\modelset^\Tt=\{Z\}=\modelset^\Uu$, $Z=M^\Tt=B^\Uu$ is 
unsound.
We need to show that $m\geq 0$ and $B^\Tt$ has a 
good core. Let 
$\beta=\movin^\Tt(\alpha)$.
Basically as above,
$E^\Tt_\gamma=\emptyset\neq E^\Uu_\gamma$ for all
$\gamma>\beta$.

\renewcommand{\thecasefive}{\Roman{casefive}}
\begin{casefive}$\beta$ is $(\Tt,\Uu)$-unsual (equivalently,
$E^\Tt_\beta=\emptyset=E^\Uu_\beta$).
 
So $\beta=\xi+1$ 
is type 
(i), $E^\Uu_\xi$ is equivalent to $F(Q^\Tt)$, and $Q$ is type 
3 but not 
superstrong. 
We have $B^\Uu_\beta=B^\Uu_{\beta+1}$ and $S^\Uu_\beta=S^\Uu_{\beta+1}$
and
$(M^\Tt\sim B^\Uu_{\beta+1})|\rho^\Tt$.

\begin{scasefive} $M^\Tt\neq B^\Uu_{\beta+1}$.

Then $E^\Uu_{\beta+1}\neq\emptyset$ and
$\rho^\Tt<\lh^\Uu_{\beta+1}$ and $\rho^\Tt$ is a successor cardinal of 
$B^\Uu_{\beta+1}$.
So $\rho^\Tt\leq\iota^\Uu_{\beta+1}$. Since $M^\Tt$ is 
$\rho^\Tt$-sound, it follows that there is 
exactly one ordinal $\delta$ such that $\delta\geq\beta+1$ and 
$\delta+1\leq_\Uu\alpha$, and in fact
$\delta+1=\alpha$. So $\exit^\Uu_\delta$ is a premouse, as
$\alpha\notin\curlyB^\Uu$. Since $M^\Tt$ is $\rho^\Tt$-sound, 
therefore $\delta=\beta+1$ and $E^\Uu_{\beta+1}$ is type 1 or type 3, with 
$\lh^\Uu_{\beta+1}=((\rho^\Tt)^+)^{M^\Tt}$. It follows that $m\geq 0$ and 
$B^\Tt$ is 
non-exact, and 
letting 
$F^*=F(N^{B^\Tt})$ and $\kappa^*=\crit(F^*)$, we have $E^\Uu_{\beta+1}=F^*$, 
and 
$M^\Tt$ has an $(m,\rho^\Tt)$-good core at $\kappa^*$, and 
$G^{M^\Tt}_{m,\kappa^*,\rho^\Tt}=F^*\rest\rho^\Tt$ and 
$H^{M^\Tt}_{m,\kappa^*}=B^{*\Uu}_{\alpha}$. Also, if $F^*$ is type 1 then 
$\pred^\Uu(\alpha)=\beta+1$ and $\beta+1\notin\curlyB^\Uu$ and 
$B^{*\Uu}_\alpha=B^\Uu_{\beta+1}$, and 
$M^\Tt$ has an $(m,\gamma^\Tt)$-good core at $\crit(F(Q^\Tt))$, etc. So $B^\Tt$ 
has
a good core.
\end{scasefive}
\begin{scasefive} $M^\Tt=Z=B^\Uu_{\beta+1}$.

This is a simplification of the previous case, but here, the comparison 
terminates early (so $\alpha=\beta+1$), and $B^\Tt,B$ are 
exact.
\end{scasefive}

\end{casefive}

\begin{casefive} $\beta$ is not $(\Tt,\Uu)$-unusual (so $E^\Uu_\beta=F'$ where 
$F'=F(Q^\Tt)$).

\begin{scasefive} $Q$ is not superstrong.
 
So $F'$ does not have superstrong type. Things work much as in 
the previous case, but there are a couple more possibilities, which we just 
outline. If $B$ is 
exact then 
$\alpha=\beta+1$, and $F'$ is the last extender used in $\Uu$. If $B$ is 
non-exact then 
$\alpha=\beta+2$ and like above, $F^*=F(N^{B^\Tt})$ is type 1 or type 3 and is 
the last extender 
used in $\Uu$. Here if $B$ is non-exact with $N^{B^\Tt}$ is type 1 and 
$Q^\Tt$ type 
2 then 
$Q^\Tt=B^{*\Uu}_\alpha$.
\end{scasefive}

\begin{scasefive} $Q$ is superstrong.
 
So $F'$ has superstrong type, so by 
($*$) above, $B$ is 
non-exact. Things work much 
as before, but there are some extra details, which we just illustrate
in an example 
case. Let
$\eps=\pred^\Uu(\beta+1)$. Note first that if
$\beta+1\in\curlyB^\Uu$ then $\rho^\Tt<\rho^\Uu_{\beta+1}$, 
by Claim \ref{clm:implies_unusual_ii} and non-exactness.
Now
$\kappa'=\crit(F')<\iota^\Uu_\eps$ and
$((\kappa')^+)^{Q^\Tt}\leq\lh^\Uu_\eps$.
Suppose for 
example that $((\kappa')^+)^{Q^\Tt}=\lh^\Uu_\eps$. Then $E^\Uu_\eps$ is type 2 
and 
$B^{*\Uu}_{\beta+1}=\exit^\Uu_\eps$, and $\OR(B^\Uu_{\beta+1})=\OR(Q^\Tt)$ and 
$B^\Uu_{\beta+1}$ is active type 2, so $E^\Uu_{\beta+1}=F(B^\Uu_{\beta+1})$. 
Note that
$(Q^\Tt\sim B^\Uu_{\eps+1})||((\kappa')^{++})^{Q^\Tt}$,
so by \ref{lem:ult_comm_not_moving_crit_F^Q},
\[ (\Ult(Q^\Tt,F')\sim B^\Uu_{\beta+2})||((\gamma^\Tt)^{++})^{\Ult(Q^\Tt,F')}, 
\]
and so $E^\Uu_{\beta+2}=F(N^{B^\Tt})$
(the extender $E^\Uu_{\beta+2}$ must exist in the first place,
by non-exactness). We leave the remaining details to 
the reader.
\end{scasefive}
\end{casefive}
\end{proof}

We can now complete the proof in the non-exceptional case:

\begin{clmfour} If $B$ is non-exceptional then $m\geq 0$ and $B$ has a good 
core.
\end{clmfour}
\begin{proof}
Suppose $B$ is non-exceptional.
By the previous claim, $m\geq 0$ and we have an iterate $B'$ of $B$ with a good 
core.
But then the 
proof of Claim \ref{clm:pull_back_facts_to_B} 
of 
\ref{thm:no_iterable_sound_bicephalus} shows that $B$ also has a good core.
\end{proof}

We now prove corresponding claims for the exceptional case.

\begin{clmfour}
Suppose $B$ is exceptional. Then 
one 
of
$B^\Tt,B^\Uu$ is a cephalanx with an 
exceptional core.
\end{clmfour}
\begin{proof}
We first observe:

\begin{sclmfour}\label{sclm:terminates_early}
Suppose the comparison terminates early (so $\alpha=\beta+1$
where $\beta$ is unusual). Then $\beta$ is type (ii),
 $B$ is exact, $S^\Uu=\{0\}=S^\Tt$, and the final models of the comparison 
are $M^\Tt=M^\Uu$.
\end{sclmfour}

\begin{proof}
The fact that $\beta$ is type (ii) and $B$ exact, is because $B$ is exceptional
and Claim \ref{clm:T-unusual}.
So
$(\rho^+)^{M^\Tt}=(\rho^+)^{M^\Uu}$
where $\rho=\rho^\Tt$,
 by Claim 
\ref{clm:T-unusual}(\ref{item:T,U-unusual})
\ref{item:type(ii)}\ref{item:double_successor}.
But  $M^\Tt,M^\Uu$ project ${\leq\rho}$,
so $M^\Tt\npins M^\Uu\npins M^\Tt$.
\end{proof} 
 
Now we consider a few cases:
\renewcommand{\thecasesix}{\Roman{casesix}}
\begin{casesix} Either (a) $S^\Tt=\{0\}=S^\Uu$ and $M^\Uu\pins M^\Tt$, or 
(b) $S^\Tt=\{0,1\}$.

This case is covered by the next case and symmetry.\end{casesix}

\begin{casesix} Either (a) $S^\Tt=\{0\}=S^\Uu$ and $M^\Tt\pins M^\Uu$,
or (b) $S^\Uu=\{0,1\}$.

Note that given either (a) or (b), $S^\Tt=\{0\}$ and $M^\Tt\pins 
B^\Uu$. So $M^\Tt$ is sound, so $\alpha\in\curlyB^\Tt$. Let 
$\beta=\movin^\Tt(\alpha)$. Because
$\rho_{m+1}(M^\Tt)\leq\rho^\Tt$ and $M^\Tt\pins B^\Uu$,
$\Uu$ does not use any extender $E$ with $\rho^\Tt<\lh(E)$.
So if
$\beta$ is 
$(\Tt,\Uu)$-unusual (so type (ii))
then $M^\Tt\pins M^\Uu_\beta=M^\Uu_{\beta+1}=B^\Uu$
and $\alpha=\beta+1$,
contradicting Subclaim  \ref{sclm:terminates_early}.
So (let) $E^\Uu_\beta=F'=F(Q^\Tt)$. Similarly, 
$\beta+1$ is not 
$(\Uu,\Tt)$-unusual. 
So by Claim \ref{clm:implies_unusual_ii},
if $\beta+1\in\curlyB^\Uu$ then $\rho^\Tt<\rho^\Uu_{\beta+1}$,
so $B^\Uu_{\beta+1}|\rho^\Tt$ is well-defined.
Let $\kappa=\crit(F')$ and $\eps=\pred^\Uu(\beta+1)$. We split into two 
subcases:

\begin{scasesix}\label{scasesix:B^Uu_beta+1|rho_active} 
$B^\Uu_{\beta+1}|\rho^\Tt$ is active.

Then $\beta+1\notin\curlyB^\Uu$,
$B^\Uu_{\beta+1}$ is type 2, $\rho^\Tt=\OR(B^\Uu_{\beta+1})$,
 $(\kappa^+)^{Q^\Tt}=\OR(B^{*\Uu}_{\beta+1})$, 
$E^\Uu_\eps=F(B^{*\Uu}_{\beta+1})$,
$E^\Uu_{\beta+1}=F(B^\Uu_{\beta+1})$,
and $\alpha=\beta+2$.
Let $R=B^{*\Uu}_{\beta+1}$ and $G=F^R=E^\Uu_\eps$
and $U=\Ult_0(R,G)$. Then
$(\kappa^+)^U=(\kappa^+)^{Q^\Tt}=\OR^R$
and
\[ (U\sim Q^\Tt)||(\kappa^{++})^{Q^\Tt},\]
but $(\kappa^{++})^U>(\kappa^{++})^{Q^\Tt}$
because $B^\Tt$ is exact and $M^\Tt\pins B^\Uu$ and by \ref{lem:ult_comm}.

Let $H\pins U$ and $h\in\{-1\}\un\om$ with 
$\rho_{h+1}^H=(\kappa^+)^H<(\kappa^{++})^H=(\kappa^{++})^{Q^\Tt}\leq\rho_h^H$.
Let $H^*=i^{U}_{F'}(H)$; so
$H^*\pins\Ult_0(U,F')$. Note $i^U_{F'}$ is continuous
at $(\kappa^+)^{Q^\Tt}$.

We claim that $\Ult_h(H,F')\ins 
H^*$.
For if $h=-1$ then by  continuity, in fact 
$\Ult_{-1}(H,F')=H^*$, so suppose $h\geq 0$. Let
\[ \sigma:\Ult_h(H,F')\to H^* \]
be the factor map $\sigma([a,f]^{H,h}_{F'})=i^U_{F'}(f)(a)$. Arguing 
like in \S\ref{sec:prelim},
we get that $H,\sigma\in\Ult_0(U,F')$ and the hypotheses of 
\ref{lem:fully_elem_condensation} hold for $H,\sigma,h,H^*$. By 
\ref{lem:fully_elem_condensation} (and its first order nature), $R\sats$``Lemma 
\ref{lem:fully_elem_condensation} holds for my proper segments''. 
Therefore $\Ult_h(H,F')\ins H^*$, as desired.

So $\Ult_h(H,F')\pins B^\Uu_{\beta+2}$. But
$((\rho^\Tt)^+)^{\Ult_h(H,F')}=((\rho^\Tt)^+)^{M^\Tt}$,
so $h=m$ and $M^\Tt=\Ult_h(H,F')$. It easily follows that $B^\Tt$ has an 
exceptional core, and with 
$X,m'$ as 
in \ref{dfn:exceptional_core},\[H=\cHull_{m'+1}^{M^\Tt}(X\un 
z_{m+1}^{M^\Tt}\un\pvec_m^{M^\Tt}).\]
\end{scasesix}

\begin{scasesix} $B^\Uu_{\beta+1}|\rho^\Tt$ is passive.

Then $\alpha=\beta+1$, so $M^\Tt\pins B^\Uu_{\beta+1}=B^\Uu$.
Let $R=B^{*\Uu}_{\alpha}$. If $\alpha\in\curlyB^\Uu$ then $\kappa<\gamma(R)$, 
so 
$(\kappa^{++})^R$ is well-defined. In any case, $\kappa$ is not the largest 
cardinal of $R$.
We have $(\kappa^+)^R=(\kappa^+)^{Q^\Tt}$ and
$(R\sim Q)||(\kappa^{++})^{Q^\Tt}$.
If $(\kappa^{++})^{R}>(\kappa^{++})^{Q^\Tt}$ then a 
simplification of the argument in the previous subcase works. Suppose then that 
$(\kappa^{++})^R=(\kappa^{++})^{Q^\Tt}$. Because $M^\Tt\pins B^\Uu$, it 
is easy enough to see that $\alpha\notin\curlyB^\Uu$, so $R$ is a premouse.
If $R$ is active type 3, then $(\kappa^+)^R<\nu(F^R)$, because if 
$(\kappa^+)^R=\nu(F^R)$ then 
$\OR(B^\Uu_{\beta+1})=((\rho^\Tt)^+)^{M^\Tt}$, a contradiction. Let 
$d=\deg^\Uu(\beta+1)$. Then $i^{*\Uu}_{\beta+1}$ is discontinuous at 
$(\kappa^{++})^R$, and so 
$(\kappa^+)^R=\rho_d^R$, so $d>0$. Let $r<d$ be such that 
$\rho_{r+1}^R=(\kappa^+)^R<\rho_r^R$. 
Then arguing like in the previous subcase, but using 
\ref{lem:condensation_iterates} 
instead of \ref{lem:fully_elem_condensation},
\[ M^\Tt=\Ult_r(R,F')\pins B^\Uu_{\beta+1} \]
and $B^\Tt$ has an 
exceptional core (with $m=r$).
\end{scasesix}
\end{casesix}

\begin{casesix} $S^\Tt=\{0\}=S^\Uu$ and $M^\Tt=M^\Uu$
but the comparison does not terminate early.

Then $\alpha\in\curlyB^\Tt\Delta\curlyB^\Uu$; assume 
$\alpha\in\curlyB^\Tt\cut\curlyB^\Uu$. Let 
$\beta=\movin^\Tt(\alpha)$.

\begin{sclmfour}\label{sclm:beta_not_T-unusual}
 $\beta$ is not $(\Tt,\Uu)$-unusual.\end{sclmfour}
\begin{proof}
Suppose otherwise, so $\beta$ is type (ii) and
$E^\Tt_\beta=E^\Uu_\beta=\emptyset$.
Since the comparison does 
not terminate 
early and $M^\Tt$ is $\rho^\Tt$-sound, we have 
$E^\Uu_{\beta+1}\neq\emptyset=E^\Tt_{\beta+1}$ and $\alpha=\beta+2$ and
\[ \rho^\Tt=\nu^\Uu_\beta<\lh^\Uu_\beta=((\rho^\Tt)^+)^{M^\Tt}. \]
So $\rho^\Tt$ is not the largest cardinal in $M^\Tt$, so
is also not in
$M^\Uu_\beta$. So 
$\exit^\Uu_\beta\pins 
M^\Uu_\beta$, so
$((\rho^\Tt)^+)^{M^\Tt}<((\rho^\Tt)^+)^{M^\Uu_\beta}$,
contradicting Claim 
\ref{clm:T-unusual}(\ref{item:T,U-unusual})
\ref{item:type(ii)}\ref{item:double_successor}.\end{proof}

So $E^\Uu_\beta=F'=F(Q^\Tt)$.

\begin{sclmfour}
 $\beta+1$ is not $(\Uu,\Tt)$-unusual.
\end{sclmfour}
\begin{proof}
This is like the proof of Subclaim \ref{sclm:beta_not_T-unusual}.
\end{proof}

By the subclaim and Claim \ref{clm:implies_unusual_ii},
(1) if $\beta+1\in\curlyB^\Uu$ then $\rho^\Tt<\rho^\Uu_{\beta+1}$,
and (2) one of the 
following 
holds:
\begin{enumerate}[label=\tu{(}\alph*\tu{)}]
 \item\label{case:alpha=beta+1} $\alpha=\beta+1$.
 \item\label{case:alpha=beta+2_type_2} $\alpha=\beta+2$ and 
$\lh^\Uu_{\beta+1}=\rho^\Tt$ and 
$E^\Uu_{\beta+1}$ is type 2.
 \item\label{case:alpha=beta+2_not_type_2} $\alpha=\beta+2$,
$\lh^\Uu_{\beta+1}=((\rho^\Tt)^+)^{M^\Tt}$ and $E^\Uu_{\beta+1}$ is
(i) type 1 or (ii) type 3.
 \item\label{case:alpha=beta+3} $\alpha=\beta+3$, 
$\lh^\Uu_{\beta+1}=\rho^\Tt$, 
$E^\Uu_{\beta+1}$ is type 2,
$\lh^\Uu_{\beta+2}=((\rho^\Tt)^+)^{M^\Tt}$ and $E^\Uu_{\beta+2}$ is  
(i) type 1 or (ii) 
type 3.
\end{enumerate}
The same general argument works in each case, but the details vary. We just 
discuss cases
\ref{case:alpha=beta+1}, \ref{case:alpha=beta+2_type_2}, 
\ref{case:alpha=beta+2_not_type_2}(i)
and sketch \ref{case:alpha=beta+3}(i). In each case let 
$\eps=\pred^\Uu(\beta+1)$ and 
$R=B^{*\Uu}_{\beta+1}$ and 
$\kappa=\crit(F')$. Note that if $m=-1$ then case \ref{case:alpha=beta+1} 
attains.

Consider case \ref{case:alpha=beta+1}. Since
$S^\Uu=\{0\}$, $R$ is a premouse. Let $d=\deg^\Uu(\alpha)$.
So $R$ is $d$-sound and $M^\Uu=\Ult_d(R,F')$. Clearly $d\geq m$.
We claim that  $d>m$ (so $R$ is $(m+1)$-sound)
and
$\rho_{m+1}^R=(\kappa^+)^R<\rho_m^R$.
For if $\rho_{m+1}^R>(\kappa^+)^R$ then
\[ \rho_{m+1}(M^\Uu)>\rho^\Tt\geq\rho_{m+1}(M^\Tt); \]
and if $\rho_{m+1}^R\leq\kappa$ then $\rho_{m+1}(M^\Uu)<\rho^\Tt$ and $M^\Uu$ 
is 
$\gamma^\Tt$-sound,
so $B^\Tt$ is not exceptional, contradicting \ref{lem:exceptional_equiv}.

Let $U_i=\Ult_i(R,F')$, so $M^\Uu=U_d$.
Note  $\kappa^{++R}=\kappa^{++B^\Tt}$ and
$(\rho^\Tt)^{+U_m}=(\rho^\Tt)^{+M^\Tt}$,
but $\rho_{m+1}^{U_m}=\rho^\Tt$, so $U_m\notin 
M^\Tt=U_d$, 
so arguing like in the proof of \ref{lem:fully_elem_condensation},
$U_m=U_d$
and the factor map $\sigma:U_m\to U_d$ is the identity 
(this does not use condensation). Letting $\pi=i^{R,m}_{F'}$ and $H=R$, then 
$H,\pi$ are as in 
\ref{dfn:exceptional_core}.

Now consider case \ref{case:alpha=beta+2_type_2}. Note that $R=\exit^\Uu_\eps$, 
 is active type 
2 and 
$\OR^R=(\kappa^+)^{B^\Tt}$. Note that $\deg^\Uu(\beta+2)=m$ and 
$\crit(F^R)=\crit^\Uu_{\beta+1}$, so $\pred^\Uu(\beta+2)=\pred^\Uu(\eps+1)$ and 
$B^{*\Uu}_{\eps+1}=B^{*\Uu}_{\beta+2}$ and $\deg^\Uu(\eps+1)=m$. Let 
$H=B^\Uu_{\eps+1}$. Then
$\Ult_m(H,F')=M^\Tt$
and letting $\pi=i^{H,m}_{F'}$, then $H,\pi$ are as in 
\ref{dfn:exceptional_core}.

Now consider case \ref{case:alpha=beta+2_not_type_2}(i).

\begin{sclmfour} In case \ref{case:alpha=beta+2_not_type_2}(i), $E^\Uu_\eps$ is 
the ``preimage'' of 
$E^\Uu_{\beta+1}$ under $i^{*\Uu}_{\beta+1}$ and
$\lh^\Uu_\eps=(\kappa^{++})^{B^\Tt}$.\end{sclmfour}
\begin{proof} We have $\exit^\Uu_\eps\ins R$ and
$(\kappa^+)^{B^\Tt}=(\kappa^+)^{\exit^\Uu_\beta}=(\kappa^+)^R=(\kappa^+)^{
\exit^\Uu_\eps}
<\lh^\Uu_\eps$.
We have $(\kappa^{++})^R\geq(\kappa^{++})^{B^\Tt}$ and
if $\beta+1\in\curlyB^\Uu$ then 
$(\kappa^{++})^R>(\kappa^{++})^{B^\Tt}$;
the latter is because $\exit^\Uu_{\beta+1}\not\ins M^\Tt$ and 
$\exit^\Uu_{\beta+1}$ projects 
$\leq\rho^\Tt$.
Let
$P\ins R$ and $p\in\{-1\}\un\om$ with
\[ \rho_{p+1}^P\leq(\kappa^+)^{B^\Tt}<(\kappa^{++})^{B^\Tt}
=(\kappa^{++})^P\leq\rho_p^P\]
(so $P$ is $p$-sound). By condensation, like before,
$U^{P,p}=\Ult_p(P,F')\ins M^\Uu_{\beta+1}$.
But $((\rho^\Tt)^+)^{U^{P,p}}=((\rho^\Tt)^+)^{M^\Tt}$,
and as $\nu^\Uu_{\beta+1}=\rho^\Tt$, therefore 
$U^{P,p}=\exit^\Uu_{\beta+1}$.

So $P$ 
is type 1, $p=0$, $\OR^P=(\kappa^{++})^{B^\Tt}$, and  
$E^\Uu_\eps=F^P$. Now 
$i^{*\Uu}_{\beta+1}$ 
is 
continuous at $(\kappa^+)^{B^\Tt}$. So if $P\pins R$ then $i^{*\Uu}_{\beta+1}$ 
is continuous at 
$\OR^P$, so 
$i^{*\Uu}_{\beta+1}(P)=\exit^\Uu_{\beta+1}$
(or $\psi_j(P)=\exit^\Uu_{\beta+1}$ where $j=i^{*\Uu}_{\beta+1}$).
If $P=R$ then 
$\Ult_p(P,F')=M^\Uu_{\beta+1}$ (even if $0<\deg^\Uu(\beta+1)$).
\end{proof}

Since $E^\Uu_\eps=F^P$ and $\crit(F^P)=\crit(F')$, $\pred^\Uu(\eps+1)=\eps$ and 
$B^{*\Uu}_{\eps+1}=R$ and $\deg^\Uu(\eps+1)=\deg^\Uu(\beta+1)$. Also, 
$\pred^\Uu(\beta+2)=\beta+1$ 
and $m=\deg^\Uu(\beta+2)=\deg^\Uu(\eps+1)$. Using this, and letting 
$H=B^\Uu_{\eps+1}$, we get
$\Ult_m(H,F')=M^\Tt$
and letting $\pi=i^{H,m}_{F'}$, then $H,\pi$ are as in 
\ref{dfn:exceptional_core}.

Finally consider case \ref{case:alpha=beta+3}(i). For illustration, assume that 
$\beta+2\notin\curlyB^\Uu$. Let $\chi=\pred^\Uu(\beta+2)$ and 
$S=B^{*\Uu}_{\beta+2}$ and 
$c=\deg^\Uu(\beta+2)$. A combination of the preceding arguments gives the 
following: 
\begin{enumerate}[label=--]
\item $\exit^\Uu_\eps$ is the (type 2) preimage of $\exit^\Uu_{\beta+1}$ under 
$i^{*\Uu}_{\beta+1}$,
\item $\pred^\Uu(\eps+1)=\chi$ and $B^{*\Uu}_{\eps+1}=S$ and 
$\deg^\Uu(\eps+1)=c$,
\item $\exit^\Uu_{\eps+1}$ is the (type 1) preimage of $\exit^\Uu_{\beta+2}$
under the map $\sigma$ defined 
below,
\item $\eps=\pred^\Uu(\eps+2)$ and $\deg^\Uu(\eps+2)=0$,
\item $\beta+1=\pred^\Uu(\beta+3)$ and $m=\deg^\Uu(\beta+3)=0$.
\end{enumerate}
Let $J=B^\Uu_{\eps+1}$ and $H=B^\Uu_{\eps+2}$. Then also,
$\Ult_c(J,F')=B^\Uu_{\beta+2}$
and letting $\sigma=i^{J,c}_{F'}$, then 
$\sigma(\exit^\Uu_{\eps+1})=\exit^\Uu_{\beta+2}$ (as mentioned above), 
and
$\Ult_0(H,F') = M^\Tt$,
etc.

Cases \ref{case:alpha=beta+2_not_type_2}(ii) and \ref{case:alpha=beta+3}(ii) 
are fairly similar to the preceding cases.
However, while in the preceding cases
there is always some $\zeta<\rho^\Tt$
such that
\[ M^\Tt=\Hull_{m+1}^{M^\Tt}(\zeta\cup\{\pvec_m^M,z_{m+1}^M\}),\]
there is no such $\zeta$ in \ref{case:alpha=beta+2_not_type_2}(ii) and 
\ref{case:alpha=beta+3}(ii).
\end{casesix}

There is just one case left:

\begin{casesix}
The comparison terminates early
(so by Subclaim \ref{sclm:terminates_early}, $S^\Tt=\{0\}=S^\Uu$ and 
$M^\Tt=M^\Uu$). 

We may assume that $\alpha$ is $(\Tt,\Uu)$-unusual (type (ii)). Let 
$\xi_0<\xi_1<\alpha=\xi_1+1$ 
witness this. So 
$E^\Tt_{\xi_1}=F'=F(Q^\Uu)$. We have  $M^\Uu= 
M^\Tt$. Let $H=M^\Tt_{\xi_0}$. Then
$\Ult_m(H,F')=M^\Tt=M^\Uu$
etc.\qedhere
\end{casesix}
\end{proof}

Since we now have an iterate $B'$ of $B$ with an exceptional core, the next 
claim completes the proof  of the theorem:

\begin{clmfour}\label{clm:exceptional_core_pullback}
Suppose that $B$ is exceptional and let $B'$ be a cephalanx non-dropping 
iterate of $B$. 
If $B'$ has an exceptional core then so does $B$.
\end{clmfour}
\begin{proof}
The proof is similar to \ref{thm:no_iterable_sound_bicephalus}, but with some 
extra 
argument. We 
assume that $m\geq 0$ and leave the other case to the reader (the main 
distinction in that case is 
that even though $m=-1$, all ultrapower embeddings are at least $\rSigma_1$ 
elementary). Fix
$H,\kappa,F,X$ and $\pi:H\to M$ as in \ref{dfn:exceptional_core}. Let 
$B'=(\gamma',\rho',M',Q')$ 
and fix 
$H',\kappa',F',X',\pi'$ as in \ref{dfn:exceptional_core} for $B'$.
Suppose  $B'$ has an exceptional core.
Let $i:M\to M'$ and $j:Q\to Q'$ be the iteration maps.
So $j=i\rest(B||\gamma^{+M})$. 
Note $i(\kappa,\pvec_m^M,z_{m+1}^M)=(\kappa',\pvec_m^{M'},z_{m+1}^{M'})$,
and for $\alpha<\gamma^{+M}$, we have $X\inter\alpha\in B||\gamma^{+M}$ and
\begin{equation}\label{eqn:X_pres} i(X\inter\alpha)=X'\inter i(\alpha).
\end{equation}
From these  facts, and because
$X'=(\gamma')^{+M'}\inter\rg(\pi')$, it is easy to see that
$X=\gamma^{+M}\inter\rg(\pi)$.
It remains to see
$H||\kappa^{++H}=M|\kappa^{++M}$.

Let $Y=\rg(\pi)\inter\gamma^{++M}$.
Let $\sigma=i^M_F$ and
$Z = \rg(\sigma)\inter\gamma^{++M}$.
It suffices to see that $Y=Z$. Let $Y',\sigma',Z'$ be defined analogously from 
$B'$. Because $B'$ 
has an exceptional core, Lemma \ref{lem:exceptional_core_facts} applies,
and $Y'=Z'$ follows. We will use this to deduce that $Y=Z$,
by breaking $Y$ and $Z$ into unions of small pieces, and considering
how they move under the  iteration map $i$.

\begin{sclmfour}\label{sclm:Y,Y'} For any $\alpha<\gamma^{++M}$, we have 
$\alpha\in Y$ iff 
$i(\alpha)\in 
Y'$.\end{sclmfour}
\begin{proof}
If $\alpha\in Y$ then $i(\alpha)\in Y'$ because $i``X\sub X'$ and 
$i(z_{m+1}^M)=z_{m+1}^{M'}$.

Suppose $\alpha\notin Y$. For $\beta<\gamma^{+M}$ and $\delta<\rho_m^M$ let
$Y_{\beta,\delta}$ be the set of all $\xi<\gamma^{++M}$ such that 
\[ \xi\in\Hull_{m+1}^M((X\inter\beta)\un z_{m+1}^M\un\pvec_m^M),\]
as witnessed by some theory below
$\Th_{\rSigma_m}^M(\delta\un\{\vec{p}_m^M\})$.
(See \cite[\S2]{fsit}, in particular, the stratification of $\rSigma_{m+1}$ 
described there, for 
more details. If $m=0$ this needs to be interpreted appropriately; for example, 
if
$M$ is passive and $\OR^M$ is divisible by $\om^2$,
that the $\rSigma_1$ 
fact should hold  in $M|\delta$.)
Then $Y_{\beta,\delta}\in M$.
Define $Y'_{\beta,\delta}$ analogously over $M'$. Let 
$I=\rho_m^M\cross\gamma^{+M}$.
Using line (\ref{eqn:X_pres}), we get
$i(Y_{\beta,\delta})=Y'_{i(\beta),i(\delta)}$,
and note
$Y' =\bigcup_{(\beta,\delta)\in I} i(Y_{\beta,\delta})$.
The fact that $i(\alpha)\notin Y'$ follows easily.
\end{proof}
\begin{sclmfour} For any $\alpha<\gamma^{++M}$, we have $\alpha\in Z$ iff 
$i(\alpha)\in 
Z'$.\end{sclmfour}
\begin{proof}
Let $\alpha<\gamma^{++M}$.
Let $\beta<\kappa^{++M}$ with $\alpha<\sigma(\beta)$.
Fix a surjection $f:\kappa^{+M}\to\beta$ in $M$.  So 
$\sigma(f):\gamma^{+M}\to\sigma(\beta)$ is a surjection in $M$,
and note that $\rg(\sigma)\inter\sigma(\beta)=\sigma(f)``X$.

Now we claim that $i(\sigma(f))=\sigma'(i(f))$.
For let $C$ be the prewellorder of $\kappa^{+M}$ corresponding
to $f$ (so $(\delta,\varepsilon)\in C$ iff $f(\delta)\leq 
f(\varepsilon)$). Then it suffices
to see that $i(\sigma(C))=\sigma'(i(C))$.
But this holds by continuity at $\kappa^{+M}$
and because $i(\sigma(D))=\sigma'(i(D))$ for all $D\in\pow(\kappa)\inter M$.

So let $f'=i(f)$ and $\beta'=i(\beta)=\rg(f')$. Then 
$\sigma'(\beta')=\rg(i(\sigma(f)))$,
so $i(\alpha)<\sigma'(\beta')$.
Therefore $i(\alpha)\in Z'$ iff $i(\alpha)\in\sigma'(f')``X'$
iff $i(\alpha)\in i(\sigma(f))``X'$.

But we have $X'=\bigcup_{\delta<\gamma^{+M}}i(X\inter\delta)$.
So $\alpha\in Z$ iff $\alpha\in\sigma(f)``X$
iff there is $\delta<\gamma^{+M}$ such that 
$\alpha\in\sigma(f)``(X\inter\delta)$ iff there is $\delta'<i(\gamma)^{+M'}$
such that $i(\alpha)\in i(\sigma(f))``(X'\inter\delta')$
iff $i(\alpha)\in i(\sigma(f))``X'$ iff $i(\alpha)\in Z'$, as desired.
\end{proof}

Clearly by the subclaims, we have $Y=Z$, as desired.

This completes the proof of the claim, and hence the theorem.
\end{proof}
\renewcommand{\qedsymbol}{}\end{proof}

\section{Condensation from solidity and normal 
iterability}\label{sec:condensation}

By  $(k+1)$-condensation\footnote{Cf. \cite[pp. 87--88]{fsit} or 
\cite[Theorem 9.3.2]{imlc}.}, if $H,M$ are $(k+1)$-sound premice such that $M$ 
is $(k,\om_1,\om+1)^*$-iterable
and $\pi:H\to M$  a near $k$-embedding
with $\crit(\pi)\geq\rho$ where $\rho=\rho_{k+1}^H$, then ($*$Con) either
$H\ins M$ or [$M|\rho$ is active and $H\pins\Ult(M|\rho,F^{M|\rho})$].

We now prove that 
$(k,\om_1+1)$-iterability suffices for this result.
In our proof, we will replace the phalanx used in 
the 
standard proof with a cephal, and avoid  Dodd-Jensen. 
We will in fact prove a partial analogue of the more refined version
  \cite[Theorem 9.3.2]{imlc} (but for Mitchell-Steel indexing,
with superstrongs). We do not achieve a full analogue here,
because in the case that $H\notin M$ we encounter an obstacle in connection 
with exceptional cephalanxes.
So in this sense we do not quite prove full 
condensation. However, if assume also that $M$ is $(k+1)$-solid, we can deduce 
the full analogous
conclusion.\footnote{It will
in fact be shown in \cite{fsfni} that $M$ \emph{is} $(k+1)$-solid
(from $(k,\om_1+1)$-iterability), so the two papers together
will prove the full result.}

\begin{dfn}
 Let $M$ be a $k$-sound premouse and $\zeta_{k+1}^M\leq\rho\leq\rho_k^M$. 
The 
\dfnemph{$\rho$-solid-core} of $M$ is
$H=\cHull_{k+1}^M(\rho\un z_{k+1}^M\un\pvec_k^M)$,
and the \dfnemph{$\rho$-solid-core map} is the uncollapse map $\pi:H\to M$.
\end{dfn}

The $\rho$-solid-core map is a $k$-embedding, since $H\notin M$ and by
\ref{lem:k-lifting_dichotomy}.

\begin{tm}[Condensation from solidity]\label{thm:condensation}
 Let $M$ be a $k$-sound, $(k,\om_1+1)$-iterable premouse.
 Let $H$ be a $\rho$-sound premouse with
$\rho\in[\rho_{k+1}^H,\rho_k^H)$
   an $H$-cardinal; let 
$\gamma=\card^M(\rho)$.
Let
$\pi:H\to M$ be $k$-lifting with $\crit(\pi)\geq\rho$.
Then\tu{:}
\begin{enumerate}[ref=\tu{(}\arabic*\tu{)}]
\item\label{item:H_notin_M} If $H\notin M$ then\tu{:}
\begin{enumerate}[label=\tu{(}\alph*\tu{)}]
 \item\label{item:zeta_z_pres} $\zeta_{k+1}^H=\zeta_{k+1}^M\leq\rho$ and 
$\pi(z_{k+1}^H)=z_{k+1}^M$,
\item\label{item:H_is_rho-solid-core} $H$ is the $\rho$-solid-core of $M$ and 
$\pi$ is the 
$\rho$-solid-core map,
 \item\label{item:rho_k+1^H_avoids_interval} $\rho_{k+1}^H\notin[\gamma,\rho)$,
 \item\label{item:superstrong_case} if $\rho_{k+1}^H=\rho$ and 
$\rho^{+H}<\rho^{+M}$ then 
$M|\rho$ is active 
with a superstrong extender with critical point $\kappa$ and 
$\rho_{k+1}^M\leq(\kappa^+)^M<\rho$,
\item\label{item:rho_k+1^H_geq_rho_k+1^M} $\rho_{k+1}^H\geq\rho_{k+1}^M$,
\item\label{item:if_M_solid} if $M$ is $(k+1)$-solid then 
$\rho_{k+1}^H=\rho_{k+1}^M$,
 \item\label{item:H_rho-core} if $\rho_{k+1}^H=\rho_{k+1}^M$ then $H$ is the 
$\rho$-core of $M$, 
$\pi$ is the 
$\rho$-core map and $\pi(p_{k+1}^H)=p_{k+1}^M$.
\end{enumerate}
\item\label{item:H_in_M} If $H\in M$ then exactly one of the following 
holds\tu{:}
\begin{enumerate}[label=\tu{(}\alph*\tu{)}]
 \item\label{item:H_in_M_H_pins_M} $H\pins M$, or
 \item\label{item:H_in_M_active} $M|\rho$ is active with extender $F$ and 
$H\pins\Ult(M|\rho,F)$, 
or
 \item\label{item:H_in_M_passive_tp1} $M|\rho$ is passive, $N=M|\rho^{+H}$ is 
active type 1 and $H=\Ult_k(Q,F^N)$, where $Q\pins M$ is such 
that 
$\gamma^{+Q}=\rho$ and $\rho_{k+1}^Q=\gamma<\rho_k^Q$, or
 \item\label{item:H_in_M_active_tp1} $k=0$ and $H,M$ are active type 2 and 
$M|\rho$ is
active with a type 2 extender $F$ and letting $R=\Ult(M|\rho,F)$, 
then $N=R|\rho^{+H}$ is active 
type 1  and $H=\Ult_0(M|\rho,F^N)$.
\end{enumerate}
\end{enumerate}
\end{tm}

\begin{rem}
If we assume further that $H,M$ are 
$(k+1)$-sound, it is now easy to conclude that 
($*$Con) (stated above) holds. In fact, it suffices to 
assume that if $H\notin M$ then $M$ is $\rho$-sound, and if $H\in M$ 
then $H$ is 
$(k+1)$-sound.
\end{rem}

\begin{proof} 
\begin{clmseven}\label{clm:assumptions}We may assume:
\begin{enumerate}[label=--]
 \item $H,M$ have the same type,
 \item if $H,M$ are passive and $k=0$ then $\pi$ is cofinal, and hence
 \item $\pi$ is c-preserving.
\end{enumerate}
\end{clmseven}
\begin{proof}
Suppose that $H,M$ have different types.
Then $k=0$, $H$ is passive and $M$ is active.
Here $M$ might an unsquashed or squashed premouse.
In either case, note that
$\pi:H\to M'=M||\OR^M$ is still $0$-lifting,
and $M'$ is $\om$-sound with $\rho_\om^{M'}=\OR^{M'}$.
If part \ref{item:H_in_M} (of the conclusion of  Theorem \ref{thm:condensation})
 holds for 
$\pi:H\to M'$,
then it also holds for $\pi:H\to M$,
so we are done. So suppose 
part \ref{item:H_notin_M}
holds for $\pi:H\to M'$. Then because $M'$ is $1$-sound, 
by parts \ref{item:H_notin_M}\ref{item:if_M_solid},\ref{item:H_rho-core},
in fact $H=M'$ and $\pi=\id$. But $\rho_1^H<\rho_0^H$,
contradicting that $\rho_\om^{M'}=\OR^{M'}$.

Now suppose that $H,M$ are passive
and $k=0$ but $\pi``\OR^H$ is bounded in $\OR^M$.
Let $M'=M||\sup\pi``\OR^H$. Note that $\pi:H\to M'$ is also $0$-lifting,
and is cofinal in $\OR^{M'}$, and $M'$ is $\om$-sound.
And $M'\pins M$,
since otherwise $M|\OR^{M'}$ is active, but then
$M'\sats\ZFC^-$, which contradicts the fact that 
\[ \rg(\pi)=\Hull_1^{M'}(\rho\cup\{\pi(p_{1}^H)\})\]
is cofinal in $M'$. Again if part \ref{item:H_in_M}
holds for $\pi:H\to M'$, then we are done,
so suppose part \ref{item:H_notin_M} holds.
As $M'$ is $1$-sound, then $H=M'$ and $\pi=\id$,
so $H\pins M$, so we are done.
\end{proof}

From now on we make the assumptions stated in Claim \ref{clm:assumptions}.
Using \ref{lem:k-lifting_dichotomy}, we get:

\begin{clmseven}\label{clm:k-embedding}
If $\rho^{+H}=\rho^{+M}$ or $\rho_{k+1}^H<\gamma$ then $H\notin M$ and $\pi$ 
is a 
$k$-embedding.\end{clmseven}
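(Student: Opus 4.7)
The plan is to reduce the claim to Lemma \ref{lem:k-lifting_dichotomy}(\ref{item:pi_bounded}) and then separately handle the two hypotheses with a cardinality argument. First, note that since $\crit(\pi)\geq\rho$, we have $\pi\rest\rho=\id\rest\rho\in M$. Since $H$ is $\rho$-sound, Lemma \ref{lem:k-lifting_dichotomy}(\ref{item:pi_bounded}) applies; contrapositively, if $H\notin M$ then $\pi$ is a $k$-embedding. Thus it suffices to prove $H\notin M$ under each of the two hypotheses.

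For either case, I would argue by contradiction: suppose $H\in M$. Since $H$ is $\rho$-sound, $H=\Hull_{k+1}^H(\rho\un\vec{p}_{k+1}^H)$, so externally $|H|\leq\rho$. If $H\in M$, this hull is computable in $M$ from $H$ together with parameters in $\rho$, giving $\card^M(H)\leq\rho$, and hence $\OR^H<(\rho^+)^M$. Since $\rho$ is a cardinal of $H$ we have $\rho<(\rho^+)^H\leq\OR^H$, so $(\rho^+)^H<(\rho^+)^M$. In the first case $(\rho^+)^H=(\rho^+)^M$, this is immediate contradiction.

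For the second case ($\rho_{k+1}^H<\gamma$), I would sharpen the cardinality bound. Using the standard $\rSigma_{k+1}^H$ surjection $f\maps\rho_{k+1}^H\surj H$ (definable from $\vec{p}_{k+1}^H$ via the $(k+1)$-th skolem function), if $H\in M$ then $f\in M$, so $\card^M(H)\leq\rho_{k+1}^H$. But $\rho\leq\OR^H\sub H$, so $\card^M(\rho)\leq\card^M(H)\leq\rho_{k+1}^H$, i.e.\ $\gamma\leq\rho_{k+1}^H$, contradicting $\rho_{k+1}^H<\gamma$.

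I do not expect a serious obstacle here: the only subtlety is to verify that the $\rSigma_{k+1}^H$-surjection used in the second case really does lie in $M$ when $H\in M$ (which follows from absoluteness of $\rSigma_{k+1}$ satisfaction relative to the fixed structure $H\in M$), and to note that the standard $\rSigma_{k+1}$ parameterization of $H$ from $\rho_{k+1}^H$ is in fact a surjection, which is routine fine structure. Both observations are standard; everything else is pure cardinality bookkeeping.
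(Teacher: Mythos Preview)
Your overall strategy---reduce to Lemma \ref{lem:k-lifting_dichotomy}(\ref{item:pi_bounded}) and then argue $H\notin M$ by cardinality---is correct and is exactly what the paper has in mind (the paper itself only writes ``this gives'' and leaves the verification to the reader). Your first case is fine.

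There is, however, a genuine gap in your second case. You invoke ``the standard $\rSigma_{k+1}^H$ surjection $f\colon\rho_{k+1}^H\surj H$'' via the $(k+1)$-th Skolem function with parameter $\vec{p}_{k+1}^H$. But the existence of such a surjection is precisely $(k+1)$-soundness of $H$, and the theorem does \emph{not} assume this---only $\rho$-soundness, which merely gives $H=\Hull_{k+1}^H(\rho\cup\vec{p}_{k+1}^H)$. (Indeed, the remark immediately following the theorem explicitly treats $(k+1)$-soundness of $H$ as an extra hypothesis one might add.) So your conclusion $\card^M(H)\leq\rho_{k+1}^H$ is not justified, and this is not ``routine fine structure'' as you suggest.

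The fix is easy but uses a different mechanism. By definition of $\rho_{k+1}^H$ there is $A\subseteq\rho_{k+1}^H$ which is $\rSigma_{k+1}^H$-definable (in some parameter from $H$) yet $A\notin H$. If $H\in M$ then $A\in M$. Now $\crit(\pi)\geq\rho$ gives $H||\rho=M||\rho$; since $\rho_{k+1}^H<\gamma$ and $\gamma$ is a cardinal of $M$, we get $\pow(\rho_{k+1}^H)\cap M\subseteq M||\gamma=H||\gamma\subseteq H$, so $A\in H$, contradiction.
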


An easy calculation using the $\rho$-soundness of $H$ gives (cf. 
\cite[2.17]{extmax}):

\begin{clmseven}\label{clm:zeta_z_H}
$\zeta_{k+1}^H\leq\rho$ and $p_{k+1}^H\cut\rho=z_{k+1}^H\cut\rho$.
\end{clmseven}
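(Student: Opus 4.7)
The plan is to unpack the $\rho$-soundness hypothesis directly against the extremal definitions of $(z_{k+1}^H,\zeta_{k+1}^H)$. Set $q := p_{k+1}^H\cut\rho$. Clause (iii) of the definition of $\rho$-soundness applies, since $\rho_{k+1}^H\leq\rho<\rho_k^H$, and it tells us two things: that $q$ is $(k+1)$-solid for $H$, and that $H=\Hull_{k+1}^H(\rho\cup\{\vec{p}_{k+1}^H\})$. Every element of $p_{k+1}^H$ lying below $\rho$ is already a member of $\rho$, so the second assertion is equivalent to
\[ H=\Hull_{k+1}^H(\rho\cup q\cup\vec{p}_k^H). \]
Hence the pair $(q,\rho)$ is a witness to the existence of a $(k+1)$-solid parameter together with a generating ordinal, which is exactly the property being extremized in the definition of $(z_{k+1}^H,\zeta_{k+1}^H)$.

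From the minimality of $\zeta_{k+1}^H$, we immediately conclude $\zeta_{k+1}^H\leq\rho$, giving the first assertion.

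For the second assertion, both $q=p_{k+1}^H\cut\rho$ and $z_{k+1}^H\cut\rho$ are $(k+1)$-solid parameters above $\rho$ from which (together with $\rho\cup\vec{p}_k^H$) the structure $H$ is $\rSigma_{k+1}$-generated. The fact that such a tail above a common projectum is unique — i.e.\ that the standard parameter $p_{k+1}$ and the solid-core parameter $z_{k+1}$ must coincide above any $\rho\geq\max(\zeta_{k+1}^H,\rho_{k+1}^H)$ — is the content of \cite[2.17]{extmax}, which the preliminaries explicitly note goes through unchanged in our setting (Mitchell-Steel indexing with superstrong extenders permitted). Applying it yields $p_{k+1}^H\cut\rho=z_{k+1}^H\cut\rho$.

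There is no genuine obstacle here: the only point requiring care is matching the notion of ``$(k+1)$-solid'' as defined in the preliminaries (in terms of $\rSigma_{k+1}$-theories being members of $H$, using generalized solidity witnesses) with the notion used in the definitions of $z_{k+1}^H$ and $\zeta_{k+1}^H$ in the cited background. This matching is built into those definitions, so the claim follows as an immediate unwinding, exactly as the remark preceding it advertises.
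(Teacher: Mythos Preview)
Your proposal is correct and follows essentially the same approach as the paper, which simply says ``An easy calculation using the $\rho$-soundness of $H$ gives (cf.\ \cite[2.17]{extmax})'' and leaves the details implicit. You have unpacked exactly that calculation: $\rho$-soundness gives that $p_{k+1}^H\cut\rho$ is $(k+1)$-solid and generates $H$ over $\rho\cup\vec{p}_k^H$, and then the cited result identifies this with $z_{k+1}^H\cut\rho$ and bounds $\zeta_{k+1}^H$ by $\rho$.
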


\begin{clmseven}\label{clm:reduce_H_notin_M}
If $H\notin M$ and 
\ref{item:H_notin_M}\ref{item:zeta_z_pres},\ref{item:rho_k+1^H_avoids_interval} 
hold then so do 
\ref{item:H_notin_M}\ref{item:H_is_rho-solid-core},
\ref{item:rho_k+1^H_geq_rho_k+1^M},
\ref{item:if_M_solid},\ref{item:H_rho-core}.
\end{clmseven}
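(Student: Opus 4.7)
The plan is to exploit that $H\notin M$ forces $\pi$ to be a $k$-embedding by \ref{lem:k-lifting_dichotomy}, hence $\rSigma_{k+1}$-elementary; combined with $\rho$-soundness of $H$ and Claim \ref{clm:zeta_z_H}, this suffices for all four parts. I would prove the conclusions in the order \ref{item:H_is_rho-solid-core}, \ref{item:rho_k+1^H_geq_rho_k+1^M}, \ref{item:if_M_solid}, \ref{item:H_rho-core}. For \ref{item:H_is_rho-solid-core}: from $\rho$-soundness of $H$ and Claim \ref{clm:zeta_z_H} (the elements where $z_{k+1}^H$ differs from $p_{k+1}^H$ all lie below $\rho$, hence already in the generator set), we have $H=\Hull_{k+1}^H(\rho\cup z_{k+1}^H\cup\vec{p}_k^H)$. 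Push forward via $\pi$: by $\pi\rest\rho=\id$, $\pi(z_{k+1}^H)=z_{k+1}^M$ (hypothesis \ref{item:zeta_z_pres}), $\pi(\vec{p}_k^H)=\vec{p}_k^M$ ($k$-embedding), and $\rSigma_{k+1}$-elementarity, we get $\rg(\pi)=\Hull_{k+1}^M(\rho\cup z_{k+1}^M\cup\vec{p}_k^M)$; so $H$ is the $\rho$-solid-core of $M$ with $\pi$ the $\rho$-solid-core map.

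For \ref{item:rho_k+1^H_geq_rho_k+1^M}, I split on hypothesis \ref{item:rho_k+1^H_avoids_interval}. If $\rho_{k+1}^H=\rho$, then $\rho_{k+1}^M\leq\zeta_{k+1}^M\leq\rho=\rho_{k+1}^H$ by \ref{item:zeta_z_pres}. Otherwise $\rho_{k+1}^H<\gamma\leq\rho$: take a new $\rSigma_{k+1}^H$-definable $A\subseteq\rho_{k+1}^H$ from parameter $\vec{p}_{k+1}^H$ with $A\notin H$; by $\rSigma_{k+1}$-elementarity and $\pi\rest\rho=\id$, the parallel formula over $M$ with parameter $\pi(\vec{p}_{k+1}^H)$ defines exactly $A$. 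If $\rho_{k+1}^M>\rho_{k+1}^H$, then $A\in M$, and since $A$ is $\rSigma_{k+1}^M$-definable from parameters that all lie in the solid-core hull, we have $A\in\rg(\pi)=H$, contradicting $A\notin H$. Thus $\rho_{k+1}^M\leq\rho_{k+1}^H$. For \ref{item:if_M_solid}, $(k+1)$-solidity gives $\zeta_{k+1}^M=\rho_{k+1}^M$; by \ref{item:zeta_z_pres}, $\zeta_{k+1}^H=\rho_{k+1}^M$, and $\rho_{k+1}^H\leq\zeta_{k+1}^H$ is general, so combining with \ref{item:rho_k+1^H_geq_rho_k+1^M} yields equality.

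For \ref{item:H_rho-core} under $\rho_{k+1}^H=\rho_{k+1}^M$: the elements of $z_{k+1}^M\setminus p_{k+1}^M$ lie below $\rho_{k+1}^M\leq\rho$, so $\rho\cup z_{k+1}^M\cup\vec{p}_k^M$ and $\rho\cup p_{k+1}^M\cup\vec{p}_k^M$ generate the same $\rSigma_{k+1}^M$-hull; by \ref{item:H_is_rho-solid-core}, $H$ is the $\rho$-core and $\pi$ the $\rho$-core map. For $\pi(p_{k+1}^H)=p_{k+1}^M$, note $\pi\rest\rho_{k+1}^H=\id$. If $\Th^M_{\rSigma_{k+1}}(\rho_{k+1}^M\cup\pi(p_{k+1}^H)\cup\vec{p}_k^M)\in M$, then, being $\rSigma_{k+1}^M$-definable from generators of the solid-core hull, this theory lies in $\rg(\pi)=H$, so its $\pi$-preimage $\Th^H_{\rSigma_{k+1}}(\rho_{k+1}^H\cup p_{k+1}^H\cup\vec{p}_k^H)$ lies in $H$, impossible. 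Rev-lex minimality of $p_{k+1}^M$ then gives $p_{k+1}^M\leq_{\mathrm{rlex}}\pi(p_{k+1}^H)$; the reverse inequality follows symmetrically from $\pi^{-1}(p_{k+1}^M)\in H$. The main obstacle I expect is this last identification, since $M$ is not assumed $(k+1)$-sound and the usual hull-closure characterization of $p_{k+1}^M$ is unavailable; instead the argument must be routed through the solid-core presentation of \ref{item:H_is_rho-solid-core} together with the non-membership characterization of the standard parameter's $\rSigma_{k+1}$-theory.
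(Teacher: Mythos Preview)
Your overall structure matches the paper's: use $H\notin M$ to force $\pi$ to be a $k$-embedding, then combine Claim~\ref{clm:zeta_z_H} with hypothesis \ref{item:zeta_z_pres} for \ref{item:H_is_rho-solid-core}, and derive the rest. Your arguments for \ref{item:H_is_rho-solid-core} and \ref{item:if_M_solid} are correct and essentially the paper's.

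There is, however, a recurring gap. In both \ref{item:rho_k+1^H_geq_rho_k+1^M} and the forward inequality of \ref{item:H_rho-core} you invoke the principle ``$A\in M$ is $\rSigma_{k+1}^M$-definable from parameters lying in $\rg(\pi)$, hence $A\in\rg(\pi)$''. This fails in general: membership in $\Hull_{k+1}^M(X)$ requires $A$ to be the \emph{unique} witness to an $\rSigma_{k+1}$ formula with parameters in $X$, whereas $A$ being definable as a subset via $\alpha\in A\iff M\models\varphi(\alpha,q)$ only characterizes $A$ by a $\Pi_{k+1}$ condition. The repair is simpler than your route and is what the paper does. For \ref{item:rho_k+1^H_geq_rho_k+1^M}: if $\kappa=\rho_{k+1}^H<\gamma$ then $(\kappa^+)^M\leq\gamma\leq\rho$ and $H||\rho=M||\rho$, so $\pow(\kappa)^H=\pow(\kappa)^M$; hence the new $\rSigma_{k+1}^H$-subset of $\kappa$ is also missing from $M$, giving $\rho_{k+1}^M\leq\kappa$ directly. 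If $\kappa=\rho$ then the theory $\Th_{\rSigma_{k+1}}^M(\rho\cup\pi(\vec p_{k+1}^{\,H}))$ computes $H$, so $H\notin M$ gives $\rho_{k+1}^M\leq\rho$. The same dichotomy repairs your forward inequality in \ref{item:H_rho-core}.

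For the reverse inequality $\pi(p_{k+1}^H)\leq p_{k+1}^M$ in \ref{item:H_rho-core}, your argument via $q=\pi^{-1}(p_{k+1}^M)$ is actually sound (since $\pi\rest\rho_{k+1}^H=\id$, applying $\pi$ to the $H$-theory in parameters $\rho_{k+1}^H\cup q$ literally yields the $M$-theory in $\rho_{k+1}^M\cup p_{k+1}^M$, which is not in $M$). The paper instead uses the $(k+1)$-solidity of $p_{k+1}^H\cut\rho$, built into $\rho$-soundness: the solidity witnesses push forward under $\pi$ to generalized solidity witnesses for $\pi(p_{k+1}^H\cut\rho)$ in $M$, which together with $p_{k+1}^M\leq\pi(p_{k+1}^H)$ forces $\pi(p_{k+1}^H\cut\rho)=p_{k+1}^M\cut\rho$, and then $\pi\rest\rho=\id$ finishes. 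Your route avoids invoking solidity witnesses but requires first knowing $p_{k+1}^M\in\rg(\pi)$, which you correctly get from the identification of the $\rho$-solid-core with the $\rho$-core.
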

\begin{proof}
$\pi$ is a $k$-embedding. So
 \ref{item:H_is_rho-solid-core} follows from Claim 
\ref{clm:zeta_z_H} 
and \ref{item:zeta_z_pres}.
Part \ref{item:rho_k+1^H_geq_rho_k+1^M}: Let $\kappa=\rho_{k+1}^H$. If 
$\pow(\kappa)^H=\pow(\kappa)^M$ then \ref{item:rho_k+1^H_geq_rho_k+1^M} is 
clear. If 
$\pow(\kappa)^H\neq\pow(\kappa)^M$ then by \ref{item:rho_k+1^H_avoids_interval}, 
$\kappa=\rho$, so 
because $H\notin M$, \ref{item:rho_k+1^H_geq_rho_k+1^M} holds. Part
\ref{item:if_M_solid}: As
$\rho_{k+1}^M\leq\rho_{k+1}^H$, we get $\rho_{k+1}^M=\rho_{k+1}^H$ 
by
\cite[2.17]{extmax} and \ref{item:zeta_z_pres}. 
Part
\ref{item:H_rho-core}: Suppose $\rho_{k+1}^M=\rho_{k+1}^H$. We have 
$\rho_{k+1}^H\leq\rho$. If $\rho_{k+1}^M=\rho_{k+1}^H=\rho$ then 
as $H\notin M$, and by the solidity of $p_{k+1}^H=p_{k+1}^H\cut\rho$, we 
then have $p_{k+1}^M=\pi(p_{k+1}^H)$. Suppose 
$\rho_{k+1}^M=\rho_{k+1}^H=\kappa<\rho$,
so by \ref{item:rho_k+1^H_avoids_interval}, $\kappa<\gamma$. So 
$\pow(\kappa)^M=\pow(\kappa)^H$, so
$p_{k+1}^M\leq\pi(p_{k+1}^H)$, and so 
using the solidity of $p_{k+1}^H\cut\rho$, we get
$\pi(p_{k+1}^H\cut\rho)=p_{k+1}^M\cut\rho$, and since $\pi\rest\rho=\id$, we 
get 
$\pi(p_{k+1}^H)=p_{k+1}^M$. Now \ref{item:H_rho-core} easily follows.
\end{proof}

There are two main cases overall.

\begin{caseseven}\label{case:rho^+^H=rho^+^M} $\rho^{+H}=\rho^{+M}$.
 
We show \ref{item:H_notin_M}.
It suffices to prove \ref{item:H_notin_M}
\ref{item:zeta_z_pres},\ref{item:rho_k+1^H_avoids_interval},
\ref{item:superstrong_case}, by Claim 
\ref{clm:reduce_H_notin_M}.
 Part \ref{item:superstrong_case} is
trivial by case hypothesis.
By claims above,
$H\notin M$, $\pi$ is a $k$-embedding, and
$\zeta_{k+1}^H\leq\rho$. Using generalized 
solidity witnesses and as $\pow(\rho)^H=\pow(\rho)^M$, 
\ref{item:zeta_z_pres} follows. 
For part \ref{item:rho_k+1^H_avoids_interval},
we show  $\gamma=\rho$.
Suppose $\gamma<\rho$. So $\rho$ is an $H$-cardinal but non-$M$-cardinal,
and $\rho\leq\crit(\pi)$.
So $\rho=\gamma^{+H}$ and $\pi(\rho)=\gamma^{+M}=\rho^{+M}$.
But as $\rho^{+H}=\rho^{+M}$ this 
contradicts condensation for $\om$-sound 
mice (\ref{lem:fully_elem_condensation}).
\end{caseseven}

\begin{caseseven}\label{case:rho^+^H<rho^+^M} $\rho^{+H}<\rho^{+M}$.
 
Let $\eta=\rho^{+H}$. Either (I) $\crit(\pi)=\rho$, or 
(II) $\crit(\pi)=\eta<\rho_0^H$, or (III) $\eta=\rho_k^H=\rho_0^H$ and 
$\crit(\pi)$
does not exist.

Assume (III) holds. Then by Claim \ref{clm:assumptions} and case 
hypothesis, $H,M$ are active 
and $\rho$ is an 
$M$-cardinal. Letting $\mu=\crit(F^H)$, then
$\crit(F^M)=\mu<\rho$ and $(\mu^+)^H=(\mu^+)^M$ and $\pi\rest(\mu^+)^H=\id$.
It follows that $H,M$ are type 3
(consider the amenable predicate $E^H$ coding $F^H$
and its cofinality in $(H|(\mu^+)^H)\cross H||\OR^H$,
and likewise for $M$; we get $E^H\sub E^M$, and a contradiction
to the fact that $\OR^H<\OR^M$). 
So $\eta=\rho_k^H=\rho_0^H=\nu(F^H)<\nu(F^M)$,
and $F^H\rest\eta\sub F^M$. So by the ISC,
if $M|\eta$ is passive then $H\pins M$, 
and if $M|\eta$ is active then $H\pins\Ult(M|\eta,F^{M|\eta})$,
but as $\rho_{k+1}^H<\eta$, the latter is impossible.

From now on we assume  either (I) or (II) holds, so $\crit(\pi)$ exists.
We will produce an 
iterable cephal $C$ and use it to deduce the required facts.
If $M|\rho$ is 
passive then let
$J\pins M$ be least with $\rho_\om^J\leq\rho$
and $\eta\leq\OR^J$.
If $M|\rho$ is active and 
$\eta<\rho^{+U_\rho}$ where
$U_\rho=\Ult(M|\rho,F^{M|\rho})$, let 
$J\pins U_\rho$ be 
least with $\eta\leq\OR^J$ and $\rho_\om^J=\rho$. Otherwise leave $J$ 
undefined.
We may assume $H\neq J$
(otherwise \ref{item:H_in_M} holds). This ensures the cephal $C$ 
defined next is
non-trivial.

If $\rho$ is an $M$-cardinal, let $C = (\rho,H,J)$, a bicephalus. Here the fact 
that 
$H||\eta=J||\eta$, and hence $\eta=(\rho^+)^J$, follows from 
condensation for $\om$-sound 
mice (\ref{lem:fully_elem_condensation}).
If $\rho$ is a non-$M$-cardinal (so $\gamma<\rho<\gamma^{+M}$), 
let
$C=(\gamma,\rho,H,Q)$, where
$Q\pins M$ is least with $\rho\leq\OR^Q$ and $\rho_\om^Q=\gamma$;  here $C$ 
is a cephalanx,  by  \ref{lem:fully_elem_condensation}.

\begin{clmseven}\label{clm:C_iterable} $C$ is a non-trivial, 
$(\om_1+1)$-iterable 
cephal.\end{clmseven}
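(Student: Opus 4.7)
The plan is to verify non-triviality directly and then establish iterability by lifting iteration trees on $C$ to iteration trees on $M$ (or on $\Ult(M|\rho)$ when $M|\rho$ is active and $J \pins \Ult(M|\rho)$), using $\pi$ as the copy map on the $H$-side and the identity (or inclusion) on the $J$- or $Q$-side. Non-triviality is immediate from the case assumption $H \neq J$ (resp.\ $H \neq Q$) that we made just before forming $C$; the structural requirements of a cephal were essentially checked during the construction of $C$, using \ref{lem:fully_elem_condensation} for the agreement $H||\eta = J||\eta$ or $H||\eta = Q||\eta$.

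For iterability, the realization of $C$ consists of a pair of embeddings. The first is $\pi \maps H \to M$, which is $k$-lifting with $\crit(\pi) \geq \rho$. The second is either the identity on $Q$ (in the cephalanx case, where $Q \pins M$), the identity on $J$ (in the bicephalus case with $M|\rho$ passive, where $J \pins M$), or the inclusion of $J$ into $\Ult(M|\rho)$ (in the bicephalus case with $M|\rho$ active, where $J \pins \Ult(M|\rho)$). In each case the two maps agree on $B||\eta$, since $\crit(\pi) \geq \rho \geq \eta$ (or $\crit(\pi) = \eta$ with $\pi \rest \eta = \id$), which is the compatibility needed for a copy from a cephal.

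Given a potential tree $\Tt$ on $C$ of length $\leq \om_1 + 1$, I would define a lifted tree $\Uu$ on $M$ (resp.\ on $\Ult(M|\rho)$) by the standard copying construction: extenders of $\Tt$ drawn from the $H$-side are lifted under the $H$-side copy map, those from the $J$/$Q$-side under the $J/Q$-side copy map, with the Shift Lemma for $k$-lifting embeddings (\ref{lem:k-lifting_facts}\ref{item:shift_lemma}) used to propagate the copy maps along $\Uu$, as described in Remark \ref{rem:k-lifting_copying}. At stages where $\Tt$ remains a tree on a cephal (the active extender is total over $B^\Tt_\alpha||\rho^\Tt_\alpha$), simultaneous ultrapowers are taken on both sides, and Lemmas \ref{lem:biceph_ult}, \ref{lem:passive_cephalanx_ult}, \ref{lem:active_cephalanx_ult}, and \ref{lem:ceph_it} guarantee that these ultrapowers (and the ultrapowers of $R^{B^\Tt_\alpha}$ when relevant) are wellfounded whenever the corresponding models of $\Uu$ are wellfounded. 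Then $(k,\om_1+1)$-iterability of $M$ (which transfers to $\Ult(M|\rho)$ in the active subcase by copying trees on $\Ult(M|\rho)$ back to trees on $M$ above $\rho$) supplies wellfoundedness of $\Uu$, hence of $\Tt$, yielding the $(\om_1+1)$-iteration strategy for $C$.

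The main obstacle is the bookkeeping at the boundary where $\Tt$ drops from the cephal $C$ onto a single segmented-premouse on one of its two sides: one has to check that the lifted copy map remains $\deg^\Tt$-lifting (and c-preserving where needed), and that the ultrapower-agreement conclusions of the cited cephal-ultrapower lemmas (in particular the preservation of the $(\rho^+)^B$-agreement and of largest-cardinal structure in the superstrong and type 3 subcases) accurately reflect into the lifted tree. The auxiliary complication with $M|\rho$ active is minor, since $\Ult(M|\rho)$ inherits normal iterability from $M|\rho$ in the standard way, and $\pi$ agrees with the inclusion $J \to \Ult(M|\rho)$ below $\eta$, so the two copy maps still provide a well-defined realization.
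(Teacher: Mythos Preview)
Your overall plan matches the paper's: lift trees on $C$ to $k$-maximal trees $\Uu$ on $M$ using $\pi$ on the $H$-side and the identity on the other side, pulling back branches from a $(k,\om_1+1)$-strategy for $M$. The paper only details the active cephalanx case $C=(\gamma,\rho,H,Q)$ with $Q=M|\rho$, asserting the remaining cases are similar (the bicephalus case ``a little different, but simpler'').

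The key mechanism you do not make explicit is what the paper calls a \emph{lift-drop}. In the cephalanx case, $\sigma_\alpha$ maps $Q_\alpha$ into the proper segment $W_\alpha=i^\Uu_{0,\alpha}(Q)\pins S_\alpha=M^\Uu_\alpha$, not into $S_\alpha$ itself. When $\Tt$ moves to the $Q$-side alone --- that is, $\alpha+1\in\curlyQ^\Tt$ with $\pred^\Tt(\alpha+1)=\delta\in\curlyB^\Tt$ and $[0,\alpha+1]_\Tt$ not dropping in model (which forces $E^\Tt_\delta=F(Q_\delta)$ and $\crit^\Tt_\alpha=\gamma(B_\delta)$) --- the lifted tree $\Uu$ must \emph{drop in model} to $W_\delta$, even though $\Tt$ does not drop. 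This extra drop in $\Uu$ is exactly what makes the $Q$-side copy go through; your ``boundary'' paragraph gestures at the right place but does not identify this drop.

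There is also a gap in your active bicephalus subcase. You take $\pi:H\to M$ on one side and the inclusion $J\hookrightarrow\Ult(M|\rho)$ on the other, and propose lifting $\Tt$ to a tree on $\Ult(M|\rho)$. But these two maps target different structures, and there is no available map $H\to\Ult(M|\rho)$ (nor $J\to M$), so a single lifted tree $\Uu$ is not well-defined as you describe it. The lift must still target $M$. The paper does not spell this case out either; one natural fix is to treat the $J$-side by analogy with the cephalanx $Q$-side, letting $\sigma_\alpha$ map $N_\alpha$ into $i^\Uu_{0,\alpha}(J)$, which sits inside the ultrapower of a segment of $S_\alpha$ by its active extender rather than inside $S_\alpha$ directly, and arranging an extra step in $\Uu$ (via an image of $F^{M|\rho}$) when a genuine $J$-side extender must be used.
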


Assume this claim for now; we will use it to finish the proof.

\begin{clmseven}\label{clm:bicephalus_case} Suppose that either:
\begin{enumerate}[label=\tu{(}\roman*\tu{)}]
 \item\label{item:C_is_bicephalus} $\rho$ is a cardinal of $M$, so 
$C=(\rho,H,J)$ is a bicephalus; 
or
 \item\label{item:C_is_cephalanx} $\rho$ is not a cardinal of $M$ and 
$C=(\gamma,\rho,H,Q)$ is a 
passive cephalanx, $\OR^J=\eta$ and $J$ is type 3.
\end{enumerate}
Then \ref{item:H_notin_M} holds.\end{clmseven}
\begin{proof}
Note $N^C=J$ in case \ref{item:C_is_cephalanx}. 
Using 
\ref{thm:no_iterable_sound_bicephalus}/\ref{thm:nispc}, and as $H\neq J$ and 
$J$ is sound, note 
$\OR^J=\eta$, $J$ is type 1/3,
and letting $F=F^J$ and $\kappa=\crit(F)$, we have $\kappa<\gamma$ (in case 
\ref{item:C_is_bicephalus}, $\gamma=\rho$; in case 
\ref{item:C_is_cephalanx}, $J$ is type 
3), the $\kappa$-core $N$ of $H$  is $\kappa$-sound, and
\begin{equation}\label{eqn:kappa^+_agmt} 
N||(\kappa^+)^N=H||(\kappa^+)^H=M||(\kappa^+)^M
\end{equation}
(so $F$ is weakly amenable to $N$) and $H=\Ult_k(N,F)$.
It follows that 
$\rho_{k+1}^M\leq\rho_{k+1}^H\leq\rho_{k+1}^N\leq\kappa<\gamma$, so $H\notin M$ 
and $\pi$ is a 
$k$-embedding. Now 
$\zeta_{k+1}^N\leq\kappa$ since $N$ is $\kappa$-sound. But then by line 
(\ref{eqn:kappa^+_agmt})
and as $\pi,i^{N,k}_F$ preserve generalized solidity
witnesses, we get $i^{N,k}_F(z_{k+1}^N)=z_{k+1}^H$ and 
$z_{k+1}^M=\pi(z_{k+1}^H)$
and $\zeta_{k+1}^M=\zeta_{k+1}^H=\zeta_{k+1}^N$,
giving \ref{item:zeta_z_pres}.
Since $\rho_{k+1}^H\leq\zeta_{k+1}^H<\gamma$ we have 
\ref{item:rho_k+1^H_avoids_interval}, and  
\ref{item:superstrong_case} is trivial.
\end{proof}

\begin{clmseven} Suppose $\rho$ is a non-$M$-cardinal,
$C=(\gamma,\rho,H,Q)$ is a passive cephalanx \tu{(}so $N^C=J$\tu{)}, and if 
$\OR^J=\eta$ then $J$ 
is non-type 3. Then \ref{item:H_in_M}\ref{item:H_in_M_passive_tp1} 
holds.\end{clmseven}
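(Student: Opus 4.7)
The plan is to apply Theorem \ref{thm:no_iterable_sound_passive_cephalanx} to the cephalanx $C=(\gamma,\rho,H,Q)$, where $Q=Q^C$, exploit the good core it yields, and read off the conclusion \ref{item:H_in_M}\ref{item:H_in_M_passive_tp1}. By Claim \ref{clm:C_iterable}, $C$ is a non-trivial, $(\om_1+1)$-iterable passive cephalanx, so \ref{thm:no_iterable_sound_passive_cephalanx} gives that $C$ has a good core. By the definition of a good core for a passive cephalanx, $N^C=J$ is active and either type 1 or type 3, and $\OR^J=\eta=(\rho^+)^H$. The present hypothesis (``if $\OR^J=\eta$ then $J$ is not type 3'') therefore forces $J$ to be type 1. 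Let $F=F^J$, $\kappa=\crit(F)$ and $\nu=\nu(F)$.

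The type 1 clause of the good core gives that $H^H_{m,\kappa}=Q$, $m=q$, and $G^H_{m,\kappa,\nu}=F\rest\nu$, where $m$ is the degree of $C$'s $M$-part (which equals $k$, since $H$ is $\rho$-sound with $\rho_{k+1}^H\leq\rho<\rho_k^H$) and $q$ is the degree of its $Q$-part. Since $F$ is type 1, $F$ is the trivial completion of $F\rest\nu$, so the ultrapower is unaffected by extending to all of $F$:
\[ \Ult_k(Q,F)=\Ult_k(Q,F\rest\nu)=\Ult_m(Q,G^H_{m,\kappa,\nu})=H,\]
the last equality being by the definition of a good core.

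Next I would identify $Q$ and $\kappa$ more concretely. The cephalanx's $Q$ is the least $Q'\pins M$ with $\rho\leq\OR^{Q'}$ and $\rho_\om^{Q'}=\gamma$. Since $\rho$ is not a cardinal of $M$ and $\gamma=\card^M(\rho)$, acceptability in $M$ yields a $\rSigma_\om^{M|\rho}$-definable surjection from $\gamma$ onto $\rho$, so $\rho_\om^{M|\rho}=\gamma$, and hence $Q=M|\rho$. In particular $\OR^Q=\rho$, and since $\gamma$ is the largest cardinal of $Q$, by convention $(\gamma^+)^Q=\rho$. The cephalanx degree condition $\rho_{q+1}^Q\leq\gamma<\rho_q^Q$ combined with $\rho_\om^Q=\gamma$ forces $\rho_{k+1}^Q=\gamma$. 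To see $\kappa=\gamma$: from $Q^C||(\kappa^+)^Q=H||(\kappa^+)^H$ in the good core definition, $\kappa$ must be a cardinal of $Q=M|\rho$, whose only cardinals $\geq\gamma$ is $\gamma$ itself; so $\kappa\leq\gamma$. Conversely, the good core uncollapse map $\pi:Q\to H$ has $\crit(\pi)=\kappa$ and $\rho_{m+1}^Q\leq\kappa$, while $\rho_\om^Q=\gamma\leq\rho_{m+1}^Q$, giving $\kappa\geq\gamma$. Hence $\kappa=\gamma$.

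Assembling the pieces: $M|\rho=Q$ is passive because the cephalanx $C$ is passive; $M|\eta=J$ is active with the type 1 extender $G:=F$; $H=\Ult_k(Q,G)$ from the displayed equation; and $Q\pins M$ satisfies $(\gamma^+)^Q=\rho$ and $\rho_{k+1}^Q=\gamma<\rho_k^Q$. Since $Q,G\in M$, also $H\in M$. This is precisely the conclusion \ref{item:H_in_M}\ref{item:H_in_M_passive_tp1}. The main obstacle in this argument is the bookkeeping needed to identify $\kappa$ with $\gamma$ and to recognize the good core's $Q$ with $M|\rho$; once these identifications are made, the type 1 character of $F$ immediately converts the good core ultrapower into the ultrapower demanded by the conclusion, and the remaining conditions are read off from the cephalanx's data.
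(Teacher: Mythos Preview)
Your overall approach is correct and matches the paper's: apply Theorem \ref{thm:no_iterable_sound_passive_cephalanx} to get that $C$ has a good core, use the hypothesis to force $J$ to be type 1, and then read off conclusion \ref{item:H_in_M}\ref{item:H_in_M_passive_tp1} from the type 1 clause of the good core. The deduction $H=\Ult_k(Q,F^J)$ from $H^H_{m,\kappa}=Q$ and $G^H_{m,\kappa,\nu}=F\rest\nu$ is right.

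However, your identification $Q=M|\rho$ is wrong. By the definition of a passive right half-cephalanx, $(\gamma^+)^Q=\rho<\OR^Q$, so in the passive case $Q$ is strictly longer than $M|\rho$. Your acceptability argument that $\rho_\om^{M|\rho}=\gamma$ is also unjustified: acceptability does not give a surjection $\gamma\to\rho$ that is definable over $M|\rho$ itself. Consequently your argument for $\kappa\leq\gamma$ (``$\kappa$ must be a cardinal of $Q=M|\rho$, whose only cardinals $\geq\gamma$ is $\gamma$ itself'') fails on both counts.

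Fortunately these errors are in bookkeeping that you can bypass. The facts you need from $Q=M|\rho$ hold for other reasons: $(\gamma^+)^Q=\rho$ is part of the cephalanx definition; $M|\rho$ is passive because $C$ is passive (i.e., $Q|\rho=M|\rho$ is passive); and $\rho_{k+1}^Q=\gamma<\rho_k^Q$ follows from $m=q$ and the cephalanx degree condition $\rho_{q+1}^Q\leq\gamma<\rho_q^Q$ together with $\rho_\om^Q=\gamma$. As for $\kappa=\gamma$ (which the paper simply asserts): note $\kappa=\crit(F^J)$ is a cardinal of $J$, hence of $H$, and $\kappa<\rho$; the only such $H$-cardinals are $\gamma$ and those below $\gamma$. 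Your lower bound $\kappa\geq\gamma$ via $\gamma=\rho_\om^Q\leq\rho_{m+1}^Q\leq\kappa$ then gives $\kappa=\gamma$. (Alternatively, $\kappa=\rho$ would force $Q=H^H_{m,\rho}=H$ by $\rho$-soundness of $H$, making $C$ trivial.)
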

\begin{proof}
Using \ref{thm:no_iterable_sound_passive_cephalanx}, $\OR^J=\eta$, $J$ is  
type 1,
$\rho_{k+1}^Q=\crit(F^J)=\gamma<\rho_k^Q$
and $H = \Ult_k(Q,F^J)$,
and since $J\ins Q$, therefore $\rho_{k+1}^H=\rho_{k+1}^Q=\gamma<\rho$.
\end{proof}

\begin{clmseven} Suppose  $\rho$ is a non-$M$-cardinal and $C$ is an 
active 
cephalanx. Then 
either \ref{item:H_notin_M} or
\ref{item:H_in_M}\ref{item:H_in_M_active_tp1} holds.\end{clmseven}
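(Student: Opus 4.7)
The plan is to apply Theorem \ref{thm:no_iterable_sound_active_cephalanx} to the iterable, non-trivial, active pm-cephalanx $C = (\gamma, \rho, H, Q)$ supplied by Claim \ref{clm:C_iterable}. The theorem delivers two cases: either (A) $C$ is non-exceptional and has a good core, or (B) $C$ is exceptional and has an exceptional core. In case (A), Definition \ref{dfn:good_core_active_cephalanx} produces an extender $F$ with $\kappa = \crit(F)$ and $\nu = \nu(F)$---namely $F^Q$ if $C$ is exact, and $F^N$ for $N = N^C$ otherwise---such that $H$ has a $(k, \nu)$-good core at $\kappa$. The only subcase in which $\kappa \not< \gamma$ is when $C$ is non-exact, $N$ is type 1, and $Q$ is type 2; the adjacent subcase ($N$ type 1, $Q$ neither type 2 nor superstrong) additionally furnishes a $(k, \gamma)$-good core at $\mu = \crit(F^Q) < \gamma$. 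So outside the ``$Q$ type 2'' subcase, we obtain $\rho_{k+1}^H \leq \kappa < \gamma$ (or $\leq \mu < \gamma$). The agreement $\tilde H || (\kappa^+)^{\tilde H} = H || (\kappa^+)^H$ supplied by the good core, combined with $\crit(\pi) \geq \rho > \kappa$, yields $\pow(\kappa)^H = \pow(\kappa)^M$; comparing generalized solidity witnesses gives $\zeta_{k+1}^H = \zeta_{k+1}^M \leq \kappa$ and $\pi(z_{k+1}^H) = z_{k+1}^M$, establishing \ref{item:H_notin_M}\ref{item:zeta_z_pres}. Since \ref{item:H_notin_M}\ref{item:rho_k+1^H_avoids_interval} and \ref{item:H_notin_M}\ref{item:superstrong_case} are immediate, Claim \ref{clm:reduce_H_notin_M} then delivers all of \ref{item:H_notin_M}.

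The distinguished ``$Q$ type 2'' subcase of the good core is set up to match \ref{item:H_in_M}\ref{item:H_in_M_active_tp1}. Here $M|\rho = Q$ is active type 2, $R = \Ult(Q, F^Q)$, and $R|(\rho^+)^H = N$ is active with the type 1 extender $G = F^N$; the good-core conclusions $H^H_{k, \gamma} = Q$ and $G^H_{k, \gamma, \rho} = F^N \rest \rho$ give $H = \Ult_k(Q, F^N)$. Since $Q \pins M$ is $\om$-sound with $\rho_\om^Q = \gamma$, in particular $\rho_1^Q \leq \gamma = \crit(F^N)$, which forces the ultrapower degree $k = 0$, so $\Ult_k(Q, F^N) = \Ult_0(Q, F^N)$. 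That $H$ is active type 2 is inherited from $Q$; the elementarity of $\pi$ (which is $\rSigma_0$-elementary with $\crit(\pi) \geq \rho$) transmits type 2 activeness to $M$, completing \ref{item:H_in_M}\ref{item:H_in_M_active_tp1}.

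In case (B), $Q$ is superstrong and $C$ is exact. Lemma \ref{lem:exceptional_core_facts} yields $\tilde H$ with $H = \Ult_k(\tilde H, F^Q)$, $\tilde \pi = i^{\tilde H}_{F^Q}$ a $k$-embedding, and $\tilde H$ being $(\kappa^+)^{\tilde H}$-sound; moreover either $\rho_{k+1}^{\tilde H} \leq \kappa$ or $\rho_{k+1}^{\tilde H} = (\kappa^+)^{\tilde H}$. In the former subcase, $\rho_{k+1}^H \leq \kappa < \gamma$ and the argument of case (A) gives \ref{item:H_notin_M}. In the latter, both $\tilde H$ and $H$ are $(k+1)$-sound with $\rho_{k+1}^H = \rho$; using $\tilde H || (\kappa^{++})^{\tilde H} = Q | (\kappa^{++})^Q$, $(k+1)$-soundness, and condensation for $\om$-sound mice (Lemma \ref{lem:fully_elem_condensation}), one identifies $\tilde H$ within $Q$ (since the hypothesis $H \neq J$ rules out the alternative initial-segment cases \ref{item:H_in_M}\ref{item:H_in_M_H_pins_M} and \ref{item:H_in_M}\ref{item:H_in_M_active}), and the superstrong structure of $F^Q$ recasts $H = \Ult_k(\tilde H, F^Q)$ in the form \ref{item:H_in_M}\ref{item:H_in_M_active_tp1}.

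The main obstacle is the fine-structural reconciliation of the ultrapower presentations of $H$ (coming from the good core or exceptional core) with the stringent configuration demanded by \ref{item:H_in_M}\ref{item:H_in_M_active_tp1}---namely that $H$ realizes as $\Ult_0(M|\rho, G)$ for a type 1 extender $G$ at index $(\rho^+)^H$ in $R = \Ult(M|\rho, F^Q)$, with $k = 0$ forced throughout. The exceptional subcase $\rho_{k+1}^{\tilde H} = (\kappa^+)^{\tilde H}$ is particularly delicate, as it requires a careful identification of $\tilde H$ as an appropriate substructure of $Q$ via condensation before the ultrapower structure can be matched against the specific type 1/type 2 layering required by the conclusion.
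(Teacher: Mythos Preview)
Your treatment of the non-exceptional case (A) is essentially right and matches the paper: the good-core analysis splits according to whether the relevant critical point lies below $\gamma$ (giving $\rho_{k+1}^H<\gamma$ and conclusion \ref{item:H_notin_M}) or equals $\gamma$ with $Q$ type 2 (giving \ref{item:H_in_M}\ref{item:H_in_M_active_tp1}).

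The error is in the exceptional case (B), specifically the subcase $\rho_{k+1}^K=(\kappa^+)^K$ (your $\rho_{k+1}^{\tilde H}=(\kappa^+)^{\tilde H}$). You claim this yields \ref{item:H_in_M}\ref{item:H_in_M_active_tp1}, but that is impossible: conclusion \ref{item:H_in_M}\ref{item:H_in_M_active_tp1} explicitly requires $M|\rho$ to be active with a \emph{type 2} extender, whereas exceptionality forces $Q=M|\rho$ to have superstrong type. Your suggestion to ``identify $\tilde H$ within $Q$'' via condensation cannot work either, since the exceptional-core agreement gives $(\kappa^{++})^{\tilde H}=(\kappa^{++})^Q$, so $\tilde H$ is not a proper segment of $Q$.

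In fact the paper shows that in the entire exceptional case one always lands in \ref{item:H_notin_M}. First one proves $H\notin M$: since $(\kappa^{++})^K=(\kappa^{++})^M$ we have $K\notin M$, and because $\Th_{\rSigma_{k+1}}^K(\pvec_{k+1}^K\un(\kappa^+)^K)$ is computable from $F^Q\in M$ together with $\Th_{\rSigma_{k+1}}^H(\pvec_{k+1}^H\un\rho)$, it follows that $H\notin M$. Then in the subcase $\rho_{k+1}^K=(\kappa^+)^K$ one has $\rho_{k+1}^H=\rho$, $H$ is $(k+1)$-sound, and one computes $p_{k+1}^M=\pi(p_{k+1}^H)\conc\langle\rho\rangle$ and $\zeta_{k+1}^M=\rho=\zeta_{k+1}^H$, verifying \ref{item:H_notin_M}\ref{item:zeta_z_pres},\ref{item:rho_k+1^H_avoids_interval},\ref{item:superstrong_case} directly; this is precisely the scenario in which $M$ fails $(k+1)$-solidity. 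You also skip the split within $\rho_{k+1}^K\leq\kappa$: when $\kappa<\zeta_{k+1}^K<(\kappa^+)^K$, the powerset agreement $\pow(\kappa)^H=\pow(\kappa)^M$ does not immediately give \ref{item:zeta_z_pres}, and one needs to use $F^Q\in M$ to argue that $\Th_{\rSigma_{k+1}}^M(\pi(z_{k+1}^H)\un\zeta_{k+1}^H\un\pvec_k^M)\notin M$.
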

\begin{proof}
We have $C=(\gamma,\rho,H,Q)$ where $Q=M|\rho$ is active. Let $F=F^Q$. Apply 
\ref{thm:no_iterable_sound_active_cephalanx} to $C$.
If $C$ is non-exceptional then $C$ has a good core, and as before 
either
 $\rho_{k+1}^H<\gamma$ and \ref{item:H_notin_M} holds, 
or
 $\rho_{k+1}^H=\gamma$ 
and \ref{item:H_in_M}\ref{item:H_in_M_active_tp1} holds.

Now suppose that $C$ is exceptional, so $C$ has an exceptional core. Let
\[ K = \cHull^H_{k+1}(X\un z_{k+1}^H\un\pvec_k^H), \]
where $X$ is defined as in \ref{dfn:exceptional_core}. Let $\kappa=\crit(F)$.
By \ref{lem:exceptional_core_facts}, $K$ is $\kappa^{+K}$-sound, and 
$\rho_{k+1}^K\leq\kappa^{+K}$. 
Since 
$\kappa^{++K}=\kappa^{++M}$, therefore $K\notin M$. Since $Q\in M$ and
\[\Th_{\rSigma_{k+1}}^K(\pvec_{k+1}^K\un\kappa^{+K})
\text{ can be computed from }F^Q\text{ and
}\Th_{\rSigma_{k+1}}^H(\pvec_{k+1}^H\un\rho),\]
it follows that $H\notin M$, so $\pi$ is a $k$-embedding, as is 
$i^{K,k}_F$. So we must verify \ref{item:H_notin_M}.
\begin{sclmseven}\label{sclm:rho_k+1^K_leq_kappa}
If $\rho_{k+1}^K=\kappa^{+K}$ then \ref{item:H_notin_M} holds.
\end{sclmseven}
\begin{proof}
The argument here is similar to that used to illustrate the failure of solidity 
for
long extender premice. By 
\ref{lem:exceptional_core_facts}, we have
$\rho_{k+1}^H=\rho$ and 
$i^{K,k}_F(p_{k+1}^K)=p_{k+1}^H$ and both $K,H$ are $(k+1)$-sound.
Moreover,
\[ p_{k+1}^M\leq\pi(p_{k+1}^H)\conc\left<\rho\right> \]
because $K\notin M$ and by the calcuation above. Since $H$ is $(k+1)$-solid, 
therefore
$p_{k+1}^M\cut\rho=\pi(p_{k+1}^H)$.
But for $\alpha\leq\rho$,
\begin{equation}\label{eqn:which_theories_in}
\alpha<\rho\ \iff\ \Th_{\rSigma_{k+1}}^M(\pi(\pvec_{k+1}^H)\un\alpha)\in M, 
\end{equation}
because (in the case that $\alpha=\rho$) $H\notin M$, and (in the case that 
$\alpha<\rho$) 
$\crit(\pi)=\rho=\rho_{k+1}^H$. But line 
(\ref{eqn:which_theories_in}) gives
$p_{k+1}^M=\pi(p_{k+1}^H)\conc\left<\rho\right>$.

Now $z_{k+1}^H=p_{k+1}^H$ and $\zeta_{k+1}^H=\rho$, and 
\ref{item:H_notin_M}\ref{item:zeta_z_pres},\ref{item:rho_k+1^H_avoids_interval},
\ref
{item:superstrong_case} follow.
\end{proof}

Note that in the above case, $M$ is not $(k+1)$-solid.

\begin{sclmseven} If $\rho_{k+1}^K\leq\kappa<\zeta_{k+1}^K$ then 
\ref{item:H_notin_M} holds.\end{sclmseven}
\begin{proof}
Suppose $\rho_{k+1}^K\leq\kappa$. Then $\zeta_{k+1}^K<\kappa^{+K}$, 
as otherwise,
\[ \Th_{\rSigma_{k+1}}^K(\rho_{k+1}^K\un\pvec_{k+1}^K)\in K, \]
impossible. So 
$\kappa<\zeta_{k+1}^K<\kappa^{+K}$, so $z_{k+1}^H=i^K_F(z_{k+1}^K)$ and 
$\zeta_{k+1}^H=\sup 
i^K_F``\zeta_{k+1}^K$, by \cite[2.20]{extmax}. So $\gamma<\zeta_{k+1}^H<\rho$. 
So to verify \ref{item:zeta_z_pres}, it suffices to see
\[ \Th_{\rSigma_{k+1}}^M(\pi(z_{k+1}^H)\un\zeta_{k+1}^H\un\pvec_k^M)\notin M, \]
so suppose otherwise.
Then because $F^Q\in M$, we get
\[ \Th_{\rSigma_{k+1}}^K(\zeta_{k+1}^K\un z_{k+1}^K\un\pvec_k^K)\in M. \]
But $\pow(\kappa)^K=\pow(\kappa)^M$, so the above theory is in $K$, a 
contradiction.

We also have $\rho_{k+1}^H\leq\rho_{k+1}^K\leq\kappa$, so 
\ref{item:rho_k+1^H_avoids_interval} 
holds 
and \ref{item:superstrong_case} is trivial.\end{proof}

\begin{sclmseven} If $\zeta_{k+1}^K\leq\kappa$ then \ref{item:H_notin_M} 
holds.\end{sclmseven}
\begin{proof} This follows as before since 
$\pow(\kappa)^K=\pow(\kappa)^H=\pow(\kappa)^M$.\end{proof}

This completes the proof of the claim.
\end{proof}
\end{caseseven}

\begin{proof}[Sketch of Proof of Claim \ref{clm:C_iterable}]
The basic approach is to lift iteration trees on $C$ to iteration
trees on $M$. There are some details here that one must be careful with.
For illustration, we assume that
$C=(\gamma,\rho,H,M|\rho)$ is an active cephalanx. The other cases are similar 
(the bicephalus case 
a little 
different, but simpler). Recall that we have already
reduced to the case that $\pi$ is c-preserving.
In order to keep focus on the main 
points, we assume that $\pi$ is in fact c-$\nu$-preserving (see 
\cite{recon_con}). 
This will allow us to inductively maintain that all lifting maps we encounter 
are 
c-$\iota$-preserving, keeping the copying process smooth. (If instead, $\pi$ 
is not $\nu$-preserving, one should just combine the 
copying process to follow with that given in \cite{recon_con}. In the next 
section we do provide 
details of a copying process, with resurrection, which incorporates those extra
details.)

For a tree $\Tt$ on $C$ and $\alpha+1<\lh(\Tt)$, we say $\Tt$
\dfnemph{lift-drops at $\alpha+1$} iff $\alpha+1\in\curlyQ^\Tt$,
$\pred^\Tt(\alpha+1)\in\curlyB^\Tt$
and $[0,\alpha+1]_\Tt$ does not drop in model.

If $\Tt$ lift-drops at 
$\alpha+1$ then $Q$ is type 2, and letting $\beta=\pred^\Tt(\alpha+1)$, 
we have $E^\Tt_\beta=F(Q^\Tt_\beta)$ and 
$\crit(j^\Tt_{\beta,\alpha+1})=\lgcd(Q^\Tt_\beta)$.

Let $\Sigma$ be a $(k,\om_1+1)$-iteration strategy for $M$.
Consider building an iteration tree $\Tt$ on $C$, and
lifting this to a $k$-maximal tree $\Uu$ on
$M$, via $\Sigma$, inductively on $\lh(\Tt)$. Having defined
$(\Tt,\Uu)\rest\lambda+1$,
then for each $\alpha\leq\lambda$, letting $B_\alpha,M_\alpha,Q_\alpha$ be the 
models of
$\Tt$, and $S_\alpha=M^\Uu_\alpha$, and $W_\alpha=i_{0\alpha}^\Uu(Q)$ when 
$[0,\alpha]_\Uu\inter\mathscr{D}^\Uu=\emptyset$, we will have also defined 
embeddings
$\pi_\alpha$ and $\sigma_\alpha$, such that:
\begin{enumerate}
 \item ${<_\Tt}\rest(\lambda+1)={<_\Uu}\rest(\lambda+1)$. The drop structure of 
$\Uu$ matches that of 
$\Tt$, except for the 
following exceptions:
\begin{enumerate}[label=--]
\item If $\alpha\in\curlyB^\Tt$ then $[0,\alpha]_\Uu$ does not drop in model or 
degree (so 
$\deg^\Uu(\alpha)=k$).
\item If $\Tt$ lift-drops at $\alpha$ then $\Uu$ drops in model at $\alpha$.
\end{enumerate}
Moreover, if $\alpha\notin\curlyB^\Tt$ then 
$\deg^\Uu(\alpha)\geq\deg^\Tt(\alpha)$.
 \item\label{item:agmt_within_stage} Suppose $\alpha\in\curlyB^\Tt$. Then:
 \begin{enumerate}[label=--]
\item $\pi_\alpha:\core_0(M_\alpha)\to\core_0(S_\alpha)$
 is c-$\iota$-preserving $k$-lifting, and
\item $\sigma_\alpha:Q_\alpha\to W_\alpha$
 is an $\Sigma_0$-elementary simple embedding.
 \end{enumerate}Moreover,
 $\sigma_\alpha\sub\pi_\alpha$.
 \item Suppose $\alpha\in\curlyM^\Tt$. Then:
 \begin{enumerate}[label=--]
  \item $\pi_\alpha:\core_0(M_\alpha)\to\core_0(S_\alpha)$
is c-$\iota$-preserving $\deg^\Tt(\alpha)$-lifting, and
\item$\sigma_\alpha$ is undefined.
\end{enumerate}
\item Suppose $\alpha\in\curlyQ^\Tt$. Then:
\begin{enumerate}[label=--]
 \item $\pi_\alpha$ is undefined, and
\item $\sigma_\alpha:\core_0(Q_\alpha)\to\core_0(S_\alpha)$
is  c-$\iota$-preserving $\deg^\Tt(\alpha)$-lifting.
\end{enumerate}
\item\label{item:agmt_between_stages} Suppose $\alpha<\lambda$. Let 
$\beta\in(\alpha,\lambda]$. If
$E^\Tt_\alpha\in\es_+(M^\Tt_\alpha)$ let $\psi_\alpha=\psi_{\pi_\alpha}$; 
otherwise
let $\psi_\alpha=\psi_{\sigma_\alpha}$. Let
$\tau\in\{\pi_\beta,\sigma_\beta\}$. Then
\[ \psi_\alpha\rest\lh^\Tt_\alpha\sub\tau\text{ and }
\tau(\iota^\Tt_\alpha)=\psi_\alpha(\iota^\Tt_\alpha)=\nu^\Uu_\alpha.\]
\item 
Suppose $\alpha<\lambda$ and let 
$\delta=\pred^\Tt(\alpha+1)=\pred^\Uu(\alpha+1)$.

\begin{enumerate}
\item Suppose $\Tt$ drops in model at $\alpha+1$. Then so does $\Uu$. If
$\alpha+1\in\curlyM^\Tt$ then
$\psi_\delta(M^{*\Tt}_{\alpha+1})=S^{*\Uu}_{\alpha+1}$ and
\[ \pi_{\alpha+1}\com
i^{*\Tt}_{\alpha+1}=i^{*\Uu}_{\alpha+1}\com\psi_\delta\rest\core_0(M^{*\Tt}_{
\alpha+1}). \]
If $\alpha+1\in\curlyQ^\Tt$ then
$\psi_\delta(Q^{*\Tt}_{\alpha+1})=S^{*\Uu}_{\alpha+1}$ and
\[ \sigma_{\alpha+1}\com
j^{*\Tt}_{\alpha+1}=i^{*\Uu}_{\alpha+1}\com\psi_\delta\rest\core_0(Q^{*\Tt}_{
\alpha+1}). \]
\item Suppose $\Tt$ lift-drops at $\alpha+1$. Then $\Uu$
drops in model at $\alpha+1$ (but $[0,\delta]_\Uu$ does not drop in 
model or 
degree),
$S^{*\Uu}_{\alpha+1}=i^\Uu_{0\delta}(Q)=W_\delta$
and
\[ \sigma_{\alpha+1}\com
j^{\Tt}_{\delta,\alpha+1}=i^{*\Uu}_{\alpha+1}\com\sigma_\delta.\]
\end{enumerate}
\item If $\alpha<\lambda$ and $\alpha<_\Tt\beta\leq\lambda$ and
$(\alpha,\beta]_\Tt$ neither drops in model nor
lift-drops, then:
\begin{enumerate}[label=--]
\item If $M_\beta$ is defined then
$\pi_\beta\com i^\Tt_{\alpha,\beta}= i^\Uu_{\alpha,\beta}\com\pi_\alpha$.
\item If $Q_\beta$ is defined then
$\sigma_\beta\com j^\Tt_{\alpha,\beta}=i^\Uu_{\alpha,\beta}\com\sigma_\alpha$.
\end{enumerate}
\end{enumerate}

This completes the inductive hypotheses.

We now start the construction. We start with $\pi_0=\pi$ and $\sigma_0=\id$. 
Since
$\crit(\pi_0)=\rho$, we have $\sigma_0\sub\pi_0$.

Now let $E_\lambda=E^\Tt_\lambda$ be given. We define $F_\lambda=E^\Uu_\lambda$ 
by copying in the 
usual manner. That is:
\begin{enumerate}[label=(\roman*)]
 \item Suppose $E_\lambda\in\es_+(M_\lambda)$. Then:
 \begin{enumerate}[label=--]
 \item If $E_\lambda=F(M_\lambda)$ then $F_\lambda=F(S_\lambda)$.
 \item If $E_\lambda\neq F(M_\lambda)$ then 
$F_\lambda=\psi_{\pi_\lambda}(E_\lambda)$.
 \end{enumerate}
 \item Suppose $E_\lambda\notin\es_+(M_\lambda)$; so 
$E_\lambda\in\es_+(Q_\lambda)$. Then:
 \begin{enumerate}[label=--]
 \item If $E_\lambda=F(Q_\lambda)$ and $[0,\lambda]_\Tt$ does not drop or 
lift-drop 
then $F_\lambda=F(W_\lambda)$.
 \item If $E_\lambda=F(Q_\lambda)$ and $[0,\lambda]_\Tt$ drops or lift-drops 
then $F_\lambda=F(S_\lambda)$.
 \item If $E_\lambda\neq F(Q_\lambda)$ then 
$F_\lambda=\psi_{\sigma_\lambda}(E_\lambda)$.
 \end{enumerate}
\end{enumerate}
The agreement hypotheses and the fact that $\pi_\lambda$ and $\sigma_\lambda$ 
are 
c-$\iota$-preserving (if defined) ensures that this choice of $F_\lambda$ is 
legitimate.

Let $\beta=\pred^\Tt(\lambda+1)$ and $\kappa=\crit^\Tt_\lambda$.
We consider only the case that
\begin{equation}\label{eqn:the_case} \beta\in\curlyB^\Tt\text{ and 
}\kappa\leq\gamma(B^\Tt_\beta)\text{ and }\Tt\text{ does not drop in model at 
}\lambda+1.\end{equation}
For otherwise it is routine to propagate the inductive 
hypotheses,
except maybe for the $\iota$-preservation of the embeddings.
But we give the details for $\iota$-preservation in the case we consider,
and it is similar in general.
So suppose line (\ref{eqn:the_case}) holds.
We have $\beta=\pred^\Uu(\lambda+1)$ by
property \ref{item:agmt_between_stages}.

\renewcommand{\thecaseeight}{\Roman{caseeight}}
\begin{caseeight} $\lambda+1\in\curlyB^\Tt$.
 
In this case
$[0,\lambda+1]_\Uu$ does not drop in model or degree; this is because 
$\pi_\lambda$ is 
c-preserving and because if $E_\beta=F(Q_\beta)$ then 
$\kappa<\gamma(B^\Tt_\beta)$. By 
\ref{lem:k-lifting_facts} and properties
\ref{item:agmt_within_stage} and \ref{item:agmt_between_stages}, we can apply
(essentially)\footnote{We say \emph{essentially} because if $Q$ is type 3, 
$\sigma_\beta$ 
is a simple embedding, not an embedding between squashed premice.} the Shift 
Lemma to 
$(\pi_\beta,\psi_\lambda\rest \exit^\Tt_\lambda)$ and
$(\sigma_\beta,\psi_\lambda\rest \exit^\Tt_\lambda)$, to produce 
$\pi_{\lambda+1}$ and 
$\sigma_{\lambda+1}$. For the latter, we have
\[ \sigma_\beta:Q_\beta\to W_\beta\pins S_\beta=S^*_{\lambda+1},\]
and we set
\[
\sigma_{\lambda+1}([a,f]^{Q_\beta}_{E_\lambda})=[\psi_\lambda(a),\sigma_\beta(f)
]_{
F_\lambda}^{S_\beta}. \]
It follows easily that $\sigma_{\lambda+1}\sub\pi_{\lambda+1}$.

Now $\iota$-preservation for $\sigma_{\lambda+1}$ is immediate because this 
embedding is simple. We verify that $\pi_{\lambda+1}$ is 
$\iota$-preserving.
This is immediate unless $H$
is type 3, so assume this. So $M_\beta,S_\beta,M_{\lambda+1},S_{\lambda+1}$
are also type 3, so $\iota$-preservation just means $\nu$-preservation here.
Write $\nu^{M_\beta}=\nu(F^{M_\beta})$ and $\nu^{S_\beta}$ likewise.
Write $\psi_\beta=\psi_{\pi_\beta}$ and $\psi_{\lambda+1}$ likewise.
Write $\psi^\Tt_{\beta,\lambda+1}=\psi_{i^\Tt_{\beta,\lambda+1}}$
and $\psi^\Uu_{\beta,\lambda+1}$ likewise.
By induction, $\pi_\beta$ is $\nu$-preserving; that is,
$\psi_{\beta}(\nu^{M_\beta})=\nu^{S_\beta}$.
We must see that $\pi_{\lambda+1}$ is also; that is, that
\[ \psi_{\lambda+1}(\nu^{M_{\lambda+1}})=\nu^{S_{\lambda+1}}.\]
But note that
\[ 
\psi_{\lambda+1}\com\psi^\Tt_{\beta,\lambda+1}=
\psi^\Uu_{\beta,\lambda+1}\com\psi_{\beta}.\]
So if $i^\Tt_{\beta,\lambda+1}$ and $i^\Uu_{\beta,\lambda+1}$ are also 
$\nu$-preserving, then so is $\pi_{\lambda+1}$.
This holds in particular if $k>0$, by elementarity considerations.
So suppose $k=0$. Then
\[ M_{\lambda+1}=\Ult_0(M_\beta,E_\lambda), \]
and note that
\[ 
\Ult(M_{\lambda+1}^\sq,F^{M_{\lambda+1}})=
\Ult(\Ult(M_\beta^\sq,F^{M_\beta}),E_\lambda), \]
and the ultrapower maps commute, and a straightforward
calculation with extenders show that
\[ \psi^\Tt_{\beta,\lambda+1}:\Ult(M_\beta^\sq,F^{M_\beta})\to
\Ult(M_{\lambda+1}^\sq,F^{M_\lambda+1}) \]
(which, recall, is defined via the Shift Lemma) coincides with the resulting 
$E_\lambda$-ultrapower map.
Likewise for $\psi^\Uu_{\beta,\lambda+1}$.

Now let $\mu=\cof^{M_\beta}(\nu^{M_\beta})$. Then 
$\psi_{\beta}(\mu)=\cof^{S_\beta}(\psi_\beta(\nu^{M_\beta}))=
\cof^{S_\beta}(\nu^{S_\beta})$.

By the preceding remarks, if $\kappa\neq\mu$
then $\psi^\Tt_{\beta,\lambda+1}$ and $\psi^\Uu_{\beta,\lambda+1}$
are continuous at $\nu^{M_\beta}$ and $\nu^{S_\beta}$
respectively, so by commutativity, $\pi_{\lambda+1}$ is $\nu$-preserving.
So suppose $\kappa=\mu$, so $\pi_\beta(\kappa)=\cof^{S_\beta}(\nu^{S_\beta})$.
Let $f\in M_\beta$ with $f:\kappa\to\nu^{M_\beta}$ be cofinal.
Write $f^{M_\beta}=f$.
So
\[ f^{S_\beta}=\psi_{\beta}(f^{M_\beta}):
\pi_\beta(\kappa)\to\nu^{S_\beta}\]
is also cofinal. Let 
$f^{M_{\lambda+1}}=\psi^\Tt_{\beta,\lambda+1}(f^{M_\beta})$
and $f^{S_{\lambda+1}}$ be likewise, so commutativity gives
$\psi_{\lambda+1}(f^{M_{\lambda+1}})=f^{S_{\lambda+1}}$.
Note then that
\[ \nu^{M_{\lambda+1}}=\sup 
i_{\beta,\lambda+1}``\nu^{M_\beta}=
\sup f^{M_{\lambda+1}}``\kappa \]
and likewise
\[ \nu^{S_{\lambda+1}}=\sup i^\Uu_{\beta,\lambda+1}``\nu^{S_\beta}
=\sup f^{S_{\lambda+1}}``\pi_\beta(\kappa).\]
Since $\psi_{\lambda+1}(\kappa)=\pi_\beta(\kappa)$,
therefore
\[ \psi_{\lambda+1}(\nu^{M_{\lambda+1}})=
 \sup f^{S_{\lambda+1}}``\psi_{\lambda+1}(\kappa)=
 \sup f^{S_{\lambda+1}}``\pi_\beta(\kappa)=\nu^{S_{\lambda+1}},\]
as desired.

The remaining properties for this case are established as usual.
\end{caseeight}

\begin{caseeight}
$\lambda+1\in\curlyM^\Tt$.

This case is routine, using the fact that 
$E_\beta\in\es_+(M_\beta)$.\end{caseeight}

\begin{caseeight}
$\lambda+1\in\curlyQ^\Tt$.

So $\Tt$ lift-drops at $\lambda+1$, and so $E_\beta=F(Q_\beta)$ and 
$\crit^\Tt_\lambda=\gamma(B_\beta)$. Therefore $F_\beta=F(W_\beta)$ and 
$\crit^\Uu_\lambda=\sigma_\beta(\gamma(B_\beta))$ is the largest cardinal of 
$W_\beta$. Therefore 
\[ S^*_{\lambda+1}=W_\beta\pins S_\beta,\]
and in particular, $\Uu$ drops in model at $\lambda+1$. This is precisely enough 
to 
define $\sigma_{\lambda+1}$. Everything else is routine in this case.
\end{caseeight}

This completes the propagation of the properties to $(\Tt,\Uu)\rest\lambda+2$.

For limit $\lambda$, everything is routine.

This completes the sketch of the proof that $C$ is iterable, and so the proof 
of the theorem.
\end{proof}
\renewcommand{\qedsymbol}{}
\end{proof}

\section{A premouse inner model inheriting strong 
cardinals}\label{sec:ultra-stack}
Let $W\sats\ZFC$ be an iterable transitive model.
In this section we define a proper class premouse $L[\es]^W$ of $W$ which 
inherits all Woodin and strong cardinals from $W$.
(See \S\ref{sec:intro} for some introduction to this
and a comparison with Steel's local $K^c$-construction
of \cite{localKc}.)
The construction allows certain types of partial background extenders. However, 
all 
background extenders will be total in some ultrapower of $W$, and moreover,
assuming enough $\AC$, we 
will be able to lift 
iteration trees on $L[\es]^W$ to (non-dropping) iteration trees on $W$. 
The model $L[\es]^W$ is also outright definable over $W$.

Let us first point out that a fully backgrounded construction
can fail to inherit strong cardinals:

\begin{rem}\label{rem:strong}
Assume $\ZFC$ and suppose $\kappa$ is strong but there is no measurable 
cardinal $\mu>\kappa$.
Let $\left<N_\alpha\right>_{\alpha\leq\OR}$ be a fully backgrounded
$L[\es]$-construction; suppose that this does not break down,
so produces a model $L[\es]=N_\OR$ of height $\OR$.
Then we claim that $\es$ has no extenders with index $\geq\kappa$,
and hence $\kappa$ is certainly not strong in $L[\es]$.
For let $\zeta\in\OR$ be such that $L[\es]|\kappa=N_\zeta$.
Then we claim there is no $\alpha\geq\zeta$ such that $N_\alpha$ is active,
which suffices.
For suppose otherwise and let $\alpha$ be least such.
Since $\OR^{N_\zeta}=\kappa$ is a cardinal,
we have $\alpha>\zeta$. So $\alpha=\beta+1$
where $\beta=\OR^{N_\beta}>\kappa$ and $N_\beta=\J_\beta(N_\zeta)$.
Let $\mu=\crit(F^{N_{\beta+1}})$.
Since the construction is fully backgrounded, $\mu$ is measurable,
so $\mu\leq\kappa$.
Since $N_\beta=\J_\beta(N_\zeta)$ and by coherence,
it easily follows there is $\gamma<\mu$
such that $N_\zeta=\J_\kappa(N_\zeta|\gamma)$.
But then note that $\rho_\om(N_{\beta+1})\leq\gamma<\mu\leq\kappa$,
contradicting the fact that $N_\zeta=L[\es]|\kappa$.

Of course if the background construction does not
make unusual demands on background extenders, then $L[\es]|\kappa$ is 
closed under $\#$'s. In this case,
note that extenders $E$ in $V$ with $\crit(E)=\kappa$
do not cohere $L[\es]$.\end{rem}

Instead of using rank to measure the strength of extenders, we use:

\begin{dfn}
Let $E$ be an extender. The \dfnemph{strength} of $E$, denoted $\strength(E)$, 
is the largest 
$\rho$ such that $H_\rho\sub\Ult(V,E)$.
\end{dfn}

So $\strength(E)$ is always a cardinal.
The backgrounding we use is 
described as follows (in the definition, we imagine we 
are working inside $W$ as mentioned earlier):

\begin{dfn}\label{dfn:ultra_bkgd}Assume $\ZFC$. Let $\lambda\leq\OR+1$. An 
\dfnemph{ultra-backgrounded construction 
(of length $\lambda$)} is
a sequence $\left<S_\alpha\right>_{\alpha<\lambda}$ such that:
\begin{enumerate}
 \item Each $S_\alpha$ is a premouse. 
 \item\label{item:limit} Given a limit $\beta<\lambda$, 
$S_\beta=\liminf_{\alpha<\beta}S_\alpha$.
 \item Given $\beta=\alpha+1<\lambda$, either:
\begin{enumerate}
\item\label{item:construct} For each $n<\om$, $\core_n(S_\alpha)$
is $(n+1)$-universal and $\core_{n+1}(S_\alpha)$ is $(n+1)$-solid, and
$S_{\alpha+1}=\J(\core_\om(S_\alpha))$; or
 \item\label{item:bkgd_ext} $S_\alpha$ is passive and there is $F$ and an
extender $G$ such that $S_{\alpha+1}=(S_\alpha,F)$ and $F\rest\nu(F)\sub G$ and
$\strength(G)\geq\nu(F)$; or
 \item\label{item:ultra} $\alpha$ is a limit, $S_\alpha$ has a largest cardinal 
$\rho$, and there 
is an extender $G$ such that letting $\kappa=\crit(G)$, we have:
\begin{enumerate}
\item $\strength(G)\geq\rho$,
\item $\kappa\leq\rho\leq i_G(\kappa)$,
\item $\rho$ is a cardinal in $i_G(S_\alpha)$,
\item $(S_\alpha\sim i_G(S_\alpha))||\OR(S_\alpha)$,
\item $S_{\alpha+1}\pins i_G(S_\alpha)$,
\item $\rho_\om(S_{\alpha+1})=\rho$,
\item $\OR(S_\alpha)=(\rho^+)^{S_{\alpha+1}}$.\qedhere
\end{enumerate}
\end{enumerate}
 \end{enumerate}
\end{dfn}

\begin{dfn}\label{dfn:pm-ultra_bkgd}
Suppose that $V$ is a premouse (and $\ZFC$ holds). A 
\dfnemph{pm-ultra-\-back\-ground\-ed construction} is
a sequence $\left<S_\alpha\right>_{\alpha<\lambda}$ as in \ref{dfn:ultra_bkgd}, 
except 
that in (\ref{item:bkgd_ext}) and (\ref{item:ultra}) we also require that 
$G\in\es^V$ and $\nu(G)$ is a cardinal.
\end{dfn}

\begin{rem} When we refer to, for example, 
\ref{dfn:pm-ultra_bkgd}(\ref{item:ultra}), we mean the 
analogue of \ref{dfn:ultra_bkgd}(\ref{item:ultra}) for \ref{dfn:pm-ultra_bkgd}. 
We will mostly work 
explicitly with ultra-backgrounded constructions; the adaptation to 
pm-ultra-backgrounded is mostly 
obvious, so we mostly omit it. For all definitions to follow, we either 
implicitly or explicitly 
make the pm-ultra-backgrounded analogue, denoted by the prefix 
\emph{pm-}.\end{rem}

\begin{dfn}
Let $\CC=\left<S_\alpha\right>_{\alpha<\lambda}$ be an ultra-backgrounded
construction. Let $\beta<\lambda$. Then we say that $\beta$, or $S_\beta$, is 
\dfnemph{$\CC$-standard} iff \ref{dfn:ultra_bkgd}(\ref{item:limit}), 
(\ref{item:construct}) or 
(\ref{item:bkgd_ext}) holds (for $\beta$). We say that $\beta$ is 
\dfnemph{$\CC$-strongly 
standard} iff \ref{dfn:ultra_bkgd}(\ref{item:ultra}) does not hold. Given also 
$n\leq\om$, we 
say that $(\beta,n)$ is \dfnemph{$\CC$-relevant} iff either (i) $\beta$ is 
$\CC$-standard, or 
(ii) $\beta=\alpha+1$ and $\rho_n(N_{\alpha+1})=\rho_\om(N_{\alpha+1})$.
\end{dfn}

Clearly $\CC$-strongly standard implies $\CC$-standard.
The next lemma is routine:

\begin{lem}\label{lem:stage_projecting_to_card}
Let $\CC=\left<S_\alpha\right>$ be an ultra-backgrounded construction. Let 
$(\beta,n)$ be 
$\CC$-relevant. Let $\rho$ be a cardinal of
$S_\beta$ such that $\rho\leq\rho_n^{S_\beta}$. Let $P\pins S_\beta$ be such
that $\rho_\om^P=\rho$. Then there is $\alpha<\beta$ such that
$\core_0(P)=\core_\om(S_\alpha)$. 
\end{lem}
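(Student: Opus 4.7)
The plan is to prove this by induction on $\beta$, splitting into cases according to which clause of \ref{dfn:ultra_bkgd} produces $S_\beta$. In each case I will either find $\alpha'<\beta$ with $\core_0(P)=\core_\om(S_{\alpha'})$ directly, or reduce to the inductive hypothesis applied at some earlier $\CC$-relevant pair. The three standard clauses are essentially automatic; the real work is in the ultra clause.

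For limit $\beta$, $S_\beta=\liminf_{\alpha<\beta}S_\alpha$, so $P\pins S_\alpha$ for all sufficiently large $\alpha<\beta$; such $\alpha$ can be chosen $\CC$-standard, hence $(\alpha,n')$ is $\CC$-relevant for any $n'$, and the cardinal/projectum hypotheses on $\rho$ transfer so the IH applies. For $\beta=\alpha+1$ via clause \ref{dfn:ultra_bkgd}(\ref{item:construct}), $S_\beta=\J_\om(\core_\om(S_\alpha))$, so either $P=\core_\om(S_\alpha)$, giving the conclusion with $\alpha'=\alpha$, or $P\pins\core_\om(S_\alpha)\pins S_\alpha$ and the IH applies. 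For $\beta=\alpha+1$ via clause \ref{dfn:ultra_bkgd}(\ref{item:bkgd_ext}), $P\pins S_\alpha$ and the IH applies directly.

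The main obstacle is the ultra clause \ref{dfn:ultra_bkgd}(\ref{item:ultra}). Let $\delta$ be the largest cardinal of $S_\alpha$; by the clause, $\rho_\om(S_\beta)=\delta$, $\OR(S_\alpha)=(\delta^+)^{S_\beta}$, and $(S_\alpha\sim S_\beta)||\OR(S_\alpha)$ (via $S_\beta\pins i_G(S_\alpha)$ combined with $(S_\alpha\sim i_G(S_\alpha))||\OR(S_\alpha)$). Since $\beta$ is not $\CC$-standard, $\CC$-relevance of $(\beta,n)$ forces $\rho_n(S_\beta)=\rho_\om(S_\beta)=\delta$, so $\rho\leq\delta$. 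A cardinality count in $S_\beta$ then shows $P\ins S_\alpha$: if $\rho<\delta$ then $|P|^{S_\beta}=\delta$ forces $\OR^P<(\delta^+)^{S_\beta}=\OR(S_\alpha)$, hence $P\pins S_\alpha$; if $\rho=\delta$ then $\OR^P\leq\OR(S_\alpha)$, hence $P\ins S_\alpha$. The ultra clause requires $\alpha$ to be a limit, so $\alpha$ is automatically $\CC$-standard and the IH is available. When $P\pins S_\alpha$, the liminf structure places $P$ inside some $S_{\alpha''}$ with $\alpha''<\alpha$, and the IH concludes.

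The delicate sub-case is $P=S_\alpha$, where I need $\core_0(S_\alpha)=\core_\om(S_{\alpha''})$ for some $\alpha''<\beta$. Taking $\alpha''=\alpha$, this reduces to showing that $S_\alpha$ is itself $\om$-sound whenever $\rho_\om(S_\alpha)=\delta$; this should follow from $S_\alpha$ being the liminf of the sound structures $\core_\om(S_{\alpha_i})$ at earlier $\CC$-standard stages (which are $\om$-sound by clause \ref{dfn:ultra_bkgd}(\ref{item:construct}) or by induction), together with the stabilization of the standard parameter and projectum cofinally below $\alpha$. This soundness-at-liminfs is the only non-routine ingredient, and is the main thing I expect to need to verify carefully.
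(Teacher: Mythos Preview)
Your inductive approach is correct and is what the paper has in mind---the paper gives no proof at all, deeming the lemma ``easy to see''. Your case split and the handling of the standard clauses are fine.

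However, your main concern, the sub-case $P = S_\alpha$ in the ultra clause, does not actually arise. You correctly observe that $\CC$-relevance at a non-standard $\beta = \alpha+1$ forces $\rho \leq \rho_\om(S_\beta) = \delta$, the largest cardinal of $S_\alpha$. But then acceptability gives $\OR^P < (\rho^+)^{S_\beta} \leq (\delta^+)^{S_\beta} = \OR(S_\alpha)$: since $P \pins S_\beta$ and $P$ projects to $\rho$, the master code of $P$ is a new subset of $\rho$ appearing in $S_\beta$ just above $P$, so $|\OR^P|^{S_\beta} \leq \rho$. (Your own cardinality estimate is slightly off: when $\rho < \delta$ you get $|P|^{S_\beta} = \rho$, not $\delta$, though the conclusion is the same.) Combining this strict inequality with $(S_\alpha \sim S_\beta)||\OR(S_\alpha)$ gives $P \pins S_\alpha$ strictly. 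Since $\alpha$ is a limit, $\alpha$ is $\CC$-standard, so $(\alpha, 0)$ is $\CC$-relevant, $\rho \leq \rho_0^{S_\alpha}$ holds trivially, and $\rho$ is a cardinal of $S_\alpha$ by the $||$-agreement. The IH therefore applies directly at $(\alpha, 0)$, and the soundness-at-liminfs question you flag never comes up.

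One point you glossed over and should tighten: in clauses \ref{dfn:ultra_bkgd}(\ref{item:construct}) and \ref{dfn:ultra_bkgd}(\ref{item:bkgd_ext}) you write ``the IH applies'', but you should specify at which pair $(\alpha,n')$ and check relevance together with $\rho \leq \rho_{n'}^{S_\alpha}$. In clause (\ref{item:construct}) this is straightforward, since one sees $\rho \leq \rho_\om(\core_\om(S_\alpha)) = \rho_\om^{S_\alpha}$ and hence any $n'$ with $\rho_{n'}^{S_\alpha} = \rho_\om^{S_\alpha}$ works whether or not $\alpha$ is standard. In clause (\ref{item:bkgd_ext}), if $\alpha$ happens to be non-standard (i.e.\ $\alpha$ itself arose via the ultra clause), you need to check that $\rho \leq \rho_\om^{S_\alpha}$; make sure your hypotheses at $(\beta,n)$ actually deliver this.
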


\begin{rem}\label{rem:xi}
It follows that if \ref{dfn:ultra_bkgd}(\ref{item:ultra}) holds, there is $\xi$ 
such that 
$S_{\alpha+1}=\core_\om(S_{\xi}^{i_G(\CC)})$,
because $\alpha$ is a limit and $\rho$ 
is a cardinal of $i_G(S_\alpha)$.
\end{rem}

\begin{lem}\label{lem:type_3_standard}
Let $\CC=\left<S_\alpha\right>$ be an ultra-backgrounded construction. Suppose 
that $S_{\alpha+1}$ 
is active type 1 or type 3 and $\rho_\om(S_{\alpha+1})=\nu(F(S_{\alpha+1}))$. 
Then $\alpha+1$ is 
$\CC$-standard, so $F(S_{\alpha+1})$ is backgrounded by a $V$-extender.
\end{lem}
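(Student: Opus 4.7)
The plan is to argue by contradiction on the clauses of \ref{dfn:ultra_bkgd}. Since $\alpha+1$ is a successor and $S_{\alpha+1}$ is active, clauses~(\ref{item:limit}) and~(\ref{item:construct}) are ruled out (the latter would make $S_{\alpha+1}$ passive), so if $\alpha+1$ fails to be $\CC$-standard then clause~(\ref{item:ultra}) must apply. I will derive a contradiction from (\ref{item:ultra}) together with the hypotheses. Once (\ref{item:ultra}) is ruled out, only clause~(\ref{item:bkgd_ext}) can apply at stage $\alpha+1$, which directly gives both $\CC$-standardness and a $V$-extender $G$ backgrounding $F(S_{\alpha+1})$.

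Assume (\ref{item:ultra}) applies. I extract: $S_\alpha$ has largest cardinal $\rho$; $S_{\alpha+1}\pins i_G(S_\alpha)$; $\rho_\om(S_{\alpha+1})=\rho$; $\OR(S_\alpha)=(\rho^+)^{S_{\alpha+1}}$; and $\rho$ is a cardinal of $i_G(S_\alpha)$, hence (by passage of cardinals down to initial segments) of $S_{\alpha+1}$. Writing $F=F(S_{\alpha+1})$, the lemma's hypothesis gives $\nu(F)=\rho$. I will also use that $\rho_\om(S_{\alpha+1})$ is always a cardinal of $S_{\alpha+1}$.

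In the type 1 case, the Mitchell--Steel convention gives $\nu(F)=\crit(F)+1$, the successor of an infinite ordinal, which cannot be a cardinal of any premouse containing $\crit(F)$. This already contradicts $\rho=\rho_\om(S_{\alpha+1})=\nu(F)$ being a cardinal of $S_{\alpha+1}$, so the hypothesis is actually vacuous in the type 1 case---no appeal to (\ref{item:ultra}) is even required. In the type 3 case, the second bullet of the paper's super-fine extender sequence definition characterises $S_{\alpha+1}$: $\nu(F)$ equals the largest cardinal of the passive reduct $S_{\alpha+1}||\OR(S_{\alpha+1})$. Provided $(\rho^+)^{S_{\alpha+1}}<\OR(S_{\alpha+1})$, the cardinal $(\rho^+)^{S_{\alpha+1}}$ is strictly above $\rho=\nu(F)$ yet lies inside that passive reduct, contradicting maximality of $\nu(F)$ there.

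The main obstacle is the degenerate subcase $(\rho^+)^{S_{\alpha+1}}=\OR(S_{\alpha+1})=\OR(S_\alpha)$: in this situation $(\rho^+)^{S_{\alpha+1}}$ is the top ordinal of $S_{\alpha+1}$ rather than a cardinal strictly inside it, so the maximality argument as stated breaks down. The plan here is to observe that $S_\alpha$ is passive (as a liminf) and $S_{\alpha+1}=(S_\alpha,F)$, which matches the structural form of clause~(\ref{item:bkgd_ext}) exactly; since $\strength(G)\geq\rho=\nu(F)$ by~(\ref{item:ultra})(i) and $F$ is the active extender at index $\OR(S_\alpha)\in[\kappa,i_G(\kappa)]$ of $i_G(S_\alpha)$, one reads off $F\rest\nu(F)\subseteq G$ from the structure of the ultrapower extenders at indices in $[\kappa,i_G(\kappa)]$. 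Thus (\ref{item:bkgd_ext}) applies with the same $G$, $\alpha+1$ is $\CC$-standard, and $F$ is $V$-backgrounded as required.
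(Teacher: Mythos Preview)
Your overall strategy is the right one and matches what the paper has in mind: since $\alpha+1$ is a successor and $S_{\alpha+1}$ is active, only clauses~(\ref{item:bkgd_ext}) and~(\ref{item:ultra}) can apply, and you want to rule out~(\ref{item:ultra}). Your type 1 observation (the hypothesis is vacuous since $\nu(F)=\crit(F)+1$ cannot be a cardinal) is correct, and your main type 3 argument is essentially right: under~(\ref{item:ultra}) one has $\rho=\rho_\om(S_{\alpha+1})=\nu(F)$, while the paper's convention gives $\lgcd(S_{\alpha+1})\leq\nu(F^{S_{\alpha+1}})$ for any active premouse, so if $(\rho^+)^{S_{\alpha+1}}<\OR(S_{\alpha+1})$ you get a cardinal of $S_{\alpha+1}$ strictly above $\nu(F)$, a contradiction.

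The gap is in your handling of the degenerate subcase $\OR(S_{\alpha+1})=\OR(S_\alpha)$. Your claim that $F\rest\nu(F)\subseteq G$ is not justified and is not generally true. The extender $F$ lives on the sequence of $i_G(S_\alpha)$ at index $\OR(S_\alpha)$; its critical point is some $\mu<\rho$ which need bear no relation to $\kappa=\crit(G)$, and there is no principle saying that extenders on the sequence of $i_G(S_\alpha)$ at indices in any particular range are fragments of $G$. (You also assert $\OR(S_\alpha)\leq i_G(\kappa)$, but the hypotheses only give $\rho\leq i_G(\kappa)$, and $\rho<\OR(S_\alpha)$.) So this subcase, as written, does not go through.

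The clean way out is simply that the degenerate subcase does not arise: the equation $\OR(S_\alpha)=(\rho^+)^{S_{\alpha+1}}$ in clause~(\ref{item:ultra}) is to be read as asserting that $\OR(S_\alpha)$ is a genuine successor cardinal inside $S_{\alpha+1}$, hence strictly below $\OR(S_{\alpha+1})$. (This is the natural reading, and is consistent with the role of~(\ref{item:ultra}) as producing a model strictly taller than $S_\alpha$; the situation $S_{\alpha+1}=(S_\alpha,F)$ is exactly what~(\ref{item:bkgd_ext}) is for.) With that reading your non-degenerate contradiction finishes the proof, and this is presumably why the paper calls the lemma ``easy to see''.
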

\begin{proof}
 Suppose not and let $\alpha$ be the least counterexample.
 Let $\rho$ be the largest cardinal of $S_\alpha$.
 By \ref{lem:stage_projecting_to_card},
 $S_{\alpha}|\rho=S_\beta$ for some limit $\beta<\alpha$.
Let $G$ be as in \ref{dfn:ultra_bkgd}(\ref{item:ultra}) for $S_{\alpha+1}$.
So $\Ult(V,G)$ satisfies ``the lemma holds for $i^V_G(\CC\rest\beta)$'',
and note that $i^V_G(\rho)>\rho$,
and $i^V_G(S_\alpha)|i^V_G(\rho)=i^V_G(S_\beta)$
and $\rho$ is a cardinal of $i^V_G(S_\beta)$.
So by \ref{lem:stage_projecting_to_card}, 
$S_{\alpha+1}=\core_\om(S_{\gamma}^{i^V_G(\CC\rest\beta)})$
for some $\gamma$.
But because $S_{\alpha+1}$ is type 1 or 3 and
by the ISC, it follows that $S_\gamma^{i^V_G(\CC\rest\beta)}$ is already fully 
sound, so $S_{\alpha+1}=S_\gamma^{i^V_G(\CC\rest\beta)}$.
But then since $\Ult(V,G)$ thinks the lemma holds
for $i^V_G(\CC\rest\beta)$, therefore $\Ult(V,G)\sats$``$\gamma$ is 
$i^V_G(\CC\rest\beta)$-standard''.
So there is $H\in\Ult(V,G)$ such that in $\Ult(V,G)\sats$``$H$ is
a $\rho$-strong extender'' and $F\rest\nu\sub H$,
where $F=F^{S_{\alpha+1}}$ and $\nu=\nu(F)$.
But since $G$ is $\rho$-strong, so is $H$ (in $V$),
and since $\nu\leq\rho$ (as $F$ is type 1 or 3),
$H$ backgrounds $F$ in $V$, so $\alpha+1$ is $\CC$-standard, a contradiction.
\end{proof}

\begin{dfn}\label{dfn:nice_witness}
Let $\CC=\left<S_\alpha\right>$ be an ultra-backgrounded construction. Suppose 
that $\alpha+1$ 
is not $\CC$-standard, and let $\rho=\rho_\om(S_{\alpha+1})$. An extender $G$ is 
a 
\dfnemph{$\CC$-nice 
witness for $\alpha+1$} iff
$G$ witnesses \ref{dfn:ultra_bkgd}(\ref{item:ultra}), $i_G(\crit(G))>\rho$, and 
$S_{\alpha+1}$ is $i_G(\CC)$-strongly standard (in $\Ult(V,G)$).
\end{dfn}

\begin{lem}\label{lem:nice_witness}
Let $\CC=\left<S_\alpha\right>$ be an ultra-backgrounded construction. Suppose 
that $\alpha+1$ 
is not $\CC$-standard and let $\rho=\rho_\om(S_{\alpha+1})$. Then there is a 
$\CC$-nice witness
for 
$\alpha+1$.

Let $G$ be a $\CC$-nice witness for $\alpha+1$. Then:
\begin{enumerate}[label=--]
 \item If $\crit(G)<\rho$ then $\strength(G)$ is the the least cardinal 
$\geq\rho$.
 \item If $\crit(G)=\rho$ then $\strength(G)=\rho^+$.
 \item If condensation for $\om$-sound mice holds for all proper segments of 
$S_\alpha$ then $\rho$ 
is not measurable in $\Ult(V,G)$.
\end{enumerate}\end{lem}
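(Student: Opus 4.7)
The plan is to establish the three assertions in sequence: (1) existence of a nice witness for $\alpha+1$, (2) the strength equalities, and (3) the non-measurability of $\rho$ in $\Ult(V,G)$ under the condensation hypothesis.

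For (1), I would start with any $G_0$ witnessing clause \ref{dfn:ultra_bkgd}(\ref{item:ultra}). The condition $i_G(\crit G) > \rho$ is automatic when $\crit(G_0) = \rho$, so the remaining case is $\crit(G_0) < \rho$ with $\rho = i_{G_0}(\crit G_0)$; here I would produce an upgrade $G$ by taking a derived long-enough extender from $G_0$ (using that $\strength(G_0) \geq \rho$ gives enough room past $\rho$), or by composing $G_0$ with an auxiliary extender $H$ whose critical point sits above $\rho$, so that $i_{H\circ G_0}(\crit G_0) = i_H(\rho) > \rho$. Having secured this, strong-standardness of $S_{\alpha+1}$ in $i_G(\CC)$ follows by running the construction $i_G(\CC)$ inside $\Ult(V,G)$: since $S_{\alpha+1}\pins i_G(S_\alpha)$ with $\rho_\om(S_{\alpha+1}) = \rho$ a cardinal of $i_G(S_\alpha)$, \ref{lem:stage_projecting_to_card} applied in $\Ult(V,G)$ identifies a stage $\xi$ with $\core_\om(i_G(\CC)_\xi) = S_{\alpha+1}$; and one verifies that $\xi$ is reached via one of the standard clauses, because $S_{\alpha+1}$ sits as an initial segment of $i_G(S_\alpha)$ and is obtainable by simple construction without invoking clause (\ref{item:ultra}).

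For (2), $\strength(G) \geq \rho$ is part of the witness hypothesis, and since $\strength(G)$ is a $V$-cardinal, $\strength(G) \geq \rho^*$ where $\rho^*$ is the least $V$-cardinal at least $\rho$. For the matching upper bound when $\crit(G) < \rho$: if $\strength(G) > \rho^*$, then $H_{(\rho^*)^+}^V \sub \Ult(V,G)$, which gives enough agreement between $V$ and $\Ult(V,G)$ past $\rho$ that, combined with $\rho$ being the largest cardinal of $S_\alpha$ and the agreement $(S_\alpha\sim i_G(S_\alpha))||\OR(S_\alpha)$, clause \ref{dfn:ultra_bkgd}(\ref{item:bkgd_ext}) would actually apply at $\alpha+1$ (backgrounding the active top extender of $S_{\alpha+1}$, if any, by a piece of $G$), contradicting that $\alpha+1$ is not $\CC$-standard. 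The case $\crit(G) = \rho$ is analogous: the condition $\OR(S_\alpha) = (\rho^+)^{S_{\alpha+1}}$ forces $\strength(G) \geq \rho^+$, and pushing beyond $\rho^+$ would again make clause (\ref{item:bkgd_ext}) attain, yielding $\strength(G) = \rho^+$.

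For (3), suppose toward contradiction that $\rho$ is measurable in $\Ult(V,G)$ via a normal ultrafilter $U$. Consider the restriction $U \cap S_\alpha \in \Ult(V,G)$, which gives a candidate active-type-1 premouse $P$ with $F^P = U \cap S_\alpha$ and $\OR^P = \OR(S_\alpha)$. Using the condensation hypothesis on proper segments of $S_\alpha$, I would argue that $P$ condenses into an initial segment of $S_\alpha$, i.e., that $S_\alpha$ already carries such an active extender at some $\beta < \alpha$ on its sequence. But then $\CC$ would have constructed $S_{\alpha+1}$ (or a relevant initial segment) at an earlier stage via clause (\ref{item:bkgd_ext}), again contradicting the failure of $\CC$-standardness at $\alpha+1$. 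The main obstacle is the upgrade in step (1): producing the witness $G$ with $i_G(\crit G) > \rho$ strictly, which requires carefully extracting auxiliary extenders from $V$ without circular appeal to the construction $\CC$ itself; the existence of such auxiliary extenders is guaranteed precisely because $V$ already provided $G_0$, but the technical bookkeeping for strong-standardness simultaneously is the delicate point.
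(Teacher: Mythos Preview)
Your proposal has genuine gaps in parts (1) and (3), and these are precisely where the paper's argument differs from yours.

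\textbf{Part (1), strong standardness.} You correctly invoke \ref{lem:stage_projecting_to_card} to locate a stage $\xi$ in $i_G(\CC)$ with $\core_\om(S_\xi^{i_G(\CC)})=S_{\alpha+1}$, but your claim that this stage must be reached via one of the standard clauses ``because $S_{\alpha+1}$ sits as an initial segment of $i_G(S_\alpha)$'' is unjustified. That configuration is exactly the setup for clause~(\ref{item:ultra}) to hold, not to fail. The paper handles this differently: it first uses linear iterability of $V$ to find $H$ witnessing (\ref{item:ultra}) such that the relevant stage in $i_H(\CC)$ is already strongly standard (implicitly by iterating: if not, clause~(\ref{item:ultra}) holds there, giving another witness, and wellfoundedness terminates the process). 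Only then does the paper compose, setting $G=i_H(H)\circ H$; this both secures $i_G(\crit G)>\rho$ and preserves strong standardness, since $i_H(H)$ has critical point $i_H(\crit H)\geq\rho$ and coheres $i_H(S_\alpha)$. Your upgrade-then-verify order does not work without an independent argument for strong standardness.

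\textbf{Part (3).} Your route via a type~1 pseudo-premouse $P=(S_\alpha,U\cap S_\alpha)$ does not work: since $\rho$ is the largest cardinal of $S_\alpha$, such a $P$ would have $\OR^P=\OR(S_\alpha)$, so it cannot condense to a \emph{proper} segment of $S_\alpha$, and the contradiction you aim for (failure of $\CC$-standardness at $\alpha+1$ in $V$) is the wrong target anyway, since $U$ lives in $\Ult(V,G)$, not $V$. The paper's argument is direct and uses the niceness of $G$: if $F$ is a measure on $\rho$ in $U=\Ult(V,G)$, then condensation gives $S_{\alpha+1}\pins i^U_F(S_{\alpha+1})$, and so $F$ itself witnesses clause~(\ref{item:ultra}) at the stage where $S_{\alpha+1}$ appears in $i_G(\CC)$, contradicting that $S_{\alpha+1}$ is $i_G(\CC)$-strongly standard.

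\textbf{Part (2).} Your upper-bound argument presumes $S_{\alpha+1}$ has the form $(S_\alpha,F)$ so that clause~(\ref{item:bkgd_ext}) might apply, but when (\ref{item:ultra}) holds one can have $\OR(S_{\alpha+1})>\OR(S_\alpha)$, so this reasoning does not go through as stated.
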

\begin{proof}
Because $V$ is linearly iterable and $\alpha+1$ is 
not $\CC$-standard, there is an extender $H$ witnessing 
\ref{dfn:ultra_bkgd}(\ref{item:ultra}) and 
such that $\Ult(V,H)\sats$``$\xi$ is $i_H(\CC)$-strongly standard'', where 
$\xi$ is defined as in 
\ref{rem:xi}. Letting $G=i_H(H)\com H$, then $G$ is a nice witness 
($S_{\alpha+1}\pins 
i_G(S_\alpha)$ because in $\Ult(V,H)$, $i_H(H)$ coheres $i_H(S_\alpha)$ 
enough). 

Now let $G$ be a nice witness. The facts regarding $\strength(G)$ are easy. 
Suppose $F$ is a 
measure on $\rho$ in $U=\Ult(V,G)$. Then by condensation, $S_{\alpha+1}\pins 
i^U_F(S_{\alpha+1})$,
contradicting the niceness of $G$.
\end{proof}

For pm-ultra-backgrounding, we need to modify the notion of \emph{nice witness}
a little:

\begin{dfn}\label{dfn:pm-nice_witness}
Suppose $V$ is a premouse and let $\CC=\left<S_\alpha\right>$ be a 
pm-ultra-backgrounded 
construction. Suppose that $\alpha+1$ 
is not pm-$\CC$-standard, and let $\rho=\rho_\om(S_{\alpha+1})$. The
\dfnemph{pm-$\CC$-nice 
witness for $\alpha+1$} is the extender $G$ such that, letting $G_1$ be the 
least witness to 
\ref{dfn:pm-ultra_bkgd}(\ref{item:ultra}) (that is, the witness with $\lh(G_1)$ 
minimal), either:
\begin{enumerate}[label=\tu{(}\roman*\tu{)}]
 \item\label{item:single} $S_{\alpha+1}$ is pm-$i_{G_1}(\CC)$-strongly standard 
and $G=G_1$, or
 \item\label{item:composition} $S_{\alpha+1}$ is not pm-$i_{G_1}(\CC)$-strongly 
standard and letting
$G_2$ be the least witness to \ref{dfn:pm-ultra_bkgd}(\ref{item:ultra}) for 
$(i_{G_1}(\CC),S_{\alpha+1})$, then $G=G_2\com G_1$.\qedhere
\end{enumerate}
\end{dfn}

\begin{lem}
Suppose $V$ is a premouse and let $\CC=\left<S_\alpha\right>$ be a 
pm-ultra-backgrounded 
construction. Suppose that $\alpha+1$ 
is not pm-$\CC$-standard, let $\rho=\rho_\om(S_{\alpha+1})$ and let $G$ be the 
pm-$\CC$-nice witness for $\alpha+1$. Suppose that condensation for $\om$-sound 
mice holds for 
all proper segments of $S_\alpha$. Then:
\begin{enumerate}[label=--]
\item $S_{\alpha+1}$ is pm-$i_G(\CC)$-strongly standard.
\item $\rho$ is not measurable in $\Ult(V,G)$, so $i_G(\crit(G))>\rho$.
\item If \ref{dfn:pm-nice_witness}\ref{item:single} attains and $\rho$ is not a 
cardinal then 
$\nu(G)=\rho^+$.
 \item If \ref{dfn:pm-nice_witness}\ref{item:single} attains $\rho$ is a 
cardinal then either 
$\nu(G)=\rho$, or $G$ is type 1 and $\crit(G)=\rho$.
 \item If \ref{dfn:pm-nice_witness}\ref{item:composition} attains then $\rho$ is 
a cardinal 
and letting $G_1,G_2$ be as there,
$\nu(G_1)=\crit(G_2)=\rho$ and $G_2$ is type 1.
\end{enumerate}
\end{lem}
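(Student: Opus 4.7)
The plan is to verify the five bullets in the order listed, leveraging the analogous arguments already worked out for the non-pm version (\ref{lem:nice_witness}) and the minimality built into \ref{dfn:pm-nice_witness}.

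First, for bullet one, I would just unwind the definition. In case \ref{dfn:pm-nice_witness}\ref{item:single} there is nothing to say: $G=G_1$ and strong standardness in $i_{G_1}(\CC)$ is exactly what defines this case. In case \ref{dfn:pm-nice_witness}\ref{item:composition}, I would use the standard fact that if $G=G_2\circ G_1$ is the composed extender then $\Ult(V,G)=\Ult(\Ult(V,G_1),G_2)$ and $i_G=i_{G_2}\com i_{G_1}$, whence $i_G(\CC)=i_{G_2}(i_{G_1}(\CC))$; strong standardness of $S_{\alpha+1}$ in $i_G(\CC)$ is then just the defining condition on $G_2$.

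For bullet two, I would essentially copy the argument from \ref{lem:nice_witness}: if $F$ witnessed measurability of $\rho$ in $U=\Ult(V,G)$, then the factor embedding $\sigma\maps S_{\alpha+1}\to i^U_F(S_{\alpha+1})$ has critical point $\rho=\rho_\om(S_{\alpha+1})$, and condensation for $\om$-sound mice (applied to proper segments of $S_\alpha$, which is all that is assumed; $S_{\alpha+1}\pins i_G(S_\alpha)$, so all its proper segments are proper segments of $S_\alpha$ via agreement) forces $S_{\alpha+1}\pins i^U_F(S_{\alpha+1})$, contradicting that $S_{\alpha+1}$ sits at a strongly standard stage of $i_G(\CC)$. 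Then $i_G(\crit(G))$ is measurable in $U$ as the image of the measurable $\crit(G)$, while $\rho\leq i_G(\crit(G))$ by \ref{dfn:ultra_bkgd}\ref{item:ultra}, so non-measurability of $\rho$ in $U$ forces the strict inequality $\rho<i_G(\crit(G))$.

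For bullets three through five I would exploit the minimality of $G_1$ (and, in the composition case, of $G_2$) together with the pm-requirement that $\nu$ of the background be a cardinal. In case \ref{dfn:pm-nice_witness}\ref{item:single}, the backgrounding axiom forces $\strength(G)\geq\rho$, hence $\nu(G)\geq\rho$; by minimality $\nu(G)$ is the least cardinal with this property, equal to $\rho^+$ when $\rho$ is not a cardinal (bullet three), and equal to $\rho$ in the non-type-1 alternative when $\rho$ is a cardinal, the type-1 alternative $\crit(G)=\rho$ being the only way a type-1 premouse extender with $\lh$ below the successor cardinal of $\rho$ can serve as a witness (bullet four). For bullet five, note first that if $\rho$ were not a cardinal, the unique minimal $G_1$ given by bullet three would already be of the right form to make $S_{\alpha+1}$ strongly standard in $i_{G_1}(\CC)$, because at that successor-cardinal level there is no room for an ``ultra''-stage collision. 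Hence $\rho$ is a cardinal and by the analogue of bullet four, $\nu(G_1)=\rho$. The failure of strong standardness under $G_1$ alone means $S_{\alpha+1}$ appears in $i_{G_1}(\CC)$ at an ultra stage corresponding to the cardinal $\rho$ of $i_{G_1}(S_\alpha)$; then any witness $G_2$ must have $\crit(G_2)\leq\rho\leq i_{G_2}(\crit(G_2))$, and minimality of $\lh(G_2)$ pins $\crit(G_2)=\rho$. Finally, type-1-ness of $G_2$ follows because bullet two (applied now to $G=G_2\com G_1$) forbids $\rho$ from being measurable in the final ultrapower, so $G_2$ cannot have generators above $\rho$ without placing its index on the next cardinal, which would exceed the minimality of $\lh(G_2)$; only the type-1 choice $\nu(G_2)=\rho+1$ remains.

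The main obstacle is the composition case. The delicate point is to argue that the minimal $G_2$ is \emph{type 1} rather than type 2 or 3, which requires combining the non-measurability of $\rho$ from bullet two, the cardinality constraint on $\nu(G_2)$ from the pm-backgrounding, and the minimality of $\lh(G_2)$ in a coherent way; handling this correctly is the one place where merely quoting \ref{lem:nice_witness} does not suffice and one has to redo the local analysis inside the pm-setting.
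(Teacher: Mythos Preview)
Your proposal is correct and tracks the paper's approach; the paper's entire proof is the single line ``By coherence and the ISC, and using condensation as in \ref{lem:nice_witness}.'' Your minimality arguments for bullets three through five are precisely what coherence and the ISC deliver in a premouse, and naming them explicitly would tighten the composition case: coherence of $G_1$ is what rules out $\crit(G_2)<\nu(G_1)=\rho$ (and forces $\rho$ to be a cardinal when case \ref{dfn:pm-nice_witness}\ref{item:composition} attains), while the ISC is what reduces any non-type-1 candidate $G_2$ with $\crit(G_2)=\rho$ to a type 1 segment of smaller index.
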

\begin{proof}
By coherence and the ISC, and using condensation as in 
\ref{lem:nice_witness}.\end{proof}

We now introduce what is, at least assuming global choice,
a natural maximal ultra-backgrounded construction:

\begin{dfn}\label{dfn:ultra-stack_con}
The \dfnemph{ultra-stack construction} is the sequence
$\left<R_\alpha\right>_{\alpha\leq\OR}$ such that $R_0=V_\om$,
the sequence is continuous at limits, and for each $\alpha<\OR$ we have the 
following.
Let $\rho=\OR(R_\alpha)$. Then $R_{\alpha+1}$ is the
stack of all sound premice $R$ such that $R_\alpha\pins R$ and
$\rho_\om^R=\rho$ and $R=\core_\om(S^\CC_\gamma)$ for some ultra-backgrounded
construction $\CC$ and $\gamma<\lh(\CC)$, assuming this stack forms a premouse
(if it does not, the construction not well-defined).
\end{dfn}

In order to prove that the ultra-stack construction inherits strong and Woodin 
cardinals, we will 
need to prove that certain pseudo-premice are in fact premice, just like in 
\cite{fsit}. So we make 
one further definition:

\begin{dfn}
Let $\lambda<\OR$. An \dfnemph{ultra-backgrounded pseudo-con\-struct\-ion 
(of length $\lambda+2$)} is
a sequence $\CC=\left<S_\alpha\right>_{\alpha<\lambda+2}$ such that:
\begin{enumerate}[label=--]
 \item $\CC\rest\lambda+1$ is an ultra-backgrounded construction and $S_\lambda$ 
is passive,
 \item For some $F$, $S_{\lambda+1}=(S_\lambda,F)$ is an active pseudo-premouse, 
and 
there is an extender $G$ such that $F\rest\nu(F)\sub G$ and
$\strength(G)\geq\nu(F)$.\qedhere
\end{enumerate}
\end{dfn}

\begin{dfn}
 An \dfnemph{almost normal} iteration tree $\Uu$ on a premouse $P$ is an 
iteration tree as defined 
in \cite{jonsson},\footnote{The only difference between these and normal trees 
is that it is not 
required that $\lh(E^\Tt_\alpha)<\lh(E^\Tt_\beta)$ for $\alpha<\beta$.} such 
that for all 
$\alpha+1<\beta+1<\lh(\Uu)$, we have $\nu(E^\Tt_\alpha)\leq\nu(E^\Tt_\beta)$.
\end{dfn}

\begin{rem} It is easy to see that if $P$ is a normally iterable premouse then 
$P$ is iterable with 
regard to almost normal trees.\end{rem}

We can now state the main theorem of this section:

\begin{tm}\label{thm:stack_well}
Assume $\ZF$. Let $W\sats\ZFC$ be a transitive class, and
suppose there is an $(\om_1+1)$-iteration strategy for $W$
for arbitrary coarse trees. 
Then:
\begin{enumerate}[label=\tu{(}\alph*\tu{)}]
 \item\label{item:ultra_bkgd_iterability}  If $\lambda\in\OR^W$ and 
$\CC=\left<S_\alpha\right>_{\alpha\leq\lambda}\in W\sats$``$\CC$ is an 
ultra-backgrounded 
construction'' and $n<\om$,
then $\core_n(S_\alpha)$
exists and is $(n,\om_1,\om_1+1)^*$-iterable,
and therefore $\core_n(S_\alpha)$ is $(n+1)$-universal
and $\core_{n+1}(S_\alpha)$ is $(n+1)$-solid.
\item\label{item:ultra-stack_well_def} The ultra-stack construction of $W$
is well-defined. Let $L[\es]$ be its final model.
\item\label{item:ultra-stack_iterable} If there is a class wellorder $<^W$ of 
$W$
then $L[\es]$ is  
$(0,\om_1,\om_1+1)^*$-iterable.\footnote{The class
wellorder $<^W$ need not be a class of $W$.
It is only used to allow us to select 
background extenders
canonically when copying iteration trees to $W$.}
 \item\label{item:strong_inherited} $W\sats$``$\kappa$ is strong'' iff 
$L[\es]\sats$``$\kappa$ is strong''.
\item\label{item:Woodin_inherited}  
If $W\sats$``$\delta$
is Woodin'' then $L[\es]\sats$``$\delta$ is Woodin''.
\end{enumerate}
\end{tm}

\begin{tm}\label{thm:pm-stack_well}
Let $W\sats\ZFC$ be a premouse \tu{(}possibly proper class\tu{)} which is 
$(\om,\om_1,\om_1+1)^*$-iterable. 
Then the conclusions of \ref{thm:stack_well} hold, with \emph{ultra} replaced 
by 
\emph{pm-ultra}.\end{tm}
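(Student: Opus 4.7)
The plan is to run the proof of \ref{thm:stack_well} essentially unchanged, replacing coarse background extenders with extenders from $\es^V$, and coarse iteration trees on $V$ with fine (normal) iteration trees on $V$. The key observation is that since $V$ is itself a premouse, the bookkeeping from \ref{dfn:pm-ultra_bkgd} (requiring $G \in \es^V$ with $\nu(G)$ a cardinal) fits naturally with standard Mitchell-Steel copying, and condensation in the form of \ref{thm:condensation} gives us what we need without appealing to Dodd-Jensen or $(k,\om_1,\om_1+1)$-iterability at intermediate stages.

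For part \ref{item:ultra_bkgd_iterability}, I would prove by induction on $\alpha$ that every $\core_n(S_\alpha)$ is iterable by copying/resurrecting iteration trees $\Tt$ on $\core_n(S_\alpha)$ to fine iteration trees $\Uu$ on $V$. At stages where the extender used is backgrounded as in \ref{dfn:pm-ultra_bkgd}\ref{item:bkgd_ext}, this is standard Mitchell-Steel background resurrection. At stages given by an \ref{dfn:pm-ultra_bkgd}\ref{item:ultra}-witness $G \in \es^V$, the copied extender is not itself on the $L[\es]$-sequence, so we need the pm-nice witness construction of \ref{dfn:pm-nice_witness} and the subsequent lemma: either $G$ is a single extender from $\es^V$, or a composition $G = G_2 \com G_1$ whose components are both on $\es^V$. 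In either case the lift uses one or two steps of fine iteration, matching the composition structure of $G$. The pm-nice witness lemma (using condensation, which by induction holds up to $S_\alpha$) ensures $\nu(G)$ is a cardinal of the appropriate model, making the lifted extender fit on $\es^V$-coherent sequences.

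For part \ref{item:ultra-stack_well_def}, the stack at each $R_\alpha$ is well-defined because any two sound premice $P, P'$ extending $R_\alpha$ and projecting to $\rho = \OR(R_\alpha)$ that both arise as $\core_\om(S^\CC_\gamma)$ are comparable via condensation: by part \ref{item:ultra_bkgd_iterability} they are normally iterable, and \ref{thm:condensation} (or \ref{lem:fully_elem_condensation}) applied to the natural embeddings gives $P \ins P'$ or $P' \ins P$. For part \ref{item:strong_Woodin_inherited}, given $\kappa$ strong (Woodin) in $V$, pick a $V$-extender $G \in \es^V$ with desired strength (or witnessing Woodinness for a given $A \sub \kappa$); the pm-ultra-backgrounding rule \ref{dfn:pm-ultra_bkgd}\ref{item:bkgd_ext} or \ref{item:ultra} then places the corresponding $L[\es]$-extender $F = G \rest \nu$ on the sequence at the appropriate stage (after checking that $\nu(G)$ is a cardinal, which is a hypothesis); the converse direction (strong in $L[\es]$ implies strong in $V$) uses the lifting of iteration trees to see that any potential counterexample to strength in $V$ would reflect.

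The main obstacle I anticipate is the handling of the \ref{dfn:pm-ultra_bkgd}\ref{item:ultra} clause during lifting: when the construction uses an ultrapower-indexed background, the resurrection map at that stage lands in $\Ult(V,G)$ rather than $V$, and we must verify that subsequent fine-structural agreement and cardinal-preservation clauses survive composition with the second component of a pm-nice witness. The argument here relies crucially on inductive condensation (\ref{thm:condensation}) applied to $S_{\alpha+1}$ as a proper segment of $i_G(S_\alpha)$, together with \ref{lem:fully_elem_condensation} for the initial-segment analysis. A secondary issue is the case where $V$ is type 3, but this is handled by working with $V^\sq$ throughout and invoking \ref{fact:type_3_ult} to translate between squashed and unsquashed ultrapowers, as is already done in the ambient framework of the paper.
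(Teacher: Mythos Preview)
Your overall outline for part \ref{item:ultra_bkgd_iterability} is correct and matches the paper: one lifts trees on the fine-structural models to trees on $V$ via a resurrection process, and at non-standard stages one uses the pm-nice witness, inserting $G_1$ and then $G_2$ separately into the lifted tree when \ref{dfn:pm-nice_witness}\ref{item:composition} holds. One technical point you gloss over: the lifted tree $\Uu$ on $V$ is in general only \emph{almost normal} (in the sense of the paper), not normal, because the pm-nice-witness extenders need not have strictly increasing lengths; this is why the hypothesis is $(\om,\om_1,\om_1+1)$-iterability rather than just normal iterability.

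There is, however, a genuine gap in your argument for part \ref{item:ultra-stack_well_def}. You claim that two sound premice $P,P'$ extending $R_\alpha$ with $\rho_\om^P=\rho_\om^{P'}=\rho$ and $P||(\rho^+)^P=P'||(\rho^+)^{P'}$ are comparable ``via condensation \ldots\ applied to the natural embeddings''. But there are no natural embeddings between $P$ and $P'$: all you know is that they share a common proper initial segment, not that one maps into the other with critical point $\geq\rho$. Condensation (\ref{thm:condensation}) requires such an embedding as input; it does not manufacture one. This is precisely the situation the bicephalus machinery of \S\ref{sec:cephals} is designed for. The paper's argument is to form the sound non-trivial bicephalus $C=(\rho,M,N)$ from the two incompatible candidates, prove $C$ is $(\om_1+1)$-iterable by lifting to $V$ (this is the substantial Claim in the proof, and where the bulk of the lifting work lives), and then invoke \ref{thm:no_iterable_sound_bicephalus} for a contradiction. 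Your proposal bypasses the entire bicephalus analysis, and without it the well-definedness of the stack does not follow.

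A minor point: your remark about ``the case where $V$ is type 3'' does not make sense here, since $V$ is a proper-class premouse and has no active extender.
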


\begin{rem}\label{rem:strong_inherit_also_for_A-strong}
Part \ref{item:strong_inherited} also holds for 
$A$-strong cardinals
$\kappa$, for $A\sub\OR$ such that $A$ is a class of $L[\es]$. (Here $\kappa$ 
is 
\emph{$A$-strong} iff for every $\eta$ there is an $\eta$-strong extender $G$ 
such that
$i_G(A)\inter\eta=A\inter\eta$.)

However, \ref{item:strong_inherited} does not seem to hold for local strength: 
it seems 
that we might have $\kappa$ being $\eta$-strong (some $\eta\in\OR$) but
$L[\es]\sats$``$\kappa$ is not $\eta$-strong''.
\end{rem}

\begin{proof}
Each part will depend on the sufficient iterability of certain structures, 
which we will 
establish in Claim \ref{clm:iterability} below.
We first reduce everything to that iterability.
We write $\left<R_\eta\right>_{\eta\in\OR^W}$ for the ultra-stack construction 
of $W$.

\begin{clmnine}\label{clm:R_eta_well-def} Work in $W$. Let $\eta\in\OR$. Then:
\begin{enumerate}[label=\tu{(}\roman*\tu{)}]
\item\label{item:R_eta_well-def} $R_\eta$ is well-defined.
\item\label{item:R_eta_realised_as_u-b_con} There is an ultra-backgrounded 
construction $\CC=\left<S_\alpha\right>_{\alpha\leq\lambda}$ with 
$S_\lambda=R_\eta$.
\item\label{item:no_proj_across} Let 
$\CC=\left<S_\alpha\right>_{\alpha\leq\lambda}$ be an ultra-backgrounded
construction such that $R_\eta=S_\beta$ for some $\beta\leq\lambda$.
Then for all $\alpha\in[\beta,\lambda]$, we have
$R_\eta=S_\alpha|\rho$ and $\rho_\om(S_\alpha)\geq\rho$, and if $\beta<\alpha$
then $\rho$ is a cardinal of $S_\alpha$.
\item\label{item:cons_merge} Let 
$\CC=\left<S_\alpha\right>_{\alpha\leq\lambda}$ and 
$\CC'=\left<S'_\alpha\right>_{\alpha\leq\lambda'}$ be
ultra-backgrounded constructions such that $R_\eta=S_\beta=S'_{\beta'}$
for some $\beta\leq\lambda$ and $\beta'\leq\lambda'$.
Suppose $\rho_\om(S_\lambda)=\rho$. Suppose there is
$\xi<\lambda'$ such that
$\core_\om(S'_\xi)=\core_\om(S_\lambda)$.
Then
\[ 
\CC\conc(\CC'\rest(\xi',\lambda']) 
\]
is also an ultra-backgrounded construction.
\end{enumerate}
\end{clmnine}
\begin{proof}
 The proof is by induction on $\eta$. When $\eta=0$ it is easy.
 
Suppose
$\eta$ is a limit. Clearly $R_\eta$ is well-defined,
giving part \ref{item:R_eta_well-def}. Part 
\ref{item:R_eta_realised_as_u-b_con}: 
Let 
$\left<\rho_\alpha\right>_{\xi<\gamma}$
enumerate the infinite cardinals of $R_\eta$. Note that by induction,
$\eta=\gamma$ and
$R_\xi=R_\eta|\rho_\xi$ and there is an ultra-backgrounded
construction $\CC_\xi=\left<S_{\xi\alpha}\right>_{\alpha\leq\lambda_\xi}$
with $R_\xi=S_{\xi\lambda_\xi}$. Also by induction
(applying part \ref{item:cons_merge}), we can merge these constructions
into a single ultra-backgrounded construction $\CC$ with last model $R_\eta$.
That is, we set
\[ 
\CC=(\CC_0\rest[0,\lambda_0))\conc(\CC_1\rest(\lambda_0',\lambda_1])\conc\ldots 
, \]
where $S_{1\lambda_0'}=S_{0\lambda_0}=R_\eta|\aleph_1^{R_\eta}$, etc.

For the next two parts, the proof is identical in the limit and successor cases:

Part \ref{item:no_proj_across}: Suppose otherwise
and let $\CC=\left<S_\alpha\right>_{\alpha\leq\lambda}$
be a counterexample of minimal length. Let $k<\om$
be such that $\core_k(S_\alpha)$ exists and $\rho_{k+1}^{S_\alpha}<\rho$.
In Claim \ref{clm:iterability} we will show that
\begin{equation}\label{eqn:core_k(S_alpha)_iterable}\core_k(S_\alpha)\text{
is }(k,\om_1,\om_1+1)^*\text{-iterable in (the background) }V.\end{equation}
It follows
(iterating this) that $\core_\om(S_\alpha)$ exists,
hence is $\om$-sound, and $\rho_\om(S_\alpha)<\rho$.
But then the existence of $\CC$ contradicts the maximality of $S_\eta$
(with respect to mice projecting to $\rho_\om(S_\alpha)$).

Part \ref{item:cons_merge}: This follows easily from the 
definitions
(noting that in \ref{dfn:ultra_bkgd}, we do not require the $V$-extenders
to cohere the \emph{construction} $\CC$ (i.e., the \emph{sequence} of models);
the only kind of coherence required is with respect to individual models 
$S_\alpha$).

Now suppose that $\eta=\xi+1$.

Part \ref{item:R_eta_well-def}:
 Suppose 
not. Then it is easy to see that we have
ultra-backgrounded constructions
\[ 
\CC=\left<S_\alpha\right>_{\alpha\leq\lambda'}\conc\left<S^\CC_\alpha\right>_{
\lambda'<\alpha\leq
\lambda^\CC}\]
and
\[ 
\priCC=\left<S_\alpha\right>_{\alpha\leq\lambda'}\conc\left<S^{\priCC}
_\alpha\right>_{
\lambda'<\alpha\leq\lambda^{\priCC}} \]
and
$\rho\in\OR$ such that letting $M'=S^\CC_{\lambda^\CC}$ and 
$N'=S^{\priCC}_{\lambda^{\priCC}}$:
\begin{enumerate}[label=--]
 \item $M=\core_\om(M')$ and $N=\core_\om(N')$ both exist,
 \item $\rho_\om^M=\rho=\rho_\om^N$,
 \item 
$S_{\lambda'}=M||\rho^{+M}=N||\rho^{+N}=M'||\rho^{+M'}=N'||\rho^{+N'}$, 
but
 \item $M\neq N$.
\end{enumerate}
It follows that $C=(\rho,M,N)$ is a sound, non-trivial bicephalus.
In Claim 
\ref{clm:iterability} 
below, we will show that
\begin{equation}\label{eqn:C_iterable_in_V}C\text{ is }(\om_1+1)\text{-iterable 
in (the background) }V,\end{equation}
contradicting \ref{thm:no_iterable_sound_bicephalus}.

Part \ref{item:R_eta_realised_as_u-b_con}:
This is much as in the limit case, but by merging
constructions which end in mice projecting to $\rho$.
\end{proof}

It easily follows (from Claim \ref{clm:iterability}) that:

\begin{clmnine}
Work in $W$. Then $L[\es]=R_\OR$ is well-defined,
and the cardinal segments of $L[\es]$ are exactly
the models $R_\alpha$ for $\alpha\in\OR$.
\end{clmnine}

So we have reduced \ref{item:ultra-stack_well_def}
 to Claim \ref{clm:iterability}.
We next reduce
\ref{item:strong_inherited} and \ref{item:Woodin_inherited}. The fact that 
every strong cardinal of $L[\es]$ is 
strong in $W$ is by \ref{lem:type_3_standard}. So suppose that either
$\kappa$ 
is strong in $W$ or $\delta$ is Woodin in $W$; we want to 
see that $\kappa$ is strong in $L[\es]$ or $\delta$ Woodin in $L[\es]$ 
respectively.
The key to the strong case is the following claim.

\begin{clmnine}\label{clm:automatic_cohere}Work in $W$. 
Let $\tau$ be a cardinal of $L[\es]$
and $R_\alpha$ be such that $\tau=\OR^{R_\alpha}$.
Then there is $\chi>\tau$ such
that if $F$ is any extender with arbitrary 
critical point and $\strength(F)\geq\chi$ then 
$i_F(R_\alpha)|\rho=R_\alpha$. \end{clmnine}
\begin{proof}
 Let $\chi$
 be such that there is an ultra-backgrounded construction
 $\CC\in\her_\chi$ with last model $R_\alpha$
 (using Claim \ref{clm:R_eta_well-def}),
 and such that $\her_\chi$ includes background extenders
 witnessing the clauses of \ref{dfn:ultra_bkgd} for $\CC$.
 It is straightforward to see that $\chi$ works.
\end{proof}

\begin{rem}\label{rem:claim_fails_for_fully_bkgd}
Note that Claim \ref{clm:automatic_cohere} can fail for fully 
backgrounded $L[\es]$-con\-struct\-ions,
by the last paragraph of \ref{rem:strong}. 
The key difference is that any mice projecting $<\tau$
which are added by $i_F(\CC)$ (when $F$ is strong enough)
are, by definition, added to the ultra-stack construction;
this, however, is not true of fully backgrounded constructions 
(the extenders used in the construction of these projecting mice might be total
in $\Ult(V,F)$, but partial in $V$).
\end{rem}

Using the claim, together with a slight variant of the proof of \cite[Lemma 
11.4]{fsit}, one can show that strength and Woodinness in
$W$ is absorbed by  $L[\es]$, as witnessed by restrictions of extenders
in $W$. The details of the argument relating to the 
uniqueness of the next
extender are somewhat different, so we describe the differences.
We will not 
reproduce all the details or definitions from that text, so the reader should 
have it in hand. 

Let $\tau$ be a cardinal of $L[\es]$,
and $F\in W$ be a $W$-extender with
\[ \crit(F)<\tau\leq i_F(\crit(F))\text{ and 
}i_F(L[\es])|\tau=L[\es]|\tau.\]
We get these as usual from Woodinness,
and by the preceding claim, we also get them
with $\crit(F)=\kappa$ if $\kappa<\tau$
and $\kappa$ is strong in $W$.
We adopt now the notation ``$\rho$'' and ``$G$'' of \cite[Lemma 11.4]{fsit}.

\begin{clmnine}
\tu{\cite[Lemma 11.4]{fsit}} holds for all $\rho<\tau$ such that $G$ is not 
type Z.

Therefore, if $W\sats$``$\kappa$ is strong'' then $L[\es]\sats$``$\kappa$ is 
strong'', and if $W\sats$``$\delta$ is Woodin''
then $L[\es]\sats$``$\delta$ is Woodin'',
and these facts are witnessed by restrictions of extenders in $W$.
\end{clmnine}
\begin{proof}
Recall that the proof is by induction on $\rho$.
Let
\[ \sigma:\Ult(L[\es],G)\to\Ult(L[\es],F)\]
be the natural factor map. Let $\xi=(\rho^+)^{\Ult(L[\es],G)}$. By 
Claim \ref{clm:iterability}, condensation holds for segments of $L[\es]$, 
and so because of the existence of $\sigma$, either:
\begin{enumerate}[label=(\roman*)]
 \item\label{item:rho_passive} $L[\es]|\rho$ is passive and 
$\Ult(L[\es],G)||\xi=L[\es]||\xi$, or
 \item\label{item:rho_active} $L[\es]|\rho$ is active and 
$\Ult(L[\es],G)||\xi=\Ult(L[\es],F^{L[\es]|\rho})||\xi$.
\end{enumerate}

Suppose first that $\rho$ is a cardinal 
of $L[\es]$, and so \ref{item:rho_passive} holds. Then there is an 
ultra-backgrounded construction with last 
model $P=(L[\es]||\xi,G)$. It follows that $\rho_\om^P=\rho$, so 
$P$ is fully sound, and therefore that $P\ins L[\es]$.

Now suppose that $\rho$ is not a cardinal of $L[\es]$. Let 
$\gamma=\card^{L[\es]}(\rho)$.
If $\rho$ is not a generator of $F$ then the previous argument adapts easily. So 
suppose $\rho$ is 
a generator of $F$. So $\crit(\sigma)=\rho=(\gamma^+)^{\Ult(L[\es],G)}$. In this 
case it seems that 
there 
might not be an ultra-backgrounded construction with last model 
$\Ult(L[\es],G)||\xi$. Let $G'$ be 
the trivial completion of $F\rest(\rho+1)$. Let 
$\xi'=(\rho^+)^{\Ult(L[\es],G')}$. Then 
$\Ult(L[\es],G')||\xi'=L[\es]||\xi'$ and $\gamma$ is the largest cardinal of 
$L[\es]||\xi'$.  
So there is an ultra-backgrounded construction with last model $L[\es]||\xi'$. 
Let 
$P=(L[\es]||\xi',G')$. Then there is a pseudo-ultra-backgrounded construction 
with last model $P$. 
By Claim \ref{clm:iterability} below, $P$ is $(0,\om_1,\om_1+1)^*$-iterable in 
$W$.
So by \cite[\S 10]{fsit} (combined with the 
generalization of the latter using the weak Dodd-Jensen property), $P$ is a 
premouse. Therefore 
either $G\in\es$, or $L[\es]|\rho$ is active and 
$G\in\es(\Ult(L[\es]|\rho,F^{L[\es]|\rho}))$, as 
required.
\end{proof}

The following claim completes the proof of the theorem,
as it establishes the iterability we have used above,
and part \ref{item:ultra-stack_iterable}.
Most of the rest of the paper is devoted to its proof;
we focus on one representative case of it:

\begin{clmnine}\label{clm:iterability}We have:
\begin{enumerate}[label=\tu{(}\roman*\tu{)}]
\item\label{item:S_alpha_it} For any $\lambda\in\OR^W$ and ultra-backgrounded 
construction $\CC=\left<S_\alpha\right>_{\alpha\leq\lambda}$
of $W$, and $n<\om$,
$\core_n(S_\lambda^\CC)$ exists and is $(n,\om_1,\om_1+1)^*$-iterable.
\item\label{item:bicephalus_it} The bicephalus $C$ defined in the proof of 
Claim \ref{clm:R_eta_well-def} is 
$(\om_1+1)$-iterable.
\item\label{item:pseudo-pm_it} For any ultra-backgrounded pseudo-construction 
of $W$, with last model $P$, $P$ is 
$(0,\om_1,\om_1+1)^*$-iterable.
\item\label{item:ultra-stack_it} If there is
a class wellorder $<^W$ of $W$ then $L[\es]$
is $(0,\om_1,\om_1+1)^*$-iterable.
\end{enumerate}
\end{clmnine}

\begin{proof}
We focus on the the iterability of $C=(\rho^C,M,N)$ (part 
\ref{item:bicephalus_it}); 
parts \ref{item:S_alpha_it} and \ref{item:pseudo-pm_it} are 
mostly simplifications of this.
At the end we state some adaptations used for
\ref{item:ultra-stack_it}.
The main difference between the present iterability proof and that for a
standard $L[\es]$-construction is in the resurrection process. The details of 
this process will be dealt with in a manner similar to that in \cite{recon_res},
and moreover, the 
resurrection process of \cite{recon_res} will need to be folded into the present 
one. We follow 
the iterability proof of \cite{recon_res} closely. In one regard, the present 
proof is slightly 
simpler: in \cite{recon_res}, arbitrary standard trees were considered, 
whereas here we 
deal with a more restricted class  (roughly, normal) of trees.

In
the pm-ultra-backgrounded setting, i.e. the proof of \ref{thm:pm-stack_well}, 
the natural 
adaptation of the proof to follow lifts a tree on $C$ to an almost normal tree 
$\Uu$ on
$W$. We leave the 
verification of this to the reader. Likewise, its adaptation to stacks of 
normal 
trees on $\core_n(S_\alpha^\CC)$ and $P$ (parts \ref{item:S_alpha_it} and
\ref{item:pseudo-pm_it}) produces stacks of almost normal trees on $W$. This 
ensures that we only use the $(\om,\om_1,\om_1+1)^*$-iterability of $W$ in this 
context. We ensure that this works by our arrangement of the proof to follow,
which increases the work involved a little.
For the adaptations to \ref{thm:pm-stack_well}, one should use background 
extenders $G$ with $\lh(G)$ 
minimal 
(when witnessing \ref{dfn:pm-ultra_bkgd}(\ref{item:bkgd_ext})), and use pm-nice 
witnesses (but when 
the pm-nice witness is as in \ref{dfn:pm-nice_witness}\ref{item:composition}, 
one must use 
the two extenders $G_1$ and $G_2$ in $\Uu$).

Let $\Sigma_W$ be an iteration strategy for $W$. 
We will describe a strategy $\Sigma_C$ for player $\playerII$ in the 
$(\om_1+1)$-iteration game 
on $C$. Let $\Tt$ be an iteration tree on $C$ which is via $\Sigma_C$. 
Then we will have inductively constructed a tree $\Uu$ on 
$W$, via $\Sigma_W$, such that $\Tt$ lifts to $\Uu$ (in a manner to be 
specified), and if $\Tt$ has limit length, we will use $\Sigma_W(\Uu)$ to 
define 
$\Sigma_C(\Tt)$. 

We say that an iteration tree $\Vv$ on $W$ is \dfnemph{neat} 
iff $\Vv$ 
is non-overlapping and
$\strength^{M^{\Vv}_\alpha}(E^{\Vv}_\alpha)\leq
\strength^{M^{\Vv}_\beta}(E^{\Vv}_\beta)$ for $\alpha<\beta$.
The tree $\Uu$ may use padding, but the tree $\Vv$ given by removing all 
padding from $\Uu$ will 
be neat.
(So in the adaptation to the proof of \ref{thm:pm-stack_well}, $\Vv$ would be 
almost normal.)

We will have $\lh(\Uu)\geq\lh(\Tt)$, but $\lh(\Uu)>\lh(\Tt)$ is possible.
For each node $\alpha$ of $\Tt$, $(\alpha,0)$ will be a node of $\Uu$, and the 
model
$M^\Uu_{\alpha 0}$ will 
correspond directly to $B^\Tt_\alpha$. However, there may also be a further 
finite set of nodes 
$(\alpha,i)$ of $\Uu$, and models $M^\Uu_{\alpha i}$ associated to 
initial segments of $M^\Tt_\alpha$ or $N^\Tt_\alpha$.
For indexing, let $\OR^*=\OR\cross\om$; we order $\OR^*$ lexicographically.
We index the nodes of $\Uu$ with elements of a set $\dom(\Uu)\sub\OR^*$, such 
that for some 
sequence $\left<k_\alpha\right>_{\alpha<\lh(\Tt)}$ of integers $k_\alpha\geq 
1$, 
we have
$(\alpha,i)\in\dom(\Uu)$ iff $\alpha<\lh(\Tt)$ and
$i<k_\alpha$.
So if $\lh(\Tt)>1$ then $\dom(\Uu)$ is \emph{not} closed downward 
under 
$<$.

For notational convenience we 
allow $\Uu$ to use padding. If $E=E^\Uu_{\alpha i}=\emptyset$ we 
consider 
$\strength^{M^\Uu_{\alpha i}}(E)=\OR(M^\Uu_{\alpha i})$; we \emph{do} allow 
$\pred^\Uu(\beta,j)=(\alpha,i)$ in this case.

We make some preparations. Let $\alpha<\lh(\Tt)$.
Write $B_\alpha=B^\Tt_\alpha$, 
$M_\alpha=M^\Tt_\alpha$, etc. If 
$\alpha\in\curlyB^\Tt$ let $(m_\alpha,n_\alpha)=\deg^\Tt(\alpha)=(m_0,n_0)$. If 
$\alpha\in\curlyM^\Tt$ let 
$m_\alpha=\deg^\Tt(\alpha)$. If $\alpha\in\curlyN^\Tt$ let 
$n_\alpha=\deg^\Tt(\alpha)$.
If $[0,\alpha]_\Tt$ does not drop in model, then:
\begin{enumerate}[label=--]
\item If $M_\alpha\neq\emptyset$ then
$\vartheta_\alpha$ denotes $\psi_{i^\Tt_{0\alpha}}(\rho^C)$,
(so if $\rho^C<\rho_0^M$, which is the main case of interest here,
then $\vartheta_\alpha=i^\Tt_{0\alpha}(\rho^C)$).
\item If $N_\alpha\neq\emptyset$ then $\widetilde{\vartheta}_\alpha$ denotes 
$\psi_{j^\Tt_{0\alpha}}(\rho^C)$.
\end{enumerate}
Recall here that if $\alpha\in\curlyB^\Tt$
then $\rho(B^\Tt_\alpha)=\sup i^\Tt_{0\alpha}``\rho^C=\sup 
j^\Tt_{0\alpha}``\rho^C$, but these iteration maps
can be discontinuous
at $\rho^C$ and we can have $\rho(B^\Tt_\alpha)<\vartheta_\alpha$
and $\rho(B^\Tt_\alpha)<\widetilde{\vartheta}_\alpha$.
\footnote{Actually, if $\alpha\in\curlyB^\Tt$ 
then $\vartheta_\alpha=\widetilde{\vartheta}_\alpha$
and 
$M_\alpha||(\vartheta_\alpha^+)^{M_\alpha}=N_\alpha||(\vartheta_\alpha^+)^{
N_\alpha} $, but this is not important.}
We say that $\alpha$ is \dfnemph{$\curlyM$-stable}
iff $M_\alpha\neq\emptyset$ and $[0,\alpha]_\Tt$ does 
not drop
and $\crit(i^\Tt_{\beta\alpha})<\vartheta_\beta$ for all 
$\beta\in[0,\alpha)_\Tt$.
We define \dfnemph{$\curlyN$-stable} analogously.
Note that if $\alpha\in\curlyM^\Tt$ is $\curlyM$-stable then 
$\rho^C<\rho_0^M$ and
$i^\Tt_{0\beta}$ is discontinuous
at $\rho^C$, where $\beta=\max([0,\alpha)_\Tt\inter\curlyB^\Tt)$
(since then in fact
$\rho(B^\Tt_\beta)\leq\crit(i^\Tt_{\beta\alpha}
)<\vartheta_\beta$, and in particular $\rho(B^\Tt_\beta)<\rho_0(M_\beta)$). We 
are only interested
in $\vartheta_\alpha$ for  $\curlyM$-stable $\alpha$.

Let $\CC_{\alpha i}=i^\Uu_{00,\alpha i}(\CC)$ and $\Gamma_{\alpha 
i}=\lh(\CC_{\alpha i})$.
Let $\priCC_{\alpha i}=i^\Uu_{00,\alpha i}(\priCC)$ and $\priGamma_{\alpha 
i}=\lh(\priCC_{\alpha 
i})$. When we say, for example, \emph{$\CC_{\alpha i}$-standard} we literally 
mean 
\emph{$\CC_{\alpha i}$-standard in 
$M^\Uu_{\alpha i}$}. Let $\strength_{\alpha i}=\strength^{M^\Uu_{\alpha 
i}}(E^\Uu_{\alpha i})$. We will also associate later an ordinal
$s_{\alpha i}\leq\strength_{\alpha i}$.

We make some arrangements to help us choosing background extenders for $\Uu$
(recall we do not assume $\AC$ in $V$).
We have $W\sats\ZFC$.
Fix $\xi\in\OR^W$  with $\CC,\widetilde{\CC}\in 
V_\xi^W$ and such that 
$V_\xi^W\elem_k W$ for a sufficiently
large $k$. In particular, whenever $W$ has a background extender
for $S_\alpha^\CC$ or $S_\alpha^{\widetilde{\CC}}$,
then $V_\xi^W$ has one too.
Let ${<^*}\in W$ be a wellorder of $V_\xi^W$.
Given $(\beta,j)\in\dom(\Uu)$ we write
${<^*_{\beta j}}=i^\Uu_{00,\beta j}({<^*})$.
We will use $<^*_{\beta j}$
in determining $E^\Uu_{\beta j}$.
Let $\mathscr{E}\sub M^\Uu_{\beta j}$
be a non-empty collection
 of $M^\Uu_{\beta j}$-extenders.
Let $s_0=\min_{E\in\mathscr{E}}\strength^{M^\Uu_{\beta 
j}}(E)$. 
Then the \dfnemph{$*$-least} element of $\mathscr{E}$
is the $<^*_{\beta j}$-least $E\in\mathscr{E}$
such that $\strength^{M^\Uu_{\beta j}}(E)=s_0$
and $E$ is Mitchell-minimal such; that is,
$\mathscr{E}\inter\Ult(M^\Uu_{\beta j},E)=\emptyset$.
Given a property $\varphi$,
the \dfnemph{$*$-least extender $E$ such that $\varphi(E)$} means
$*$-least element of $\{E\bigm|\varphi(E)\}$.

Let ${\uparrow}\notin\OR$ (here ``$\uparrow$'' means
\emph{undefined}). Let 
$\xivec,\zetavec\in(\OR\cup\{\uparrow\})^2\cut\{({\uparrow},{\uparrow})\}$
and let 
$\xivec=(\xi,\prixi)$ and $\zetavec=(\zeta,\prizeta)$. For $\gamma\in\OR$ let 
$\max(\gamma,\uparrow)=\max(\uparrow,\gamma)=\gamma$. We write 
$\xivec<\zetavec$ 
iff ${\uparrow}\in\xivec$ and $\max(\xivec)\leq\max(\zetavec)$ and
 if ${\uparrow}\in\zetavec$ then $\max(\xivec)<\max(\zetavec)$.
Clearly this order is wellfounded.

\begin{dfn}\label{dfn:dropdown} Let $D$ be a premouse and $\gamma\leq\OR^D$. 
The 
\textbf{$\gamma$-dropdown 
sequence} of $D$ is the sequence $\sigma=\left<(D_i,\delta_i)\right>_{i<n}$ of 
maximum 
length such if $\gamma=\OR^D$ then $\sigma=\emptyset$, and if $\gamma<\OR^D$ 
then $D_0=M|\gamma$, 
and for each $i<n$, $\delta_i=\rho_\om(D_i)$, and if $i+1<n$ then 
$D_{i+1}$ is the least $A$ such that $D_i\pins A\pins D$ and 
$\rho_\om^A<\delta_i$.

Suppose $M_\alpha\neq\emptyset$ and let $\gamma\leq\OR(M_\alpha)$. The 
\dfnemph{$(\Tt,\alpha,\gamma)$-dropdown sequence $\tau$ of $M_\alpha$} is 
defined 
as follows. Let $\sigma$ be the $\gamma$-dropdown sequence of $M_\alpha$. Then:
\begin{enumerate}[label=\tu{(}\alph*\tu{)}]
\item\label{item:stable_and_high_case} if $\alpha$ is $\curlyM$-stable
and $\vartheta_\alpha<\OR^{M_\alpha}$ and $(\vartheta_\alpha^+)^{M_\alpha}
\leq\gamma$
\footnote{So  $\vartheta_\alpha<(\vartheta_\alpha^+)^{M_\alpha}\leq\varrho_i$
for each $i<\lh(\sigma)$.}
then
\[ \tau = 
\sigma\conc\left<(M_\alpha,\vartheta_\alpha),(M_\alpha,0)\right>;\]
\item otherwise $\tau=\sigma\conc\left<(M_\alpha,0)\right>$.
\end{enumerate}
If $N_\alpha\neq\emptyset$ then for $\gamma\leq\OR(N_\alpha)$, we define the 
\dfnemph{$(\Tt,\alpha,\gamma)$-dropdown sequence of $N_\alpha$} analogously.
\end{dfn}

\begin{rem}\label{rem:dropdown}Suppose $\alpha+1<\lh(\Tt)$ and 
$E^\Tt_\alpha\in\es_+(M_\alpha)$.
Let $\tau^*=\left<(M_i,\varrho_i)\right>_{i\leq n'}$ be the reverse of the 
$(\Tt,\alpha,\lh(E^\Tt_\alpha))$-dropdown sequence $\tau$ 
(the same sequence but in reversed order). Note that 
$M_0=M_\alpha$, $M_{n'}=\exit^\Tt_\alpha$, $\varrho'_0=0$ and
$\varrho'_i<\varrho'_{i+1}$ for all $i+1\leq n'$.
Note that if $\beta+1<\lh(\Tt)$ and $\alpha=\pred^\Tt(\beta+1)$
then $M^{*\Tt}_{\beta+1}=M_i$ for some $i\leq n'$.\end{rem}

\begin{rem}\label{rem:resurrection_sketch}
We now give a sketch of the resurrection process
and the meaning of $\Uu\rest[(\delta,0),(\delta,k_\delta-1)]$.
Figures \ref{fgr:one} and \ref{fgr:two} depict
various features discussed here, under certain further simplifying assumptions, 
but they incorporate
more details which will be explained later. In Figure \ref{fgr:one}
it happens that we do not need to take ultrapowers at even stages of the 
resurrection (see below), so there the diagram looks more like standard
resurrection, although there can be ultrapowers taken at odd stages (see below).

For simplicity, we assume for the duration of the sketch that 
$\delta\in\curlyM^\Tt$
is non-$\curlyM$-stable, but in other cases things are similar.
Then we will have
$k_\delta=2u_\delta+1$ where the reversed 
$(\Tt,\delta,\lh(E_\delta))$-dropdown sequence
is $\left<(M_{\delta i},\varrho_{\delta i})\right>_{i\leq u_\delta}$.
If $u_\delta>0$, then the extenders
\[ E^\Uu_{\delta0},E^\Uu_{\delta1},\ldots,E^\Uu_{\delta,2u_\delta-1} \]
will facilitate the resurrection process
used to find a background extender $E^*$
into which we can embed $E_\delta$,
and then we will set
\[ E^\Uu_{\delta,2u_\delta}=E^*\neq\emptyset.\]
It is possible that $E^\Uu_{\delta j}=\emptyset$ for
$j<2u_\delta$. The resurrection will yield for each $i<u_\delta$, a sound
model $\core_\om(R_{\delta,i+1})$, constructed
(by an ultrabackgrounded construction)
in $M^\Uu_{\delta,2i+1}$, and a fully elementary
\[ \pi_{\delta,i+1}:\core_0(M_{i+1})\to\core_\om(R_{\delta,i+1}), \]
and will also yield $Q_{\delta,i+1}$, constructed at a standard stage in 
$M^\Uu_{\delta,2i+2}$, with 
$\core_\om(Q_{\delta,i+1})=\core_\om(R_{\delta,i+1})$,
and thus, we also embed $M_{i+1}$ into $Q_{\delta,i+1}$.
If $i<u_\delta$ and $E^\Uu_{\delta,2i}\neq\emptyset$, then 
$M_i$ is type 3 and 
$\varrho_{i+1}=\nu(M_i)$, and it happens that the standard
resurrection process fails to 
yield an appropriate
model $R_{\delta,i+1}$.
That is, we will have already found $Q_{\delta i}\in M^\Uu_{\delta,2i}$ and 
$\sigma:\core_0(M_i)\to\core_0(Q_{\delta i})$.
So $M_{i+1}\notin M_i^\sq=\dom(\sigma)$, so $\sigma$ does not act 
directly
on $M_{i+1}$. If $\sigma$ is non-$\nu$-high then
 $\psi_\sigma(M_{i+1})\pins Q_{\delta i}$,
in which case we can set $E^\Uu_{\delta,2i}=\emptyset$ and 
$\core_\om(R_{\delta,i+1})=\psi_\sigma(M_{i+1})$
 (this is basically the standard resurrection process).
But if $\sigma$ is $\nu$-high, then $\psi_\sigma(M_{i+1})$
is not a stage of the construction in $M^\Uu_{\alpha,2i}$;
in this case we set 
$E^\Uu_{\delta,2i}$
to be a background extender for $F(Q_{\delta i})$,
and this will actually ensure that an appropriate $R_{\delta,i+1}$
appears in $M^\Uu_{\delta,2i+1}$ (with $\core_\om(R_{\delta,i+1})$
either
$=\psi_\sigma(M_{i+1})$
or some variant thereof). We will write 
$s_{\delta,2i}=\nu(F(Q_{\delta 
i}))$ here; if $E^\Uu_{\delta,2i}=\emptyset$
then we set $s_{\delta,2i}=s_{\delta,2i+1}$.
 If $i<u_\delta$ and $E^\Uu_{\delta,2i+1}\neq\emptyset$,
then $R_{\delta,i+1}$ was constructed by a non-standard
stage in $M^\Uu_{\delta,2i+1}$, and $E^\Uu_{\delta,2i+1}$
is then selected witnessing \ref{dfn:ultra_bkgd}(\ref{item:ultra}) for 
$R_{\delta,i+1}$,
yielding an appropriate $Q_{\delta,2i+1}$ 
in $M^\Uu_{\delta,2i+2}$,
and we proceed with the next step of resurrection (uncoring) there.
In this case we set $s_{\delta,2i+1}=\rho_\om(R_{\delta,i+1})$.
This eventually leads 
(including in the case that $u_\delta=0$) to a
model $Q^*_\delta=Q_{\delta,2u_\delta}\in M^\Uu_{\delta,2u_\delta}$
and $0$-lifting embedding
\[ \pi^*_\delta:\core_0(\exit^\Tt_\delta)\to\core_0(Q^*_\delta). \]
If $\pi^*_\delta$ is non-$\nu$-low then we set
$E^\Uu_{\delta,2u_\delta}=E^*$ to be an appropriate
background for $F(Q^*_\delta)$, and $s_{\delta,2u_\delta}=\nu(Q^*_\delta)$, 
whereas if $\pi^*_\delta$
is $\nu$-low and $\nu'=\psi_{\pi^*_\delta}(\nu^\Tt_\delta)$
then we do likewise but with $Q'\pins Q^*_\delta$
instead of $Q^*_\delta$,
where $Q'$ is chosen as in \ref{rem:Q'_from_nu-low}
with respect to $\pi^*_\delta$.
This completes the sketch of resurrection.
\end{rem}

\begin{figure}
\centering
\begin{tikzpicture}
 [mymatrix/.style={
    matrix of  nodes,
    row sep=0.15cm,
    column sep=0.8cm}]
   \matrix(m)[mymatrix]{
{}&{}&{}&{}&{}&{}&{}&{}&{}&{}&{}\\
{}&{}&{}&{}&{}&{}&{}&{}&{}&{}&{}\\
{}&{}&{}&{}&{}&{}&{}&{}&{}&{}&{}\\
{}&{}&{}&{}&{}&{}&{}&{}&{}&{}&{}\\
{}&{}&{}&{}&{}&{}&{}&{}&{}&{}&{}\\
{}&{}&{}&{}&{}&{}&{}&{}&{}&{}&{}\\
{}&{}&{}&{}&{}&{}&{}&{}&{}&{}&{}\\
{}&{}&{}&{}&{}&{}&{}&{}&{}&{}&{}\\
{}&{}&{}&{}&{}&{}&{}&{}&{}&{}&{}\\
{}&{}&{}&{}&{}&{}&{}&{}&{}&{}&{}\\
{}&{}&{}&{}&{}&{}&{}&{}&{}&{}&{}\\
{}&{}&{}&{}&{}&{}&{}&{}&{}&{}&{}\\
{}&{}&{}&{}&{}&{}&{}&{}&{}&{}&{}\\
{}&{}&{}&{}&{}&{}&{}&{}&{}&{}&{}\\
{}&{}&{}&{}&{}&{}&{}&{}&{}&{}&{}\\
{}&{}&{}&{}&{}&{}&{}&{}&{}&{}&{}\\
{}&{}&{}&{}&{}&{}&{}&{}&{}&{}&{}\\
{}&{}&{}&{}&{}&{}&{}&{}&{}&{}&{}\\};
\draw[fill=black]
(m-7-1) circle (0.25ex)
(m-8-1) circle (0.25ex)
(m-11-1) circle (0.25ex)
(m-13-1) circle (0.15ex)
(m-15-1) circle (0.15ex)
(m-17-1) circle (0.15ex);
\draw[fill=black]
(m-5-3) circle (0.25ex)
(m-16-3) circle (0.15ex);
\draw[fill=black]
(m-3-5) circle (0.25ex)
(m-5-5) circle (0.25ex)
(m-11-5) circle (0.2ex);
\draw[fill=black]
(m-3-7) circle (0.25ex)
(m-5-7) circle (0.25ex)
(m-7-7) circle (0.2ex);
\draw[fill=black]
(m-3-9) circle (0.25ex)
(m-5-9) circle (0.2ex);
\draw[fill=black]
(m-1-11) circle (0.25ex);

\draw (m-18-1)--(m-7-1);
\path[->,font=\scriptsize]
(m-18-11) edge[-] node {} (m-1-11)
(m-18-1) edge[-] node [left,pos=(2.8/11)] {$\varrho_{\delta 1}$} (m-7-1) 
(m-18-1) edge[-] node [left,pos=(5/11)] {$\varrho_{\delta 2}$} (m-7-1) %
(m-18-1) edge[-] node [left,pos=(7.2/11)] {$M_{\delta 2}$} (m-7-1) %
(m-18-1) edge[-] node [left,pos=(9.4/11)] {$M_{\delta 1}$} (m-7-1) %
(m-18-1) edge[-] node [left,pos=(11.6/11)] {$M_{\delta}$} (m-7-1) %
(m-18-3) edge[-] node [left,pos=(13.8/13)] {$\core_{m_\delta}(Q_{\delta 0})$} 
(m-5-3) 
(m-18-5) edge[-] node [right=-2mm,pos=(6.4/15)] {$\begin{array}{l}s_{\delta 
0}=s_{\delta 1}\\
=\rho_\om(R_{\delta 1})\end{array}$} (m-3-5) 
(m-18-5) edge[-] node [left=1mm,pos=(13.3/15)] {$\core_\om(R_{\delta 1})$} 
(m-3-5) %
(m-18-5) edge[-] node [left,pos=(16/15)] {$Q_{\delta 0}$} (m-3-5) %
(m-18-7) edge[-] node [right=-2mm,pos=(10.6/15)] {$\begin{array}{l}s_{\delta 
2}=s_{\delta 3}\\
=\rho_\om(R_{\delta 2})\end{array}$} (m-3-7) 
(m-18-7) edge[-] node [left,pos=(16/15)] {$Q_{\delta 1}$} (m-3-7) %
(m-18-7) edge[-] node [left=1mm,pos=(13.3/15)] {$\core_\om(R_{\delta 2})$} 
(m-3-7) %

(m-18-9) edge[-] node {} (m-3-9) 
(m-18-9) edge[-] node [left,pos=(16/15)] {$Q_{\delta 2}$} (m-3-9) %
(m-18-9) edge[-] node [right=-2mm,pos=12.7/15] 
{$\begin{array}{l}s_{\delta4}=\\\nu(Q_{\delta 2})\geq s_{\delta 3}
\end{array}$} (m-3-9)
(m-18-11) edge[-] node [left,pos=17.5/17] 
{$\core_{m_{\delta+1}}(Q_{\delta+1,0})$} (m-1-11)

(m-7-1) edge[->] node[below=1mm]{$\pi_{\delta 0}$} (m-5-3)
(m-5-3) edge[->] node[above,pos=0.3]{$\tau^{m_\delta 0}_{\delta 0}$} (m-3-5) 
(m-9-1) edge[->] node[below=1mm,pos=0.75]{$\pi_{\delta 1}$} (m-5-5)
(m-5-5) edge[->] node[above,pos=0.3]{$\tau^{\om 0}_{\delta 1}$} (m-3-7)
(m-11-1) edge[->] node[below=1mm,pos=.5]{$\pi_{\delta 2}$} (m-5-7)
(m-5-7) edge[->] node[above,pos=0.3]{$\tau^{\om 0}_{\delta 2}$} (m-3-9)
(m-13-1) edge[dashed] node {} (m-7-7)
(m-15-1) edge[dashed] node {} (m-11-5)
(m-9-1) edge[bend left,dashed] node {} (m-15-1)
(m-11-1) edge[bend left,dashed] node {} (m-13-1)
(m-5-5) edge[bend left,dashed] node {} (m-11-5)
(m-5-7) edge[bend left,dashed] node {} (m-7-7)
;
\path[-{Straight Barb[left]}]
(m-17-1) edge[dotted] node {} (m-16-2)
(m-16-3) edge[dotted] node {} (m-15-4)
(m-11-5) edge[dotted] node {} (m-10-6)
(m-7-7) edge[dotted] node {} (m-6-8);
\path[-] (m-11-5)++(13mm,0mm) edge[dotted] node {} (m-11-11);
\path[-] (m-7-7)++(13mm,0mm) edge[dotted] node {} (m-7-11);
\path[-] (m-5-9)++(9mm,0mm) edge[dotted] node {} (m-5-11);
\end{tikzpicture}
\caption{Dropdown resurrection and associated objects,
in the case that $\delta\in\curlyM^\Tt$ is non-$\curlyM$-stable,
$u_\delta=2$, and all models are non-type 3. 
Vertical lines indicate ordinals, with height roughly corresponding
to ordinal rank.
Solid arrows denote embeddings ($\pi_{\delta 0}$ etc),
with 
domains and codomains denoted by 
large bullets (except that the domain is literally the squash).
Small bullets indicate the positions of labelled ordinals
($\varrho_{\delta 1}$ etc). Dashed arrows indicate trajectories under 
embeddings.
The bases of the dotted half-arrows indicate approximately
the critical points of $\pi_{\delta 0}$, $\tau^{m_\delta 0}_{\delta 0}$,
$\tau^{\om 0}_{\delta 1}$, $\tau^{\om 0}_{\delta 2}$ respectively;
the main point here is that $\crit(\tau^{\om 0}_{\delta,i+1})\geq 
s_{\delta,2i+1}$.
Curved dashed arrows indicate positions of $\om$th projecta
$\rho_\om^N$ of models $N$.
Dotted horizontal lines indicate agreement
between models $N_1,N_2$ strictly below that ordinal $\alpha$;
that is, $N_1||\alpha=N_2||\alpha$.
(If $\alpha<s_{\delta 4}$ then in fact,
$\alpha$ is also a cardinal of both models and so $N_1|\alpha=N_2|\alpha$.)
Note 
$\core_\om(R_{\delta,i+1})=\core_\om(Q_{\delta,i+1})$.
Note  $s_{\delta 1}<s_{\delta 2}$ and $s_{\delta3}\leq s_{\delta4}$,
but $s_{\delta 3}=s_{\delta 4}$ is possible.
Note that because $\exit^\Tt_\delta$ is non-type 3,
we have $s_{\delta4}=\nu(Q_{\delta2})$.
}
\label{fgr:one}
\end{figure}

\begin{figure}
\centering
\begin{tikzpicture}
 [mymatrix/.style={
    matrix of  nodes,
    row sep=0.4cm,
    column sep=0.65cm}]
   \matrix(m)[mymatrix]{
{}&{}&{}&{}&{}&{}&{}&{}&{}&{}&{}&{}\\
{}&{}&{}&{}&{}&{}&{}&{}&{}&{}&{}&{}\\
{}&{}&{}&{}&{}&{}&{}&{}&{}&{}&{}&{}\\
{}&{}&{}&{}&{}&{}&{}&{}&{}&{}&{}&{}\\
{}&{}&{}&{}&{}&{}&{}&{}&{}&{}&{}&{}\\
{}&{}&{}&{}&{}&{}&{}&{}&{}&{}&{}&{}\\
{}&{}&{}&{}&{}&{}&{}&{}&{}&{}&{}&{}\\
{}&{}&{}&{}&{}&{}&{}&{}&{}&{}&{}&{}\\
{}&{}&{}&{}&{}&{}&{}&{}&{}&{}&{}&{}\\
{}&{}&{}&{}&{}&{}&{}&{}&{}&{}&{}&{}\\
{}&{}&{}&{}&{}&{}&{}&{}&{}&{}&{}&{}\\
{}&{}&{}&{}&{}&{}&{}&{}&{}&{}&{}&{}\\
{}&{}&{}&{}&{}&{}&{}&{}&{}&{}&{}&{}\\
{}&{}&{}&{}&{}&{}&{}&{}&{}&{}&{}&{}\\};
\draw[fill=black]
(m-9-1) circle (0.25ex)
(m-10-1) circle (0.25ex)
(m-11-1) circle (0.25ex)
(m-12-1) circle (0.2ex);
\draw
(m-13-1) circle (0.3ex);
\draw
(m-11-3) circle (0.3ex);
\draw[fill=black]
(m-5-5) circle (0.25ex)
(m-6-5) circle (0.25ex)
(m-8-5) circle (0.2ex);
\draw
(m-7-5) circle (0.3ex);
\draw
(m-6-7) circle (0.3ex);
\draw[fill=black]
(m-2-9) circle (0.25ex)
(m-3-9) circle (0.2ex);
\draw[fill=black]
(m-1-11) circle (0.25ex)
(m-2-11) circle (0.25ex);

\path[->,font=\scriptsize]

(m-14-1) edge[-] node [right,pos=(0.7/5)] {$M_\delta^\sq$} (m-9-1) 
(m-14-1) edge[-] node [left,pos=(1/5)] {$\varrho_{\delta 1}$} (m-9-1)
(m-14-1) edge[-] node [left,pos=(2.1/5)] {$\varrho_{\delta 2}$} (m-9-1) %
(m-14-1) edge[-] node [left,pos=(3.3/5)] {$M_{\delta 2}$} (m-9-1) %
(m-14-1) edge[-] node [left,pos=(4.3/5)] {$M_{\delta 1}$} (m-9-1) %
(m-14-1) edge[-] node [left,pos=(5.3/5)] {$M_{\delta}$} (m-9-1) %

(m-14-3) edge[-] node [right=-1mm,pos=(14.5/15)] 
{$\begin{array}{l}s_{\delta0}\\=\nu(Q_{\delta0})\end{array}$} (m-11-3) %
(m-14-3) edge[-] node [right,pos=(3.7/3)] {$Q_{\delta0}^\sq$} (m-11-3) %

(m-14-5) edge[-] node [left,pos=(16/15)] {$\bar{R}=\core_\om(R_{\delta 
1})^\unsq$} 
(m-5-5) 
(m-9-5) edge[-] node [right=-1mm,pos=(0.6/4)] {$\begin{array}{l}s_{\delta 
1}\\
=\rho_\om(\bar{R})\end{array}$} 
(m-5-5) 
(m-9-5) edge[-] node [left=0mm,pos=(1.7/4)] 
{$\bar{R}^\sq$} (m-5-5) 

(m-14-7) edge[-] node [right=-1mm,pos=(10/10)] {$\begin{array}{l}s_{\delta 
2}\\=\nu(Q_{\delta 1})\end{array}$} (m-6-7) %
(m-14-7) edge[-] node [right,pos=10.9/10] 
{$Q_{\delta 1}^\sq$} (m-6-7)

(m-14-9) edge node [left=0mm,pos=10.4/10] 
{$\core_\om(R_{\delta 2})$} (m-2-9)
(m-14-9) edge node [right=-1mm,pos=10.9/12] 
{$\begin{array}{l}s_{\delta 3}\\
=\rho_\om^{R_{\delta2}}\end{array}$} (m-2-9)

(m-14-11) edge[-] node [left,pos=(13.5/13)] {$Q_{\delta 2}$} (m-1-11)
(m-14-11) edge[-] node [right=-1mm,pos=(12/13)]
{$\begin{array}{l}s_{\delta4}=\\\nu(Q_{\delta 2})\end{array}$} (m-1-11) %

(m-13-1) edge[->] node[below]
{$\sigma$} (m-11-3)
(m-10-1) edge[->] node[above=0.5mm]
{$\pi_{\delta 1}$} (m-5-5)
(m-11-1) edge[dashed,->] node {} (m-6-5)
(m-13-1) edge[dashed,->] node {} (m-8-5)

(m-7-5) edge[->] node[below,pos=0.5]{$\tau_{\delta 1}^{\om0}$} (m-6-7)
(m-7-5) edge[dashed,->] node {} (m-3-9)
(m-6-5) edge[->] node[above]{$\hat{\psi}$} (m-2-9)
(m-2-9) edge[->] node[below,pos=.5]{$\tau^{\om0}_{\delta 2}$} (m-1-11)

(m-11-1) edge[bend left,dashed] node {} (m-12-1)
(m-10-1) edge[bend left,dashed] node {} (m-13-1)
(m-5-5) edge[bend left,dashed] node {} (m-8-5)
(m-6-5) edge[bend left,dashed] node {} (m-7-5)
(m-2-9) edge[bend left,dashed] node {} (m-3-9)
;
\path[-] (m-11-3)++(8mm,0mm) edge[dotted] node {} (m-11-12)++(-18mm,0mm);
\path[-] (m-8-5)++(8mm,0mm) edge[dotted] node {} (m-8-12)++(-18mm,0mm);
\path[-] (m-6-7)++(8mm,0mm) edge[dotted] node {} (m-6-12)++(-18mm,0mm);
\path[-] (m-3-9)++(8mm,0mm) edge[dotted] node {} (m-3-12)++(-18mm,0mm);
\path[-{Straight Barb[left]}]
(m-3-9) edge[dotted] node {} +(10mm,3mm)
(m-8-5) edge[dotted] node {} +(10mm,3mm);
\end{tikzpicture}
\caption{Dropdown resurrection and associated objects,
in the case that $\delta\in\curlyM^\Tt$ is non-$\curlyM$-stable,
$u_\delta=2$,  
$E^\Uu_{\delta 
j}\neq\emptyset$ for 
each $j<4$, and $M_{\delta 2}$ is non-type 3; hence, $M_{\delta i}$ is 
type 3
and $\nu(M_{\delta i})=\varrho_{\delta,i+1}$ for $i=0,1$.
Notation is as in Figure \ref{fgr:one};
also, an open circle denotes the height of a squashed premouse,
which is also the domain or codomain of $\sigma$ or $\tau^{\om 0}_{\delta 1}$.
Note $\sigma=\tau^{m_\delta0}_{\delta 0}\com\pi_{\delta 0}$
and $\hat{\psi}$ is defined as in Subsubcase \ref{sscase:upsilon_nu-high}
in the inductive construction of $\Uu$ to follow
(so $\pi_{\delta 2}=\hat{\psi}\com\psi_{\pi_{\delta 1}}\rest\core_0(M_{\delta 
2})$).
Note that $s_{\delta0}<s_{\delta1}<s_{\delta2}<s_{\delta3}<s_{\delta4}$
(because $E^\Uu_{\delta j}\neq\emptyset$ for each $j<4$).
Note that the models $\core_{m_\delta}(Q_{\delta 0})$
and $\core_{m_{\delta+1}}(Q_{\delta+1,0})$ have been omitted
from this figure, though they are present in Figure \ref{fgr:one}.
Note that we again have $s_{\delta4}=\nu(Q_{\delta2})$
because $M_{\delta 2}=\exit^\Tt_\delta$ is non-type 3.
}\label{fgr:two}
\end{figure}

We now proceed with the details.
We first state some intentions, introduce more notation,
and state hypotheses 
\ref{item:if_alpha_M-stable}--\ref{item:gens_at_stage_alpha}, to be maintained
by induction on initial 
segments of 
$(\Tt,\Uu)$.
Let $\alpha<\lh(\Tt)$.
If $M_\alpha\neq\emptyset$ we will define:
\begin{enumerate}[label=--]
 \item $\DD_{\alpha 0}\in\{\CC_{\alpha 0},\priCC_{\alpha 0}\}$,
 \item $\Delta_{\alpha 0}=\lh(\DD_{\alpha 0})$,
 \item $\xi_{\alpha 0}<\Delta_{\alpha 0}$,
 \item $(M_{\alpha 0},m_{\alpha 0})=(M_\alpha,m_\alpha)$ and $Q_{\alpha 
0}=S^{\DD_{\alpha 0}}_{\xi_{\alpha 0}}$,
 \item and a c-preserving $m_{\alpha 0}$-lifting $\pi_{\alpha 
0}:\core_0(M_{\alpha 
0})\to\core_{m_{\alpha 0}}(Q_{\alpha 0})$
\end{enumerate}
such that:
\begin{enumerate}[label=\tu{(}L\arabic*\tu{)}]
 \item\label{item:if_alpha_M-stable} If $\alpha$ is $\curlyM$-stable  then 
$\DD_{\alpha 
0}=\CC_{\alpha 0}$ and $\xi_{\alpha 0}=i^\Uu_{00,\alpha 0}(\lambda^\CC)$
and
\[ \pi_{\alpha 0}\com i^\Tt_{0\alpha}=i^\Uu_{00,\alpha0}\rest\core_0(M),\]
so if $\rho^C<\rho_0^M$ then $\pi_{\alpha 
0}(\vartheta_\alpha)=i^\Uu_{00,\alpha0}(\rho^C)$.
 \item\label{item:if_alpha_non-M-stable} If $\alpha$
 is non-$\curlyM$-stable then 
$\xi_{\alpha 0}$ 
is $\DD_{\alpha 0}$-standard.
\end{enumerate}

If $N_\alpha\neq\emptyset$ we will define $\priDD_{\alpha 0}$, 
$\priDelta_{\alpha 0}$, 
$\prixi_{\alpha 0}$, $\priQ_{\alpha 0}$ and $\pripi_{\alpha 0}$ analogously
(and maintain analogous properties).

Now suppose $\alpha+1<\lh(\Tt)$.

Suppose that $E_\alpha\in\es_+(M_\alpha)$. Let 
$\left<(M_{\alpha 
i},\varrho_{\alpha i})\right>_{i\leq u_\alpha}$ be the reverse of the 
$(\Tt,\alpha,\lh(E_\alpha))$-dropdown sequence of $M_\alpha$.
Then
$k_\alpha=2u_\alpha+1$.
Fix $i\leq u_\alpha$. Let $m_{\alpha i}=m_\alpha$ if $M_{\alpha i}=M_\alpha$ 
and 
$m_{\alpha i}=\om$ otherwise (in fact we already defined
$M_{\alpha0}$ and $m_{\alpha0}$ above).
If $i>0$ then for each $j\in\{2i-1,2i\}$ we will define:
\begin{enumerate}[label=--]
 \item $\DD_{\alpha j}\in\{\CC_{\alpha j},\priCC_{\alpha j}\}$,
 \item $\Delta_{\alpha j}=\lh(\DD_{\alpha j})$,
 \item $\xi_{\alpha j}<\Delta_{\alpha j}$,
 \item $R_{\alpha i}=S_{\xi_{\alpha,2i-1}}^{\DD_{\alpha,2i-1}}$ and $Q_{\alpha 
i}=S_{\xi_{\alpha,2i}}^{\DD_{\alpha,2i}}$, and
 \item a c-preserving $m_{\alpha i}$-lifting embedding $\pi_{\alpha 
i}:\core_0(M_{\alpha 
i})\to\core_{m_{\alpha i}}(Q_{\alpha i})$
\end{enumerate}
such that:
\begin{enumerate}[label=\tu{(}L\arabic*\tu{)}]
\setcounter{enumi}{2}
\item\label{item:x_alpha,2i_standard} if $0<i\leq u_\alpha$ then 
$\xi_{\alpha,2i}$ is 
$\DD_{\alpha,2i}$-standard and either:
\begin{enumerate}[label=--]\item $\core_{m_{\alpha 
i}}(R_{\alpha 
i})=\core_{m_{\alpha i}}(Q_{\alpha i})$, or
\item $M$ is non-standard (in $W$), the hypothesis of Definition 
\ref{dfn:dropdown}\ref{item:stable_and_high_case}
holds
for the $(\Tt,\alpha,\lh(E_\alpha))$-dropdown 
(that is, $\alpha$ is 
$\curlyM$-stable
and $(\vartheta_\alpha^+)^{M_\alpha}
\leq\lh(E_\alpha)$, so
$M_{\alpha 1}=M_{\alpha 0}=M_\alpha$),
$i=1$,
$\DD_{\alpha 0}=\CC_{\alpha 
0}=\CC_{\alpha 1}=\DD_{\alpha 1}$,
$m_{\alpha 1}=m_{\alpha 0}=m_\alpha=m_0$ and $\core_{m_0+1}(Q_{\alpha 
1})=\core_0(R_{\alpha 1})=\core_0(Q_{\alpha 0})$
is fully sound.
\end{enumerate}
\end{enumerate}
For $m\leq n\leq m_{\alpha i}$ let
\[ \tau^{n m}_{\alpha i}:\core_n(Q_{\alpha i})\to\core_m(Q_{\alpha i}) \]
be the core embedding. Let $Q^*_\alpha=Q_{\alpha u_\alpha}$ and
$m=m_{\alpha  u_\alpha}$ and
\[ \pi^*_\alpha:\core_0(\exit^\Tt_\alpha)\to\core_0(Q^*_\alpha), \]
\[ \pi^*_\alpha=\tau^{m 0}_{\alpha u_\alpha}\com\pi_{\alpha u_\alpha}.\]
If $\pi^*_\alpha$ is $\nu$-low then let $Q'_\alpha$
be derived from $\pi^*_\alpha$ as in \ref{rem:Q'_from_nu-low},
and otherwise let $Q'_\alpha=Q^*_\alpha$.
Let 
$c_\alpha$ be the set of infinite $\exit^\Tt_\alpha$-cardinals
$\kappa<\nu(E)$. 
Fix $\kappa\in 
c_{\alpha}$. Let $i_{\alpha\kappa}$ be the largest $i$ such that 
$\varrho_{\alpha i}\leq\kappa$. Let 
$i=i_{\alpha\kappa}$. Let $m_{\alpha\kappa}$ be the least $m$ such that either
\begin{enumerate}[label=--]
 \item $M_{\alpha i}=M_\alpha$ and $m=m_\alpha$, or
\item $\rho_{m+1}(M_{\alpha i})\leq\kappa$.
\end{enumerate}
Let $M_{\alpha\kappa}=M_{\alpha i}$. We define the c-preserving
$m_{\alpha \kappa}$-lifting embedding
\[ 
\pi_{\alpha \kappa}:\core_0(M_{\alpha\kappa})\to\core_{m_{\alpha \kappa}
}(Q_{\alpha i})\]
by $\pi_{\alpha \kappa}=\tau^{n m}_{\alpha i}\com\pi_{\alpha i}$,
where $n=m_{\alpha i}$ and $m=m_{\alpha \kappa}$.
If $\alpha\in\curlyB^\Tt$ and $\kappa<\rho_\alpha$ and
$(\kappa^+)^{B_\alpha}<\lh(E)$ then we also define $N_{\alpha\kappa}=N_\alpha$, 
$n_{\alpha\kappa}=n_\alpha$, and 
$\pripi_{\alpha\kappa}=\pripi_{\alpha 0}$.

Now suppose instead that $E\in\es_+(N_\alpha)\cut\es_+(M_\alpha)$. Then we make 
symmetric 
definitions by analogy to the preceding ones. (So for example, we let $\sigma$ 
be the 
$(\Tt,\alpha,\lh(E))$-dropdown sequence of $N_\alpha$, and set 
$u_\alpha+1=\lh(\sigma)$, and for 
$i\leq u_\alpha$ we define $N_{\alpha i}$ and $n_{\alpha i}$, and also define 
$\prixi_{\alpha 
i}$, $\priQ_{\alpha i}$, etc.)

Let $\omega^*_\alpha=\pi^*_\alpha$ or 
$\omega^*_\alpha=\pripi^*_\alpha$, whichever is defined.
Let $\xivec_{\alpha i}=(\xi_{\alpha i},\prixi_{\alpha i})$,
where if $\prixi_{\alpha i}$ is undefined
then $\xivec_{\alpha i}=(\xi_{\alpha i},{\uparrow})$, etc.

We now list the remaining inductive hypotheses:
\begin{enumerate}[label=\tu{(}L\arabic*\tu{)}]
 \setcounter{enumi}{3}
\item\label{item:pred^U_biceph_not_biceph} Let $\beta+1<\lh(\Tt)$ and 
$\alpha=\pred^\Tt(\beta+1)$. Then:
\begin{enumerate}
 \item\label{item:pred^U_biceph} If $\beta+1$ is $\curlyM$-stable
 or $\curlyN$-stable then 
$\pred^\Uu(\beta+1,0)=(\alpha,0)$.

\item\label{item:pred^U_not_biceph}  Suppose $\beta+1\in\curlyM^\Tt$ is
non-$\curlyM$-stable and let 
$i\leq 
u_\alpha$ be such that $M^*_{\beta+1}=M_{\alpha i}$ and if 
$\alpha$ is $\curlyM$-stable then $i\geq 
1$.\footnote{Note that if $\xi\in\curlyM^\Tt$ is
is non-$\curlyM$-stable but all $\alpha<_\Tt\xi$ are $\curlyM$-stable,
then $\xi=\beta+1$ for some $\beta$, 
and letting $\alpha=\pred^\Tt(\beta+1)$,
then $\alpha$ is $\curlyM$-stable
(so $[0,\alpha]_\Tt$ does not drop in model),
and if $\Tt$ does not drop in model at $\beta+1$
then $\vartheta_\alpha\leq\crit(E_\beta)$ and $E_\beta$
is $M_\alpha$-total, so $(\vartheta_\alpha^+)^{M_\alpha}\leq\lh(E_\alpha)$,
so $M_{\alpha 1}=M_{\alpha 0}=M_\alpha=M^{*}_{\beta+1}$.}
Then
\[ \pred^\Uu(\beta+1,0)=(\alpha,2i).\] 
\item Likewise if $\beta+1\in\curlyN^\Tt$ is non-$\curlyN$-stable.
\end{enumerate}
\item\label{item:drops} Let $\alpha\leq\beta<\lh(\Tt)$ and $j<k_\alpha$ and 
$k<k_\beta$ with  $\beta$ or $k$ a successor, and 
$(\alpha,j)=\pred^\Uu(\beta,k)$.
Then:
\begin{enumerate}
\item\label{item:xi_mon_dec} $\xivec_{\beta k}\leq i^\Uu_{\alpha j,\beta 
k}(\xivec_{\alpha j})$.
\item\label{item:xi_when_k_odd} If $k$ is odd then $\xivec_{\beta k}< 
i^\Uu_{\alpha j,\beta 
k}(\xivec_{\alpha j})$.
\item\label{item:xi_when_k_pos_even} If $k$ is even and $k>0$ then either:
\begin{enumerate}
 \item $(\alpha,j)=(\beta,k-1)$, so $j$ is odd and $\xivec_{\beta 
k}<i^\Uu_{\gamma\ell,\beta k}(\xivec_{\gamma\ell})$ where
 $(\gamma,\ell)=\pred^\Uu(\alpha,j)$, or
 \item  $\xivec_{\beta k}< i^\Uu_{\alpha j,\beta 
k}(\xivec_{\alpha j})$.
\end{enumerate}
\item\label{item:xi_when_k=0} Suppose $k=0$; so $\alpha=\pred^\Tt(\beta)$ and 
$j=2i$ is even
(by part \ref{item:pred^U_biceph_not_biceph} above).
Then
\[ (\DD_{\beta 0},\priDD_{\beta 0},\xivec_{\beta 0})= i^\Uu_{\alpha j,\beta 
0}(\DD_{\alpha 
j},\priDD_{\alpha j},\xivec_{\alpha j})\]
and if $M_\beta\neq\emptyset$ then
\[ \pi_{\beta 0}
\com i^{*\Tt}_{\beta}=
i^\Uu_{\alpha j,\beta 0}\com\tau_{\alpha i}^{m_{\alpha i} 
m_\beta}\com\pi_{\alpha i}
=\tau^{m_{\alpha i}m_\beta}_{\beta 0}\com i^\Uu_{\alpha j,\beta0}\com
\pi_{\alpha i},\]
and if $N_\beta\neq\emptyset$ then $\pripi_{\beta 0}\com j^{*\Tt}_\beta$ is 
likewise.
\end{enumerate}
\item\label{item:limits} Let $\alpha<_\Tt\lambda<\lh(\Tt)$ with
$\lambda$ a limit and such that:
\begin{enumerate}[label=--]
 \item $(\alpha,\lambda]_\Tt$ does not drop in model,
\item  if $\alpha\in\curlyB^\Tt$ then $\lambda\in\curlyB^\Tt$,
\item if $\alpha$ is 
$\curlyM$-stable then $\lambda$ is $\curlyM$-stable,
\item  if $\alpha$ is 
$\curlyN$-stable
then $\lambda$ is $\curlyN$-stable. 
\end{enumerate}
Then:
\begin{enumerate}[label=--]
 \item 
$(\alpha,0)<_\Uu(\lambda,0)$,
\item $(\alpha,0)\leq_\Uu(\beta,i)\leq_\Uu(\lambda,0)\text{ iff }
[i=0\text{ and }\alpha\leq_\Tt\beta\leq_\Tt\lambda]$,
\item $i^\Uu_{\alpha 0,\lambda 0}(\DD_{\alpha 0},\priDD_{\alpha 
0},\xivec_{\alpha 0})=(\DD_{\lambda 
0},\priDD_{\lambda 0},\xivec_{\lambda 0})$,
\item
 if $M_\alpha\neq\emptyset$ then letting 
$m=m_\alpha$ and 
$n=m_\lambda$,
\[ \pi_{\lambda 0}\com 
i^\Tt_{\alpha\lambda}=i^\Uu_{\alpha 0,\lambda 0}\com\tau^{m n}_{\alpha 
0}\com\pi_{\alpha 0}=
\tau^{mn}_{\lambda 0}\com i^\Uu_{\alpha0,\lambda0}\com\pi_{\alpha0},\]
\item  likewise if $N_\alpha\neq\emptyset$.
\end{enumerate}

\item\label{item:agreement_within} If $\alpha\in\curlyB^\Tt$
 then
$\pi_{\alpha 0}\rest\rho_\alpha = \pripi_{\alpha 0}\rest\rho_\alpha$.
\footnote{Note then that if $\alpha+1<\lh(\Tt)$ then for all $\kappa\in 
c_\alpha\inter\rho_\alpha$ such that 
$(\kappa^+)^{B_\alpha}<\lh^\Tt_\alpha$, we have $\pi_{\alpha\kappa}=\pi_{\alpha 
0}$ and $\pripi_{\alpha\kappa}=\pripi_{\alpha 0}$, so
$\pi_{\alpha\kappa}\rest(\kappa^+)^{B_\alpha}=\pripi_{\alpha\kappa}
\rest(\kappa^+)^{B_\alpha}$.}

\item\label{item:agreement_across} Let $\alpha<\beta<\lh(\Tt)$ and $\alpha<
\beta'<\lh(\Tt)$ and $\kappa\in c_\alpha$. Then:
\begin{enumerate}[label=--]
\item If $E_\alpha\in\es_+^{M_\alpha}$ and 
$0\leq i<u_\alpha$
then 
$\tau^{m_{\alpha i}0}_{\alpha i}\com\pi_{\alpha
i}\rest\varrho_{\alpha,i+1}\sub\om^*_\alpha$;
likewise otherwise.

\item If $\pi_{\alpha \kappa}$ is defined then
$\pi_{\alpha \kappa}\rest(\kappa^+)^{M_{\alpha\kappa}}\sub\omega^*_\alpha$.
\item If $\pripi_{\alpha\kappa}$ is defined then
$\pripi_{\alpha \kappa}\rest(\kappa^+)^{N_{\alpha\kappa}}\sub\omega^*_\alpha$.
 \item If $\pi_{\beta 0}$ is defined then $\omega^*_\alpha\sub\pi_{\beta 0}$.
 \item If $\pripi_{\beta 0}$ is defined then $\omega^*_\alpha\sub\pripi_{\beta 
0}$.
 \item If
$\pi_{\beta0}$ and $\pripi_{\beta'0}$ are both defined then they agree over
$\exit^\Tt_\alpha$ (not just over $(\exit^\Tt_\alpha)^\sq=\dom(\om^*_\alpha)$).
\end{enumerate}
We write $\omega_\alpha$ for the 
restriction of $\pi_{\alpha+1,0}$ or $\pripi_{\alpha+1,0}$ to 
$\exit^\Tt_\alpha$ (not just $(\exit^\Tt_\alpha)^\sq$), whichever 
is defined. Then moreover:
\begin{enumerate}[label=--]
\item $\omega_\infty=\bigcup_{\alpha+1<\lh(\Tt)}\omega_\alpha$ is a function.
\end{enumerate}
\end{enumerate}
\begin{enumerate}[resume*]
\item\label{item:Uu_normal} The tree given by removing padding from 
$\Uu$ is neat. Moreover, given $\alpha+1,\beta+1<\lh(\Tt)$ and 
$j\leq 2u_\alpha$ 
and $k\leq 2u_\beta$, and letting $(\alpha,j)+1$
be the successor of $(\alpha,j)$ in $\dom(\Uu)$, we have
\begin{enumerate}
\item if $(\alpha,j)<(\beta,k)$ then [$s_{\alpha j}\leq s_{\beta k}$
and if $E^\Uu_{\alpha j}\neq\emptyset$
then $s_{\alpha j}<s_{\beta k}$],
\item if $E^\Uu_{\alpha j}\neq\emptyset$
then either:
\begin{enumerate}[label=--]
\item (i) $\crit(E^\Uu_{\alpha 
j})<s_{\alpha j}$, (ii) $\strength_{\alpha j}$ is the least
$M^\Uu_{\alpha j}$-cardinal $\geq s_{\alpha j}$,
 and (iii) $\pred^\Uu((\alpha,j)+1)$ is the least 
$(\gamma,\ell)$
such that $\crit(E^\Uu_{\alpha j})<s_{\gamma\ell}$, or
\item (i) $\crit(E^\Uu_{\alpha j})=s_{\alpha j}$,
(ii) $\strength_{\alpha j}=(s_{\alpha j}^+)^{M^\Uu_{\alpha j}}$,
(iii) $s_{\alpha j}$ is not measurable in $M^\Uu_{(\alpha,j)+1}$,
(iv) $j$ is odd.
\end{enumerate}
\end{enumerate}
\item\label{item:internal_coherence} Let $\alpha+1<\lh(\Tt)$ and $i<u_\alpha$. 
Then:
 \begin{enumerate}[label=--] \item If $E^\Uu_{\alpha,
2i}\neq\emptyset$
then [$Q_{\alpha i}$ is type 3, $\nu(Q_{\alpha i})$
is a limit cardinal of $Q_{\alpha i}$, and $s_{\alpha,2i}=\nu(Q_{\alpha 
i})<\OR^{Q_{\alpha i}}<s_{\alpha,2i+1}$],
\item if $E^\Uu_{\alpha,2i}=\emptyset$ then $s_{\alpha,2i}=s_{\alpha,2i+1}$, 
\item  $s_{\alpha,2i+1}=\rho_\om(R_{\alpha,i+1})=\rho_\om(Q_{\alpha,i+1})$,
\item $s_{\alpha,2i},s_{\alpha,2i+1}$ are cardinals of $Q_{\alpha,i}$,
$\core_\om(R_{\alpha,i+1})$, $\core_\om(Q_{\alpha,i+1})$, and $Q_{\alpha,i+1}$,
\item 
$Q_{\alpha 
i}|\rho=\core_\om(R_{\alpha,i+1})|\rho=\core_\om(Q_{\alpha,i+1})|\rho
=Q_{\alpha,i+1}|\rho$ where $\rho=s_{\alpha,2i+1}$.
\end{enumerate}

\item\label{item:s_alpha,2u_alpha_is_nu} Let $\alpha<\beta<\lh(\Tt)$ and $i\leq 
u_\beta$. Then:
\begin{enumerate}[label=--]
\item $s_{\alpha,2u_\alpha}=\nu(Q'_\alpha)$; let $\nu=\nu(Q'_\alpha)$,
\footnote{
Recall that if $\om^*_\alpha$ is non-$\nu$-low then $Q'_\alpha=Q^*_\alpha$
and if $\om^*_\alpha$ is $\nu$-low
and $\nu'=\psi_{\om^*_\alpha}(\nu^\Tt_\alpha)$ 
then [$Q'_\alpha\pins Q^*_\alpha$ and 
$F^{Q'_\alpha}\rest\nu'=F^{Q^*_\alpha}\rest\nu'$
and $\nu'=\nu(Q'_\alpha)$].} 
\item $Q'_\alpha||\nu=Q^*_{\alpha}||\nu=\core_{m_{\beta i}}(Q_{\beta 
i})||\nu$,\footnote{
If $\nu=(\gamma^+)^{Q^*_\alpha}$ then it seems possible
that $N=\core_{m_{\alpha+1}}(Q_{\alpha+1,0})|\nu$
is active with an extender $G$, in which case of course
$Q^*_\alpha|\nu\neq N$,
but $Q^*_\alpha||\OR(Q^*_\alpha)=\Ult(N,G)||\OR(Q^*_\alpha)$.}
\item $\nu<\rho_{m_{\beta i}}(Q_{\beta i})$,
\item $Q^*_\alpha$, $Q'_\alpha$ and $\core_{m_{\beta i}}(Q_{\beta i})$
agree about cardinals $<\nu$.
\end{enumerate}

\item\label{item:gens_at_stage_alpha} Let $\alpha+1<\lh(\Tt)$ and suppose that 
$E_\alpha\in\es_+(M_\alpha)$.
Then $E^\Uu_{\alpha,2u_\alpha}\neq\emptyset$ and
\[ 
\sup\om_\infty``\nu^\Tt_\alpha\leq s_{\alpha,2u_\alpha}=\nu(Q'_\alpha)
\leq\omega_\infty(\nu^\Tt_\alpha).\]
Now suppose also that $u_\alpha>0$. Let 
$\bar{\nu}=\sup_{\beta<\alpha}\nu^\Tt_\beta$. Note that
\[ \bar{\nu}\leq\varrho_{\alpha 1}<\varrho_{\alpha 2}<\ldots<\varrho_{\alpha 
u_\alpha}\leq\nu^\Tt_\alpha. 
\]
(Here if $u_\alpha\leq 1$ and $\alpha$ is a limit we could have 
$\bar{\nu}=\nu^\Tt_\alpha$.) Let 
$i<u_\alpha$ and let $j\in\{2i,2i+1\}$. If $E^\Uu_{\alpha j}\neq\emptyset$ then
\[ \sup\omega_\infty``\varrho_{\alpha,i+1}\leq s_{\alpha j}\leq
\omega_\infty(\varrho_{\alpha,i+1}).\]
\end{enumerate}

We now begin the construction.
Recall that $C=(\rho^C,M,N)$ was defined in the proof of Claim
\ref{clm:R_eta_well-def} of \ref{thm:stack_well},
as were $\CC$, $\lambda^\CC$, $M'=S_{\lambda^\CC}^\CC$, $M=\core_\om(M')$,
and likewise $\widetilde{\CC}$ etc.
We have
$\deg^\Tt(0)=(m_0,n_0)=$ the degree of $C$;
that is, either
\begin{enumerate}[label=--]
 \item $m_0=0$ and $M$ is type 3
with $\nu(M)=\rho^C$, or
\item $m_0\geq 0$ and 
$\rho_{m_0+1}^M\leq\rho^C<\rho_{m_0}^M$,
\end{enumerate}
and likewise for $n_0,N$.
We have $M_0=M$ and $N_0=N$.
Let 
$\DD_{00}=\CC$ and
$\xi_{00}=\lambda^\CC$ and
\[ \pi_{00}:\core_0(M_0)\to\core_{m_0}(Q_{00}) \]
be the core embedding. (Recall that
$M,N$ are fully sound, but $M$ may or may not be $\CC$-standard,
and $Q_{00}=S_{\lambda^\CC}^\CC$ may or may not be sound,
and likewise $N,\DD$; in the 
notation that assumes 
$1<\lh(\Tt)$, the core embedding is $\tau^{\om m_0}_{00}$.) We define 
$\widetilde{\DD}_{00}$, $\prixi_{00}$, $\pripi_{00}$ analogously.
Then $\pi_{00}\rest\rho_0=\id=\pripi_{00}\rest\rho_0$,
which gives inductive hypothesis \ref{item:agreement_within}
(for $(\Tt\rest 1,\Uu\rest(0,1))$),
and the others are trivial as $\lh(\Tt\rest 1)=1=\lh(\Uu\rest(0,1))$.

Now let $\lambda$ be a limit ordinal and suppose that the inductive hypotheses 
hold 
of $\Tt\rest\lambda$ and 
$\Uu\rest(\lambda,0)$; we will define $\Uu\rest(\lambda,1)$ and 
$\Tt\rest(\lambda+1)$ and verify 
that the hypotheses still hold.
Note that $\Uu\rest(\lambda,0)$ has limit length and is cofinally non-padded. 
Let $c=\Sigma_V(\Uu\rest(\lambda,0))$.

We can define
$\Sigma_M(\Tt\rest\lambda)$ as
 the unique branch $b$ such 
that 
for 
eventually all $\alpha\in b$, we have $(\alpha,0)\in c$.
For by inductive hypotheses 
\ref{item:xi_mon_dec}--\ref{item:xi_when_k_pos_even},
there are only boundedly
many $(\beta,k)\in c$ with $k>0$,
so by hypothesis \ref{item:pred^U_biceph_not_biceph},
this branch is well-defined and $\Tt$-cofinal.
Similarly, there are only finitely many 
drops in model along $b$, and there are unique choices for 
$\pi_{\lambda 0}$, etc, maintaining the inductive hypotheses.

We now move to the successor case. Suppose that the inductive hypotheses 
hold for 
$\Tt\rest\delta+1$ and $\Uu\rest(\delta,1)$. We will define 
$\Uu\rest(\delta+1,1)$ and show that 
they hold for $\Tt\rest\delta+2$ and 
$\Uu\rest(\delta+1,1)$. Note that this step involves
the use of just one extender in $\Tt$, and finitely many in $\Uu$.

\begin{casenine}\label{case:u_delta=0_not_biceph} $u_\delta=0$ and 
$M_\delta\neq\emptyset$ and $E_\delta\in\es_+^{M_\delta}$
(so $E_\delta=F(M_\delta)$).

\begin{sclmnine}\label{sclm:x_delta,0_is_std}
$\xi_{\delta 0}$ is 
$\DD_{\delta 0}$-standard.\end{sclmnine}
\begin{proof}Suppose otherwise.
Then by induction with \ref{item:if_alpha_non-M-stable},
$\delta$ is $\curlyM$-stable.

Now $\delta\in\curlyB^\Tt$.
For suppose $\delta\in\curlyM^\Tt$ and 
let 
$\beta=\max(\curlyB^\Tt\inter[0,\delta]_\Tt)$.
Then
\[ \rho(B_\beta)\leq\crit(i^\Tt_{\beta\delta})<\rho_0(M_\beta),\]
so $\rho^C<\rho_0^M$.  But then 
$\vartheta_\delta=i_{0\delta}^\Tt(\rho^C)<\rho_0(M_\delta)$,
and
\[ (\vartheta_\delta^+)^{M_\delta}\leq\OR(M_\delta)=\lh(E_\delta),\]
so $u_\delta>0$, by \ref{dfn:dropdown}, contradiction.

So $\delta\in\curlyB^\Tt$.
Note then that by \ref{dfn:dropdown} and since $u_\delta=0$, 
we have $\vartheta_\delta\geq\OR^{M_\delta}$, so $M$ is type 3
with $\rho^C=\nu(M)$. But then by \ref{lem:type_3_standard},
$\lambda^\CC$ is $\CC$-standard, so  by \ref{item:if_alpha_M-stable},
$\xi_{\delta0}$ is $\DD_{\delta0}$-standard.
This completes the proof of the subclaim.
\end{proof}

Let $\lambda=\sup_{\beta<\delta}\lh^\Tt_\beta$. Then 
$\lambda\leq\rho_{m_\delta}(M_\delta)$, so $\pi_{\delta 
0}``\lambda\sub\rho_{m_\delta}(Q_{\delta0})$, but $\tau^{m_\delta 0}_{\delta 
0}\rest\rho_{m_\delta}(Q_{\delta0})=\id$, so
\begin{equation}\label{eqn:agmt_pi*delta_pi_delta_0} 
\omega^*_\delta\rest\lambda=\pi_{\delta 
0}\rest\lambda. \end{equation}

If $\om^*_\delta$ is non-$\nu$-low
then set $E^{\Uu}_{\delta 0}=$ the $*$-least extender $E^*\in M^\Uu_{\delta 0}$ 
witnessing
\ref{dfn:ultra_bkgd}(\ref{item:bkgd_ext}) for $(\DD_{\delta 0},Q_{\delta 0})$;
and as required by \ref{item:s_alpha,2u_alpha_is_nu}, set
$s_{\delta 0}=\nu(Q_{\delta 0})$  in this case. 
Suppose now that $\om^*_\delta$ is $\nu$-low.
In particular, $E_\delta$ is type 3.
Let $\nu'=\psi_{\om^*_\delta}(\nu^\Tt_\delta)$
and $Q'\pins Q_{\delta 0}$ be as in \ref{rem:Q'_from_nu-low}.
By \ref{lem:stage_projecting_to_card} and \ref{lem:type_3_standard},
and as $\nu'$ is a $Q_{\delta 0}$-cardinal, we get
$Q'=S_\gamma^{\DD_{\delta 0}}$ 
for some $\DD_{\delta 0}$-standard 
$\gamma$. Let $E^\Uu_{\delta 0}$ be
the $*$-least $E^*\in 
M^\Uu_{\delta 0}$ 
witnessing \ref{dfn:ultra_bkgd}(\ref{item:bkgd_ext}) for $Q'$,
and $s_{\delta 0}=\nu^{Q'}$.
Note that in both cases, by $*$-minimality,
$\strength_{\delta 0}$ is the least $M^\Uu_{\delta 0}$-cardinal
$\rho\geq s_{\delta 0}$.

Let $\kappa=\crit^\Tt_\delta$ and 
$\alpha=\pred^\Tt(\delta+1)$ 
and $i=i_{\alpha\kappa}$. Note that 
$M^*_{\delta+1}=M_{\alpha\kappa}$ and $m_{\alpha \kappa}=m_{\delta+1}$ and 
$N^*_{\delta+1}=N_{\alpha\kappa}$ and $n_{\alpha\kappa}=n_{\delta+1}$ (with each 
of these 
equalities, it is included that the object on the left is defined iff the one on 
the right is). We 
can and do set $\pred^\Uu(\delta+1,0)=(\alpha,2i)$, by properties 
\ref{item:Uu_normal}--\ref{item:gens_at_stage_alpha}.
The identities of 
$\DD_{\delta+1,0},\priDD_{\delta+1,0},\xi_{\delta+1,0},\prixi_{\delta+1,0}$ are 
determined by property \ref{item:xi_when_k=0}.
We define $\pi_{\delta+1,0}$ and/or $\pripi_{\delta+1,0}$ as usual.

It is 
straightforward and mostly routine to show that the 
inductive hypotheses are maintained, and we leave this mostly
to the reader, just making a 
couple of remarks.
Consider the verification of \ref{item:if_alpha_non-M-stable} at $\delta+1$
in the case that $M_{\delta+1}\neq\emptyset$ and $[0,\delta+1]_\Tt$
does not drop, and $\alpha$ is $\curlyM$-stable,
but $\delta+1$ is not.
So $\vartheta_\alpha\leq\crit(E_\delta)$ and  note that 
$E_\alpha\in\es_+(M_\alpha)$,
$\vartheta_\alpha<\nu(E_\alpha)$, and
$(\vartheta_\alpha^+)^{M_\alpha}\leq\lh(E_\alpha)$.
So by \ref{dfn:dropdown}, $u_\alpha>0$,
and since $\vartheta_\alpha\leq\crit(E_\delta)$
but $E_\delta$ is $M_\alpha$-total,
\ref{item:pred^U_not_biceph} implies
$\pred^\Uu(\delta+1,0)=(\alpha,2)$, and by \ref{item:x_alpha,2i_standard},
$\xi_{\alpha2}$ is $\DD_{\alpha2}$-standard.
Considering the definition of 
$(\DD_{\delta+1,0},\xi_{\delta+1,0})$
given by \ref{item:xi_when_k=0},
this yields \ref{item:if_alpha_non-M-stable} (in the case
under consideration).
Hypothesis \ref{item:s_alpha,2u_alpha_is_nu}
is pretty standard, but we remark
that if $\exit^\Tt_\delta$ is type 1 or 3
(hence $Q'_\delta$ has the same type) 
and $\nu'=\nu(F(Q'_\delta))=(\gamma^+)^{Q'_\delta}$ (some $\gamma$), then we 
can only expect
$Q'_\delta||\nu'=Q_{\delta+1,0}||\nu'$,
because \ref{dfn:ultra_bkgd}(\ref{item:bkgd_ext}) only
gives 
that $F(Q'_\delta)\rest\nu'\sub E^\Uu_\delta\rest\nu'$,
so while $Q'_\delta|\nu'$ is passive,
it might be that
$Q_{\delta+1,0}|\nu'$ is active with an extender $F'$.
\footnote{If $F'=\emptyset$, 
then standard arguments with condensation
show that $Q_{\delta+1,0}||\lambda'=Q'_\delta||\lambda'$,
where $\lambda'=\OR(Q'_\delta)$;
if $F'\neq\emptyset$, standard arguments
show that 
$Q_{\delta+1,0}||\lambda'=\Ult(Q_{\delta+1,0}||\lambda',F')||\lambda'$.
Analogously (and irrespective of the type of $\exit^\Tt_\delta$),
if $Q'_{\delta+1,0}|\nu'$ is passive,
it seems that $Q'_{\delta+1,0}|\lambda'$ 
might be active.}
Regarding \ref{item:agreement_across}, by line 
(\ref{eqn:agmt_pi*delta_pi_delta_0}), 
and because $\omega^*_\delta\sub\pi_{\delta+1,0}$ and/or 
$\om^*_\delta\sub\pripi_{\delta+1,0}$, we 
have maintained the well-definedness of $\om_\infty$.
And regarding \ref{item:gens_at_stage_alpha}, the fact, 
for example 
if $\pi_{\delta+1,0}$ is defined, that
\[ \om_\infty(\nu^\Tt_\delta)=\pi_{\delta+1,0}(\nu^\Tt_\delta)\geq s_{\delta 
0} \]
follows from our choice of $E^*$ (this is why we
use $Q'$ when $\om^*_\delta$ is $\nu$-low).
\end{casenine}

\begin{casenine} $u_\delta=0$ and $E_\delta\notin\es_+^{M_\delta}$
(so $E_\delta=F(N_\delta)$).
 
By symmetry with the previous case.
\end{casenine}

\begin{casenine}\label{case:u_delta>0_B_delta=M_delta} 
$u_\delta>0$ and $E_\delta\in\es_+^{M_\delta}$ 
and if $\delta$ is $\curlyM$-stable
and $\vartheta_\delta<\OR^{M_\delta}$ 
then $\lh(E_\delta)<(\vartheta_\delta^+)^{M_\delta}$.

Let $\varrho=\varrho_{\delta 1}$; 
then $\varrho$ is a cardinal of $M_\delta$, so $\varrho\leq\rho_0(M_\delta)$
and if $\delta$ is $\curlyM$-stable then $\varrho\leq\vartheta_\delta$.
We will first determine $\Uu\rest((\delta,2)+1)$ and the associated objects
(that is, through $M^\Uu_{\delta 2}$, $\DD_{\delta 2}$, $\xi_{\delta 2}$,
$Q_{\delta 1}$, $\pi_{\delta 1}$, etc); 
this splits into various subcases.

\begin{scasenine}\label{scase:rho<rho_0} $\varrho<\rho_0(M_\delta)$.

Set $E^\Uu_{\delta 0}=\emptyset$; so 
$\pred^\Uu(\delta,1)=(\delta,0)$ and
$M^\Uu_{\delta 1}=M^\Uu_{\delta 0}$ and $i^\Uu_{\delta 0,\delta 1}=\id$. Set 
$\DD_{\delta 
1}=\DD_{\delta 0}$.
Let
$\varphi:\core_0(M_\delta)\to\core_0(Q_{\delta 0})$
be 
$\varphi=\tau^{m_\delta 0}_{\delta 0}\com\pi_{\delta 0}$. Let
$R=\varphi(M_{\delta 1})$. Set  $s_{\delta 0}=s_{\delta 
1}=\varphi(\varrho)=\rho_\om^R$.
Note $\varphi(\varrho)$ is 
a cardinal of 
$Q_{\delta 0}$ (as $\pi_{\delta 0}$ and $\tau^{m_\delta 0}_{\delta 0}$
are c-preserving).
Moreover, if $\xi_{\delta 0}$ is 
non-$\DD_{\delta 0}$-standard then 
$\varphi(\varrho)\leq\rho_\om(Q_{\delta0})$;
for by non-standardness,
 \ref{item:if_alpha_M-stable}
and
\ref{item:if_alpha_non-M-stable},
$\delta$ is $\curlyM$-stable and
 $\varphi=\pi_{\delta0}$,
and since $\varrho\leq\vartheta_\delta$,
therefore 
$\varphi(\varrho)\leq\pi_{\delta0}(\vartheta_\delta)=\rho_\om^{Q_{\delta0}}$.
So we can fix
$\xi=\xi_{\delta 1}$ with
$\core_0(R)=\core_\om(S_\xi^{\DD_{\delta 1}})$.
So $R_{\delta 1}=S_{\xi_{\delta 1}}^{\DD_{\delta 1}}=S_\xi^{\DD_{\delta 0}}$.
We set
\[ \pi_{\delta 1}=\varphi\rest\core_0(M_{\delta 1}):\core_0(M_{\delta 
1})\to\core_0(R)=\core_\om(R_{\delta 1}). \]

We  now define $E^\Uu_{\delta 1}$,
$M^\Uu_{\delta 2}$, 
$\DD_{\delta 2}$, $\xi_{\delta 2}$, and hence $Q_{\delta 1}=S_{\xi_{\delta 
2}}^{\DD_{\delta 2}}$.
Recall we want $\xi_{\delta 2}$ to be $\DD_{\delta 2}$-standard and
$\core_\om(Q_{\delta 1})=\core_\om(R_{\delta 1})$.

Now if $\xi_{\delta 1}$ is $\DD_{\delta 1}$-standard we set 
$E^\Uu_{\delta 
1}=\emptyset$
(so $\pred^\Uu(\delta,2)=(\delta,1)$ etc),
$\DD_{\delta 
2}=\DD_{\delta 1}$ and $\xi_{\delta 2}=\xi_{\delta 1}$.

Suppose that $\xi_{\delta 1}$ is non-$\DD_{\delta 1}$-standard. So 
$R_{\delta 1}$ is sound. Let $E^\Uu_{\delta 1}$ 
be the $*$-least $G\in M_{\delta 1}$ which is a $\DD_{\delta 
1}$-nice witness for 
$R_{\delta1}$.
Let $\mu=\crit(G)$.
(Note we can) let 
$(\alpha,j)=\pred^\Uu(\delta,2)$ be least $\leq(\delta,1)$ such that  
$\mu<s_{\alpha j}$, if such exists; otherwise
$\mu=s_{\delta 1}$ and $(\alpha,j)=(\delta,2)$.
 If 
$E_\alpha\in\es_+(M_\alpha)$ then let $\FF=\DD_{\alpha j}$ and 
$\zeta=\xi_{\alpha j}$, and 
if $2i=j$ then let $P=Q_{\alpha i}$, and if $2i-1=j$ then $P=R_{\alpha i}$. 
Otherwise let 
$\FF=\priDD_{\alpha j}$, $\zeta=\prixi_{\alpha j}$ and $P=\priQ_{\alpha i}$ or 
$P=\priR_{\alpha 
i}$. 
Let $f=i^\Uu_{\alpha j,\delta 2}$.

By properties \ref{item:internal_coherence},\ref{item:s_alpha,2u_alpha_is_nu}, 
$(P\sim R)|\mu$, and note that
$P|\mu=S_\gamma^{\FF}$ for some $\FF$-standard $\gamma<\zeta$. Since $G$ is a 
nice witness, 
$f(\mu)>\varphi(\varrho)$,  so
$R_{\delta 1}\pins f(P|\mu)$,
and note that $\varphi(\varrho)$ is a cardinal of $f(P|\mu)$.
We set $\DD_{\delta 2}=f(\FF)$, and let $\xi_{\delta 2}$ be the $\xi<f(\zeta)$ 
such that
$\core_0(R)=\core_\om(S^{\DD_{\delta 2}}_\xi)$.
Because $G$ is a nice witness, the agreement between $M^\Uu_{\delta 2}$ and 
$\Ult(M^\Uu_{\delta 
1},G)$ implies that $\xi_{\delta 2}$ is strongly $\DD_{\delta 2}$-standard 
(note that this only depends on $S_{\xi_{\delta 2}}^{\DD_{\delta 2}}$
in $M^\Uu_{\delta 2}$ or in $\Ult(M^\Uu_{\delta 1},G)$,
not on the relevant constructions themselves).
\end{scasenine}

\begin{scasenine}\label{scase:varrho=rho_0(M_delta)} $\varrho=\rho_0(M_\delta)$.

So $M_\delta$ is active type 3. Let $E=F^{M_\delta}$
and $Q=Q_{\delta 0}$ and $F=F^Q$ and $\nu=\nu(F)$. Let
$\upsilon:\core_0(M_\delta)\to\core_0(Q)$ be 
$\upsilon=\tau^{m_\delta 0}_{\delta 0}\com\pi_{\delta 0}$.
Let $\psi=\psi_\upsilon$.

\begin{sscasenine} $\upsilon$ is non-$\nu$-high; that is, 
$\psi(\varrho)\leq\nu$.

Proceed as in Subcase \ref{scase:rho<rho_0}, but using $\varphi=\psi$ instead.
\end{sscasenine}

\begin{sscasenine}\label{sscase:upsilon_nu-high} $\upsilon$ is $\nu$-high; that 
is, $\psi(\varrho)>\nu$.

Here we proceed basically as in 
\cite{recon_res}.
Let $E^\Uu_{\delta 0}$ be the $*$-least witness 
to
\ref{dfn:ultra_bkgd}(\ref{item:bkgd_ext}) for 
$(\DD_{\delta 0},Q)$ and set $s_{\delta 0}=\nu$. Let 
$\Tt'$ be the putative iteration tree on $C$ 
of the form $(\Tt\rest\delta+1)\conc E$. Then 
$M_{\delta 1}\pins\core_0(M^{\Tt'}_{\delta+1})$. Let 
$\alpha=\pred^{\Tt'}(\delta+1)$ and 
$\kappa=\crit(E)$ and $i=i_{\alpha\kappa}$. We set 
$\pred^\Uu(\delta,1)=(\alpha,2i)$; as 
in Case 
\ref{case:u_delta=0_not_biceph} this works. Let $\FF,\zeta,P,f$ be defined from 
 $(\alpha,2i)$ as in 
Subcase \ref{scase:rho<rho_0}. For notational specificity,
let us assume that $E_\alpha\in\es_+(M_\alpha)$;
the other case is likewise,
but instead of $P=Q_{\alpha i}$, we have $P=\widetilde{Q}_{\alpha i}$, etc.
So
$P=Q_{\alpha i}=S_\zeta^{\DD_{\alpha,2i}}$
(as $j=2i$). 
We have $M_{\alpha\kappa}=M_{\alpha i}$ and
$\kappa<\rho_{m_{\alpha\kappa}}(M_{\alpha\kappa})$ and
$\kappa<\rho_0(M_\delta)$,
\[ \pi_{\alpha\kappa}:\core_0(M_{\alpha\kappa})\to
\core_{m_{\alpha\kappa}}(P)\text{ and } 
\upsilon:\core_0(M_\delta)\to\core_0(Q).\]
Let
$\chi=(\kappa^+)^{M_{\delta}}=(\kappa^+)^{M_{\alpha\kappa}}$.
We have (and let)
$\bar{M}=M_{\alpha\kappa}||\chi=M_{\delta}|\chi$.
We have
$\pi_{\alpha\kappa}\rest\chi=\upsilon\rest\chi=\omega_\infty\rest\chi$.
Let $\kappa'=\pi_{\alpha\kappa}(\kappa)=\upsilon(\kappa)=\crit(F)$ and
\[ 
\chi'=\sup\pi_{\alpha\kappa}``\chi=\sup\upsilon``\chi=\sup\om_\infty``\chi.\]
Then $\kappa'<\rho_{m_{\alpha\kappa}}(P)$ is a cardinal of $P$ and
$Q$
and (letting)
$\bar{P}=P||\chi'=Q||\chi'$,
then $\bar{P}$ has largest cardinal $\kappa'$, 
and there is $\gamma\leq\zeta$
which is $\DD_{\alpha,2i}$-standard with
$\bar{P}=S_\gamma^{\DD_{\alpha,2i}}$.
Let
$\bar{\pi}:\bar{M}\to\bar{P}$
be $\bar{\pi}=\pi_{\alpha\kappa}\rest\bar{M}$. So $\bar{\pi}$
is cofinal $\Sigma_1$-elementary.
Let
\[ \bar{U}=\Ult_0(\bar{M},E)=\Ult_0(\bar{M},E\rest\nu(E)) \]
and
$\bar{\psi}:\bar{U}\to
\Ult_0(\bar{P},F\rest\nu)$
be induced from $\bar{\pi}$ and $\upsilon$ via the Shift Lemma.
\footnote{Note here that if $\chi'<((\kappa')^+)^{Q_{\delta 0}}$
then $F\rest\nu$ is not weakly amenable
to $\bar{P}$, and the ultrapower would be different
if we used the full $F$ (with generators through $\OR^Q$), and 
actually if $F$ is of supertstrong type,
it is not clear how it should even be defined.}
So $\bar{\psi}$ is cofinal $\Sigma_1$-elementary
and $\bar{\psi}\com i^{\bar{M}}_{E}=i^{\bar{P}}_{F\rest\nu}\com\bar{\pi}$
and $\upsilon\sub\bar{\psi}$ (and note $\rg(\upsilon)\sub Q|\nu$).
We have $\bar{P}\in M^\Uu_{\alpha,2i}$.
Let
$\psi':\Ult_0(\bar{P},F\rest\nu)
 \to f(\bar{P})$
be the natural factor map; that is, for $g\in\bar{P}$ and generators 
$a\in\nu^{<\om}$,
\[ \psi'([a,g]^{\bar{P}}_{F\rest\nu})=
[a,g]^{M^\Uu_{\alpha j}}_{E^\Uu_{\delta 0}} \]
(recalling that $F\rest\nu\sub E^\Uu_{\delta 
0}$). So $\psi'$ is $\Sigma_0$-elementary and c-preserving
and $\psi'\com i^{\bar{P}}_{F\rest\nu}
=f\rest\bar{P}$ and $\crit(\psi')\geq\nu$. 
 Let
 $\psi_1=\psi'\com\bar{\psi}$. So $\psi_1:\bar{U}\to f(\bar{P})$,
and $\psi_1$ is also $\Sigma_0$-elementary c-preserving,
commutes, and $\upsilon\sub\psi_1$, and of course
$M_\delta||\OR^{M_\delta}\ins\bar{U}$
and $\varrho=\nu(E)$ is a cardinal of $\bar{U}$.

 \begin{sclmnine}\label{sclm:psi_1(varrho)>nu}
$\psi_1(\varrho)>\nu=\nu(F)=s_{\delta0}$.\end{sclmnine}
\begin{proof}We have $\nu=\nu(F)=s_{\delta 0}$ by definition.
If 
$\chi'=((\kappa')^+)^{Q}$ then in fact 
$\bar{\psi}(\varrho)>\nu$,
because in fact $\bar{\psi}\sub\psi=\psi_\upsilon$, and by subsubcase 
hypothesis,
$\psi(\varrho)>\nu$. So suppose  $\chi'<((\kappa')^+)^{Q}$.
Let $\bar{P}^+=Q|((\kappa')^+)^Q$,
so $\bar{P}=(\bar{P}^+)||\chi'$, so noting that
$\bar{P}^+\in M^\Uu_{\alpha,2i}$, we have $f(\bar{P})=(f(\bar{P}^+))||f(\chi')$.
Let $\psi_1^+:\bar{U}\to f(\bar{P}^+)$
be $\psi_1^+=\mathrm{inc}\com\psi_1$, where $\mathrm{inc}$ denotes
inclusion. Then $\psi_1^+$ is also $\Sigma_0$-elementary
c-preserving, and $\psi_1,\psi_1^+$ have the same graph.
But $\psi_1^+=\sigma\com
(\psi\rest\bar{U})$,
where
\[ \sigma:\Ult_0(\bar{P}^+,F)\to f(\bar{P}^+) \]
is the natural factor map.
Since $\psi(\varrho)>\nu$ by subsubcase
hypothesis,
 therefore $\psi_1^+(\varrho)>\nu$, as desired.
\end{proof}

Let $\DD_{\delta 1}=f(\FF)$ and $R=\psi_1(M_{\delta 1})$. Then  
$R\pins 
f(\bar{P})$ and $R=S_\xi^{\DD_{\delta 1}}$ for some $\xi<f(\gamma)$; 
let $\xi_{\delta 1}$ be this $\xi$. Let $\pi_{\delta 
1}=\psi_1\rest\core_0(M_{\delta 1})$ (a fully elementary map).

Now if $\xi$ is $\DD_{\delta 1}$-standard, we set $E^\Uu_{\delta 1}=\emptyset$, 
etc. Otherwise, 
proceed as in Subcase \ref{scase:rho<rho_0} to define $E^\Uu_{\delta 1}$ etc. 
In either case,
$s_{\delta 0}=\nu<\psi_1(\varrho)=s_{\delta 1}$,
by Subclaim \ref{sclm:psi_1(varrho)>nu}.
\end{sscasenine}\end{scasenine}

This completes the definition of $\Uu\rest(\delta,2)$ in all subcases.
We now proceed in general for Case \ref{case:u_delta>0_B_delta=M_delta},
to determine $\Uu\rest(\delta+1,1)$ etc.
If $u_\delta=1$ then we have already determined
\[ 
\om^*_\delta=\pi^*_\delta=\tau^{\om0}_{\delta1}\com\pi_{\delta1}
:\exit^\Tt_\delta\to Q^*_\delta=Q_{\delta1}.\]
Here $\om^*_\delta$ is $\Sigma_1$-elementary and so non-$\nu$-low,
by \ref{rem:Q'_from_nu-low}. So 
$Q'_\delta=Q^*_\delta$ and (following \ref{item:s_alpha,2u_alpha_is_nu})
we set
$E^\Uu_{\delta 2}$  be the $*$-least
background for $Q^*_\delta$, and $s_{\delta 2}=\nu(F(Q^*_\delta))$.

Now $s_{\delta 
1}=\rho_\om(R_{\delta 1})=\rho_\om(Q_{\delta 1})\leq s_{\delta 2}$.
We claim that if 
$E^\Uu_{\delta 
1}\neq\emptyset$ then $s_{\delta 1}<s_{\delta 2}$. For otherwise
$s_{\delta 
1}=s_{\delta 2}$, which implies $R_{\delta 1}=Q_{\delta 1}$ is type 1 or type 
3, 
but then by \ref{lem:type_3_standard}, $\xi_{\delta 1}$ is $\DD_{\delta 
1}$-standard, so 
$E^\Uu_{\delta 1}=\emptyset$, contradiction. (Also if $E^\Uu_{\delta 
0}\neq\emptyset=E^\Uu_{\delta 
1}$, then $s_{\delta 0}<\psi_1(\varrho)=s_{\delta1}\leq s_{\delta 2}$,
by Subclaim \ref{sclm:psi_1(varrho)>nu}.)
The remaining definitions and propagation of inductive hypotheses
are like in 
Case \ref{case:u_delta=0_not_biceph}.

If $u_\delta>1$ then we next repeat
the preceding subcases, working with $M_{\delta 2}$, $\pi_{\delta 1}$, etc, in 
place of $M_{\delta 1}$, 
$\pi_{\delta 0}$, etc. We iterate this until producing $\om^*_\delta$, 
$Q^*_\delta$ and $E^\Uu_{\delta,2u_\delta}$
(as above, $\om^*_\delta$ is non-$\nu$-low). This 
completes the definition of $\Uu\rest(\delta+1,1)$.
\end{casenine}

\begin{casenine} $u_\delta>0$ and $E_\delta\notin\es_+^{M_\delta}$  
and if $\delta$ is $\curlyN$-stable
and $\vartheta_\delta<\OR^{N_\delta}$
then $\lh(E_\delta)<(\vartheta_\delta^+)^{N_\delta}$.

By symmetry.
\end{casenine}

\begin{casenine}
$u_\delta>0$ and $E_\delta\in\es_+^{M_\delta}$
and $\delta$ is $\curlyM$-stable
and $\vartheta_\delta<\OR^{M_\delta}$ and 
$(\vartheta_\delta^+)^{M_\delta}\leq\lh(E_\delta)$.
 
This case proceeds mostly like the preceding cases, but the 
first step is a little different. Recall that here 
the
reversed 
$(\Tt,\delta,\lh(E_\delta))$-dropdown
sequence begins with $(M_\delta,0),(M_\delta,\vartheta_\delta)$,
and since $\delta$ is $\curlyM$-stable, 
 $(\DD_{\delta0},\xi_{\delta0})=i^\Uu_{00,\delta0}(\CC,\lambda^\CC)$,
and recall that $m_0<\om$ and $M$ is fully sound
and either
\begin{enumerate}[label=--]
 \item 
$m_0=0$ and $M=S_{\lambda^\CC}^\CC$ is 
type 3 with $\nu(M)=\rho^C$, or 
\item $M=\core_{m_0+1}(S_{\lambda^\CC}^\CC)$ is fully sound
with $\rho_{m_0+1}^M=\rho^C<\rho_{m_0}^M$.
\end{enumerate}

We set $E^\Uu_{\delta 
0}=\emptyset$ and $\DD_{\delta 
1}=\DD_{\delta 0}$, $\xi_{\delta 1}=\xi_{\delta 0}$, 
$m_{\delta1}=m_{\delta0}=m_\delta=m_0$, etc,
so $R_{\delta1}=Q_{\delta0}$. (We also set $\widetilde{\xi}_{\delta 
1}={\uparrow}$.)
We will set 
\[ s_{\delta0}=s_{\delta1}=\rho_\om^{R_{\delta1}}=i^\Uu_{00,\delta0}(\rho^C)
=i^\Uu_{00,\delta1}(\rho^C)=\psi_{\pi_{\delta0}}(\vartheta_\delta).\]
If $\xi_{\delta 1}$ is $\DD_{\delta 
1}$-standard then we also set $E^\Uu_{\delta 1}=\emptyset$, etc. Suppose 
otherwise. So $\lambda^\CC$ is non-$\CC$-standard,
$R_{\delta 1}=Q_{\delta 0}=i^\Uu_{00,\delta 1}(M)$, and these models are
$\om$-sound. So $\rho^C<\rho_0^M$ (by \ref{lem:type_3_standard}).
We set 
$E^\Uu_{\delta 1}=$ 
the $*$-least
$\DD_{\delta 1}$-nice 
witness $G$ for $R_{\delta 1}$, set
$\pred^\Uu(\delta,2),\FF,f$ like usual, set $\DD_{\delta 
2}=f(\FF)$, and set $\xi_{\delta 2}$ to be the $\xi$ such that $R_{\delta 
1}=\core_\om(S_\xi^{\DD_{\delta 
2}})$. So either way, $\xi_{\delta 2}$ is $\DD_{\delta 2}$-standard. After this 
we proceed as 
before.
\end{casenine}

\begin{casenine} Otherwise (equivalently,
 $u_\delta>0$ and 
$E_\delta\notin\es_+^{M_\delta}$
and $\delta$ is $\curlyN$-stable
and $\vartheta_\delta<\OR^{N_\delta}$
and $(\vartheta_\delta^+)^{N_\delta}\leq\lh(E_\delta)$).

By symmetry.
\end{casenine}

This completes the proof of part \ref{item:bicephalus_it} of Claim 
\ref{clm:iterability}.
In the proof of part \ref{item:ultra-stack_it},
we need substitutes $\CC'_{\alpha j}$ for $\CC_{\alpha j}$
and $<'_{\alpha j}$ for 
$<^*_{\alpha j}$ (there is, however, no $\widetilde{\CC}_{\alpha j}$ to 
consider,
and we do not need the notation $\DD_{\alpha j}$ or $\widetilde{\DD}_{\alpha 
j}$).
We have the class wellorder $<^W$ of $W$
(which, however, need not be a class of $W$ itself).
Given $\gamma\in\OR^W$, let $\CC^{\gamma W}$
be the $<^W$-least ultra-backgrounded construction
of $W$ with last model $R_\gamma^W$. We only
define $\xi_{\alpha 0}$ in the case that $[0,\alpha]_\Tt$ drops in model
(but we always define $\xi_{\alpha j}$ when $j>0$).

Suppose $[0,\alpha]_\Tt$ does not drop.
In this case we determine $\CC_{\alpha0}$ only after
selecting $E^\Tt_\alpha$.
We will have $Q_{\alpha 
0}=L[\es]^{M^\Uu_{\alpha 0}}$
and $\pi_{\alpha 0}:M_\alpha\to Q_{\alpha 0}$ is elementary and
$\pi_{\alpha 0}\com i^\Tt_{0\alpha}=i^\Uu_{00,\alpha0}$.
Let $\gamma$ be least such that 
$\pi_{\alpha0}(\lh(E^\Tt_\alpha))<i^\Uu_{00,\alpha0}(\OR(R_\gamma))$.
Then we set $\CC_{\alpha0}=i^\Uu_{00,\alpha0}(\CC^{\gamma W})$.
So in $M^\Uu_{\alpha0}$, $\CC_{\alpha 0}$ has last model
$L[\es]|\zeta$ for some $L[\es]$-cardinal $\zeta$,
and $\pi_{\alpha0}(\lh(E^\Tt_\alpha))<\zeta$.

If $[0,\alpha]_\Tt$ drops or $j>0$, then
$\CC_{\alpha j}$ is defined basically as before
(though it is more standard, because there is no $\widetilde{\CC}_{\alpha j}$ 
etc).

Now consider $<^*_{\alpha j}$.
Let $f=i^\Uu_{00,\alpha j}$.
(Note that $f({<^W})$ need not make sense,
since ${<^W}$ need not be a $W$-class.)
Given $x,y\in M^\Uu_{\alpha j}$, set
$x<_{\alpha j}'y$
iff either (i) $\rank(x)<\rank(y)$,
or (ii) [$\rank(x)=\rank(y)$ and letting $\beta<\OR^W$
be least such that $x,y\in f(V_\beta^W)$,
and letting $<_0$ be the $<^W$-least wellorder
of $V_\beta^W$, then $(x,y)\in f({<_0})$].
Clearly $<_{\alpha j}'$ is a wellorder of $M^\Uu_{\alpha j}$;
we  use this in place of $<^*_{\alpha j}$ when selecting $E^\Uu_{\alpha j}$.

The rest is straightforward.
This completes the proof of the theorem.
\end{proof}\renewcommand{\qedsymbol}{}
\end{proof}

\begin{rem}
Suppose $W\sats\ZFC$ is an iterable premouse. Let 
$L[\es]^W$ be the output of the 
pm-ultra stack construction of $W$.
Then we have the usual partial converse to 
the fact that $L[\es]^W$ 
inherits Woodins. That is, let $\delta$ be Woodin in 
$L[\es]$. Then $W|\delta$ is generic for the 
extender algebra of $L[\es]$ 
at $\delta$, and $\delta$ is Woodin in $L[\es][W|\delta]$.

The natural analogue of \cite[Theorem 3.2]{maxcore} also holds
for  ultra-backgrounded constructions, and hence for the ultra-stack 
construction, assuming that extenders cohere the relevant iteration strategy:
\begin{tm}\label{tm:stationarity_of_LE}
Assume $\ZFC$ and let $M$ be a countable, $k$-sound, $(k,\OR)$-iterable
premouse, and $\Sigma$ be a $(k,\OR)$-iteration strategy for $M$. 
Suppose
that
$i_E(\Sigma)=\Sigma\rest\Ult(V,E$
 for every short $V$-extender $E$.
 
 Let $\CC=\left<S_\alpha\right>_{\alpha\leq\lambda}$ be an ultra-backgrounded
construction. Then there is $\xi\leq\lambda$ 
and $\left<\Tt_\alpha\right>_{\alpha\leq\xi}$ such that
(i) $\Tt_\alpha$ is a successor length tree via $\Sigma$
 with $S_\alpha\ins M^{\Tt_\alpha}_\infty$,
(ii) if $\alpha<\xi$ and $S_\alpha=M^{\Tt_\alpha}_\infty$
 then $b^{\Tt_\alpha}$ drops in model,
(iii) if $\xi<\lambda$ then $b^{\Tt_\xi}$ does not drop in model.

Moreover, either:
\begin{enumerate}[label=\tu{(}\alph*\tu{)}]
 \item\label{item:non-drop_construction} there is some ultra-backgrounded 
construction $\CC$,
with last model $S_\lambda$, such that $\Tt^\CC_\lambda$
exists and $b^{\Tt^\CC_\lambda}$
does not drop in model, or
\item\label{item:R_OR_is_iterate} the ultra-stack construction 
$\left<R_\alpha\right>_{\alpha\in\OR}$
is well-defined, and there is a length $\OR$ tree $\Uu$ on $M$,
via $\Sigma$, such that $R_\OR=M(\Uu)$.
\end{enumerate}
\end{tm}
\begin{proof}
 The proof is like that of 
  \cite[Theorem 3.2]{maxcore};
  there are also variants of this argument in \cite[\S5]{mim}
  and elsewhere. Let $\CC$ be an ultra-backgrounded construction.
  One constructs $\Tt_\alpha$ by induction on $\alpha\leq\lambda$,
  until reaching either $\lambda$ or some appropriate $\xi<\lambda$.
  The induction is straightforward except for when either
  $S_\alpha$ is active or $\alpha$ is non-standard,
  so we consider these cases.
  
 \begin{caseten} $S_\alpha$ is active and $\alpha$ is standard.
  
 So $S_\alpha=(S,F)$ where $S=S_\beta$. Let $F^*$ be a background for $F$ (as 
in \ref{dfn:ultra_bkgd}). Let $k:\Ult_0(S,F)\to j(S)$ be
the natural factor map; so $\nu(F)\leq\crit(k)$.
Let $j=i^V_{F^*}$ and $\kappa=\crit(j)$. We have
$\Tt=\Tt_\beta$
  with $S\ins 
M^\Tt_\infty$;
since $\kappa$ is measurable and $M$ countable, $\lh(\Tt)\geq\kappa+1$.
We may assume that $\lh(G)\leq\OR^S$ for all extenders
$G$ used in $\Tt$.
Then
$j(S)\ins M^{j(\Tt)}_\infty$,
  and by assumption, $j(\Tt)$ is via $\Sigma$.
Note $\Tt\rest(\kappa+1)\ins j(\Tt)$
and $j([0,\kappa]_\Tt)=[0,j(\kappa)]_{j(\Tt)}$,
 and $[0,\kappa)_\Tt\sub [0,j(\kappa)]_{j(\Tt)}$,
 so $\kappa<_{j(\Tt)} j(\kappa)$.  We have
$i_{\kappa,j(\kappa)}^{j(\Tt)}\rest\pow(\kappa)\sub j$, by the proof
 of termination of comparison;
in particular, $\kappa=\crit(i^{j(\Tt)}_{\kappa,j(\kappa)})$.
 Note $[0,j(\kappa)]_{j(\Tt)}$ has no drops $\geq\kappa$, 
 so $(\kappa^+)^{M^{j(\Tt)}_\infty}=(\kappa^+)^{M^\Tt_\kappa}$.
 Clearly 
$\theta=(\kappa^+)^S\leq(\kappa^+)^{M^\Tt_\kappa}$ and
and $S|\theta=M^{j(\Tt)}_\infty||\theta$.
 Let $\gamma+1=\mathrm{succ}^{j(\Tt)}(\kappa,j(\kappa))$
 and $E=E^{j(\Tt)}_\gamma$. Then $\crit(E)=\kappa$
and $E\rest\nu(E)$ is derived from $j$.
Let $\nu=\min(\nu(E),\nu(F))$.
Then
\[ E\inter((M^\Tt_\kappa||\theta)\cross[\nu]^{<\om})=
F\rest\nu.\]

\begin{scaseten} $(\kappa^+)^{M^\Tt_\kappa}=\theta=(\kappa^+)^S$.

So $E,F$ are compatible; that is,
$E\rest\nu=F\rest\nu$.
By the ISC for $(S,F)$, $\Tt$ does not use any extender of index
${<\OR^S}$
which is compatible with $F$.

\begin{sscaseten} $j(S)||\OR^S=S$.

Then $\Tt,j(\Tt)$ use the same extenders with index ${<\OR^S}$.
So by the previous paragraph, $\OR^S\leq\lh(E)$.
If $\lh(E)=\OR^S$,
then $E^{j(\Tt)}_\gamma=E=F$,
so $(S,F)\ins M^{j(\Tt)}_\gamma$,
so $\Tt_\alpha=j(\Tt)\rest(\gamma+1)$ is as desired.
If $\lh(E)>\OR^S$,
then by the ISC applied to $\exit^{j(\Tt)}_\gamma$,
and since $S=\exit^{j(\Tt)}_\gamma||\OR^S$,
we get $(S,F)\pins\exit^{j(\Tt)}_\gamma$,
so we can set $\Tt_\alpha=j(\Tt)\rest(\gamma'+1)$ with some $\gamma'$.
\end{sscaseten}

\begin{sscaseten}
$j(S)||\OR^S\neq S$.

Then $F$ is type 1 or 3.
Let $\nu'=\nu(F)$. By condensation arguments using $k$,
$j(S)|\nu'$ is active with an extender $G$
and $S=\Ult(j(S)|\nu',G)||\lambda$.
It follows that there is $\gamma'$
such that $\Tt\rest(\gamma'+1)\ins j(\Tt)$ and
$E^\Tt_{\gamma'}=G$, but $\nu'<\lh(E^{j(\Tt)}_{\gamma'})$.
Like before, $\lh(E)\geq\nu'$, so $\lh(E)>\nu'$
(as $\lh(G)=\nu'$).
So by the ISC applied to $\exit^{j(\Tt)}_{\gamma}$,
with respect to $\nu'$,
and since $\lh(G)=\nu'$,
 $(S,F)\ins M^{\Tt}_{\gamma'+1}$, which suffices.
\end{sscaseten}
\end{scaseten}

\begin{scaseten} $(\kappa^+)^{M^\Tt_\kappa}>\theta=(\kappa^+)^S$. 

Then $\crit(k)=\theta$, so $F$ is type 1.
We now argue with subsubcases much as before, but using the (proofs of)
\cite[Theorems 4.11, 4.12, 4.15]{mim} in place of the ISC. (In \cite{mim},
premice are always assumed to be below a superstrong.
But the proofs adapt routinely to the superstrong setting.)
\end{scaseten}

This completes the construction of $\Tt_\alpha$ in this case.
\end{caseten}

\begin{caseten}
Now suppose instead that $\alpha$ is non-$\CC$-standard.
So $\alpha=\beta+1$, and letting $\rho=\rho_\om(S_\alpha)$,
and $\Tt=\Tt_\beta$, we have $S_\beta=S_\alpha||(\rho^+)^{S_\alpha}\ins 
M^\Tt_\infty$.
Let $G$ be a nice witness for $S_\alpha$ and $j=i^V_G$.
Then $S_\alpha\pins j(S_\alpha|\rho)$,
so $S_\alpha\pins M^{j(\Tt)}_\infty$,
and since $j(\Tt)$ is via $\Sigma$, this suffices to yield $\Tt_\alpha$.
\end{caseten}

This completes the inductive construction of the trees $\Tt_\alpha$.

Now suppose there is no ultra-stack construction as in 
part \ref{item:non-drop_construction}
of the the ``moreover'' clause of the theorem.
Then for every ultra-backgrounded construction $\CC$
and every $\alpha<\lh(\CC)$, $\Tt^\CC_\alpha$ exists
and if $S_\alpha^\CC=M^{\Tt^\CC_\alpha}_\infty$
then $b^{\Tt^\CC_\alpha}$ drops in model.
But then note that no ultra-backgrounded construction can break down;
that is, for each $n<\om$, $\core_n(S_\alpha^\CC)$
is $(n+1)$-universal and $\core_{n+1}(S_\alpha^\CC)$ is $(n+1)$-solid.

Let $\left<R_\alpha\right>_{\alpha\in\OR}$ be the ultra-stack construction.
We show by induction on $\alpha$ that $R_\alpha$ is well-defined
and sound, and for each 
$\alpha\in\OR$,
there is a tree $\Uu_\alpha$ via $\Sigma$ such that
$R_\alpha\pins M^{\Uu_\alpha}_\infty$,
and by taking $\Uu_\alpha$ of minimal length,
then $\Uu_\alpha\ins\Uu_\beta$ for $\alpha<\beta$.
This suffices, because then $\Uu=\bigcup_{\alpha\in\OR}\Uu_\alpha$ is as 
desired.

So suppose $R_\alpha$ is defined and we have $\Uu_\alpha$
with $R_\alpha\ins M^{\Uu_\alpha}_\infty$.
Let $\CC$ be an ultra-backgrounded construction
with $S_\lambda^\CC=R_\alpha$.

Now $R_\alpha\pins M^{\Uu_\alpha}_\infty$.
For suppose $R_\alpha=M^{\Uu_\alpha}_\infty$.
Then because \ref{item:non-drop_construction} fails,
$b^{\Uu_\alpha}$ drops in model,
so $R_\alpha$ is not sound.
Let $R=\core_\om(M^{\Uu_\alpha}_\infty)$
(this exists and in fact $R\pins M^{\Uu_\alpha}_\xi$
for some $\xi$).
So $R\not\ins R_\alpha$.
But $R_\alpha=S_\lambda^\CC$ is produced by ultra-backgrounded
construction, so $R$ is also, so by maximality of the ultra-stack
construction,  $R\ins R_\alpha$,
a contradiction.

So $R_\alpha\pins M^{\Uu_\alpha}_\infty$.
Now consider the sound premice $R$ which project
to $\OR^{R_\alpha}$ and form the stack
$R_{\alpha+1}$ above $R_\alpha$.
These $R$ are produced by ultra-backgrounded construction,
and $R_\alpha\pins R$, so we get $\Tt_R$ such that $R\ins M^{\Tt_R}_\infty$,
and note $\Uu_\alpha\pins\Tt_R$, and it easily follows
that $R\ins M^{\Uu_\alpha}_\infty$, giving well-definedness
of $R_{\alpha+1}$. And note we get either $\Uu_{\alpha+1}=\Uu_\alpha$,
or $\Uu_{\alpha+1}=\Uu_\alpha\conc\left<E\right>$ where 
$\lh(E)=\OR(R_{\alpha+1})$.

Limit stages are easy. This completes the proof.\end{proof}
\end{rem}

\section{Questions}\label{sec:questions}

Since condensation follows from solidity and normal iterability, we  ask:

\begin{enumerate}[label=--]
 \item Let $m<\om$ and let $M$ be an $m$-sound, $(m,\om_1+1)$-iterable premouse. Is $M$
$(m+1)$-universal? Is $M$ $(m+1)$-solid?
 \item Let $M$ be a $1$-sound $(0,\om_1+1)$-iterable premouse. Is $M$ 
Dodd-solid?
\end{enumerate}

We conjecture that the answer in each case is ``yes'',\footnote{The author
has since confirmed this conjecture, including superstrongs;
see \cite{fsfni}.} at least if $M$ has no superstrong initial 
segments. However, it appears less clear how to prove these things than it is condensation; if one 
attempts an approach similar to the proof of condensation (from normal iterability) then, at least 
na\"ively, structures arise similar to bicephali $B$, but the premice involved may fail to be 
$\rho(B)$-sound. Such generalizations of cephalanxes also arise. This lack of soundness makes the 
analysis of these structures less clear than those considered in this paper.

One also uses $(0,\om_1,\om_1+1)^*$-iterability of pseudo-premice to prove that 
they satisfy the ISC. 
It seems that one might get around this by avoiding pseudo-premice entirely (in 
the proof of 
\ref{thm:stack_well}), using bicephali 
and cephalanxes instead. Extra difficulties also seem to arise here with 
superstrong premice.\footnote{Actually, there is an easy direct proof
that normal iterability suffices for this result, which does not 
need any bicephalus-style comparisons. This will also appear in 
\cite{fsfni}.}

\bibliographystyle{plain}
\bibliography{../../bibliography/bibliography}

\end{document}